\documentclass{amsart}

\usepackage{amssymb,amsmath,amscd,amsthm,xspace,color,tocvsec2, array, 
	tikz-cd, enumitem}
\usepackage[mathscr]{euscript}
\usepackage[breaklinks=true]{hyperref} %To get clickable links to papers
\usepackage[letterpaper]{geometry}
\usepackage[all, cmtip]{xy}
\usepackage[ampersand]{easylist}
\usepackage{stmaryrd}
\usepackage[myheadings]{fullpage}
\usepackage{csquotes}
\usepackage{mathtools}
\usepackage[skip=5pt, indent=10pt]{parskip}

\usepackage{tikz}
\usetikzlibrary{cd}

\newcommand{\sss}{\subsubsection{}}

\newcommand{\beq}{\begin{equation}}
\newcommand{\eeq}{\end{equation}}
\newcommand{\on}{\mathrm}

\numberwithin{equation}{section}
\theoremstyle{plain}
\newtheorem{Thm}{Theorem}[section]
\newtheorem{Prop}[Thm]{Proposition}
\newtheorem{Cor}[Thm]{Corollary}
\newtheorem{Lem}[Thm]{Lemma}
\theoremstyle{definition}

\newtheorem{Rem}[Thm]{Remark}

\DeclareMathOperator{\Rep}{Rep}

\DeclareMathOperator{\Hom}{Hom}
\DeclareMathOperator{\Funct}{Funct}
\DeclareMathOperator{\Coh}{Coh}

\DeclareMathOperator{\QCoh}{QCoh}
\DeclareMathOperator{\IndCoh}{IndCoh}

\DeclareMathOperator{\Ind}{Ind}
\DeclareMathOperator{\gr}{gr}

\DeclareMathOperator{\End}{End}
\DeclareMathOperator{\Ext}{Ext}

\DeclareMathOperator{\Spec}{Spec}

\DeclareMathOperator{\Mod}{Mod}
\DeclareMathOperator{\LMod}{LMod}
\DeclareMathOperator{\RMod}{RMod}

\DeclareMathOperator{\id}{id}

\newcommand{\Inj}{\mathop{\mathrm{Inj}}}
\newcommand{\noeth}{{\mathrm{noeth}}}
\newcommand{\coh}{{\mathrm{coh}}}
\newcommand{\Map}{\mathrm{Map}}

\newcommand{\OO}{\mathscr{O}}
\newcommand{\sC}{\mathscr{C}}
\newcommand{\sS}{\mathscr{S}}
\newcommand{\sD}{\mathscr{D}}
\newcommand{\sJ}{\mathscr{J}}
\newcommand{\sK}{\mathscr{K}}

\newcommand{\sA}{\mathscr{A}}
\newcommand{\sB}{\mathscr{B}}
\newcommand{\sM}{\mathscr{M}}

\newcommand{\bs}{\backslash}
\newcommand{\sN}{\mathscr{N}}

\newcommand{\Vect}{\on{Vect}}

\newcommand{\sP}{\mathscr{P}}
\newcommand{\sH}{\mathscr{H}}

\newcommand{\DGCat}{\on{DGCat}_{\mathrm{cont},k}}

\newcommand{\fh}{\mathfrak{h}}
\newcommand{\fk}{\mathfrak{k}}

\newcommand{\Dmod}{\on{D}\text{-}\on{mod}}

\newcommand{\sT}{\mathscr{T}}

\renewcommand{\mod}{\text{-}\on{mod}}
\newcommand{\modr}{\on{mod}\text{-}}
\newcommand{\hc}{\on{HCh}}

\renewcommand{\and}{\quad \on{and} \quad}

\newcommand{\h}{\mathfrak{h}}

\newcommand{\Z}{\mathbb{Z}}
\newcommand{\im}{\operatorname{im}}

\newcommand{\ind}{\mathrm{ind}}
\newcommand{\inv}{\mathrm{inv}}
\newcommand{\op}{\mathrm{op}}
\newcommand{\rev}{\mathrm{rev}}
\newcommand{\Ch}{\mathrm{Ch}}
\newcommand{\Chb}{\mathrm{Ch}^{\mathrm{b}}}
\newcommand{\Db}{D^{\mathrm{b}}}

\newcommand{\comp}{\mathrm{c}}
\newcommand{\simto}{\xrightarrow{\sim}}

\newcommand{\BB}{\mathrm{B}}
\newcommand{\fd}{\mathrm{fd}}

\newcommand{\PrL}{\mathrm{Pr}^{\mathrm{L}}}
\newcommand{\PrLSt}{\mathrm{Pr}^{\mathrm{L}}_{\mathrm{St}}}
\newcommand{\Catoo}{\mathrm{Cat}_{\infty}}
\newcommand{\res}{\mathrm{res}}
\newcommand{\coind}{\mathrm{coind}}

\newcommand{\DD}{\mathbb{D}}

\newcommand{\Frob}{\mathrm{F}}
\newcommand{\Tr}{\mathrm{Tr}}

\title[Categorical representations]{Categorical representations of algebraic groups in positive characteristic}

\author{Pramod N. Achar}
\address{Department of Mathematics\\
  Louisiana State University\\
  Baton Rouge, LA 70803\\
  U.S.A.}
\email{pramod.achar@math.lsu.edu}

\author{Gurbir Dhillon}
\address{UCLA Mathematics Department, Los Angeles, CA 90095-1555, USA.}
\email{gsd@math.ucla.edu}

\author{Simon Riche}
\address{Universit\'e Clermont Auvergne, CNRS, LMBP, F-63000 Clermont-Ferrand, France.}
\email{simon.riche@uca.fr}

\thanks{P.A. was supported by NSF Grant No.~DMS-2202012.  
G.D. was supported by an NSF Postdoctoral Fellowship under grant No.~DMS-2103387.
This project has received
funding from the European Research Council (ERC) under the European Union's Horizon 2020
research and innovation programme (S.R., grant agreement No.~101002592).
}

\begin{document}

\begin{abstract}
In this paper we present a formalism of categorical representation theory for affine group schemes of finite type over fields (both in the weak and strong settings) which applies to arbitrary base fields. This is based on the development of an appropriate $\infty$-categorical setting for the study of representations and Harish-Chandra (bi)modules of such group schemes, which constitute the building blocks and basic examples of such structures. 
We also study categorical traces in this context, and show in particular that the categorical trace of the identity morphism, resp.~Frobenius morphism, on the appropriate $\infty$-category of representations of a connected reductive algebraic group $G$ over an algebraically closed field $k$ of positive characteristic identifies (under suitable assumptions) with the $\infty$-category of Ind-coherent sheaves on the adjoint quotient $G/G$, resp.~of the fixed points $G^\Frob$ of the Frobenius.
\end{abstract}

\maketitle

%%%%%%%%%%%%%%%%%%%%%%%%%%
\section{Introduction}
%%%%%%%%%%%%%%%%%%%%%%%%%%

%-----------------------------------------
\subsection{Categorical representation theory in characteristic \texorpdfstring{$0$}{0}}
%-----------------------------------------

\sss
Categorical representation theory of affine group schemes over fields of characteristic $0$ has proven to be an invaluable tool in the geometric Langlands program; see in particular~\cite{beraldo, tao} for the basics of this theory, and~\cite{dhillon} for a survey of some of its applications.

\sss
\label{sss:intro-def-weak-rep}
Let us briefly recall the basic definitions used to set up this theory, for later comparison with the case of general fields. Let $k$ be a field of characteristic $0$, and let $H$ be an affine group scheme of finite type over $k$. Then the $\infty$-category $\QCoh(H)$ of quasi-coherent sheaves on $H$ (i.e., $\OO(H)$-modules) admits a canonical structure of an algebra object in the monoidal $\infty$-category $\DGCat$ of presentable, stable, $k$-linear $\infty$-categories, with associated binary operation given by convolution. (Here, as usual, the monoidal structure on $\DGCat$ is given by the Lurie tensor product relative to the $\infty$-category $\Vect_k$ of $k$-vector spaces.) A \emph{weak categorical representation} of $H$ is defined to be a $\QCoh(H)$-module in $\DGCat$. Typical examples of such structures arise from geometry: if $X$ is a $k$-scheme (or, more generally, prestack over $k$) endowed with an action of $H$, then the $\infty$-category $\QCoh(X)$ is a weak categorical representation of $H$ in a natural way.

\sss
\label{sss:intro-def-strong-rep}
\emph{Strong categorical representations} are defined in a similar way, replacing $\QCoh(H)$ by the $\infty$-category $\Dmod(H)$ of $D$-modules on $H$ (i.e., $\sD(H)$-modules). We have a natural morphism of algebra objects $\QCoh(H) \to \Dmod(H)$ given by induction, which provides natural ``restriction'' and ``induction'' functors relating weak and strong categorical representations of $H$.

\sss
\label{sss:intro-weak-reps-Rep-modules}
It is a basic but very important observation that these structures can be described in ``dual'' terms as follows. Consider the $\infty$-category $\Rep(H)$ of algebraic $H$-modules. Then we have a canonical identification
\[
\Rep(H) = \Funct_{\QCoh(H)}(\Vect_k, \Vect_k),
\]
where $\Vect_k$ is equipped with the trivial action of $\QCoh(H)$, and we consider the $\infty$-category of $\QCoh(H)$-linear endofunctors of $\Vect_k$. Via this equivalence, composition of functors corresponds to the natural monoidal structure on $\Rep(H)$, induced by tensor product of representations. This identification allows us to define two functors from the $\infty$-category of weak categorical representations of $H$ to that of $\Rep(H)$-modules in $\DGCat$, those of ``invariants'' and ``coinvariants,'' given by
\[
(-)^{H, \mathrm{w}} : \sC \mapsto \Funct_{\QCoh(H)}(\Vect_k, \sC) \quad \text{and} \quad (-)_{H, \mathrm{w}} : \sC \mapsto \Vect_k \otimes_{\QCoh(H)} \sC
\]
respectively.

Using the ``$1$-affineness theorem'' for the stack $\BB H$ proved by Gaitsgory--Lurie, Beraldo proved in~\cite{beraldo} that these functors are naturally isomorphic, and induce an equivalence of $\infty$-categories between weak categorical representations of $H$ and $\Rep(H)$-modules. Note that equivalences between $\infty$-categories of functors and tensor products are typical for \emph{rigid} monoidal $\infty$-categories, but that here $\QCoh(H)$ is \emph{not} rigid. On the other hand $\Rep(H)$ \emph{is} rigid, which can sometimes make computations easier on the other side of the equivalence.

\sss
\label{sss:intro-strong-reps-Rep-modules}
The results from~\S\ref{sss:intro-weak-reps-Rep-modules} have counterparts for strong representations, which are in fact consequences of their weak versions. Namely, let $\fh$ be the Lie algebra of $H$, and consider the $\infty$-categories $\fh\mod$ of $\fh$-modules and $\hc_H$ of Harish-Chandra bimodules for $H$. Here $\hc_H$ is an algebra object in $\DGCat$, with associated binary operation given by (derived) tensor product of $U(\fh)$-bimodules. Then $\fh\mod$ has a natural structure of strong categorical representation of $H$ (obtained by interpreting $U(\fh)$-modules as weakly $H$-equivariant $D$-modules on $H$), and we have an identification
\[
\hc_H \simeq \Funct_{\Dmod(H)}(\fh\mod, \fh\mod).
\]
Using this identification one can ``upgrade'' the functors $(-)^{H, \mathrm{w}}$ and $(-)_{H, \mathrm{w}}$ to functors from strong categorical representations of $H$ to $\hc_H$-modules in $\DGCat$, see~\cite[\S 2.3.9]{beraldo}. As explained in the course of the proof of~\cite[Theorem~2.3.12]{beraldo} (see also~\cite[Proposition~2.5.7]{tao}), these functors are again isomorphic, and induce an equivalence between strong categorical representations of $H$ and $\hc_H$-modules.

%-----------------------------------------
\subsection{General base fields}
%-----------------------------------------

\sss
The goal of this paper is to develop the foundations of the corresponding theory for affine group schemes of finite type over an arbitrary field $k$.  In particular, for such an affine group scheme $H$, we study the monoidal $\infty$-categories $\QCoh(H)$, $\Rep(H)$, $\Dmod(H)$, and $\hc_H$, as well as modules over them.
This theory diverges at various points from the characteristic-$0$ story, both in terms of the definitions of the objects we are studying, and in terms of the statements we prove. (As usual, the main new difficulty we have to face is that the abelian category of $H$-representations typically has infinite cohomological dimension.)
We should emphasize that, in characteristic $0$, a similar renormalization was found to be necessary for weak actions of groups of infinite type, in the work of Raskin; see~\cite{raskin-ws,raskin}. In the present situation, it appears in the categorical representation theory, weak and strong, already for finite-dimensional groups.

\sss
To start, we must modify the definition of $\Rep(H)$, because the ``naive''  derived $\infty$-categories of $H$-representations (namely, the $\infty$-category $\QCoh(\BB H)$ of quasi-coherent sheaves on the classifying stack $\BB H$ or the derived $\infty$-category of the abelian category of algebraic representations of $H$) turn out to have poor categorical properties; in particular, they need not be compactly generated. To remedy this, we define $\Rep(H)$ in this paper as the category of \emph{Ind-coherent sheaves} on $\BB H$.
Equivalently, $\Rep(H)$ is obtained from $\QCoh(\BB H)$ by a ``renormalization'' procedure, which guarantees that finite-dimensional $H$-representations are compact in $\Rep(H)$.  Similar care is required in defining the appropriate $\infty$-category $\hc_H$ of Harish-Chandra bimodules: our definition ensures that finitely-generated Harish-Chandra bimodules are compact in $\hc_H$. 

\sss
In both cases, the $\infty$-category we study admits a canonical t-structure, whose bounded below part coincides with that of its more naive counterpart(s) (see~\S\S\ref{sss:QCoh-BH}--\ref{sss:def-RepH} and~\S\ref{sss:HC-ren}). These $\infty$-categories possess natural structures of algebra objects in $\DGCat$, and as such are rigid. The algebra $\Rep(H)$ is commutative, and we show that $\hc_H$ is pivotal if the ``modular character'' $\fh \to k$ given by $x \mapsto \mathrm{tr}(\mathrm{ad}(x))$ vanishes, e.g.~if $H$ is reductive. (In general, the double left dual on compact objects is given by conjugation with an invertible object closely related to this character, see~\S\ref{sss:twist-pivotality}.)

\sss
\label{sss:intro-def-cat-rep-general}
We then define a \emph{weak categorical representation} of $H$ to be a $\Rep(H)$-module in $\DGCat$, and a \emph{strong categorical representation} of $H$ to be an $\hc_H$-module in $\DGCat$. Comparing with~\S\S\ref{sss:intro-def-weak-rep}--\ref{sss:intro-def-strong-rep}, we see that these are not the obvious generalizations of the definitions in characteristic $0$, but compare more directly with the ``dual'' point of view of~\S\S\ref{sss:intro-weak-reps-Rep-modules}--\ref{sss:intro-strong-reps-Rep-modules}. We show that these definitions are also related to more direct adaptations of the characteristic-$0$ definitions, although they are not equivalent. Namely, consider the $\infty$-categories $\QCoh(H)$ of quasi-coherent sheaves on $H$ and $\Dmod(H)$ of (crystalline, i.e.~without divided powers) $D$-modules on $H$. These $\infty$-categories again have natural structures of algebra objects in $\DGCat$, and we construct (under a mild assumption) \emph{fully faithful} functors from (right) $\QCoh(H)$-modules to weak categorical representations of $H$ (see Proposition~\ref{prop:QCoh-Rep-ff}), and from (right) $\Dmod(H)$-modules to strong categorical representations of $H$ (see Proposition~\ref{prop:Dmod-HC-ff}), which are \emph{not} equivalences.

\sss
One can produce examples of categorical representations coming from geometry using the fully faithful functors from~\S\ref{sss:intro-def-cat-rep-general}.\footnote{This comment applies literally for weak representations, because the $\QCoh$ theory is well developed over general bases. For strong representations these constructions are presently limited by the absence of a documented general theory of $\infty$-categories of crystalline $D$-modules on schemes or stacks.}
But the examples we are interested in rather come from algebra. For weak representations we study $\infty$-categories of representations of subgroups (or, more generally, groups endowed with a morphism to $H$) in~\S\ref{ss:reps-morphisms} and equivariant modules for algebras in~\S\ref{ss:equiv-modules}; we describe in particular intertwining functors and tensor products of such categorical representations, and study dualizability. For strong representations we study $\infty$-categories of Harish-Chandra \emph{modules} in~\S\S\ref{ss:hK-mod}--\ref{ss:hKmod-functorialities}. These examples are particularly relevant for us, and were the main motivation for our investigations, because they appear in equivalences (which will be constructed in forthcoming works) relating such $\infty$-categories for $H$ a reductive group or a Borel subgroup with semiinfinite sheaves on the affine flag variety or affine Grassmannian of the Langlands dual group; see~\cite[\S 1.4]{adr} for some details.

\sss
We also take this opportunity to study categorical traces. Recall (see e.g.~\cite[\S 7.3.1]{zhu}) that for any algebra object $\sA$ in $\DGCat$ and any $\sA$-bimodule $\sM$ we have an associated Hochschild homology (or trace) $\Tr(\sA,\sM)$, which is an $\infty$-category. When $\sA=\Rep(H)$ and $\varphi : H \to H$ is a morphism, we can consider the bimodule $\sM = {}^{\varphi^*} \hspace{-2pt} \Rep(H)$ defined by $\Rep(H)$ with the left action twisted by pullback under $\varphi$. In this case we set $\Tr(\Rep(H),\varphi) := \Tr(\Rep(H), {}^{\varphi^*} \hspace{-2pt} \Rep(H))$.

We study in detail two examples of this construction.
The first one is when $\varphi=\id$; for this case we prove in Proposition~\ref{prop:trace-Rep-id} that if $k$ is algebraically closed and $H$ is a possibly disconnected reductive algebraic group, under mild assumptions the trace identifies with the $\infty$-category of Ind-coherent sheaves on the adjoint quotient $H/H$. The proof uses in particular a result of Fargues--Scholze~\cite{fs} which ensures that this $\infty$-category is generated by the ``free sheaves'' associated with finite-dimensional $H$-modules. The other case we study is when $k$ is an algebraic closure of a finite field, $H$ is smooth and defined over that finite field, and $\phi$ is pullback under the Frobenius endomorphism $\Frob$ of $H$. Assuming that $H$ is connected and that any simple $k$-representation of the finite group $H^\Frob$ extends to an algebraic representation of $H$ (which is satisfied in particular if $H$ is reductive with simply connected derived subgroup), we show in Proposition~\ref{prop:trace-Rep-Frob} that the trace identifies with the $\infty$-category $\Rep(H^\Frob)$.

We also study counterparts of these computations for strong categorical representations. When $\varphi$ is as above we also have a natural $\hc_H$-bimodule attached to $\varphi$, whose Hochschild homology is denoted $\Tr(\hc_H,\varphi)$. We construct a canonical conservative functor $\Tr(\hc_H,\varphi) \to \Tr(\Rep(H),\varphi)$, and prove that it is an equivalence in the setting above when $\varphi$ is the Frobenius morphism.

%---------------------------------------
\subsection{Contents}
%-----------------------------------------

\sss
Let us now describe in more detail the contents of the paper. Sections~\ref{sec:preliminaries} and~\ref{sec:infty-cat-gp-scheme} collect some preparatory material that will be required for the later sections.
Section~\ref{sec:preliminaries} is devoted to generalities on $\infty$-categories.
In Section~\ref{sec:infty-cat-gp-scheme} we state some basic properties of the $\infty$-category $\QCoh(H)$ of quasi-coherent sheaves on an affine group scheme $H$ of finite type over a field $k$. We then explain what we believe is the ``correct'' definition of the $\infty$-category $\Rep(H)$ of representations of $H$, and study the relation with other possible definitions of this object.

\sss
In Section~\ref{sec:weak-actions} we study weak categorical representations of $H$, i.e.~$\Rep(H)$-modules. In particular, in~\S\ref{ss:def-properties-weak} we explain how to compute tensor products and $\infty$-categories of intertwining functors between such representations, and in~\S\ref{ss:equiv-modules} we introduce the example of equivariant modules over an algebra. 
In~\S\ref{ss:reps-morphisms} we study in more detail the example of $\Rep(K)$ where $K$ is a subgroup of $H$, and show that under certain technical conditions (satisfied in particular when $H$ is a split reductive group scheme and $K$ is a parabolic subgroup) these modules are smooth and proper, which constitutes an analogue in this setting of the main result of~\cite{bzgo}; see Corollary~\ref{cor:smoothness-properness-Rep} and Remark~\ref{rmk:smoothness-properness-Rep}.
We also explain in~\S\ref{ss:equiv-deequiv} the relation between weak categorical representations and $\QCoh(H)$-modules, and compute in~\S\ref{ss:categorical-trace-weak}, under suitable assumptions, the categorical trace on $\Rep(H)$ of the identity of $H$ and the Frobenius morphism (when $k$ is an algebraic closure of a finite field and $H$ is defined over that finite field).

\sss
In Section~\ref{sec:hc} we define and study the $\infty$-category $\hc_H$ of Harish-Chandra bimodules for $H$. Here again there is a ``naive'' choice of definition, from which the ``clever'' definition is obtained using renormalization; see~\S\ref{sss:HC-ren}. We explain the structure of algebra object on $\hc_H$, show that it is rigid, and study its pivotality (see~\S\ref{sss:twist-pivotality}).

\sss
In Section~\ref{sec:strong-actions} we define strong categorical representations, and study in detail the example of the $\infty$-category $(\fh,K)\mod$ of Harish-Chandra modules with respect to a subgroup $K \subset H$ (see~\S\S\ref{ss:hK-mod}--\ref{ss:hKmod-functorialities}). In particular we show that, under suitable assumptions satisfied e.g.~when $H$ is a split reductive group scheme and $K$ is a parabolic subgroup, the $\hc_H$-module $(\fh,K)\mod$ is proper; see~\S\ref{sss:hK-mod-proper}. We also explain in~\S\ref{sss:tens-funct-strong} how to describe tensor products and $\infty$-categories of intertwining functors between strong categorical representations, and in~\S\ref{ss:equiv-deequiv-strong} we relate this notion to modules over the $\infty$-category $\Dmod(H)$ of $D$-modules on $H$. Finally, in~\S\ref{sss:traces-strong} we explain how to make sense of the trace on $\hc_H$ of an endomorphism of $H$, we relate it to the corresponding trace on $\Rep(H)$, and study the cases of the identity and Frobenius morphisms.

%---------------------------------------
\subsection{Acknowledgements}
%-----------------------------------------

We thank Julien Bichon and Quan Situ for useful discussions around the subject of this paper, and Michel Brion and Sasha Kuznetsov for helpful correspondence in relation with the content of Remark~\ref{rmk:smoothness-Rep}. We acknowledge interactions with Microsoft Copilot that suggested ideas used in some proofs, and for editing purposes.

%%%%%%%%%%%%%%%%%%%%%%%%%%
\section{Preliminaries}
\label{sec:preliminaries}
%%%%%%%%%%%%%%%%%%%%%%%%%%

%----------------------------------------------------------
\subsection{Algebra objects and \texorpdfstring{$\infty$}{infinity}-categories of modules}
\label{sss:algebras-modules}
%----------------------------------------------------------

\sss
\label{sss:inf-cat-notation}
We will work in the framework of Lurie's theory of $\infty$-categories. (We will always retain the prefix ``$\infty$-,'' and reserve the unadorned word ``category'' for \emph{ordinary} categories.) First, consider the $\infty$-category $\Catoo$ of $\infty$-categories, with the cartesian symmetric monoidal structure.
We also let $\PrL$ be the $\infty$-category of presentable $\infty$-categories and continuous (i.e., colimit-preserving) functors between them, equipped with the Lurie tensor product $\otimes$ (see~\cite[\S 4.8]{lurie-ha}). Recall that $\PrL$ has all limits and colimits, and that the tensor product is continuous in both variables, see~\cite[Remark~4.8.1.24]{lurie-ha}. (Here the inclusion $\PrL \subset \Catoo$ preserves limits, but not colimits in general.)

The canonical ``forgetful'' functor $\PrL \to \Catoo$ is lax symmetric monoidal; as a consequence it sends algebra objects in $\PrL$ to algebra objects in $\Catoo$ (which are the same as monoidal $\infty$-categories). As explained in~\cite[Introduction to~\S 4.8]{lurie-ha}, the functor sending an algebra object in $\PrL$ to the underlying monoidal $\infty$-category 
identifies such algebra objects with
monoidal $\infty$-categories $\sC$ which are presentable and such that the binary product $\sC \times \sC \to \sC$ preserves colimits in each variable.

The unit object in $\PrL$ is the $\infty$-category $\sS$ of spaces (also called the $\infty$-category of anima). Another important (symmetric) algebra object in this monoidal $\infty$-category is the $\infty$-category $\mathrm{Sp}$ of spectra. There exists a canonical functor $\sS \to \mathrm{Sp}$ which exhibits $\mathrm{Sp}$ as an idempotent object of $\PrL$ in the sense of~\cite[Definition~4.8.2.1]{lurie-ha}.

\sss
\label{sss:modules-oocat}
Recall that if $\sC$ is a monoidal $\infty$-category, $\sD$ is an $\infty$-category which is left-tensored over $\sC$, and $A$ is an algebra object in $\sC$,
we can consider the $\infty$-category
$\LMod_A(\sD)$ of left $A$-modules in $\sD$.
We have a natural ``restriction'' (or ``forgetful'') functor
\[
\LMod_A(\sD) \to \sD
\]
which is conservative
and admits a left adjoint, the ``induction functor,'' see~\cite[Corollary~4.2.4.8]{lurie-ha}. 

Assuming that $\sD$ admits colimits and that for any $X \in \sC$ the functor $X \otimes (-) : \sD \to \sD$ preserves colimits, $\LMod_A(\sD)$ admits colimits and the restriction functor is continuous, see~\cite[Corollary~4.2.3.5]{lurie-ha}.
Assuming moreover that $\sD$ is presentable,
$\LMod_A(\sD)$ is presentable,
 see~\cite[Corollary~4.2.3.7]{lurie-ha}.
If $\sD$ is stable and the functor $A \otimes (-) : \sD \to \sD$ is exact, the $\infty$-category $\LMod_A(\sD)$ is stable and the restriction functor is exact, see~\cite[Proposition~7.1.1.4]{lurie-ha}.

For details about this construction, see~\cite[\S 4.2]{lurie-ha} or~\cite[Chap.~1, \S 3.5]{gr}. Of course, as a particular case one can take $\sD=\sC$. 
The $\infty$-category $\LMod_A(\sC)$ is naturally right-tensored over $\sC$.

\sss
If $\sC$, $A$ are as in~\S\ref{sss:modules-oocat}, and if $\sD$ is an $\infty$-category which is right-tensored over $\sC$, then one can consider the $\infty$-category $\RMod_A(\sD)$ of right $A$-modules in $\sD$. This construction has properties similar to those recalled in~\S\ref{sss:modules-oocat} for left modules. 

\sss
\label{sss:reversed-algebra}
If $\sC$ is a \emph{symmetric} monoidal $\infty$-category, given an algebra object $A$ one can consider the ``opposite'' algebra object, see~\cite[Remark~4.1.1.7]{lurie-ha} or~\cite[Chap.~1, \S 3.1.4]{gr}. This opposite algebra is denoted $A^{\rev}$ to avoid any confusion with opposite categories (since $A$ itself will sometimes be an $\infty$-category). In this case an $\infty$-category $\sD$ which is left-tensored over $\sC$ is also naturally right-tensored over $\sC$, and we have an identification $\LMod_A(\sD) \simeq \RMod_{A^\rev}(\sD)$, see~\cite[\S 4.6.3]{lurie-ha}.

In this setting, in case $A$ is a commutative algebra object, the $\infty$-categories $\LMod_A(\sC)$ and $\RMod_A(\sC)$ identify canonically, see~\cite[\S 4.5.1]{lurie-ha}; they will be denoted $\Mod_A(\sC)$.

\sss
\label{sss:relative-tensor-product}
Let $\sC$, $\sD$ be as in~\S\ref{sss:modules-oocat}. Assume 
that $\sC$ and $\sD$ admit geometric realizations of simplicial objects, and that the binary operations $\sC \times \sC \to \sC$ and $\sC \times \sD \to \sD$ preserve geometric realizations of simplicial objects on each side.
If $A$ is an algebra object in $\sC$, there exists a natural ``relative tensor product'' functor
\[
\RMod_{A}(\sC) \times \LMod_{A}(\sD) \to \sD,
\]
denoted $- \otimes_A -$,
see~\cite[Example~4.4.2.12]{lurie-ha}. If $\sC$ and $\sD$ admit colimits and the binary operations $\sC \times \sC \to \sC$ and $\sC \times \sD \to \sD$ are continuous in both variables, then this functor is continuous in both variables, see~\cite[Corollary~4.4.2.16]{lurie-ha}.

\sss
\label{sss:restriction-extension-scalars}
Let $\sC$, $\sD$ be as in~\S\ref{sss:modules-oocat}.
Given algebra objects $A$ and $B$ in $\sC$ and a morphism of algebra objects $A \to B$, we have an associated ``restriction of scalars'' functor
\[
\LMod_B(\sD) \to \LMod_A(\sD).
\]
If $\sD$ is presentable and the functor $X \otimes (-) : \sD \to \sD$ is continuous for any $X \in \sC$, then this functor is continuous,
see~\cite[Corollary~4.2.3.7]{lurie-ha}. 
If 
$\sC$ and $\sD$ admit geometric realizations of simplicial objects, and if the binary operations $\sC \times \sC \to \sC$ and $\sC \times \sD \to \sD$ preserve geometric realizations of simplicial objects on each side,
then this functor admits a left adjoint, given by the relative tensor product $B \otimes_A (-)$ (suitably defined for bimodules), see~\cite[Proposition~4.6.2.17]{lurie-ha}. One can think of this adjoint as an ``extension of scalars'' functor.

\sss
\label{sss:Amod-monoidal}
Consider the setting of~\S\ref{sss:relative-tensor-product} with $\sC=\sD$, assuming now that $A$ is a
commutative algebra object in $\sC$. In this case the relative tensor product is part of a canonical symmetric monoidal structure on the $\infty$-category $\Mod_A(\sC)$, see~\cite[Theorem~4.5.2.1]{lurie-ha}.

\sss
\label{sss:claim-relative-tensor-product}
Below we will use the following standard fact, 
see also~\cite[Example~7.66]{zhu}.
Let $\sC$ be a symmetric monoidal $\infty$-category which admits colimits and such that the binary operation $\sC \times \sC \to \sC$ is continuous in both variables. 
Let $B$ be a commutative algebra object in $\sC$, and consider an algebra object $A$ in the monoidal $\infty$-category $\Mod_B(\sC)$. 
Via the lax monoidal forgetful functor $\Mod_B(\sC) \to \sC$, $A$ also defines an algebra object in $\sC$, endowed with an algebra map $B \to A$. In this setting, the Barr--Beck--Lurie theorem~\cite[Chap.~1, Proposition~3.7.7]{gr} implies that we have an equivalence $\LMod_A(\sC) \simeq \LMod_A(\Mod_B(\sC))$. Similarly we have $\RMod_A(\sC) \simeq \RMod_A(\Mod_B(\sC))$, and the two possible meanings of relative tensor product over $A$ identify.

Given $X$ in $\RMod_{A}(\sC)$ and $Y$ in $\LMod_A(\sC)$, the object $X \otimes_B Y$ is naturally a module over $A^\rev \otimes_B A$, and we have a canonical identification
\begin{equation}
\label{eqn:relative-tensor-product}
X \otimes_A Y \simeq A \otimes_{A^\rev \otimes_B A} (X \otimes_B Y),
\end{equation}
where $A$ is seen as a right module over $A^\rev \otimes_B A$ in the natural way. In fact, 
consider the bar resolution which computes the $(A^\rev \otimes_B A)$-module $A \otimes_A A \simeq A$; its $n$-th term is $A^{\otimes_B (n+2)}$. By continuity of the relative tensor product (see~\S\ref{sss:relative-tensor-product}), $A \otimes_{A^\rev \otimes_B A} (X \otimes_B Y)$ is the colimit of the diagram obtained by tensoring (over $A^\rev \otimes_B A$) this simplicial object with $X \otimes_B Y$. The $n$-th term of this diagram identifies with $X \otimes_B A^{\otimes_B n} \otimes_B Y$, and in fact one obtains in this way the bar complex which computes $X \otimes_A Y$, which finishes the proof.

%----------------------------------------------------------
\subsection{Algebra in stable presentable \texorpdfstring{$\infty$}{infinity}-categories of modules}
\label{sss:conventions-cat}
%----------------------------------------------------------

\sss
\label{sss:PrLSt}
Let $\PrLSt \subset \PrL$ be the full subcategory of \emph{stable} presentable $\infty$-categories. This $\infty$-category has all limits and colimits, and the inclusion $\PrLSt \subset \PrL$ preserves limits and colimits, see~\cite[\S 7.1.1]{zhu}.

The subcategory $\PrLSt \subset \PrL$ has the following interpretation: associated with the idempotent object $\mathrm{Sp}$ in $\PrL$ (see~\S\ref{sss:inf-cat-notation}) we have the full subcategory consisting of objects $\sC$ such that the functor $\sC \simeq \mathcal{S} \otimes \sC \to \mathrm{Sp} \otimes \sC$ is an equivalence. This subcategory is closed under tensor product, and it inherits a monoidal structure from that of $\PrL$, with unit object $\mathrm{Sp}$; moreover this monoidal $\infty$-category identifies with the monoidal $\infty$-category $\Mod_{\mathrm{Sp}}(\PrL)$, see~\cite[Proposition~4.8.2.10]{lurie-ha}. As explained in~\cite[Proposition~4.8.2.18]{lurie-ha}, this full subcategory is $\PrLSt$. 

The discussion following~\cite[Proposition~4.8.2.7]{lurie-ha} shows that the canonical functor from algebra objects in $\PrLSt$ to algebra objects in $\PrL$ is fully faithful, and that its essential image consists of those algebra objects in $\PrL$ which are moreover stable.
For the same reason,
given an algebra object $\sC$ in $\PrLSt$, if we denote similarly the induced algebra object in $\PrL$, the canonical functor
\[
\LMod_{\sC}(\PrLSt) \to \LMod_{\sC}(\PrL)
\]
is an equivalence of $\infty$-categories. 

\sss
Given $\sC$ in $\PrLSt$, the full subcategory of compact objects in $\sC$ (see~\cite[Chap.~1, \S 7.1]{gr}) will be denoted $\sC^\comp$. This is a stable (but nonpresentable in general) $\infty$-category.

\sss
Now we fix a base field $k$, and consider the $\infty$-category $\Vect_k$ of complexes of $k$-vector spaces. This $\infty$-category is presentable and stable, and it admits a canonical structure of commutative algebra object in $\PrLSt$. The $\infty$-categories we will encounter below will mostly be objects of the $\infty$-category
\[
\DGCat := \Mod_{\Vect_k}(\PrLSt).
\]
Recall that this $\infty$-category admits all limits and colimits, is presentable, and that the ``forgetful'' functor $\DGCat \to \PrLSt$ preserves limits and colimits, see e.g.~\cite[\S 7.1.5]{zhu}.
By~\S\ref{sss:Amod-monoidal} the $\infty$-category $\DGCat$ is naturally symmetric monoidal under the Lurie tensor product relative to $\Vect_k$; we will denote the associated binary product of two objects $\sC$ and $\sD$ by $\sC \otimes_k \sD$.

\sss
\label{sss:def-Amod-DGCat}
Given an algebra object $\sA$ in $\DGCat$, one can consider the $\infty$-categories
\[
\sA\mod := \LMod_{\sA}(\DGCat), \quad \modr\sA := \RMod_{\sA}(\DGCat).
\]
These $\infty$-categories are naturally modules over $\DGCat$.
We have canonical functors
\[
\sA\mod \to \LMod_{\sA}(\PrLSt), \quad \modr\sA \to \RMod_{\sA}(\PrLSt), 
\]
which as in~\S\ref{sss:claim-relative-tensor-product} are equivalences.

\sss
\label{sss:Funct}
Given objects $\sC$ and $\sD$ in $\sA\mod$ we will write $\Funct_{\sA}(\sC,\sD)$ for the $\infty$-category of $\sA$-linear functors from $\sC$ to $\sD$. When $\sA=\Vect_k$, we will simplify the notation to $\Funct_k(\sC,\sD)$.
The construction of $\Funct_{\sA}(\sC,\sD)$ is a particular case of the construction of the ``relative inner Hom,'' see~\cite[Chap.~1, \S 8.2.1]{gr}. As explained in~\cite[Chap.~1, \S 8.2.3]{gr}, in case $\sC=\sD$ the object $\Funct_{\sA}(\sC,\sC)$ admits a canonical structure of algebra object in $\DGCat$.
Concretely, using the bar resolution of $\sC$, the $\infty$-category $\Funct_{\sA}(\sC,\sD)$ can be computed as the limit of the natural cosimplicial diagram whose $n$-th term is $\Funct_{k}(\sA^{\otimes_k n} \otimes_k \sC,\sD)$. 

\sss
\label{sss:Funct-2}
In the setting of~\S\ref{sss:Funct},
note that $\Funct_k(\sC,\sD)$ is naturally a module over $\sA \otimes_k \sA^{\rev}$, and that we have an identification
\[
\Funct_{\sA}(\sC,\sD) \simeq \Funct_{\sA \otimes_k \sA^{\rev}}(\sA, \Funct_k(\sC,\sD)).
\]
Indeed, to compute the right-hand side we can replace $\sA$ by its bar resolution as in~\S\ref{sss:claim-relative-tensor-product}; we find that the $\infty$-category $\Funct_{\sA \otimes_k \sA^{\rev}}(\sA, \Funct_k(\sC,\sD))$ is the limit of a cosimplicial diagram with $n$-th term $\Funct_k(\sA^{\otimes_k n}, \Funct_k(\sC,\sD)) \simeq \Funct_k(\sA^{\otimes_k n} \otimes_k \sC, \sD)$, which identifies with the cosimplicial diagram considered above and which computes the left-hand side.

\sss
If $\sA$ is a commutative algebra object in $\DGCat$ then $\sA\mod$ identifies with $\modr\sA$ and admits a canonical symmetric monoidal structure given by relative tensor product over $\sA$, see~\S\ref{sss:Amod-monoidal}.

\sss
\label{sss:Amod-comp-gen}
Let $\sA$ be an algebra object in $\DGCat$, $\sC$ be a left $\sA$-module, and let $A$ be an algebra object in $\sA$. Then it makes sense to consider the $\infty$-category $\LMod_A(\sC)$ of left $A$-modules in $\sC$, and we have an adjunction
\[
A \otimes (-) : \sC \rightleftarrows \LMod_A(\sC) : \res
\]
where $\on{res}$ is conservative, see~\S\ref{sss:modules-oocat}.
Assume that $\sC$ is a compactly generated $k$-linear $\infty$-category, i.e.~$\sC=\Ind(\sC_0)$ for $\sC_0$ a small $k$-linear stable $\infty$-category. If we denote by $\sP_0$ the full idempotent complete stable subcategory of $\LMod_A(\sC)$ generated by the image of $\sC_0$ under the functor above, then we have
\[
\LMod_A(\sC) \simeq \Ind(\sP_0).
\]
In fact, by standard results (see e.g.~\cite[Chap.~1, Lemma~7.2.4]{gr}) it suffices to prove that $\LMod_A(\sC)$ is compactly generated by $\sP_0$, which follows from~\cite[Chap.~1, Lemmas~5.4.3 and~7.1.5]{gr}.

%-----------------------------------------------------
\subsection{\texorpdfstring{$\infty$}{Infinity}-categories of complexes}
%-----------------------------------------------------

\sss
\label{sss:derived-cat}
Recall that if $\mathsf{A}$ is an additive category we have the $\infty$-category $\Ch(\mathsf{A})$ of chain complexes of objects in $\mathsf{A}$, obtained using the dg-nerve construction from the differential graded category of complexes, see~\cite[\S 1.3.2]{lurie-ha}.\footnote{Our $\infty$-category $\Ch(\mathsf{A})$ corresponds to $\mathrm{N}_{\mathrm{dg}}(\Ch(\mathsf{A}))$ in the notation of~\cite{lurie-ha}.} The homotopy category $\mathrm{Ho}(\Ch(\mathsf{A}))$ is the usual (unbounded) homotopy category of $\mathsf{A}$ in the sense of triangulated categories. This $\infty$-category is stable, see~\cite[Proposition~1.3.2.10]{lurie-ha}. We will also consider the full subcategory $\Chb(\mathsf{A})$ of $\Ch(\mathsf{A})$ spanned by bounded complexes. By the comments preceding~\cite[Corollary~1.3.2.18]{lurie-ha}, $\Chb(\mathsf{A})$ is a stable subcategory of $\Ch(\mathsf{A})$; in particular it is also stable.

If $\mathsf{A}$ is abelian, we will also consider the $\infty$-category $\Db(\mathsf{A})$ obtained from $\Chb(\mathsf{A})$ by localization at the class of quasi-isomorphisms. Here again $\Db(\mathsf{A})$ is stable, see e.g.~\cite[Remark~2.8.2]{jasso}, and its homotopy category is the bounded derived category of $\mathsf{A}$ in the sense of triangulated categories. If $\mathsf{A}$ is a Grothendieck abelian category, we can also consider the unbounded derived category $D(\mathsf{A})$ following~\cite[\S 1.3.5]{lurie-ha}. Here again, $D(\mathsf{A})$ is the localization of $\Ch(\mathsf{A})$ at the class of quasi-isomorphisms (see~\cite[Proposition~1.3.5.13]{lurie-ha}), it is stable by~\cite[Proposition~1.3.5.9]{lurie-ha}, and presentable by~\cite[Proposition~1.3.5.21]{lurie-ha}, and finally its homotopy category is the unbounded derived category of $\mathsf{A}$ in the sense of triangulated categories; see also the discussion in~\cite[\S 2.6]{jasso}. In this setting we have a canonical fully faithful functor $\Db(\mathsf{A}) \to D(\mathsf{A})$.

\sss
\label{sss:Verdier-quotient}
Given a small stable $\infty$-category $\sC$ and a stable full subcategory $\sD$, recall that the Verdier quotient $\sC/\sD$ is the cofiber of the inclusion $\sD \to \sC$ (in small stable $\infty$-categories), see~\cite[\S 5.1]{bgt}. This quotient can also be described as the localization of $\sC$ at the class of morphisms whose cofiber belongs to $\sD$, see~\cite[Definition-Proposition~2.1.24]{jasso} or~\cite[Theorem~1.3]{drew}. By~\cite[Proposition~5.14]{bgt} or~\cite[Remark~2.1.25]{jasso}, the homotopy category of $\sC/\sD$ is the Verdier quotient $\mathrm{Ho}(\sC)/\mathrm{Ho}(\sD)$ in the sense of triangulated categories.

Using the description of Verdier quotient as localization one sees (using~\cite[Proposition~4.1.7.4]{lurie-ha}) that in case $\sC$ is furthermore a monoidal $\infty$-category (with binary product exact on both sides) and $\sD$ is a $2$-sided ideal (in the sense that it is stable under left and right multiplication with any object of $\sC$), the quotient $\sC/\sD$ admits a canonical monoidal structure (again, with binary product exact on both sides) such that the natural functor $\sC \to \sC/\sD$ is monoidal, see~\cite[Corollary~1.4]{drew} or~\cite[Theorem~I.3.6]{ns}.

In the setting of~\S\ref{sss:derived-cat}, if $\mathsf{A}$ is abelian $\Db(\mathsf{A})$ is the Verdier quotient of $\Chb(\mathsf{A})$ by the full subcategory of acyclic complexes, and if $\mathsf{A}$ is a Grothendieck abelian category then $D(\mathsf{A})$ is the Verdier quotient of $\Ch(\mathsf{A})$ by the full subcategory of acyclic complexes (because a morphism is a quasi-isomorphism if and only if its cofiber is acyclic).

\sss
\label{sss:algebra-structure-Chb}
Below we will use the following construction of algebra objects in $\DGCat$. Let $\mathsf{A}$ be a monoidal $k$-linear additive category. (This includes the condition that the monoidal product be bilinear.) Consider the $\infty$-category $\Chb(\mathsf{A})$.
The monoidal structure on $\mathsf{A}$ defines a (strict) monoidal structure on the dg-category of bounded chain complexes of objects in $\mathsf{A}$, and hence a monoidal structure on the $\infty$-category $\Chb(\mathsf{A})$, see the discussions in~\cite[\S A.1.11]{chen-dhillon} or~\cite[\S 2]{drew}. This monoidal structure is basically given by functors
\[
\Chb(\mathsf{A})^{\times I} \to \Chb(\mathsf{A})^{\times J}
\]
for any totally ordered finite sets $I$, $J$ and nondecreasing function $I \to J$, see e.g.~\cite[Remark~4.1.1.13]{lurie-ha}. Here $\Chb(\mathsf{A})$ admits a natural action of $k$ in the sense of~\cite[Definition~D.1.1.1]{lurie-sag}, so that $\Chb(\mathsf{A})^{\times I}$ admits an action of $k^{\times I}$ for any totally ordered finite set $I$, and the functors above have the property that they are $k^{\times I}$-linear where the action on $\Chb(\mathsf{A})^{\times J}$ is obtained by pullback along the natural map $k^{\times I} \to k^{\times J}$.

By~\cite[Corollary~4.8.1.14]{lurie-ha} (where the word ``symmetric'' can be omitted), from the monoidal structure on $\Chb(\mathsf{A})$ we obtain a canonical monoidal structure on the presentable stable $\infty$-category $\Ind(\Chb(\mathsf{A}))$ of Ind-objects in $\Chb(\mathsf{A})$. (See~\cite[Proposition~1.1.3.6]{lurie-ha} for stability.) The binary operation for this monoidal structure is continuous on both sides, so $\Ind(\Chb(\mathsf{A}))$ defines an algebra object in $\PrLSt$, see~\S\S\ref{sss:inf-cat-notation}--\S\ref{sss:PrLSt}.

We claim that $\Ind(\Chb(\mathsf{A}))$ in fact automatically upgrades to an algebra object in $\DGCat$. Indeed, from the action of $k$ on $\Chb(\mathsf{A})$ we obtain an action on $\Ind(\Chb(\mathsf{A}))$, see~\cite[Remark~D.1.1.4]{lurie-sag}. In view of~\cite[\S D.1.5]{lurie-sag}, this means that $\Ind(\Chb(\mathsf{A}))$ is naturally an object of $\DGCat$. (For this, see also the discussion in~\cite[Chap.~1, \S 10.5]{gr}.) As above the structure of algebra object in $\PrLSt$ is basically given by functors
\[
\Ind(\Chb(\mathsf{A}))^{\otimes I} \to \Ind(\Chb(\mathsf{A}))^{\otimes J}
\]
for any totally ordered finite sets $I$, $J$ and nondecreasing function $I \to J$. Using (a variant with more factors of) the identification~\eqref{eqn:relative-tensor-product}
and the discussion of extension of scalars in~\S\ref{sss:restriction-extension-scalars}, we see that each such functor defines canonically a functor
\[
\Ind(\Chb(\mathsf{A}))^{\otimes_k I} \to \Ind(\Chb(\mathsf{A}))^{\otimes_k J},
\]
and these functors define the desired structure of algebra object in $\DGCat$.

By similar considerations, given a $k$-linear additive category $\mathsf{M}$ equipped with a (bilinear) action of $\mathsf{A}$, we obtain on $\Ind(\Chb(\mathsf{M}))$ a structure of $\Ind(\Chb(\mathsf{A}))$-module object in $\DGCat$.

\sss
\label{sss:algebra-structure-Verdier-quotient}
The construction of~\S\ref{sss:algebra-structure-Chb} can also be combined with Verdier quotients (see~\S\ref{sss:Verdier-quotient}).
Consider as above a monoidal $k$-linear additive category $\mathsf{A}$ and an ideal $\mathsf{J}$ in the monoidal category $\mathrm{Ho}(\Chb(\mathsf{A}))$. Let $\sJ$ be the full subcategory of $\Chb(\mathsf{A})$ spanned by the objects which belong to $\mathsf{J}$, and consider the Verdier quotient $\Chb(\mathsf{A})/\sJ$. By the comments above this small stable $\infty$-category admits a canonical monoidal structure. By considerations similar to those explained in~\S\ref{sss:algebra-structure-Chb}, the monoidal $\infty$-category $\Ind(\Chb(\mathsf{A})/\sJ)$ admits a canonical structure of algebra object in $\DGCat$.

\begin{Rem}
\begin{enumerate}
\item
The discussion around~\cite[Proposition~5.13]{bgt} shows that $\Ind(\Chb(\mathsf{A})/\sJ)$ is also the cofiber (in $\DGCat$) of the embedding $\Ind(\sJ) \to \Ind(\Chb(\mathsf{A}))$. By~\cite[Proposition~5.9]{bgt}, its homotopy category is also given by the Verdier quotient $\mathrm{Ho}(\Ind(\Chb(\mathsf{A}))) / \mathrm{Ho}(\Ind(\sJ))$ in the sense of triangulated categories.
\item
Since a small stable $\infty$-category and its idempotent-completion have the same $\infty$-category of Ind-objects, one can replace in this discussion $\Chb(\mathsf{A})/\sJ$ by its idempotent-completion. Moreover, this idempotent-completion identifies with the full subcategory $\Ind(\Chb(\mathsf{A})/\sJ)^\comp$ of compact objects in $\Ind(\Chb(\mathsf{A})/\sJ)$.
\end{enumerate}
\end{Rem}

As above again, given a $k$-linear additive category $\mathsf{M}$ equipped with a (bilinear) action of $\mathsf{A}$ and a full subcategory $\mathsf{K}$ of $\mathrm{Ho}(\Chb(\mathsf{M}))$ stable under the action of $\mathrm{Ho}(\Chb(\mathsf{A}))$ and such that the induced action of $\mathrm{Ho}(\Chb(\mathsf{A}))$ on $\mathrm{Ho}(\Chb(\mathsf{M}))/\mathsf{K}$ factors through an action of $\mathrm{Ho}(\Chb(\mathsf{A}))/\mathsf{J}$, one can consider the full subcategory $\sK$ of $\Chb(\mathsf{M})$ spanned by objects which belong to $\mathsf{K}$, and the $\infty$-category $\Ind(\Chb(\mathsf{M})/\sK)$ admits a canonical structure of module for $\Ind(\Chb(\mathsf{A})/\sJ)$ in $\DGCat$.

%----------------------------------------------
\subsection{t-structures}
%----------------------------------------------

\sss
Recall from~\cite[Definition~1.2.1.4]{lurie-ha}\footnote{Contrary to Lurie, we use cohomological conventions.} that a t-structure on a stable $\infty$-category $\sC$ is by definition a t-structure $(\mathrm{Ho}(\sC)^{\leq 0}, \mathrm{Ho}(\sC)^{\geq 0})$ on the associated homotopy category $\mathrm{Ho}(\sC)$ (which has a canonical structure of triangulated category). Given such a t-structure, for any $n \in \Z$ we denote by $\sC^{\leq n}$, resp.~$\sC^{\geq n}$, the full subcategory of $\sC$ spanned by the objects which belong to $\mathrm{Ho}(\sC)^{\leq n}$, resp.~$\mathrm{Ho}(\sC)^{\geq n}$.

\sss
Let $\sC$ be a commutative algebra object in $\PrLSt$, and $\sD$ be an object in $\Mod_{\sC}(\PrLSt)$. Let $A$ be an algebra object in $\sC$, and consider the $\infty$-category $\LMod_A(\sD)$. By the comments in~\S\ref{sss:modules-oocat}, $\LMod_A(\sD)$ is stable and presentable, and the ``forgetful'' functor $\LMod_A(\sD) \to \sD$ is continuous and admits a left adjoint (which is automatically also continuous). At the level of objects, this functor sends $M$ to $A \otimes M$, with the action of $A$ on the left factor.

The following statement is well known. (See e.g.~\cite[Proposition~7.1.1.13]{lurie-ha} for a similar statement.)

\begin{Lem}
\label{lem:t-structures}
Assume that $\sD$ is endowed with an accessible t-structure.

\begin{enumerate}
\item
\label{it:t-struct-1}
If the functor $A \otimes (-) : \sD \to \sD$ is right t-exact, then there exists a unique t-structure on $\LMod_A(\sD)$ such that the forgetful functor $\LMod_A(\sD) \to \sD$ is t-exact. Moreover this t-structure is accessible, and we have $\LMod_A(\sD)^{\leq 0} \simeq \LMod_A(\sD^{\leq 0})$, and hence $\LMod_A(\sD)^{-} \simeq \LMod_A(\sD^{-})$.
\item
\label{it:t-struct-2}
If the functor $A \otimes (-) : \sD \to \sD$ is t-exact, then the induction functor $\sD \to \LMod_A(\sD)$ is also t-exact, and we also have $\LMod_A(\sD)^{\geq 0} \simeq \LMod_A(\sD^{\geq 0})$, and hence $\LMod_A(\sD)^{+} \simeq \LMod_A(\sD^{+})$.
\end{enumerate}
\end{Lem}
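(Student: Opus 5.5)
For part~\eqref{it:t-struct-1}, I would define the t-structure through its connective part: let $\LMod_A(\sD)^{\leq 0}$ be the full subcategory of modules whose underlying object lies in $\sD^{\leq 0}$. Since the forgetful functor $\res$ is continuous and $\sD^{\leq 0}$ is closed under colimits and extensions, this subcategory is also closed under colimits and extensions. To see that it is presentable, I would note that it identifies with $\LMod_A(\sD^{\leq 0})$. This makes sense because $A \otimes (-)$ preserves $\sD^{\leq 0}$ (right t-exactness), so $\sD^{\leq 0}$ is a presentable $\infty$-category left-tensored over the monoidal subcategory generated by $A$. Presentability then follows from~\S\ref{sss:modules-oocat}. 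Moreover, $\LMod_A(\sD^{\leq 0})$ is generated under colimits by the free modules $A \otimes X$ with $X$ ranging over a set of generators of $\sD^{\leq 0}$. By~\cite[Proposition~1.4.4.11]{lurie-ha}, a presentable subcategory closed under colimits and extensions is the connective part of an accessible t-structure. Its coconnective part $\LMod_A(\sD)^{\geq 1}$ consists of the $M$ with $\Map(A\otimes X, M) = \Map(X, \res M)$ contractible for all $X \in \sD^{\leq 0}$, i.e.\ of the $M$ with $\res M \in \sD^{\geq 1}$. Hence $\res$ is t-exact.

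Uniqueness is immediate: any t-structure making $\res$ t-exact must have $\LMod_A(\sD)^{\leq 0} = \res^{-1}(\sD^{\leq 0})$, because $\res$ is conservative and t-exactness together with conservativity pins down both halves. The identification of $\LMod_A(\sD)^{-}$ with $\LMod_A(\sD^{-})$ follows by taking the union over shifts.

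For part~\eqref{it:t-struct-2}, the composite $\res \circ (A \otimes -)$ is $A \otimes (-)$, which is now t-exact. Since $\res$ is t-exact and conservative and detects both halves of the t-structure, induction is t-exact. Then $A \otimes (-)$ preserves $\sD^{\geq 0}$, so $\LMod_A(\sD^{\geq 0})$ makes sense. By the description of the coconnective part above, it identifies with $\LMod_A(\sD)^{\geq 0}$, and taking the union over shifts handles the bounded-below parts.

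The main obstacle I expect is the careful justification that $\LMod_A(\sD^{\leq 0})$, formed in the non-stable presentable setting, embeds fully faithfully into $\LMod_A(\sD)$ with image $\res^{-1}(\sD^{\leq 0})$. My approach is to compare the monadic descriptions of both sides via Barr--Beck--Lurie: the monad $A \otimes (-)$ on $\sD$ restricts to $\sD^{\leq 0}$, and the inclusion $\sD^{\leq 0} \hookrightarrow \sD$ is fully faithful and commutes with the monads.
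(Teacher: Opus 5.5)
Your proposal is correct and follows essentially the same route as the paper. Both arguments take the connective part to be the modules whose underlying object lies in $\sD^{\leq 0}$, identify it with $\LMod_A(\sD^{\leq 0})$, and apply \cite[Proposition~1.4.4.11]{lurie-ha}. Both then get left t-exactness of the forgetful functor from its adjunction with the right t-exact induction functor, and deduce uniqueness and part (2) from conservativity and t-exactness. Your explicit checks of presentability and of the fully faithful embedding $\LMod_A(\sD^{\leq 0}) \to \LMod_A(\sD)$ spell out points the paper treats as evident.
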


\begin{proof}
\eqref{it:t-struct-1}
Uniqueness of the t-structure is easy since, by conservativity of the forgetful functor, $\LMod_A(\sD)^{\leq 0}$, resp.~$\LMod_A(\sD)^{\geq 0}$, has to be the full subcategory spanned by objects whose image in $\sD$ belongs to $\sD^{\leq 0}$, resp.~$\sD^{\geq 0}$. For existence, we consider the full subcategory $\LMod_A(\sD)^{\leq 0}$ of $\LMod_A(\sD)$ spanned by objects whose image in $\sD$ belongs to $\sD^{\leq 0}$. The fully faithful functor $\sD^{\leq 0} \to \sD$ induces a fully faithful functor $\LMod_A(\sD^{\leq 0}) \to \LMod_A(\sD)$, whose essential image is clearly the same full subcategory. Since $\LMod_A(\sD) \to \sD$ is continuous and $\sD^{\le 0} \subset \sD$ is closed under colimits and extensions, it follows that $\LMod_A(\sD^{\le 0})$ is closed under colimits and extensions. By~\cite[Proposition~1.4.4.11]{lurie-ha}, there exists a unique t-structure on $\LMod_A(\sD)$ whose ``$\leq 0$'' part is $\LMod_A(\sD)^{\leq 0}$, and this t-structure is accessible. The forgetful functor is right t-exact with respect to that t-structure by construction. 
Its left adjoint is also right t-exact by our assumption, so this functor is also left t-exact.

\eqref{it:t-struct-2}
Our assumption clearly implies that the induction functor is t-exact, and the other claims are clear.
\end{proof}

%----------------------------------------------
\subsection{Renormalization}
\label{ss:renormalization}
%----------------------------------------------

\sss
Suppose $\sC$ is a stable $\infty$-category equipped with a t-structure.  Under some assumptions on this t-structure,~\cite[Proposition~6.3.2]{bznp} describes a functorial ``renormalization'' of $\sC$. (For a slightly different treatment of this theory, see~\cite[\S 2.4]{campbell-raskin}.) In this section, we explain how to use results of Lurie in~\cite[Appendix C]{lurie-sag} to describe this renormalization explicitly in the case where $\sC$ is the derived $\infty$-category of a locally noetherian Grothendieck category. We note that the main results of this subsection are essentially equivalent to those of~\cite[\S 2]{krause} (which is written in the language of triangulated categories).

\sss
Recall that if $\sC$ is a presentable stable $\infty$-category equipped with a t-structure compatible with filtered colimits, an object $X \in \sC$ is called \emph{almost compact} if $\tau_{\ge n}(X)$ is a compact object of $\sC^{\ge n}$ for all $n \in \Z$, i.e.~if for all $n \in \Z$ the functor
\[
\mathrm{Map}_{\sC}(X,-) : \sC^{\geq n} \to \sS
\]
commutes with filtered colimits.
An object is called \emph{coherent} if it is almost compact and bounded below. 

We will denote by $\sC^\coh \subset \sC$ the full subcategory spanned by coherent objects. Assuming $\sC$ is right complete, almost compact objects are automatically bounded above, see~\cite[Lemma~6.2.1]{bznp} or~\cite[Comments above Lemma~2.4.1]{campbell-raskin}, so coherent objects are bounded.

\begin{Rem}
\label{rmk:almost-stable-lurie}
If $\sC$ is as above, then $\sC^{\leq 0}$ is a prestable $\infty$-category in the sense of~\cite[Definition~C.1.2.1]{lurie-sag}. Lurie defines in~\cite[Definition~C.6.4.1]{lurie-sag} the condition of being almost compact in this setting. Note that if $X \in \sC^{\leq 0}$, for any $Y \in \sC$ we have
\[
\mathrm{Map}_{\sC}(X,Y) \simeq \mathrm{Map}_{\sC^{\leq 0}}(X,\tau_{\leq 0}(Y)).
\]
Hence $X$ is almost compact in the sense above if and only if it is almost compact in the sense of~\cite[Definition~C.6.4.1]{lurie-sag}.
\end{Rem}

\sss
From now on, let $\sA$ be a Grothendieck abelian category; in particular, $\sA$ has enough injectives (see~\cite[Corollary~1.3.5.7]{lurie-ha}).  Let $\Inj \sA$ denote the additive category of injective objects in $\sA$, and consider the $\infty$-category $\Ch(\Inj \sA)$ of chain complexes of objects in $\Inj \sA$.  This $\infty$-category is also called the \emph{unseparated derived $\infty$-category} of $\sA$, see~\cite[Definition~C.5.8.2]{lurie-sag}. This is a presentable stable $\infty$-category, and it is equipped with a canonical t-structure which is right complete and compatible with filtered colimits, and whose heart identifies with $\sA$; see~\cite[Theorem~C.5.8.8]{lurie-sag}.

The usual derived $\infty$-category $D(\sA)$ can be described as a certain full stable subcategory of the $\infty$-category of chain complexes of injective objects of $\sA$, see~\cite[Remark~C.5.8.3]{lurie-sag}; we therefore have a canonical fully faithful exact functor $D(\sA) \to \Ch(\Inj \sA)$. Now, recall the canonical t-structure on $D(\sA)$, see~\cite[Proposition~1.3.5.21]{lurie-ha}. By definition of the t-structure on $\Ch(\Inj \sA)$, see~\cite[Notation~C.5.8.4]{lurie-sag}, the functor above restricts to an equivalence
\begin{equation}
\label{eqn:chinj-d-positive}
D(\sA)^{\ge 0} \simto \Ch(\Inj \sA)^{\ge 0}.
\end{equation}
On the other hand, since $D(\sA)$ is a localization of $\Ch(\sA)$ we have a canonical functor
$\Ch(\Inj \sA) \to D(\sA)$.
The composition $D(\sA) \to \Ch(\Inj \sA) \to D(\sA)$ of these functors is isomorphic to the identity functor. In particular, this functor also restricts to an equivalence
\begin{equation}
\label{eqn:chinj-d-positive-2}
 \Ch(\Inj \sA)^{\ge 0} \simto D(\sA)^{\ge 0}.
\end{equation}

\sss
\label{sss:DA-coh}
Recall that the natural t-structure on $D(\sA)$ is right complete and compatible with filtered colimits, see~\cite[Proposition~1.3.5.21]{lurie-ha}. One can therefore consider the full subcategory $D(\sA)^\coh$ of coherent objects,
and $D(\sA)^\coh$ consists of bounded objects.
In view of~\eqref{eqn:chinj-d-positive}, we may 
identify $D(\sA)^\coh$ with a full subcategory of $\Ch(\Inj \sA)$.

\sss
Recall (see~\cite[Definition~C.6.8.5]{lurie-sag}) that a Grothendieck abelian category is called \emph{locally noetherian} if any object is the colimit of its noetherian subobjects, or equivalently is a filtered colimit of noetherian objects. We assume that this condition is satisfied, and let
\[
\sA^\noeth \subset \sA
\]
be the full subcategory spanned by the noetherian objects. By~\cite[Proposition~C.6.8.2]{lurie-sag}, $\sA^\noeth$ is a Serre subcategory of $\sA$, and by~\cite[Corollary~C.6.8.9]{lurie-sag}
it consists of the compact objects of $\sA$.

\begin{Rem}
It follows from~\cite[Proposition~3.6]{krause} that, assuming $\sA$ is locally noetherian, the canonical functor $\Ch(\Inj \sA) \to D(\sA)$ induces an equivalence from the Verdier quotient of $\Ch(\Inj \sA)$ by the full subcategory of acyclic complexes to $D(\sA)$. (This paper is written in the language of triangulated categories, but this property can be checked at the level of homotopy categories, so that we indeed obtain the claimed statement.) By definition of the t-structure, this full subcategory identifies with $\bigcap_n \Ch(\Inj \sA)^{\leq n}$. 
\end{Rem}

\sss
The following statement is a special case of~\cite[Lemma~6.2.5]{bznp}. We explain the proof, since it is omitted in~\cite{bznp} and can be obtained by a combination of results from~\cite{lurie-sag}.

\begin{Lem}
\label{lem:locnoeth-coh}
Assume that $\sA$ is locally noetherian, and let $X$ be an object in $D(\sA)$.  The following conditions are equivalent:
\begin{enumerate}
\item
\label{it:lnc-coh}
$X$ is coherent;
\item 
\label{it:lnc-ab}
there are only finitely many $i$ such that $H^i(X) \ne 0$, and these $H^i(X)$ belong to $\sA^\noeth$.
\end{enumerate}
\end{Lem}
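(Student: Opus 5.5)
The plan is to reduce everything to the heart $\sA$ and to Lurie's results in~\cite[Appendix~C]{lurie-sag}, by induction on the number of nonzero cohomology objects.

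First I treat objects concentrated in a single degree. Let $M \in \sA$ and consider $M[0] \in D(\sA)^{\heartsuit}$. If $M[0]$ is almost compact, then in particular $\Hom_{\sA}(M,-) = \pi_0\Map(M[0],-)$ restricted to $\sA = D(\sA)^{\leq 0} \cap D(\sA)^{\geq 0}$ commutes with filtered colimits. Here filtered colimits in $D(\sA)^{\geq 0}$ of objects of the heart are computed in $\sA$, since the t-structure is compatible with filtered colimits. So $M$ is compact in $\sA$, hence noetherian by~\cite[Corollary~C.6.8.9]{lurie-sag}. Conversely, suppose $M$ is noetherian. I need $\Ext^i_{\sA}(M,-)$ to commute with filtered colimits for all $i$, i.e.\ that $\Map(M[0],-)$ commutes with filtered colimits on each $D(\sA)^{\geq n}$. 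I would invoke Lurie's result that for a locally noetherian Grothendieck category, compact objects of the heart are almost compact in the prestable $\infty$-category $D(\sA)^{\leq 0}$ (equivalently $\Ch(\Inj\sA)^{\leq 0}$, and these agree on bounded below parts by~\eqref{eqn:chinj-d-positive}). This is the content of~\cite[Proposition~C.6.9.?/Theorem~C.6.7.?]{lurie-sag}: the prestable category is locally noetherian/coherent, and filtered colimits of injectives are injective. Alternatively, I can argue directly: in a locally noetherian category, filtered colimits of injectives are injective, so one computes $\Ext^i(M,\varinjlim N_\alpha)$ using a colimit of injective resolutions. By Remark~\ref{rmk:almost-stable-lurie} the two notions of almost compactness agree, which lets me transport Lurie's statement.

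Next, for (2)$\Rightarrow$(1), I induct on the number of nonzero $H^i(X)$ using the truncation triangles $\tau_{\leq m}X \to X \to \tau_{>m}X$. Coherent objects form a stable subcategory closed under extensions and shifts: this holds because filtered colimits commute with finite limits, and because $\tau_{\geq n}$ of a cofiber sequence is controlled up to a shift. A cleaner route is to note that $X$ is almost compact iff $\Map(X,-)$ commutes with filtered colimits on $D(\sA)^{\geq n}$ for every $n$, a condition visibly closed under fibers, cofibers and shifts. So $X$, being a finite extension of shifts of $H^i(X)[0]$ with each $H^i(X)$ noetherian, is coherent.

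For (1)$\Rightarrow$(2), note that $X$ coherent implies $X$ bounded, since $D(\sA)$ is right complete (see~\S\ref{sss:DA-coh}). Let $m$ be the top degree with $H^m(X)\neq 0$. For $Y$ in the heart, $\Hom(H^m(X),Y) = \Hom(X, Y[-m])$, and $Y[-m] \in D(\sA)^{\geq m}$. Hence $\Hom(H^m(X),-)$ commutes with filtered colimits, so $H^m(X)$ is noetherian. By the first step $H^m(X)[-m]$ is coherent, so the fiber $\tau_{<m}X$ is coherent, and I conclude by induction. The main obstacle is the first step, namely that noetherian objects have all $\Ext^i$ commuting with filtered colimits; this is where local noetherianity is really used, and I expect to cite Lurie's appendix rather than reprove it.
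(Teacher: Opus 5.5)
Your proof is correct, but it is organized differently from the paper's. The paper does no d\'evissage. It uses boundedness to shift $X$ into $D(\sA)^{\leq 0}$ and matches the two notions of almost compactness via Remark~\ref{rmk:almost-stable-lurie}. It then notes that $D(\sA)^{\leq 0}$ is a coherent, hence locally noetherian, Grothendieck prestable $\infty$-category, using $\sA \simeq \Ind(\sA^\noeth)$ and \cite[Corollaries~C.6.8.8 and~C.6.5.9]{lurie-sag}. Finally it quotes \cite[Proposition~C.6.9.3]{lurie-sag}, which says directly that an object of $D(\sA)^{\leq 0}$ is almost compact if and only if all its cohomology objects are noetherian. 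That is the result your ``C.6.9.?'' is reaching for. It already handles objects with several nonzero cohomologies, so once you cite it your induction becomes redundant.

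Your route instead uses local noetherianity only for objects of the heart and does the rest by formal stable-category arguments:
\begin{enumerate}
\item almost compact objects are closed under fibers, cofibers and shifts;
\item the top cohomology is detected via $\Hom_\sA(H^m(X),Y)\simeq\Hom(X,Y[-m])$.
\end{enumerate}
This isolates exactly where the hypothesis enters. With your direct argument (filtered colimits of injectives are injective), it also avoids the prestable machinery altogether.

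Two small points. First, if you take the direct route, the injective-resolution argument only gives commutation of $\Ext^i(M,-)$ with filtered colimits of objects of the heart. To get the statement on $D(\sA)^{\geq n}$ you still need the reduction $\Hom(M,N[i])\simeq\Hom(M,(\tau_{\leq i}N)[i])$ for $M$ in the heart, plus an induction on the amplitude of $\tau_{\leq i}N$ via the five lemma (using that truncations and cohomology commute with filtered colimits). Second, $D(\sA)^{\leq 0}$ and $\Ch(\Inj\sA)^{\leq 0}$ are not equivalent; only the coconnective parts agree. The notion of almost compactness for bounded below objects is nonetheless the same in both, which is all you actually need.
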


\begin{proof}
As explained above the coherent objects in $D(\sA)$ are bounded. Since both conditions considered in this lemma are stable under cohomological shifts, we can therefore assume that $X \in D(\sA)^{\leq 0}$. Recall from Remark~\ref{rmk:almost-stable-lurie} that for such an object, almost compactness in our sense is equivalent to almost compactness in the sense considered in~\cite[\S C.6]{lurie-sag}.

By~\cite[Corollary~C.6.8.8]{lurie-sag}, we may identify $\sA \simeq \Ind(\sA^\noeth)$.  Then, by~\cite[Corollary~C.6.5.9]{lurie-sag}, $D(\sA)^{\le 0}$ is a ``coherent Grothendieck prestable $\infty$-category,'' and hence also a ``locally noetherian prestable $\infty$-category'' in the sense of~\cite[Definition~C.6.9.1]{lurie-sag}. Therefore,~\cite[Proposition~C.6.9.3]{lurie-sag} is applicable to $D(\sA)^{\le 0}$: it says that 
an object of $D(\sA)^{\le 0}$ is almost compact if and only if it has all its cohomology objects in $\sA^\noeth$.  
The claim follows.
\end{proof}

\begin{Rem}
\label{rmk:DAcoh-DbAnoeth}
Since $\sA^\noeth$ is a full abelian subcategory of $\sA$,
there exists a canonical functor $\Db(\sA^\noeth) \to D(\sA)$. By standard arguments (see e.g.~\cite[\href{https://stacks.math.columbia.edu/tag/0FCL}{Tag 0FCL}]{stacks-project}), this functor is fully faithful, and its essential image is the subcategory considered in Lemma~\ref{lem:locnoeth-coh}.
\end{Rem}

\sss
As explained in~\S\ref{sss:DA-coh}, we can consider $D(\sA)^\coh$ as a full subcategory of $\Ch(\Inj \sA)$. The following lemma says that this subcategory consists of compact objects.

\begin{Lem}
\label{lem:coh-compact}
Assume that $\sA$ is locally noetherian.  Every object in $D(\sA)^\coh$ is compact as an object in $\Ch(\Inj \sA)$.
\end{Lem}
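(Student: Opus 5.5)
We have to show that for $X \in D(\sA)^\coh$, viewed in $\Ch(\Inj \sA)$ via~\eqref{eqn:chinj-d-positive}, the functor $\Map_{\Ch(\Inj \sA)}(X,-)$ commutes with filtered colimits. The first reduction is to the heart. Indeed, compact objects of $\Ch(\Inj \sA)$ form a stable subcategory closed under shifts, cones and retracts. By Lemma~\ref{lem:locnoeth-coh} (or Remark~\ref{rmk:DAcoh-DbAnoeth}), every coherent $X$ is bounded with noetherian cohomology, so it is a finite iterated extension of shifts $M[n]$ with $M \in \sA^\noeth$. This uses the truncation triangles, which are compatible with the t-structure on $\Ch(\Inj \sA)$ since~\eqref{eqn:chinj-d-positive} is t-exact on the bounded-below parts. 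It therefore suffices to show that each noetherian object $M \in \sA^\noeth$, placed in degree $0$ and represented by an injective resolution, is compact in $\Ch(\Inj \sA)$.

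For this I would appeal to Lurie's description of the unseparated derived category. By~\cite[\S C.5.8]{lurie-sag} together with~\cite[Corollary~C.6.8.8]{lurie-sag}, $\sA \simeq \Ind(\sA^\noeth)$ and $D(\sA)^{\leq 0}$ is locally noetherian. Then~\cite[Theorem~C.6.7.1 / Proposition~C.6.9.3]{lurie-sag} should show that $\Ch(\Inj \sA)^{\leq 0}$ is the anticompletion, and it is compactly generated by the almost compact objects of $D(\sA)^{\leq 0}$. For a locally noetherian $\sA$ these are the objects of $D(\sA)^\coh \cap D(\sA)^{\le 0}$. An object of the prestable $\infty$-category $\Ch(\Inj\sA)^{\le 0}$ that is compact there is compact in the stable $\Ch(\Inj\sA)$, because the t-structure is compatible with filtered colimits, and $\Map(X,Y)\simeq \Map(X,\tau_{\le 0}Y)$ for $X$ connective (as in Remark~\ref{rmk:almost-stable-lurie}), with $\tau_{\le 0}$ commuting with filtered colimits.

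If I want to avoid this machinery, the alternative is a direct argument following Krause~\cite{krause}. For $M$ noetherian with injective resolution $I_M$, and a complex $J$ of injectives, we have $\Hom_{\mathrm{Ho}}(I_M, J) \simeq \Hom_{\mathrm{Ho}}(M, J)$, the homotopy classes of chain maps from $M$ in degree $0$ into $J$. This holds because $\mathrm{cone}(M \to I_M)$ is acyclic and bounded below, and maps from a bounded-below acyclic complex to a complex of injectives are nullhomotopic. Hence the group is $H^0$ of the complex $\Hom_\sA(M, J^\bullet)$. Since $M$ is compact in $\sA$, $\Hom_\sA(M,-)$ commutes with filtered colimits. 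In a locally noetherian category, filtered colimits of injectives are injective, so filtered colimits (equivalently, arbitrary direct sums) in $\Ch(\Inj \sA)$ are computed termwise. Applying this to all shifts gives compactness.

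The main obstacle is that last point: justifying that colimits in $\Ch(\Inj \sA)$ are computed degreewise, i.e.\ that direct sums of injectives stay injective in the locally noetherian case (Bass--Papp type statement), and that the $\infty$-categorical colimit in the dg-nerve matches the termwise one. Testing compactness on arbitrary direct sums suffices in the stable setting. For the remaining check, I would verify that the termwise sum is a coproduct at the level of homotopy categories, using that $\Hom$ complexes out of a bounded-above-free object interact correctly.
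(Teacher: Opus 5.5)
Your direct (Krause-style) argument is correct and is essentially the paper's proof. The paper also reduces via Lemma~\ref{lem:locnoeth-coh} to a noetherian object $C$ and tests compactness on direct sums using \cite[Proposition~1.4.4.1]{lurie-ha}. It then replaces the injective resolution of $C$ by $C$ itself. For this it shows that $\underline{\Hom}_{\sA}(-,Y)$ turns quasi-isomorphisms between bounded below complexes into quasi-isomorphisms when $Y$ is a complex of injectives, which is the same fact as your null-homotopy statement for bounded below acyclic complexes. It concludes from compactness of $C$ in $\sA$ and exactness of direct sums.

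The obstacle you isolate is real but easy, and the paper uses it tacitly. In a locally noetherian category, direct sums of injectives are injective. The termwise sum satisfies $\underline{\Hom}_{\sA}(\bigoplus_i Y_i, Z) \cong \prod_i \underline{\Hom}_{\sA}(Y_i,Z)$ as complexes, so it is a coproduct in the dg-nerve; nothing about ``bounded-above-free'' objects is required. State the termwise claim only for direct sums, which suffice. For arbitrary filtered diagrams in the $\infty$-category it is neither needed nor immediate.

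Your first, Lurie-style route would give compact generation (essentially Proposition~\ref{prop:renormalization-DA}) at once, but as written it misidentifies the generators. Almost compact objects of $D(\sA)^{\leq 0}$ may have nonzero noetherian cohomology in infinitely many negative degrees, so they need not be coherent. You must restrict to the truncated ones, which by Lemma~\ref{lem:locnoeth-coh} are exactly the objects of $D(\sA)^{\coh} \cap D(\sA)^{\leq 0}$.
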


\begin{proof}
For any objects $X$ and $Y$ in $\Ch(\sA)$ we can consider the ``internal Hom'' $\underline{\Hom}_{\sA}(X,Y)$, a chain complex of abelian groups describing morphisms from $X$ to $Y$ in the differential graded category of chain complexes of objects in $\sA$, and for any $n \geq 0$ we have
\[
\pi_n\Map_{\Ch(\sA)}(X,Y) \simeq H^{-n} \underline{\Hom}_{\sA}(X,Y),
\]
see~\cite[Remark~1.3.2.3]{lurie-ha}. In particular, when $X$ and $Y$ are complexes of injective objects, this describes homotopy groups of mapping spaces in the $\infty$-category $\Ch(\Inj \sA)$. Let us note the following: if $X_1$, $X_2$ are bounded below complexes of objects in $\sA$ and $f : X_1 \to X_2$ is a quasi-isomorphism of complexes, for any object $Y$ in $\Ch(\Inj \sA)$ the morphism
\[
\underline{\Hom}_{\sA}(X_2,Y) \to \underline{\Hom}_{\sA}(X_1,Y)
\]
induced by $f$ is a quasi-isomorphism. Indeed, if $X_1$ and $X_2$ are zero in degrees $<N$, then for any $n \in \Z$ we have
\[
\underline{\Hom}_{\sA}(X_2,Y)^n = \underline{\Hom}_{\sA}(X_2,Y_{\geq n+N})^n, \quad \underline{\Hom}_{\sA}(X_1,Y)^n = \underline{\Hom}_{\sA}(X_1,Y_{\geq n+N})^n
\]
where $Y_{\geq n+N}$ is obtained from $Y$ by replacing components of degree $<n+N$ by $0$, so we can assume that $Y$ is bounded below, in which case the claim is well known.

Now we come to the proof of the lemma.
In view of Lemma~\ref{lem:locnoeth-coh},
it is enough to show that any noetherian object $C$ in $\sA$ is compact as an object of $\Ch(\Inj \sA)$, i.e.~that the functor $\pi_0 \Map_{\Ch(\Inj\sA)}(C,-)$ commutes with direct sums (see~\cite[Proposition~1.4.4.1]{lurie-ha}).
If $C \to X^\bullet$ is an injective resolution, then $C$ seen as an object in $\Ch(\Inj \sA)$ is $X^\bullet$. Let $Y^\bullet$ be another object in $\Ch(\Inj \sA)$. Then by the comments above, $\underline{\Hom}_\sA(X^\bullet, Y^\bullet) \to \underline{\Hom}_{\sA}(C,Y^\bullet)$ is a quasi-isomorphism, so for any $n \geq 0$ we have
 \begin{multline*}
 \pi_n\Map_{\Ch(\Inj\sA)}(C,Y^\bullet) = H^{-n} \underline{\Hom}_{\sA}(X^\bullet, Y^\bullet) \\
 \simeq H^{-n} \underline{\Hom}_{\sA}(C, Y^\bullet) = \frac{\ker \bigl( \Hom_{\sA}(C,Y^{-n}) \xrightarrow{d_Y^{-n} \circ (-)} \Hom_{\sA}(C,Y^{-n+1}) \bigr)}
{\im \bigl( \Hom_{\sA}(C,Y^{-n-1}) \xrightarrow{d^{-n-1}_Y \circ (-)} \Hom_{\sA}(C,Y^{-n}) \bigr)}.
 \end{multline*}
Since $C$ is compact in the abelian category $\sA$, $\Hom_{\sA}(C,{-})$ commutes with direct sums.  And since $\sA$ is a Grothendieck category, direct sums are exact: they commute with operations like kernel, image, and cokernel.  The lemma follows.
\end{proof}

\sss
The following lemma says that the full subcategory $D(\sA)^\coh \subset \Ch(\Inj \sA)$ generates this $\infty$-category.

\begin{Lem}
\label{lem:coh-generate}
Assume that $\sA$ is locally noetherian.  For every nonzero object $X$ in $\Ch(\Inj \sA)$, there exists a nonzero morphism in $\mathrm{Ho}(\Ch(\Inj \sA))$ from an object of $D(\sA)^\coh$.
\end{Lem}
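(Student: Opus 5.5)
The idea is to produce, from a nonzero complex of injectives, a nonzero chain map out of a noetherian object placed in a single degree. Let $X = X^\bullet$ be a nonzero object of $\Ch(\Inj \sA)$. We split into two cases according to whether $X$ is acyclic.

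\emph{Case 1: $X$ is not acyclic.} Pick $n$ with $H^n(X)\neq 0$ and consider the cycles $Z^n = \ker(d^n_X) \subset X^n$. The map $Z^n \to H^n(X)$ is nonzero. Since $\sA$ is locally noetherian, $Z^n$ is the union of its noetherian subobjects, so some noetherian $C \subset Z^n$ has nonzero image in $H^n(X)$. The inclusion $C[-n] \to X$ is a chain map, and hence a morphism in the homotopy category. It is not nullhomotopic: a nullhomotopy would be a map $h : C \to X^{n-1}$ with $d^{n-1}\circ h$ equal to the inclusion, which would force $C$ to land in the boundaries. By Lemma~\ref{lem:locnoeth-coh}, $C[-n]$ lies in $D(\sA)^\coh$. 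We use the identification of $D(\sA)^\coh$ inside $\Ch(\Inj \sA)$ (replacing $C$ by an injective resolution $I^\bullet$), together with the quasi-isomorphism $\underline{\Hom}_\sA(I^\bullet, X)\to \underline{\Hom}_\sA(C,X)$ from the proof of Lemma~\ref{lem:coh-compact}. These show that $\pi_0\Map_{\Ch(\Inj\sA)}(C[-n],X) = H^n\underline{\Hom}_\sA(C,X)$, and this computes homotopy classes of chain maps $C[-n]\to X$. So we get the required nonzero morphism.

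\emph{Case 2: $X$ is acyclic but nonzero.} This is the main obstacle. We still try chain maps $C[-n]\to X$ with $C$ noetherian, computed through $H^n\underline{\Hom}_\sA(C,X)$ as above. Such a class is a map $f : C\to Z^n = \ker d^n$ that does not factor through $d^{n-1} : X^{n-1}\to Z^n$. Since $X$ is acyclic, $d^{n-1}$ surjects onto $Z^n$, so a lifting problem arises. We argue by contradiction. Suppose every map from a noetherian object into every $Z^n$ lifts along the epimorphism $X^{n-1}\to Z^{n-1+1}$. Because $\sA$ is locally noetherian, the noetherian objects form a generating set, so lifting for all of them implies (via Baer's criterion in the locally noetherian form) that the short exact sequences $0\to Z^{n-1}\to X^{n-1}\to Z^n\to 0$ behave as if each $Z^n$ were injective. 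More precisely, we test injectivity of $Z^{n-1}$ via the extension $\Ext^1(C,Z^{n-1})$, which is the obstruction to lifting maps $C\to Z^n$ along $X^{n-1}\to Z^n$. Vanishing of $\Ext^1(C,Z^{n-1})$ for all noetherian $C$ gives injectivity of $Z^{n-1}$, since in a locally noetherian Grothendieck category it suffices to test $\Ext^1$ against noetherian objects (Baer's criterion with noetherian subobjects of generators). Then all $Z^m$ are injective and every sequence $0\to Z^{n-1}\to X^{n-1}\to Z^n\to 0$ splits. This makes $X$ contractible, i.e.\ zero in $\mathrm{Ho}(\Ch(\Inj\sA))$, which is a contradiction.

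The delicate points are the following. First, we must check carefully that $H^n\underline{\Hom}_\sA(C,X)$ really is the cokernel of $\Hom(C,X^{n-1})\to\Hom(C,Z^n)$. Second, we must justify the Baer-type criterion: injectivity is detected by $\Ext^1$ out of noetherian objects, which holds because every object is a filtered union of noetherian ones and $\Ext^1(-,Z)$ against quotients of generators reduces to noetherian subobjects. I would also note that an alternative route is to quote~\cite[Proposition~2.3]{krause}, or the compact generation statement in~\cite[Theorem~C.6.7.3 / C.5.8.8]{lurie-sag}.
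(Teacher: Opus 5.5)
Your proof is correct and follows the paper's argument: the same case split, the same treatment of the non-acyclic case, and in the acyclic case the same two inputs (an acyclic complex of injectives whose cycle objects are all injective is contractible, and injectivity is detected by Baer's criterion on noetherian objects). The only difference is cosmetic. The paper tests against the two-term complex $C \xrightarrow{i} D$ and extends the would-be homotopy using injectivity of $X^{n-1}$. You instead test against $D/C$ in a single degree via $\Ext^1(D/C,\ker d^n_X) \cong \operatorname{coker}\bigl(\Hom(D/C,X^n) \to \Hom(D/C,\ker d^{n+1}_X)\bigr)$, which uses injectivity of $X^n$. This amounts to the same thing, since $[C \to D]$ is quasi-isomorphic to $(D/C)[-n-1]$.
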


\begin{proof}
As in the proof of Lemma~\ref{lem:coh-compact}, it suffices to construct a bounded complex $Y$ of noetherian objects of $\sA$ and a morphism of complexes $Y \to X$ which is not null-homotopic.
The proof is different depending on whether $X$ is acyclic or not.

Suppose first that $X = (X^\bullet, d^\bullet)$ is \emph{not} acyclic: say $H^n(X) \ne 0$.  Consider the (nonzero) surjective map $\ker (d_X^n) \twoheadrightarrow H^n(X)$.  Because $\ker (d_X^n)$ is the union of its noetherian subobjects, there exists a noetherian subobject $C \subset \ker (d_X^n)$ such that the composition
\[
C \to \ker (d_X^n) \twoheadrightarrow H^n(X)
\]
is nonzero.  The inclusion $C \hookrightarrow \ker (d_X^n)$ defines a chain map $C[-n] \to X$, which is not null-homotopic because it induces a nonzero map in cohomology.

Now suppose that $X$ is acyclic.  If $\ker (d_X^n)$ were injective for all $n$, it would follow that $X$ is homotopy-equivalent to $0$; so our assumption implies that there is some $n$ such that $\ker (d_X^n)$ is not injective. Since injectivity of an object can be detected on noetherian objects (see~\cite[Proposition~C.5.6.12]{lurie-sag}), there is a monomorphism of noetherian objects $i: C \to D$ such that the induced map
\begin{equation}
\label{eqn:inj-detect}
\Hom_{\sA}(D,\ker (d_X^n)) \to \Hom_{\sA}(C, \ker (d_X^n))
\end{equation}
is not surjective.

Let $f: C \to \ker (d_X^n)$ be a map that is not in the image of~\eqref{eqn:inj-detect}.  Then the solid arrows in the diagram
\begin{equation}
\label{eqn:coh-generate}
\begin{tikzcd}
\cdots \ar[r] & 0 \ar[r] \ar[d] & C \ar[r, "i"] \ar[d, "f"] \ar[dl, dashed, "h"'] & D \ar[d, "0"] \ar[r] \ar[dl, dashed, "k"'] & 0 \ar[r] \ar[d] & \cdots \\
\cdots \ar[r] & X^{n-1} \ar[r, "d_X^{n-1}"] & X^n \ar[r, "d_X^n"] & X^{n+1}  \ar[r, "d_X^{n+1}"] & X^{n+2} \ar[r] & \cdots 
\end{tikzcd}
\end{equation}
depict a chain map from an object of $D(\sA)^\coh$ to $X$.  If it were null-homotopic, there would be maps $h$ and $k$ as shown above, with
\[
f = d_X^{n-1} \circ h + k \circ i,\qquad d_X^n \circ k = 0.
\]
The latter equation says that $k$ can be treated as a map $k: D \to \ker (d_X^n)$.  And of course $d_X^{n-1}$ can also be treated as a map $d_X^{n-1}: X^{n-1} \to \ker (d_X^n)$.
Since $X^{n-1}$ is injective, there is a map $\tilde h: D \to X^{n-1}$ satisfying $\tilde h \circ i = h$.  Then
\[
f = d_X^{n-1}\circ \tilde h \circ i + k \circ i = (d_X^{n-1} \circ \tilde h + k) \circ i.
\]
Regarding $d_X^{n-1} \circ \tilde h + k$ as an element of $\Hom_{\sA}(D, \ker (d_X^n))$, this equation contradicts the assumption that $f$ is not in the image of~\eqref{eqn:inj-detect}.  We conclude that the chain map~\eqref{eqn:coh-generate} is not null-homotopic.
\end{proof}

\sss
We conclude this subsection with the following proposition, which says that $\Ch(\Inj \sA)$ is the renormalization of $D(\sA)$ equipped with its canonical t-structure.

\begin{Prop}
\label{prop:renormalization-DA}
Let $\sA$ be a locally noetherian Grothendieck abelian category.  There is a canonical equivalence
\[
\Ind(D(\sA)^\coh) \simto \Ch(\Inj \sA).
\]
\end{Prop}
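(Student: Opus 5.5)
The plan is to apply the standard criterion for compact generation: if $\sC$ is a presentable (cocomplete) stable $\infty$-category and $\sC_0 \subset \sC$ is a small full stable subcategory consisting of compact objects which generates $\sC$, then the functor $\Ind(\sC_0) \to \sC$ induced by the inclusion is an equivalence onto $\sC$, provided $\sC_0$ is idempotent complete (otherwise the equivalence is with $\Ind$ of its idempotent completion, which has the same $\Ind$). Here generation means that any object with vanishing maps from all objects of $\sC_0$ is zero. We take $\sC = \Ch(\Inj \sA)$, which is presentable and stable by the results recalled above from \cite[\S C.5.8]{lurie-sag}. We take $\sC_0 = D(\sA)^\coh$, viewed as a full subcategory of $\Ch(\Inj \sA)$ via the fully faithful functor $D(\sA) \to \Ch(\Inj \sA)$, as in~\S\ref{sss:DA-coh}. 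The functor $\Ind(D(\sA)^\coh) \to \Ch(\Inj \sA)$ is the unique continuous extension of this inclusion, which gives canonicity.

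The key steps run as follows.
\begin{enumerate}
\item Check that $D(\sA)^\coh$ is a small stable subcategory closed under retracts. By Lemma~\ref{lem:locnoeth-coh} it consists of objects with finitely many nonzero cohomology objects, all noetherian. This condition is stable under shifts and cones, since $\sA^\noeth$ is a Serre subcategory and the long exact sequence applies. It is also stable under retracts, since cohomology of a retract is a retract. Essential smallness follows from Remark~\ref{rmk:DAcoh-DbAnoeth}: the category is equivalent to $\Db(\sA^\noeth)$, and $\sA^\noeth$ is essentially small because $\sA$ is Grothendieck and noetherian objects are compact.
\item Lemma~\ref{lem:coh-compact} shows these objects are compact in $\Ch(\Inj\sA)$.
\item Lemma~\ref{lem:coh-generate} shows they generate, in the sense that the right orthogonal is zero.
\item Conclude using \cite[Chap.~1, Lemma~7.2.4]{gr}, or \cite[Proposition~5.3.5.11]{lurie-htt} together with the standard generation argument, as in~\S\ref{sss:Amod-comp-gen}.
\end{enumerate}
Step 4 works as follows. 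Full faithfulness of $\Ind(\sC_0)\to\sC$ follows from compactness of the objects of $\sC_0$. The essential image is a cocomplete stable subcategory containing $\sC_0$. This image is therefore the localizing subcategory generated by $\sC_0$, and the embedding has a right adjoint. For any object $X$, the fiber of the counit has vanishing maps from $\sC_0$, so it is zero by Step 3.

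The main work has already been done in Lemmas~\ref{lem:coh-compact} and~\ref{lem:coh-generate}. The only point needing some care is Step 1, specifically the compatibility of the identification: the embedding of $D(\sA)^\coh$ into $\Ch(\Inj\sA)$ via~\eqref{eqn:chinj-d-positive} must be an exact functor of stable $\infty$-categories, so that the image is a stable subcategory. This holds because it is the restriction of the exact fully faithful functor $D(\sA)\to \Ch(\Inj\sA)$. Lemma~\ref{lem:coh-generate} only gives a nonzero map from a \emph{shift} of an object of $D(\sA)^\coh$ in the homotopy category. Since the class is shift-stable, this suffices for generation.
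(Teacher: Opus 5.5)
Your proposal is correct and is essentially the paper's proof: the paper likewise observes that $D(\sA)^\coh$ consists of compact objects of $\Ch(\Inj \sA)$ (Lemma~\ref{lem:coh-compact}) and generates it (Lemma~\ref{lem:coh-generate}), and then concludes by the standard compact-generation criterion \cite[Chap.~1, Lemma~7.2.4(3)]{gr}. Your extra verifications in Step~1 (closure under cones, shifts and retracts, and essential smallness) only spell out what the paper leaves implicit.
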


\begin{proof}
The full stable subcategory $D(\sA)^\coh \subset \Ch(\Inj \sA)$ consists of compact objects (see Lemma~\ref{lem:coh-compact}), and generates $\Ch(\Inj \sA)$ (see Lemma~\ref{lem:coh-generate}). The claim therefore follows from standard results on compact generation, see~\cite[Chap.~1, Lemma~7.2.4(3)]{gr}.
\end{proof}

\begin{Rem}
\phantomsection
\label{rmk:renormalization-DA}
\begin{enumerate}
\item
\label{it:renormalization-DA-IndDbA}
In view of Remark~\ref{rmk:DAcoh-DbAnoeth}, in the setting of Proposition~\ref{prop:renormalization-DA} we also have an equivalence of $\infty$-categories
\begin{equation}
\label{eqn:renormalization-DA-IndDbA}
\Ind(\Db(\sA^\noeth)) \simto \Ch(\Inj \sA).
\end{equation}
\item
\label{it:renormalization-bounded-below-parts}
A basic result in the theory of renormalization is that (under suitable assumptions) the canonical functor from the renormalized $\infty$-category to the original $\infty$-category restricts to a t-exact equivalence between the bounded below parts of the t-structures, see~\cite[Proposition~6.3.2]{bznp} or~\cite[Proposition~2.4.6]{campbell-raskin}. In the case considered above, this says that the canonical functor $\Ch(\Inj \sA) \to D(\sA)$ restricts to a t-exact equivalence $\Ch(\Inj \sA)^+ \to D(\sA)^+$, which directly follows from~\eqref{eqn:chinj-d-positive-2}.
\end{enumerate}
\end{Rem}

%----------------------------------------------------------
\subsection{Duality}
\label{ss:duality}
%----------------------------------------------------------

\sss
\label{sss:dual-bimodules}
Recall\footnote{There seem to be inconsistencies in the literature regarding the notation and terminology for left/right duals. Here we follow the conventions of~\cite{gr} for terminology, which is opposite to that used in~\cite[\S 4.6.1]{lurie-ha} or~\cite[Definition~7.15]{zhu}.} that if $\sC$ is a monoidal $\infty$-category and $A$ and $B$ are algebra objects in $\sC$, one can speak of $(A,B)$-bimodules in $\sC$. Assume that $\sC$ admits geometric realizations of simplicial objects, and that the binary product $\sC \times \sC \to \sC$ preserves geometric realizations of simplicial objects. (This assumption ensures that the relative tensor products considered below exist.) If $M$ is an $(A,B)$-bimodule in $\sC$, a right dual $M^\vee$, resp.~left dual ${}^\vee M$, of $M$, is a $(B,A)$-bimodule in $\sC$ endowed with morphisms
\[
e_M : M \otimes_B M^\vee \to A \quad \text{and} \quad u_M : B \to M^\vee \otimes_A M
\]
of $A$-bimodules and of $B$-bimodules respectively,
resp.
\[
e_M : {}^\vee M \otimes_A M \to B \quad \text{and} \quad u_M : A \to M \otimes_B {}^\vee M
\]
of $B$-bimodules and of $A$-bimodules respectively, which satisfy the appropriate zigzag relations at the level of homotopy categories. We will say that $M$ is left, resp.~right, dualizable, if it admits a left, resp.~right, dual. Note that duals are unique up to a contractible space of choices when they exist, so that it makes sense to use a special notation for them.

\sss
The morphisms $e_M$ and $u_M$ are called the evaluation and coevaluation morphisms respectively. If $M$ is right dualizable, the functor
\[
M^\vee \otimes_A (-) : \LMod_A(\sC) \to \LMod_B(\sC), \quad \text{resp.} \quad (-) \otimes_B M^\vee : \RMod_B(\sC) \to \RMod_A(\sC),
\]
is right, resp.~left, adjoint to the functor $M \otimes_B (-)$, resp.~$(-) \otimes_A M$. Similarly, if $M$ is left dualizable, the functor
${}^\vee M \otimes_A (-)$, resp.~$(-) \otimes_B {}^\vee M$, is left, resp.~right, adjoint to the functor $M \otimes_B (-)$, resp.~$(-) \otimes_A M$.

\sss
In the special case when $A=B$ is the unit object in $\sC$, one does not need to impose any assumption on $\sC$, and one obtains the notions of left and right dualizability in $\sC$. In case $\sC$ is a \emph{symmetric} monoidal $\infty$-category, these two notions coincide, and we can simply speak of dualizability. When $\sC$ is an ordinary (monoidal) category, one recovers the usual notions of dualizability discussed e.g.~in~\cite{egno}.

\sss
\label{sss:dualizability-modules-DGCat}
We will in particular consider the notions above with $\sC=\DGCat$, and either $A$ or $B$ is $\Vect_k$ (so that we consider left or right $A$-modules).
Note that the ``dualizability'' of left and right $A$-modules in the sense of~\cite[Chap.~1, \S 4.3.1]{gr} corresponds to \emph{right} dualizability of left $A$-modules and \emph{left} dualizability of right $A$-modules in the sense above.

\sss
\label{sss:smooth-proper-modules}
Consider the setting of~\S\ref{sss:dual-bimodules} with $\sC=\DGCat$. Let $\sA$, $\sB$ be algebra objects in $\DGCat$, and let $\sM$ be a right dualizable $(\sA,\sB)$-bimodule, with right dual $\sM^\vee$, unit of the duality $u_\sM : \sB \to \sM^\vee \otimes_\sA \sM$, and counit of the duality $e_\sM : \sM \otimes_\sB \sM^\vee \to \sA$. Recall (see e.g.~\cite[Definition~7.86]{zhu}) that $\sM$ is called \emph{smooth} if $u_\sM$ admits a right adjoint $u_\sM^R$ as a functor of $\sB$-bimodules, and \emph{proper} if $e_\sM$ admits a right adjoint $e_\sM^R$ as a functor of $\sA$-bimodules.

See~\cite[\S 7]{zhu} for some applications of these notions in the context of computation of categorical traces.
Up to some differences of conventions, smooth and proper bimodules are the same as the ``properly dualizable'' bimodules considered in~\cite{bzgo}; more specifically, if $\sM$ is as above, $\sM$ is smooth and proper if and only if the (left dualizable) $(\sB,\sA)$-bimodule $\sM^\vee$ is properly dualizable in the sense of~\cite[Definition~2.8]{bzgo}. In this context we therefore have the ``Morita equivalence'' given by~\cite[Theorem~1.4]{bzgo}.

In practice, we will consider this notion in the case where $\sA$ and $\sB$ are rigid $k$-linear monoidal $\infty$-categories (in the sense of~\cite[Example~7.88]{zhu}). In this case, linearity of the adjoints follows from continuity (see~\cite[Chap.~1, Lemma~3.5.3 and Lemma~9.3.6]{gr}).

%---------------------------------------------
\subsection{Rigidity}
\label{ss:rigidity}
%---------------------------------------------

\sss
\label{sss:rigidity-duals}
Recall the notion of rigid $k$-linear monoidal $\infty$-category from~\cite[Example~7.88]{zhu}. (In particular, a rigid $k$-linear monoidal $\infty$-category is a rigid monoidal $\infty$-category in the sense of~\cite[Chap.~1, \S 9]{gr}.) If $\sA$ is a rigid $k$-linear monoidal $\infty$-category, recall that a left $\sA$-module $\sC$ is right dualizable if and only if it is dualizable in $\DGCat$, or in $\PrLSt$. Moreover, in this case, the right dual of $\sC$ is the right $\sA$-module given by $\sC^\vee$ (the dual of $\sC$ in $\DGCat$, or in $\PrLSt$), endowed with the twist of the natural right action of $\sA$ by the autoequivalence $(\varphi_{\sA})^{-1}$ where $\varphi_\sA$ is as in~\cite[Chap.~1, \S 9.2.7]{gr}. (In case $\sA$ is compactly generated, in view of~\cite[Chap.~1, Lemma~9.2.8]{gr}, a compact object $a$ therefore acts on $\sC$ by the natural action of $a^{\vee\vee}$.) For details, see~\cite[Chap.~1, \S 9.5]{gr} or~\cite[Proposition~7.105]{zhu}.

\sss
\label{sss:rigidity}
Let $\sA$ be a rigid $k$-linear monoidal $\infty$-category. By definition, the binary product $m_\sA : \sA \otimes_k \sA \to \sA$ admits an $(\sA \otimes_k \sA^\rev)$-linear right adjoint $m_\sA^R : \sA \to \sA \otimes_k \sA$. Consider the monad
\[
m_\sA : \sA \otimes_k \sA \rightleftarrows \sA : m_\sA^R.
\]
The essential image of $m_\sA$ manifestly generates the target $\infty$-category, so that $(m_\sA)^R$ is conservative by~\cite[Chap.~1, Lemma~5.4.3]{gr}. We can therefore apply the Barr--Beck--Lurie theorem~\cite[Chap.~1, Proposition~3.7.7]{gr}. If we consider $\sA \otimes_k \sA$ and $\sA$ as right modules over $\sA^\rev \otimes_k \sA$, then both $m_\sA$ and $m_\sA^R$ are maps of right $(\sA^\rev \otimes_k \sA)$-modules, so that the corresponding monad is given by the algebra object $\mathfrak{a} := m_\sA^R( 1_\sA)$ in $\sA^\rev \otimes_k \sA$, and we obtain an equivalence
\begin{equation}
\label{eqn:rigidity-A-Lmod}
\sA \simeq \LMod_{\mathfrak{a}}(\sA^\rev \otimes_k \sA).
\end{equation}
Under this equivalence, $m_\sA^R$ corresponds to the obvious ``restriction'' functor, and $m_\sA$ corresponds to its left adjoint (the ``induction'' functor).

\sss
\label{sss:rigidity-tens-Hom}
Continue with the setting of~\S\ref{sss:rigidity}. Let also $\sM$, resp.~$\sN$, be a right, resp.~left, $\sA$-module, so that we can consider the tensor product $\sM \otimes_\sA \sN$. By~\eqref{eqn:relative-tensor-product} we have
\[
\sM \otimes_\sA \sN \simeq \sA \otimes_{\sA^\rev \otimes_k \sA} (\sM \otimes_k \sN).
\]
Combining this with~\eqref{eqn:rigidity-A-Lmod} and~\cite[Chap.~1, Corollary~8.5.7]{gr} we deduce an identification
\[
\sM \otimes_\sA \sN \simeq \LMod_{\mathfrak{a}}(\sM \otimes_k \sN).
\]
Under this identification, the insertion functor $\sM \otimes_k \sN \to \sM \otimes_\sA \sN$ corresponds to the induction functor $\sM \otimes_k \sN \to \LMod_{\mathfrak{a}}(\sM \otimes_k \sN)$; this functor admits a continuous right adjoint, corresponding to the restriction functor $\LMod_{\mathfrak{a}}(\sM \otimes_k \sN) \to \sM \otimes_k \sN$.

Similarly, if $\sM$, $\sN$ are left $\sA$-modules, then by the discussion in~\S\ref{sss:Funct-2} $\Funct_k(\sM, \sN)$ is naturally a right module for $\sA^\rev \otimes_k \sA$, and we have an identification
\[
\Funct_\sA(\sM, \sN) \simeq \RMod_{\mathfrak{a}}(\Funct_k(\sM, \sN)).
\]

%%%%%%%%%%%%%%%%%%%%%%%%%%%%%%%
\section{Some \texorpdfstring{$\infty$}{infinity}-categories of sheaves attached to an affine group scheme}
\label{sec:infty-cat-gp-scheme}
%%%%%%%%%%%%%%%%%%%%%%%%%%%%%%%

We continue with our fixed base field $k$.

%----------------------------------------------------------
\subsection{Sheaf theories}
%----------------------------------------------------------

\sss
The geometric objects we will work with are prestacks over $k$, as defined e.g.~in~\cite[\S 9.1]{zhu}. Among these objects we have (derived\footnote{Following standard practice, the word ``derived'' will be omitted in relation with stacks, algebraic spaces or schemes. So, what we call a stack (algebraic space, scheme) should formally be called a \emph{derived} stack (algebraic space, scheme). We will use the word ``classical'' when referring to objects of standard (nonderived) algebraic geometry.}) stacks, algebraic stacks, algebraic spaces and schemes, see again~\cite[\S 9.1]{zhu}. Below we will also consider the property for a prestack of being almost of finite presentation; for this notion, see~\cite[\S 9.1.3]{zhu}. We will write $\times_k$ for $\times_{\Spec(k)}$.

\sss
\label{sss:QCoh-def}
Recall that for any prestack $Y$ over $k$ we have an $\infty$-category $\QCoh(Y)$ of quasi-coherent sheaves on $Y$, which is naturally an object of $\DGCat$; see~\cite[Chap.~3, \S 1.1]{gr} or~\cite[\S 9.2]{zhu} for a review of the construction. This $\infty$-category admits a natural t-structure, which is left complete. In case $Y$ is an algebraic stack, this t-structure is also right complete and compatible with filtered colimits, see~\cite[Chap.~3, Corollary~1.5.7]{gr} or~\cite[Lemma~9.8]{zhu}. Moreover its heart $\QCoh(Y)^\heartsuit$ identifies with the usual abelian category $\QCoh^\heartsuit(Y_{\mathrm{cl}})$ of quasi-coherent sheaves on the classical stack $Y_{\mathrm{cl}}$ attached to $Y$, and is a Grothendieck abelian category, see~\cite[Comments preceding Lemma~9.8]{zhu}. By the universal property of the derived $\infty$-category (see~\cite[Theorem~2.6.8]{jasso}), in this case there exists a t-exact continuous functor
\[
D(\QCoh^\heartsuit(Y_{\mathrm{cl}})) \to \QCoh(Y)
\]
whose restriction to the hearts is the identity functor. If $Y$ is a quasi-compact and quasi-separated classical algebraic stack with affine diagonal, then this functor restricts to an equivalence of categories
\[
D(\QCoh^\heartsuit(Y))^+ \simto \QCoh(Y)^+,
\]
see~\cite[Lemma~9.8]{zhu} or~\cite[Chap.~3, Proposition~2.4.3]{gr}.  As a consequence, in this case $\QCoh(Y)$ is the left completion of $D(\QCoh^\heartsuit(Y))$, see the discussion in~\cite[Chap.~3, Remark~2.4.4]{gr}. 

\sss
\label{sss:QCoh-sheaf-theory}
The assignment $Y \mapsto \QCoh(Y)$ can be extended to a sheaf theory defined on an appropriate $\infty$-category of correspondences with values in $\DGCat$, see~\cite[Eqn.~(9.5)]{zhu}. This means in particular that for any morphism of prestacks $f : Y \to Z$ we have an associated pullback functor $f^* : \QCoh(Z) \to \QCoh(Y)$ in $\DGCat$ and that for a certain class of morphisms we also have a pushforward functor $f_* : \QCoh(Y) \to \QCoh(Z)$ in $\DGCat$, these functors being compatible in a very strong sense. 
(By the adjoint functor theorem, $f^*$ always admits a right adjoint $f_*$, which however might not be continuous, and thus might not be a morphism in $\DGCat$. When $f$ is allowed as a horizontal morphism in the $\infty$-category of correspondences as above, then this adjoint $f_*$ \emph{is} continuous, and coincides with the functor considered above.)
The class of morphisms for which pushforward is defined contains all morphisms representable by qcqs algebraic spaces (see the comments following~\cite[Eqn.~(9.5)]{zhu}), and also all morphisms that are representable by concentrated stacks (see the comments preceding~\cite[Lemma~9.11]{zhu}).

\sss
\label{sss:QCoh-algebra-object}
By a general principle explained e.g.~in~\cite[\S 8.2.1]{zhu}, this implies in particular that for any prestack $Y$ over $k$ the $\infty$-category $\QCoh(Y)$ has a 
canonical structure of commutative algebra object in $\DGCat$ induced by tensor product over the structure sheaf. For another discussion of this structure, see~\cite[Chap.~3, \S 3.2.1]{gr}.

\sss
\label{sss:def-IndCoh}
If $Y$ is an algebraic stack which is almost of finite presentation over $k$, one can consider the full subcategory $\Coh(Y) \subset \QCoh(Y)$ whose objects are those which are bounded with respect to the t-structure considered above, and such that all cohomology objects are coherent sheaves (on $Y_{\mathrm{cl}}$). Following~\cite[Definition~9.19]{zhu}, the $\infty$-category $\IndCoh(Y)$ of Ind-coherent sheaves on $Y$ is defined as the Ind-completion (in the sense discussed in~\cite[Chap.~1, \S 7.2]{gr}) of $\Coh(Y)$. Here, since $\Coh(Y)$ is idempotent complete, one can recover this $\infty$-category as the full subcategory $\IndCoh(Y)^\comp$ of compact objects in $\IndCoh(Y)$, see~\cite[Chap.~1, Lemma~7.2.4]{gr}.

The t-structure on $\QCoh(Y)$ restricts to a t-structure on $\Coh(Y)$, with heart the abelian category $\Coh^\heartsuit(Y_{\mathrm{cl}})$ of coherent sheaves on $Y_{\mathrm{cl}}$. Taking the Ind-completions of the connective and coconnective parts of this t-structure (see~\cite[Chap.~4, Lemma~1.2.4]{gr}) we obtain a t-structure on $\IndCoh(Y)$, which is compatible with filtered colimits. 

If $Y$ is moreover a classical algebraic stack with affine diagonal, recall that we have an identification $D(\QCoh^\heartsuit(Y))^+ \simeq \QCoh(Y)^+$, see~\S\ref{sss:QCoh-def}. (Note that the condition of being almost of finite presentation includes the property of being quasi-compact and quasi-separated.) In this case, $\Coh(Y)$ therefore identifies with a full subcategory of $D(\QCoh^\heartsuit(Y))$; as such, it is equivalent to $\Db \Coh^\heartsuit(Y)$, see Remark~\ref{rmk:DAcoh-DbAnoeth}. Proposition~\ref{prop:renormalization-DA} therefore implies that $\IndCoh(Y) \simeq \Ch(\Inj \QCoh^\heartsuit(Y))$.

\sss
\label{sss:Psi}
By the universal property of the Ind-completion (see e.g.~\cite[Chap.~1, Lemma~7.2.4]{gr}) there exists a canonical functor
\[
\Psi_Y : \IndCoh(Y) \to \QCoh(Y),
\]
which is t-exact.
In case $Y$ is a (quasi-compact and quasi-separated) smooth classical scheme over $k$, it is a standard fact that $\QCoh(Y)$ is compactly generated by $\Coh(Y)$, so that in this case the functor $\Psi_Y$ is an equivalence. This is not the case for more general algebraic stacks.

\begin{Rem}
The construction of $\IndCoh(Y)$ we consider here follows~\cite{zhu}. Even when $k$ has characteristic $0$, this construction is \emph{not} the same as that studied in~\cite{gr}, and the resulting $\infty$-categories can be different. See~\cite[Remark~9.41(2)]{zhu} for a discussion of this topic.
\end{Rem}

\sss
\label{sss:t-structure-IndCoh}
The following statement is~\cite[Lemma~9.21]{zhu}.

\begin{Lem}
\label{lem:QCoh-IndCoh-tstr}
For any $n \in \Z$, the functor $\Psi_Y$ restricts to an equivalence
$\IndCoh(Y)^{\geqslant n} \simto \QCoh(Y)^{\geqslant n}$.
As a consequence, this functor restricts to a t-exact equivalence
$\IndCoh(Y)^+ \simto \QCoh(Y)^+$.
\end{Lem}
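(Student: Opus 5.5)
We want to show that $\Psi_Y$ restricts to an equivalence $\IndCoh(Y)^{\geq n} \simto \QCoh(Y)^{\geq n}$ for every $n$; the assertion about bounded below parts then follows formally. First, $\Psi_Y$ is t-exact, so the bounded below part is the union of the $\geq n$ parts, and these are preserved in both directions.

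Since both t-structures are shift-compatible, it suffices to treat a single fixed $n$; all $n$ are handled uniformly. Note that $\IndCoh(Y)^{\geq n}$ is the Ind-completion of $\Coh(Y)^{\geq n}$. This follows from the construction in~\S\ref{sss:def-IndCoh} together with~\cite[Chap.~4, Lemma~1.2.4]{gr}. Since $\Coh(Y)$ consists of bounded objects, we can filter by the truncations $\tau_{\leq m}$, which gives $\IndCoh(Y)^{\geq n} \simeq \Ind(\Coh(Y)^{\geq n})$ as an $\infty$-category with filtered colimits. The functor $\Psi_Y$ on $\IndCoh(Y)^{\geq n}$ is then the Ind-extension of the inclusion $\Coh(Y)^{\geq n} \hookrightarrow \QCoh(Y)^{\geq n}$. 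This uses that $\QCoh(Y)^{\geq n}$ is closed under filtered colimits, since the t-structure is compatible with filtered colimits (recall from~\S\ref{sss:QCoh-def} that $Y$ is an algebraic stack).

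So we must check two things about coherent objects in $\QCoh(Y)^{\geq n}$.

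\textbf{(a) Full faithfulness.} Objects of $\Coh(Y)$ should be compact in $\QCoh(Y)^{\geq n}$, meaning $\Map(F,-)$ commutes with filtered colimits there. This is exactly the statement that coherent objects are almost compact. For a qcqs algebraic stack with affine diagonal one can argue as follows:
\begin{itemize}
\item reduce, via~\S\ref{sss:QCoh-def}, to $D(\QCoh^\heartsuit(Y))^+$;
\item use that $\QCoh^\heartsuit(Y_{\mathrm{cl}})$ is locally noetherian, since $Y$ is almost of finite presentation over a field;
\item apply Lemma~\ref{lem:locnoeth-coh}, which says that bounded complexes with noetherian (i.e.\ coherent) cohomology are coherent, hence almost compact.
\end{itemize}
For a general derived $Y$, I would instead reduce to a smooth atlas $U \to Y$ by descent. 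There, almost compactness of coherent modules over a noetherian ring is standard: filtered colimits commute with $\Hom$ out of truncated pseudo-coherent complexes.

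\textbf{(b) Generation.} Every object of $\QCoh(Y)^{\geq n}$ should be a filtered colimit of objects of $\Coh(Y)^{\geq n}$. In the locally noetherian setting I would proceed as follows:
\begin{itemize}
\item use the equivalence $\Ch(\Inj \sA)^{\geq n} \simeq D(\sA)^{\geq n}$ from~\eqref{eqn:chinj-d-positive-2};
\item combine it with Proposition~\ref{prop:renormalization-DA}, which identifies $\Ch(\Inj\sA) \simeq \Ind(D(\sA)^\coh)$;
\item observe that the t-structure on $\Ind(D(\sA)^\coh)$ is the Ind-extended one.
\end{itemize}
Together these give the claim in the classical case. In general, I would use that every object of $\QCoh(Y)^{\heartsuit}$ is a filtered colimit of coherent subsheaves, which holds for noetherian algebraic stacks. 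One then builds a resolution on each truncation $\tau_{\leq m}$ and passes to the limit using left completeness.

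The main obstacle is step (b) for non-classical $Y$, together with justifying that the fixed-$n$ statement holds uniformly. In practice, this lemma is cited from~\cite[Lemma~9.21]{zhu}, so the cleanest proof would invoke that reference and supply the classical-case argument above as a sketch.
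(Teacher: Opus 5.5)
The paper gives no argument of its own here: it simply quotes \cite[Lemma~9.21]{zhu}, which is also where you end up for general $Y$. What you add is a self-contained proof in the classical case. That part is sound and genuinely different from a citation.

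For $Y$ classical with affine diagonal, set $\sA=\QCoh^\heartsuit(Y)$; it is locally noetherian, since quasi-coherent sheaves on the noetherian stack $Y$ are filtered unions of coherent subsheaves. Remark~\ref{rmk:DAcoh-DbAnoeth} identifies $\Coh(Y)$ with $D(\sA)^\coh$, and Proposition~\ref{prop:renormalization-DA} identifies $\IndCoh(Y)$ with $\Ch(\Inj\sA)$. The composite $\Ch(\Inj\sA)\to D(\sA)\to\QCoh(Y)$ is continuous and restricts to the inclusion on coherent objects, hence is $\Psi_Y$. The Ind-extended t-structure agrees with Lurie's t-structure on $\Ch(\Inj\sA)$: both are compatible with filtered colimits and make $D(\sA)^\coh\hookrightarrow\Ch(\Inj\sA)$ t-exact. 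So the connective and coconnective parts of the former are contained in those of the latter, which forces equality. Then \eqref{eqn:chinj-d-positive-2} together with $D(\sA)^+\simeq\QCoh(Y)^+$ gives the lemma. In the case the paper actually needs (e.g.\ $\BB H$), the lemma is therefore just Remark~\ref{rmk:renormalization-DA}\eqref{it:renormalization-bounded-below-parts} transported along these identifications. The citation buys the general derived case, without any affine-diagonal hypothesis.

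Your sketch for general derived $Y$, however, does not work as written.

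\emph{Step (b).} ``Passing to the limit using left completeness'' is the wrong mechanism. The essential image of $\Ind(\Coh(Y)^{\geq n})\to\QCoh(Y)^{\geq n}$ is closed under filtered colimits, not under limits, and left completeness says nothing about objects that are already bounded below. What works is the following. Once (a) holds, $\Psi_Y$ is fully faithful on $\IndCoh(Y)^+$, so its essential image in $\QCoh(Y)^+$ is closed under fibers and cofibers, and under filtered colimits inside each $\QCoh(Y)^{\geq n}$. It contains the heart $\QCoh^\heartsuit(Y_{\mathrm{cl}})$, because every quasi-coherent sheaf on $Y_{\mathrm{cl}}$ is a filtered union of coherent subsheaves. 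Hence it contains all bounded objects by finite extensions. Finally, any $F\in\QCoh(Y)^{\geq n}$ is $\mathrm{colim}_m\tau^{\leq m}F$, by compatibility of the t-structure with filtered colimits and right completeness. No resolutions are needed.

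\emph{Step (a).} Smooth descent writes $\Map_Y(F,G)$ as a totalization over the \v{C}ech nerve, and totalizations do not commute with filtered colimits in general. The missing point is a uniform bound. If $F\in\Coh(Y)\cap\QCoh(Y)^{\leq a}$ and $G\in\QCoh(Y)^{\geq n}$, every term of the cosimplicial mapping complex lies in cohomological degrees $\geq n-a$. So each cohomology group of the totalization is computed by a finite partial totalization, which does commute with filtered colimits.

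A minor point: $\IndCoh(Y)^{\geq n}\simeq\Ind(\Coh(Y)^{\geq n})$ holds because $\tau^{\geq n}$ commutes with filtered colimits in $\IndCoh(Y)$ and preserves $\Coh(Y)$, not because of filtering by $\tau_{\leq m}$.
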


\begin{Rem}
\begin{enumerate}
\item
As noted in~\cite[Remark~9.23]{zhu}, Lemma~\ref{lem:QCoh-IndCoh-tstr} implies that $\IndCoh(Y)$ is obtained from $\QCoh(Y)$ and its canonical t-structure via
the renormalization procedure of~\cite[\S 6]{bznp}.
\item
The full subcategory $\IndCoh(Y)^{\leqslant 0} \subset \IndCoh(Y)$ is spanned by the objects $\xi$ such that $\Psi_Y(\xi)$ belongs to $\QCoh(Y)^{\leqslant 0}$. In fact, by t-exactness, if $\xi$ is in $\IndCoh(Y)^{\leqslant 0}$ then $\Psi_Y(\xi)$ is in $\QCoh(Y)^{\leqslant 0}$. Conversely, if $\Psi_Y(\xi)$ is in $\QCoh(Y)^{\leqslant 0}$ we consider the truncation fiber sequence $\tau_{\leqslant 0}(\xi) \to \xi \to \tau_{> 0}(\xi)$. By t-exactness, our assumption implies that $\Psi_Y(\tau_{>0}(\xi))=0$. Since $\Psi_Y$ restricts to an equivalence on $\IndCoh(Y)^{\geqslant 0}$ it follows that $\tau_{>0}(\xi)=0$, so that $\xi$ is in $\IndCoh(Y)^{\leqslant 0}$.
\item
We have constructed $\IndCoh(Y)$ above starting from $\QCoh(Y)$ and its canonical t-structure. A posteriori, one can also reconstruct $\QCoh(Y)$ from $\IndCoh(Y)$ and its t-structure (when it is defined): in fact, since the t-structure on $\QCoh(Y)$ is left complete, in view of Lemma~\ref{lem:QCoh-IndCoh-tstr} this $\infty$-category is the left completion of $\IndCoh(Y)$.
\end{enumerate}
\end{Rem}

\sss
The assignment $Y \mapsto \IndCoh(Y)$ also extends to a sheaf theory with values in $\DGCat$ on an appropriate $\infty$-category of correspondences for algebraic stacks\footnote{In~\cite{zhu} the author even considers ind-algebraic stacks; but this extra generality will not be required here.} almost of finite presentation over $k$. In fact there are two such extensions; we first consider the $*$-version, see~\cite[Theorem~9.32]{zhu}. In this case we have a pushforward functor $f^{\IndCoh}_* : \IndCoh(Y) \to \IndCoh(Z)$ for any morphism $f : Y \to Z$, which is obtained by the universal property of the Ind-completion from the composition
\[
\Coh(Y) \subset \QCoh(Y)^+ \xrightarrow{f_*} \QCoh(Z)^+ \xleftarrow[\sim]{\Psi_Z} \IndCoh(Z)^+ \subset \IndCoh(Z).
\]
(Here $f_*$ is the not necessarily continuous right adjoint of $f^*$, which is defined for any $f$.) 
In particular, in case the functor $f_*$ is continuous, by the universal property of the Ind-completion
we have $\Psi_Z \circ f^{\IndCoh}_* \simeq f_* \circ \Psi_Y$.

For $f : Y \to Z$ as above, we have a pullback functor $f^*_{\IndCoh} : \IndCoh(Z) \to \IndCoh(Y)$ in case $f$ is of finite tor amplitude. (See the comments above~\cite[Definition~9.7]{zhu} for the definition of this condition.) In fact, in this case the functor $f^*$ sends $\QCoh(Z)^+$ into $\QCoh(Y)^+$, and $f^*_{\IndCoh}$ is obtained as above from this functor. In particular, here we always have $\Psi_Y \circ f^*_{\IndCoh} \simeq f^* \circ \Psi_Z$.

There is also a $!$-version of the sheaf theory $\IndCoh$, see~\cite[Theorem~9.39]{zhu}. In particular, for any morphism $f : Y \to Z$ we have a continuous functor $f_{\IndCoh}^! : \IndCoh(Z) \to \IndCoh(Y)$. In case $f$ is representable by algebraic spaces and proper, $f^!_{\IndCoh}$ is the right adjoint of the functor $f_*^{\IndCoh}$.

\sss
\label{sss:IndCoh-algebra-object}
Let $Y$ be an algebraic stack which is almost of finite presentation over $k$. Assume that the structure map $Y \to \Spec(k)$ and the diagonal map $Y \to Y \times_k Y$ have finite tor amplitude. Then, again by the principle explained in~\cite[Definition~8.16(2)]{zhu}, $\IndCoh(Y)$ acquires a canonical structure of commutative algebra object in $\DGCat$. In this case, the functor $\Psi_Y$ is a morphism of commutative algebra objects.

More explicitly, the assumptions on $Y$ imply that the symmetric monoidal structure on $\QCoh(Y)$ restricts to a symmetric monoidal structure on $\Coh(Y)$. Using~\cite[Corollary~4.8.1.14]{lurie-ha} we deduce a symmetric monoidal structure on $\IndCoh(Y)$, which is the structure induced by the algebra structure constructed above.

%----------------------------------------------------------
\subsection{Quasi-coherent sheaves on an affine group scheme}
\label{ss:QCoh-gp-scheme}
%----------------------------------------------------------

In this subsection we fix a classical affine group scheme $H$ over $k$. 

\sss
\label{sss:QCohH-algebra-object}
Let us consider the $\infty$-category $ \QCoh(H)$. This $\infty$-category admits a structure of commutative algebra object in $\DGCat$ by the general considerations in~\S\ref{sss:QCoh-algebra-object}, but this structure is not very interesting since it does not involve the group structure on $H$ in any way. On the other hand, $H$  defines a Segal object in the $\infty$-category of correspondences considered in~\S\ref{sss:QCoh-sheaf-theory}, see~\cite[Example~8.8]{zhu}, and hence an algebra object by~\cite[Corollary~8.11]{zhu}, so that $\QCoh(H)$ has an associated structure of algebra object in $\DGCat$. The associated binary product is given by convolution, i.e.~the operation $(M,N) \mapsto m_*(M \boxtimes N)$ for $m : H \times_k H \to H$ the multiplication map, where we use the identification $\QCoh(H \times_k H) = \QCoh(H) \otimes_k \QCoh(H)$.

\sss
\label{sss:QCohH-algebra-object-2}
This structure can equivalently be obtained using ``only'' the monoidal structure on $\QCoh$,
following the discussion in~\cite[\S 2.1]{beraldo}. Namely, we have the symmetric monoidal functor $A \mapsto \LMod_A(\Vect_k)$ on animated $k$-algebras. (Here we still denote by $A$ the algebra object in $\Vect_k$ associated with $A$. Note that $\LMod_A(\Vect_k)$ coincides with the $\infty$-category of $A$-modules in any reasonable sense one might consider.) Our affine group scheme $H$ corresponds to a Hopf algebra $\OO(H)$ over $k$, which defines a coalgebra object in animated $k$-algebras. Therefore $\QCoh(H) = \LMod_{\OO(H)}(\Vect_k)$ has a canonical structure of coalgebra object in $\DGCat$. (Here the comultiplication is induced by the functor $m^*$.) Now $\QCoh(H)$ is dualizable as a $\Vect_k$-module, and in fact self-dual (e.g.~by~\cite[Chap.~1, Corollary~8.6.3]{gr}), so that by duality we deduce an algebra structure on $\QCoh(H)$.

\sss
\label{sss:action-QCoh}
For similar reasons, if $X$ is a prestack
endowed with an action of $H$,\footnote{See~\cite[\S 4.3]{khan} for a discussion of this notion. A basic example is a classical $k$-scheme endowed with an action of $H$ in the classical sense.} then the $\infty$-category $\QCoh(X)$ admits a canonical structure of $\QCoh(H)$-module in $\DGCat$. On the other hand one can consider the quotient stack $X/H$, which by definition is the \'etale sheafification (see~\cite[Chap.~2, \S 2.3.4]{gr}) of the quotient \emph{pre}stack of $X$ by $H$, i.e.~the colimit of the simplicial diagram defining the action of $H$ on $X$.
(As explained in~\cite[Remark~4.3.4]{khan}, the $n$-th term of this simplicial object is $H^{\times_k n} \times_k X$.)

The following statement is standard, see e.g.~\cite[Lemma~2.1.4]{tao}. Here we consider $\Vect_k = \QCoh(\Spec(k))$ as a $\QCoh(H)$-module using the previous construction for the (trivial) action of $H$ on $\Spec(k)$. (Concretely, this action is given by tensoring with global sections.)

\begin{Lem}
\label{lem:fixed-pts-QCoh-quotient}
We have a canonical identification
\[
\Funct_{\QCoh(H)}(\Vect_k, \QCoh(X)) \simeq \QCoh(X/H).
\]
\end{Lem}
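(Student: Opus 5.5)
The plan is to compute both sides as limits of cosimplicial diagrams in $\DGCat$ and to match these diagrams term by term.

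For the left-hand side, \S\ref{sss:Funct} gives $\Funct_{\QCoh(H)}(\Vect_k, \QCoh(X))$ as the limit of the cosimplicial diagram with $n$-th term $\Funct_k(\QCoh(H)^{\otimes_k n} \otimes_k \Vect_k, \QCoh(X))$. This diagram comes from the bar resolution of the module $\Vect_k$. Since $\QCoh(H)$ is self-dual in $\DGCat$ (\S\ref{sss:QCohH-algebra-object-2}) and $\QCoh(H)^{\otimes_k n} \simeq \QCoh(H^{\times_k n})$ ($H$ being affine), the $n$-th term identifies with
\[
\QCoh(H^{\times_k n}) \otimes_k \QCoh(X) \simeq \QCoh(H^{\times_k n} \times_k X).
\]
The second equivalence is the standard Künneth property of $\QCoh$ when one factor is affine, i.e.~$\QCoh(\Spec A \times Y) \simeq \LMod_A(\Vect_k)\otimes_k \QCoh(Y)$. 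It follows by writing $\QCoh(Y)$ as a limit over affines mapping to $Y$ and using that $\LMod_A(\Vect_k)$ is dualizable, so tensoring with it commutes with limits.

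For the right-hand side, $\QCoh$ is defined by right Kan extension from affines, so it sends colimits of prestacks to limits. Hence $\QCoh$ of the quotient prestack is the limit of $\QCoh(H^{\times_k n}\times_k X)$ along the $*$-pullbacks of the simplicial object of \S\ref{sss:action-QCoh}. Because $\QCoh$ satisfies fpqc (in particular étale) descent, étale sheafification does not change the answer, and we get $\QCoh(X/H)$.

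The remaining and main step is to check that the two cosimplicial diagrams agree, including all coherences and not only on terms. On the left, the coface maps are built from the $\QCoh(H)$-action on $\QCoh(X)$ (induced by $a^*$ dualized), from the convolution product (dual to $m^*$), and from the trivial action on $\Vect_k$ (the counit $e^*$). Under self-duality each of these becomes the $*$-pullback along the corresponding map of the simplicial object: action, multiplication, and projection. I would get the coherences for free by observing that the whole structure arises, as in \S\ref{sss:QCohH-algebra-object-2} and \S\ref{sss:action-QCoh}, by applying the symmetric monoidal functor $\QCoh^*$ to the simplicial object (the bar construction of the action groupoid) in prestacks. Dualizing levelwise, using that each term is dualizable and self-dual compatibly with pullbacks, turns this functorial diagram into the diagram of $*$-pullbacks. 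This identification of dual structure maps with pullbacks is where the care is needed. Passing to limits then gives the claimed canonical equivalence.
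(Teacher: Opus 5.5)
Your proposal is correct and follows the paper's proof. Both sides are written as limits of cosimplicial diagrams with $n$-th term $\QCoh(H)^{\otimes_k n}\otimes_k\QCoh(X)\simeq\QCoh(H^{\times_k n}\times_k X)$: for $\Funct_{\QCoh(H)}(\Vect_k, \QCoh(X))$ via the bar resolution and self-duality, and for $\QCoh(X/H)$ via $\QCoh$ sending colimits of prestacks to limits and being insensitive to \'etale sheafification. The paper simply asserts that the two diagrams agree, whereas you also explain why the coface maps match.

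One small slip: the trivial action of $\QCoh(H)$ on $\Vect_k$ is $\Gamma(H,-)$. Its dual is pullback along the projection $H\to\Spec(k)$, as you correctly say a few lines later, and not the counit $e^*$, which only enters through the codegeneracies.
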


\begin{proof}
Recall that the construction of $\QCoh$ is insensitive to \'etale sheafification (see~\cite[Chap.~3, Corollary~1.3.8]{gr}) and that, being defined as a right Kan extension, this functor sends colimits of prestacks to limits (in $\DGCat$). Hence $\QCoh(X/H)$ identifies with the limit of the cosimplicial diagram obtained by applying $\QCoh$ to the simplicial diagram defining the action of $H$ on $X$. The $n$-th term of this diagram is $\QCoh(H^{\times_k n} \times_k X)$,
and using~\cite[Chap.~3, Proposition~3.1.7]{gr} one sees that for any $n \in \Z_{\geq 1}$ we have
\[
\QCoh(H^{\times_k n} \times_k X) \simeq \QCoh(H)^{\otimes_k n} \otimes_k \QCoh(X).
\]

On the other hand, following the discussion in~\S\ref{sss:Funct}, the object $\Funct_{\QCoh(H)}(\Vect_k, \QCoh(X))$ can be computed using the bar resolution as the limit of the natural cosimplicial diagram with $n$-th term $\Funct_{\Vect_k}(\QCoh(H)^{\otimes_k n}, \QCoh(X))$.
By (self-)duality, for any $n$ we have
\[
\Funct_{\Vect_k}(\QCoh(H)^{\otimes_k n}, \QCoh(X)) \simeq \QCoh(H)^{\otimes_k n} \otimes_k \QCoh(X),
\]
so that this diagram identifies with the one considered above, which finishes the proof.
\end{proof}

\begin{Rem}
\label{rmk:sheafification}
We have defined the quotient stack $X/H$ using \'etale sheafification, which is the ``minimal'' sheafification which ensures that this prestack is indeed a stack. However, it is sometimes interesting to also consider fpqc or fppf sheafifications of the quotient prestack, see~\S\ref{sss:BH} below. This does not affect Lemma~\ref{lem:fixed-pts-QCoh-quotient}, or any consideration regarding quasi-coherent sheaves, since in fact $\QCoh$ satisfies flat descent, see~\cite[Chap.~3, Corollary~1.3.7]{gr}.
\end{Rem}

%----------------------------------------------------------
\subsection{Representations of an affine group scheme}
\label{ss:representations-affine-gp-scheme}
%----------------------------------------------------------

We continue as in~\S\ref{ss:QCoh-gp-scheme} with a classical affine group scheme $H$ over $k$, which we now assume to be of finite type.

\sss
\label{sss:Rep-abelian}
We want to discuss $\infty$-categories of (algebraic) representations of $H$, following~\cite[Example~9.13]{zhu}. At the level of abelian categories there is only one reasonable choice of such a category, namely the category $\Rep^\heartsuit(H)$ of algebraic $H$-modules, i.e.~$\OO(H)$-comodules. This category is a locally noetherian Grothendieck abelian category (see~\cite[Corollary~2.2.8]{dnr}),
it admits a canonical monoidal structure given by tensor product over $k$, which is exact on both sides, and it contains the full subcategory $\Rep^{\heartsuit,\fd}(H)$ of finite-dimensional representations.  Note that $\Rep^{\heartsuit,\fd}(H)$ is also the category of noetherian (or equivalently, compact) objects in $\Rep^\heartsuit(H)$.

\sss
\label{sss:DRep}
For ``derived'' versions the situation is more complicated. The first $\infty$-category one can consider is the derived $\infty$-category $D(\Rep^\heartsuit(H))$, see~\S\ref{sss:derived-cat}.
This presentable stable $\infty$-category is equipped with a canonical t-structure, which is right complete and compatible with filtered colimits, see~\S\ref{sss:DA-coh}.
(It can fail to be left complete: see~\cite{neeman}.)  The bounded below part of this t-structure identifies with the bounded below derived $\infty$-category $D^+(\Rep^\heartsuit(H))$ studied in~\cite[\S 1.3.2]{lurie-ha}, see~\cite[Remark~1.3.5.10]{lurie-ha}.

\sss
\label{sss:BH}
Now let $\BB H$ be the stack of $H$-torsors in the fpqc topology (denoted $\mathbb{B}_{\mathrm{fpqc}} H$ in~\cite{zhu}), i.e.~the fpqc sheafification of the quotient prestack of $\Spec(k)$ by $H$. Since by assumption $H$ is of finite type over $k$, $\BB H$ is also the stack of $H$-torsors in the fppf topology (i.e., the fppf sheafification of the quotient prestack of $\Spec(k)$ by $H$), and is an algebraic stack (almost of finite presentation). If $H$ is smooth then $\BB H$ is also the stack of $H$-torsors in the \'etale topology (i.e., the quotient stack $\Spec(k)/H$ with our conventions, see~\S\ref{sss:action-QCoh}). A second ``natural'' $\infty$-category of representations of $H$ is $\QCoh(\BB H)$. By the general considerations in~\S\ref{sss:QCoh-algebra-object} this $\infty$-category admits a canonical structure of commutative algebra object in $\DGCat$, which informally is given by tensor product of representations.

\sss
\label{sss:QCoh-BH}
By the generalities in~\S\ref{sss:QCoh-def}, $\QCoh(\BB H)$ admits a canonical t-structure whose heart identifies with the abelian category $\Rep^\heartsuit(H)$. The algebra structure considered above is compatible with the t-structure in the obvious way, and the induced monoidal structure on $\Rep^\heartsuit(H)$ is the natural monoidal structure considered in~\S\ref{sss:Rep-abelian}.

By the same considerations the t-structure on $\QCoh(\BB H)$ is right complete (in addition to being left complete). 
There exists a t-exact functor $D(\Rep^\heartsuit(H)) \to \QCoh(\BB H)$, whose restriction to the hearts is the identity, and which restricts to an equivalence $D(\Rep^\heartsuit(H))^+ \simto \QCoh(\BB H)^+$.
In particular, $\QCoh(\BB H)$ is the left completion of $D(\Rep^\heartsuit(H))$. 

\sss
\label{sss:def-RepH}
Now, we consider
the $\infty$-category
\[
\Rep(H) := \IndCoh(\BB H).
\]
From our point of view this $\infty$-category is the most satisfactory version of a ``derived $\infty$-category of representations'' of $H$.
By the general considerations in~\S\ref{sss:IndCoh-algebra-object}, this $\infty$-category admits a canonical structure of commutative algebra object in $\DGCat$, and $\Psi_{\BB H} : \Rep(H) \to \QCoh(\BB H)$ is a morphism of algebra objects.

By the general considerations in~\S\ref{sss:t-structure-IndCoh}, we also have a canonical t-structure on $\Rep(H)$. By Lemma~\ref{lem:QCoh-IndCoh-tstr} the functor
$\Psi_{\BB H}$
is t-exact and restricts to an equivalence
\begin{equation}
\label{eqn:equiv-Rep+-QCoh+}
\Rep(H)^+ \simto \QCoh(\BB H)^+;
\end{equation}
in particular, the heart of the t-structure on $\Rep(H)$ identifies with $\Rep^\heartsuit(H)$. Here again the algebra structure considered above is compatible with the t-structure in the obvious way, and the induced monoidal structure on $\Rep^\heartsuit(H)$ is the natural one.

\sss 
\label{sss:IndCoh-finite-cohomological-dim}
If we assume in addition that $H$ has finite cohomological dimension (e.g.~that $\mathrm{char}(k)=0$, or that $H$ is linearly reductive, see~\cite{hr}), then the discussion in~\cite[Example~9.13]{zhu} (based on~\cite[Lemma~9.14]{zhu}) shows that the $\infty$-categories $D(\Rep^\heartsuit(H))$, $\QCoh(\BB H)$ and $\Rep(H)$ are canonically equivalent.
In general, however, the $\infty$-category $\QCoh(\BB H)$ is not compactly generated (see e.g.~\cite{hr} for closely related results in the realm of triangulated categories) and therefore $\Psi_{\BB H}$ is not an equivalence.

\sss
\label{sss:Repc-DbRepfg}
By definition the subcategory $\Rep(H)^\comp$ of compact objects in $\Rep(H)$ identifies with $\Coh(\BB H)$.
By the discussion in~\S\ref{sss:def-IndCoh}, we have a monoidal equivalence $\Rep(H)^\comp \simeq \Db (\Rep^{\heartsuit,\fd}(H))$, and then an equivalence of monoidal $\infty$-categories
\[
\Rep(H) = \Ind(\Db (\Rep^{\heartsuit,\fd}(H))),
\]
where the monoidal structure on the right-hand side is constructed as in~\S\ref{sss:algebra-structure-Verdier-quotient}.
We also have an equivalence
$\Rep(H) \simeq \Ch(\Inj (\Rep^\heartsuit(H)))$.

\sss
\label{sss:Repc-tilting}
In case 
$H$ is a split reductive group scheme over $k$, one can consider the full subcategory $\mathrm{Tilt}(H) \subset \Rep^{\heartsuit,\fd}(H)$ of (finite-dimensional) tilting $H$-modules, see~\cite[Chap.~II.E]{jantzen}. In this case it is a standard fact that the canonical functor
\[
\Chb (\mathrm{Tilt}(H)) \to \Db (\Rep^{\heartsuit,\fd}(H))
\]
is an equivalence.
In this case we therefore have an equivalence of monoidal $\infty$-categories
\[
\Rep(H) = \Ind(\Chb (\mathrm{Tilt}(H))).
\]

%----------------------------------------------------------
\subsection{Properties of the \texorpdfstring{$\infty$}{infinity}-category \texorpdfstring{$\Rep(H)$}{Rep(H)}}
%----------------------------------------------------------

We continue as in~\S\ref{ss:representations-affine-gp-scheme} with
a classical affine group scheme $H$ of finite type over $k$.

\sss
\label{sss:rigidity-Rep}
In the symmetric monoidal category $\Rep^{\heartsuit,\mathrm{fd}}(H)$, each object $V$ is dualizable, with dual given by the contragredient representation $V^*$, and evaluation/coevaluation morphisms induced by the natural morphisms $V \otimes_k V^* = V^* \otimes_k V \to k$ and $k \to V^* \otimes_k V = V \otimes_k V^*$. As a consequence, each object in $\Rep(H)^\comp$ is dualizable.
By~\cite[Chap.~1, Lemma~9.1.5]{gr}, $\Rep(H)$ is rigid; in fact $\Rep(H)$ is a rigid $k$-linear monoidal $\infty$-category in the sense of~\cite[Example~7.88]{zhu}.
By contrast, note that $\QCoh(\BB H)$ is typically not rigid, e.g., the trivial representation $k$ can be noncompact if $\mathrm{char}(k)>0$.

\sss 
We now study the compatibility of the construction above with respect to products of groups. 

\begin{Lem}
\label{lem:Rep-product}
Given affine group schemes $H_1$, $H_2$ of finite type over $k$, exterior product induces a symmetric monoidal fully faithful functor
\[
\Rep(H_1) \otimes_k \Rep(H_2) \to \Rep(H_1 \times_k H_2).
\]
Assuming that one of the following conditions is satisfied:
\begin{enumerate}
\item
\label{it:Rep-product-assumption-1}
$H_1$ and $H_2$ have finite cohomological dimension;
\item
\label{it:Rep-product-assumption-2}
 for any simple $H_1$-module $V$ we have $\End_{H_1}(V)=k$,
\item
\label{it:Rep-product-assumption-3}
 for any simple $H_2$-module $V$ we have $\End_{H_2}(V)=k$,
\end{enumerate} 
 this functor is an equivalence of $\infty$-categories. 
\end{Lem}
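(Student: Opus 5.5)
Both sides are compactly generated, so I would reduce everything to compact objects. By \S\ref{sss:Repc-DbRepfg}, $\Rep(H_i)=\Ind(\Db(\Rep^{\heartsuit,\fd}(H_i)))$, and the Lurie tensor product of compactly generated $\infty$-categories is compactly generated by the exterior products of compact objects. Concretely, $\Rep(H_1)\otimes_k\Rep(H_2)\simeq \Ind(\sC_1\otimes\sC_2)$ with $\sC_i=\Rep(H_i)^\comp$, and morphism complexes computed by
\[
\Hom(V_1\boxtimes V_2, W_1\boxtimes W_2) \simeq \Hom_{H_1}(V_1,W_1)\otimes_k \Hom_{H_2}(V_2,W_2).
\]
The functor is defined on compact generators by $(V_1,V_2)\mapsto V_1\boxtimes V_2$, a finite-dimensional $H_1\times_k H_2$-module. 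This is symmetric monoidal by the construction of \S\ref{sss:algebra-structure-Chb}, being induced from the symmetric monoidal exterior product functor on $\Chb$ of finite-dimensional representations. It also preserves compact objects, because $\Rep(H_1\times_k H_2)^\comp=\Db(\Rep^{\heartsuit,\fd}(H_1\times_k H_2))$.

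For full faithfulness it suffices to check these generators. Since $\Rep(H)$ is rigid and $V^*\otimes W$ is again finite-dimensional, I would reduce to $V_1=V_2=k$. The claim then becomes the Künneth formula
\[
R\Gamma(H_1\times H_2, W_1\boxtimes W_2)\simeq R\Gamma(H_1,W_1)\otimes_k R\Gamma(H_2,W_2)
\]
for finite-dimensional $W_i$. I would prove it by computing cohomology with the Hochschild (cobar) complex $W\otimes\OO(H)^{\otimes\bullet}$, using $\OO(H_1\times H_2)=\OO(H_1)\otimes\OO(H_2)$. The tensor product of the two cobar complexes computes the cohomology of the product: it is the diagonal of the bisimplicial object, and Eilenberg--Zilber applies. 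Since everything is over a field, the Künneth theorem for complexes gives the result. This yields a fully faithful functor on compacts, hence on Ind-completions.

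Essential surjectivity reduces to showing that the essential image, which is closed under colimits, contains every finite-dimensional $H_1\times H_2$-module. It is enough to get the simple ones, since these generate $\Db(\Rep^{\heartsuit,\fd})$ under finite extensions. Under assumption \eqref{it:Rep-product-assumption-2}, I would take a simple $H_1\times H_2$-module $L$ and a simple $H_1$-submodule $V\subset L$. The module $\Hom_{H_1}(V,L)$ carries an $H_2$-action, and the evaluation map $V\otimes\Hom_{H_1}(V,L)\to L$ is a nonzero $H_1\times H_2$-map, hence surjective. It is also injective, because $\End_{H_1}(V)=k$ and the $V$-isotypic component of a semisimple $H_1$-module, here $\mathrm{soc}_{H_1}$, has this form. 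So $L\simeq V\boxtimes M$, and $M$ is simple. Case \eqref{it:Rep-product-assumption-3} is symmetric.

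Case \eqref{it:Rep-product-assumption-1} is the main obstacle, since simples need not split when $k$ is not algebraically closed. Here I would argue differently: with finite cohomological dimension, $\Rep(H)\simeq D(\Rep^\heartsuit(H))$ by \S\ref{sss:IndCoh-finite-cohomological-dim}, and $\OO(H_1\times H_2)$, the regular representation, is the tensor product of the regular representations. The regular representation $\OO(H)$ is a generator of $D(\Rep^\heartsuit(H))$ in the sense that $\Hom(M,\OO(H)\otimes -)$ detects everything. More precisely, I would use that the right adjoint (restriction to trivial group followed by coinduction) is conservative, together with finite cohomological dimension of $H_1\times H_2$, which follows from the Künneth formula and Hochschild–Serre. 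The goal is to show that the orthogonal complement of the essential image is zero. That requires checking that $W\mapsto \Hom(W,X)$ vanishing on all $V_1\boxtimes V_2$ forces $X=0$, by writing any $X$ as coinduced resolutions $X\to X\otimes\OO(H_1)\otimes\OO(H_2)\rightrightarrows\cdots$ which are finite in length up to truncation thanks to finite cohomological dimension. This last step is where I expect the real work to be.
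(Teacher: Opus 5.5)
Your full-faithfulness argument is correct. Rigidity reduces it to the Künneth formula $R\Gamma(H_1\times_k H_2,W_1\boxtimes W_2)\simeq R\Gamma(H_1,W_1)\otimes_k R\Gamma(H_2,W_2)$, which you prove via cobar complexes and Eilenberg--Zilber. Your treatment of cases~(2) and~(3) is also correct: when $\End_{H_1}(V)=k$, the evaluation map $V\otimes_k\Hom_{H_1}(V,L)\to L$ is an isomorphism. These two arguments reprove what the paper simply cites (Zhu, Proposition~9.31(1), and Milne, Proposition~4.21).

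The genuine gap is case~(1). You do not prove it, and the step you flag as ``the real work'' is left open. The mechanism you suggest, cobar resolutions ``finite in length up to truncation'', is also not what makes this case work.

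What hypothesis~(1) buys is conservativity. By Hochschild--Serre, $H_1\times_k H_2$ has finite cohomological dimension. Hence $\Rep(H_1\times_k H_2)\simeq D(\Rep^\heartsuit(H_1\times_k H_2))\simeq\QCoh(\BB(H_1\times_k H_2))$, and on this category the forgetful functor $\res_{\{1\}}:=\res^{H_1\times_k H_2}_{\{1\}}$ to $\Vect_k$ is conservative. On the renormalized $\Rep$ this fails in general: $\res_{\{1\}}$ factors through $\Psi_{\BB(H_1\times_k H_2)}$, which kills acyclic non-contractible complexes of injectives. This is exactly why some hypothesis is needed.

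Once you have conservativity, no resolutions are required. Suppose $X$ is right orthogonal to all $V_1\boxtimes V_2$. You can use three facts:
\begin{itemize}
\item $k$ is compact in $\Rep(H_1\times_k H_2)$.
\item The projection formula gives $X\otimes\OO(H_1\times_k H_2)\simeq\coind_{\{1\}}\res_{\{1\}}(X)$.
\item $\OO(H_1)\otimes_k\OO(H_2)$ is the filtered union of subcomodules $W_1\otimes_k W_2$ with $W_i$ finite-dimensional.
\end{itemize}
Together these give
\[
\res_{\{1\}}(X)\simeq\Map\bigl(k,X\otimes\OO(H_1\times_k H_2)\bigr)\simeq\mathrm{colim}\,\Map\bigl(W_1^*\boxtimes W_2^*,X\bigr)=0,
\]
so $X=0$. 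Since your functor is fully faithful with a continuous right adjoint, its essential image is then everything.

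The paper takes a shorter route. It replaces $\IndCoh$ by $\QCoh$ for $H_1$, $H_2$ and $H_1\times_k H_2$. It then invokes the general equivalence $\QCoh(\mathcal{Y}_1)\otimes_k\QCoh(\mathcal{Y}_2)\simeq\QCoh(\mathcal{Y}_1\times_k\mathcal{Y}_2)$ for such stacks (Gaitsgory--Rozenblyum, Chap.~3, Proposition~3.4.2, or Zhu, Lemma~9.11).
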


\begin{proof}
The existence and full faithfulness of the functor is given by~\cite[Proposition~9.31(1)]{zhu}. This functor is clearly monoidal. 

If the assumption in~\eqref{it:Rep-product-assumption-1} is satisfied, then $H_1 \times_k H_2$ also has finite cohomological dimension, so that as explained in~\S\ref{sss:IndCoh-finite-cohomological-dim} one can replace each ``$\IndCoh$'' by ``$\QCoh$,'' and the functor is an equivalence by~\cite[Chap.~3, Proposition~3.4.2]{gr} or~\cite[Lemma~9.11]{zhu}.
If the assumption in~\eqref{it:Rep-product-assumption-2} or~\eqref{it:Rep-product-assumption-3} is satisfied, by~\cite[Proposition~4.21]{milne} any simple representation of $H_1 \times_k H_2$ is a tensor product of (simple) representations of $H_1$ and $H_2$, and hence belongs to the essential image of our functor, which is therefore an equivalence.
\end{proof}

\begin{Rem}
\label{rmk:assumptions-Rep-product}
The assumption in Lemma~\ref{lem:Rep-product}\eqref{it:Rep-product-assumption-1} is satisfied if $\mathrm{char}(k)=0$ or if $H_1$, $H_2$ are linearly reductive, see~\S\ref{sss:IndCoh-finite-cohomological-dim}. The assumption in Lemma~\ref{lem:Rep-product}\eqref{it:Rep-product-assumption-2} is satisfied in case $k$ is algebraically closed by Schur's lemma. It is satisfied also if $H_1$ is a diagonalizable group scheme (associated with a finitely generated abelian group) or is split reductive (see~\cite[Proposition~II.2.8]{jantzen}). We do not know an example of failure of essential surjectivity of the functor considered in this lemma.
\end{Rem}

\sss
\label{sss:Res-Coind}
We conclude this subsection with a discussion of the functoriality of the construction of the $\infty$-category $\Rep$ with respect to change of groups. Consider affine group schemes $H$, $K$ of finite type over $k$, and a morphism of $k$-group schemes $H \to K$. Then we have an induced morphism of stacks $\BB H \to \BB K$, which is of finite tor amplitude. The associated $*$-pullback functor
\[
\res^K_H : \Rep(K) \to \Rep(H)
\]
is t-exact, and restricts on the hearts of the canonical t-structures to restriction of representations. The $*$-pushforward functor
\[
\coind_H^K : \Rep(H) \to \Rep(K)
\]
is its continuous right adjoint.\footnote{The functor $\coind_H^K$ is sometimes called ``induction'' in the algebraic groups literature, see e.g.~\cite{jantzen}; here we will reserve this word for a \emph{left} adjoint to a restriction functor. Often this functor is considered only when $H$ is a closed subgroup of $K$, but its standard properties easily generalize to arbitrary morphisms.}
Note that $\res^K_H$ is t-exact, and hence that $\coind_H^K$ is right t-exact.

%%%%%%%%%%%%%%%%%%%%%%%%%%
\section{Weak actions}
\label{sec:weak-actions}
%%%%%%%%%%%%%%%%%%%%%%%%%%

%------------------------------------------------------
\subsection{Definition and first properties}
\label{ss:def-properties-weak}
%------------------------------------------------------

We continue to consider a classical affine group scheme $H$ of finite type over $k$.

\sss 
We define a weak categorical representation of $H$ to be a $\Rep(H)$-module in $\DGCat$. We therefore have an $\infty$-category
\[
\Rep(H)\mod
\]
of weak categorical representations of $H$. In case $\mathrm{char}(k)=0$ the notion of categorical representation has been studied extensively already; in this setting our definition is not the standard one, but is equivalent to the usual definition by well-known results; see~\S\ref{ss:equiv-deequiv} below for details.

\sss
One can consider the dualizability of weak categorical representations of $H$, see~\S\ref{sss:dualizability-modules-DGCat}. (Since $\Rep(H)$ is commutative, there is no need to distinguish between left and right dualizability or left and right modules.) In particular, by the discussion in~\S\ref{sss:rigidity-duals}, a weak categorical representation $\sC$ is dualizable if and only if it is dualizable in $\DGCat$, or in $\PrLSt$. Moreover, in this case the dual of $\sC$ is the weak categorical representation given by $\sC^\vee$ (the dual in $\DGCat$, or in $\PrLSt$), endowed with the natural action of $\Rep(H)$.

The following statement is an immediate consequence of the rigidity of $\Rep(H)$ (discussed in~\S\ref{sss:rigidity-Rep}), see~\cite[Chap.~1, \S 9.2]{gr} or~\cite[Proposition~7.105]{zhu}.

\begin{Lem}
\label{lem:Rep-self-dual}
$\Rep(H)$ is canonically self-dual, i.e., one has an equivalence of $\Rep(H)$-modules 
\[
\Rep(H) \simeq \Rep(H)^\vee,
\]
given via the pairing 
\[
\langle - , - \rangle_H: \Rep(H) \otimes_k \Rep(H) \to \Rep(H) \xrightarrow{\on{inv}_H} \Vect_k,
\]
where the first arrow is the binary product and $\on{inv}_H := \Hom_{\Rep(H)}(k, -)$ denotes the functor of invariants. 
\end{Lem}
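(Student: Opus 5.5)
The plan is to deduce the statement from the general theory of rigid monoidal $\infty$-categories, recalled in \S\ref{sss:rigidity-duals}, applied to $\sA = \Rep(H)$. By \S\ref{sss:rigidity-Rep}, $\Rep(H)$ is a rigid $k$-linear monoidal $\infty$-category; it is compactly generated by $\Rep(H)^\comp \simeq \Db(\Rep^{\heartsuit,\fd}(H))$, and it is commutative. For a rigid monoidal $\infty$-category $\sA$, \cite[Chap.~1, \S 9.2]{gr} (see also \cite[Proposition~7.105]{zhu}) shows that $\sA$ is dualizable in $\DGCat$. Its dual is $\sA$ itself, via the pairing
\[
\sA \otimes_k \sA \xrightarrow{m_\sA} \sA \xrightarrow{\Hom_\sA(1_\sA,-)} \Vect_k .
\]
The second functor is continuous precisely because the unit $1_\sA$ is compact, which is one of the defining features of rigidity. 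The coevaluation is
\[
\Vect_k \to \sA \to \sA \otimes_k \sA, \qquad k \mapsto m_\sA^R(1_\sA).
\]
The zigzag identities then follow from the $(\sA \otimes_k \sA^\rev)$-linearity of $m_\sA^R$, by the standard argument.

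I would proceed in three steps. First, identify $\Hom_{\Rep(H)}(k,-)$ with $\inv_H$. This is tautological, since the unit of $\Rep(H)$ is the trivial representation $k$. Its continuity is guaranteed because $k$ is finite-dimensional, hence compact in $\Rep(H)$. This is exactly where the renormalization matters: in $\QCoh(\BB H)$ the analogous functor need not be continuous. Second, invoke the general result above to conclude that $\langle -,-\rangle_H$ is a perfect pairing in $\DGCat$, so that it induces an equivalence $\Rep(H) \simeq \Rep(H)^\vee$ of objects of $\DGCat$.

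Third, upgrade this to an equivalence of $\Rep(H)$-modules. By \S\ref{sss:rigidity-duals}, the dual module structure on $\Rep(H)^\vee$ is the natural one, twisted by $\varphi_\sA^{-1}$. In our situation the pairing is $\Rep(H)$-balanced: by commutativity and associativity of the tensor product,
\[
\langle a \otimes x, y \rangle_H \simeq \inv_H(a \otimes x \otimes y) \simeq \langle x, a \otimes y \rangle_H .
\]
Hence the induced map $\Rep(H) \to \Rep(H)^\vee$ is $\Rep(H)$-linear. Alternatively, on compact generators one can check that $\varphi_\sA$ is given by $a \mapsto a^{\vee\vee} \simeq a$ (via \cite[Chap.~1, Lemma~9.2.8]{gr}), since double duals of finite-dimensional representations are canonically trivial in the symmetric setting.

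The main obstacle I expect is making the linearity in the third step precise at the $\infty$-categorical level, i.e.\ producing the coherent $\Rep(H)$-linear structure rather than just compatibility on objects. To handle this, I would rely on the fact that the duality data for a rigid $\sA$ are constructed from $m_\sA$ and $m_\sA^R$, which are bimodule maps. I would then cite \cite[Proposition~7.105]{zhu} directly for the module-level statement rather than reprove it.
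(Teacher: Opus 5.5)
Your proposal is correct and follows the paper's route. The paper records the lemma as an immediate consequence of the rigidity of $\Rep(H)$ (\S\ref{sss:rigidity-Rep}), citing \cite[Chap.~1, \S 9.2]{gr} and \cite[Proposition~7.105]{zhu}, which is exactly the general result you invoke; your additional remarks (continuity of $\inv_H$ from compactness of the trivial representation, and triviality of the twist $\varphi_{\Rep(H)}$ by commutativity) are accurate elaborations of what that citation provides.
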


\sss
Note that the functor of Lemma~\ref{lem:Rep-product}, when it is an equivalence, is compatible with self-dualities in the obvious way.

In the setting of~\S\ref{sss:Res-Coind}, the functors $\res^K_H$ and $\coind^K_H$
are exchanged under duality in the sense that one has equivalences
\[
(\res^K_H)^\vee \simeq \coind_H^K, \quad \quad (\coind_H^K)^\vee \simeq \res_H^K.
\]
Indeed, for compact objects $V$ and $W$ of $\Rep(K)$ and $\Rep(H)$ respectively, using the notation of Lemma~\ref{lem:Rep-self-dual} we have 
\begin{multline*} 
\langle \res^K_H(V), W \rangle_H \simeq \Hom_{\Rep(H)}(k, \res^K_H(V) \otimes_k W) \simeq \Hom_{\Rep(H)}( \res^K_H (V^*), W) \\ 
\simeq \Hom_{\Rep(K)}( V^*, \coind_H^K(W)) \simeq \Hom_{\Rep(K)}(k, V \otimes_k \coind_H^K(W))
\simeq \langle V, \coind_H^K(W) \rangle_K, 
\end{multline*} 
whence the result follows by taking filtered colimits.     

\sss
The following properties are also consequences of the rigidity of $\Rep(H)$.

\begin{Cor}
\phantomsection
\label{cor:rigidity-RepH}
\begin{enumerate}
\item
\label{it:adjoints-RepH-linear}
Given $\sC$, $\sD$ in $\Rep(H)\mod$ and a morphism $\phi: \sC \rightarrow \sD$ in $\Rep(H)\mod$,
if $\phi$ admits a left adjoint, resp.~a continuous right adjoint,
then this functor is again $\Rep(H)$-linear. 
\item
\label{it:Funct-Rep-tensor}
Given $\sC$, $\sD$ in $\Rep(H)\mod$, one has a canonical equivalence
\[
\Funct_{\Rep(H)}(\sC, \sD) \simeq \Rep(H) \otimes_{\Rep(H) \otimes_k \Rep(H)} \Funct_k(\sC,\sD).
\]
In particular, if $\sC$ is dualizable 
(e.g.~compactly generated), one has a canonical equivalence
\[
\Funct_{\Rep(H)}(\sC, \sD) \simeq \sC^\vee \otimes_{\Rep(H)} \sD.
\]
\end{enumerate}
\end{Cor}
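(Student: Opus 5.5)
Both parts will be deduced from the rigidity of $\Rep(H)$ (see~\S\ref{sss:rigidity-Rep}) together with the general results on modules over rigid monoidal $\infty$-categories recalled in~\S\ref{ss:rigidity}.

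For~\eqref{it:adjoints-RepH-linear}, I will invoke~\cite[Chap.~1, Lemma~9.3.6]{gr}. It says that, for a rigid monoidal $\infty$-category $\sA$, any continuous right adjoint of an $\sA$-linear functor is automatically $\sA$-linear, i.e.\ the lax linear structure is strict.

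For left adjoints, I will argue directly. The functor $\phi$ is $\Rep(H)$-linear, so for compact $V \in \Rep(H)^\comp$ we have $\phi(V \otimes -) \simeq V \otimes \phi(-)$. The functor $V \otimes (-)$ has left and right adjoint $V^* \otimes (-)$. Passing to left adjoints in this isomorphism gives $\phi^L(V^* \otimes -) \simeq V^* \otimes \phi^L(-)$, which is the oplax structure on $\phi^L$ evaluated at $V^*$. Since every compact object arises as such a dual, this structure is invertible on $\Rep(H)^\comp$. Both sides are continuous in the $\Rep(H)$-variable, because $\phi^L$ is continuous, so it is invertible on all of $\Rep(H)$. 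Alternatively, one can apply~\cite[Chap.~1, Lemma~9.3.6]{gr} to the pair $(\phi^L,\phi)$, whose right adjoint $\phi$ is continuous and linear; that lemma is symmetric in this sense.

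For~\eqref{it:Funct-Rep-tensor}, I start from~\S\ref{sss:rigidity-tens-Hom}, which gives $\Funct_{\Rep(H)}(\sC,\sD) \simeq \RMod_{\mathfrak{a}}(\Funct_k(\sC,\sD))$, and from~\eqref{eqn:rigidity-A-Lmod}, which gives $\Rep(H) \simeq \LMod_{\mathfrak{a}}(\Rep(H)\otimes_k\Rep(H))$. Here I use that $\Rep(H)$ is commutative, so $\Rep(H)^\rev \simeq \Rep(H)$. Then~\cite[Chap.~1, Corollary~8.5.7]{gr} identifies $\Rep(H) \otimes_{\Rep(H)\otimes_k\Rep(H)} \sM$ with $\mathfrak{a}$-modules in $\sM$ for any module $\sM$ over $\Rep(H)\otimes_k\Rep(H)$. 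Applying this with $\sM = \Funct_k(\sC,\sD)$ yields the first equivalence. I expect the main subtlety to be bookkeeping the left versus right $\mathfrak{a}$-module conventions. This is harmless by commutativity, since $\mathfrak{a}$ lies in a symmetric monoidal $\infty$-category and its relevant structures agree.

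For the second statement, assume $\sC$ is dualizable in $\DGCat$. Then $\Funct_k(\sC,\sD)\simeq \sC^\vee\otimes_k\sD$, equivariantly for $\Rep(H)\otimes_k\Rep(H)$, with $\sC^\vee$ carrying the dual action; by~\S\ref{sss:rigidity-duals} this dual action is just the natural action, since the twist $\varphi_{\Rep(H)}$ is trivial for a symmetric rigid category. Then~\eqref{eqn:relative-tensor-product} gives
\[
\Rep(H)\otimes_{\Rep(H)\otimes_k\Rep(H)}(\sC^\vee\otimes_k\sD)\simeq \sC^\vee\otimes_{\Rep(H)}\sD.
\]
If $\sC$ is compactly generated, it is dualizable by~\cite[Chap.~1, Corollary~8.6.3]{gr}.
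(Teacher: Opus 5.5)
Your proposal is correct. Part~(1) is essentially the paper's argument: the paper just cites \cite[Chap.~1, Lemma~3.5.3 and Lemma~9.3.6]{gr} for both the left and the right adjoint, so your direct treatment of the left adjoint (mates for $V\otimes(-)\dashv V^*\otimes(-)$ on compact objects, then extension by continuity) unpacks that reference. Your treatment of the dualizable case of~(2) matches the paper's verbatim: $\Funct_k(\sC,\sD)\simeq\sC^\vee\otimes_k\sD$ followed by~\eqref{eqn:relative-tensor-product}.

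The real difference is in the first equivalence of~(2).
\begin{itemize}
\item The paper starts from $\Funct_{\Rep(H)}(\sC,\sD)\simeq\Funct_{\Rep(H)\otimes_k\Rep(H)}(\Rep(H),\Funct_k(\sC,\sD))$ (\S\ref{sss:Funct-2}). It then uses rigidity together with the self-duality of $\Rep(H)$ (Lemma~\ref{lem:Rep-self-dual}) to rewrite functors out of $\Rep(H)$ as tensoring with $\Rep(H)$ over $\Rep(H)\otimes_k\Rep(H)$.
\item You instead identify both sides with $\mathfrak a$-modules in $\Funct_k(\sC,\sD)$. For this you use the Barr--Beck description~\eqref{eqn:rigidity-A-Lmod}, \S\ref{sss:rigidity-tens-Hom}, and \cite[Chap.~1, Corollary~8.5.7]{gr}.
\end{itemize}
The paper's route is shorter and avoids any left/right bookkeeping. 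Yours is more explicit: it directly produces the description via the algebra $\mathfrak a$, i.e.\ $\OO(H)$-modules, used later in Corollary~\ref{cor:tens-Hom-weak-rep}. The price is that you must identify left and right $\mathfrak a$-module structures.

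On that point, your justification should be sharpened. Lying in a symmetric monoidal $\infty$-category only gives $\LMod_{\mathfrak a}\simeq\RMod_{\mathfrak a^{\rev}}$ (\S\ref{sss:reversed-algebra}). What makes the identification harmless is that $\mathfrak a=m^R_{\sA}(1_{\sA})$, for $\sA=\Rep(H)$, is itself commutative. This holds because $m_{\sA}$ is symmetric monoidal, so its right adjoint is lax symmetric monoidal.

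A minor citation point: compactly generated $\Rightarrow$ dualizable is \cite[Chap.~1, Proposition~7.3.2]{gr}, not Corollary~8.6.3.
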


\begin{proof}
\eqref{it:adjoints-RepH-linear}
This follows from~\cite[Chap.~1, Lemma~3.5.3 and Lemma~9.3.6]{gr}.

\eqref{it:Funct-Rep-tensor}
By the discussion in~\S\ref{sss:Funct-2}, we have
\[
\Funct_{\Rep(H)}(\sC, \sD) \simeq \Funct_{\Rep(H) \otimes_k \Rep(H)}(\Rep(H), \Funct_k(\sC,\sD)).
\]
By rigidity and self-duality of $\Rep(H)$ (see Lemma~\ref{lem:Rep-self-dual}), we deduce the desired equivalence.
In case $\sC$ is dualizable, we have $\Funct_k(\sC,\sD) \simeq \sC^\vee \otimes_k \sD$, and the equivalence follows from the discussion in~\S\ref{sss:claim-relative-tensor-product}.
\end{proof}

\sss 
It will be convenient in what follows to have alternative descriptions of relative tensor products and intertwining operators between $\Rep(H)$-modules, following the general discussion in~\S\ref{ss:rigidity}.
This description will be based on the following preliminary result, for which
we need to assume that the functor
\begin{equation}
\label{eqn:Rep-product-assumption}
\Rep(H) \otimes_k \Rep(H) \to \Rep(H \times_k H)
\end{equation}
of Lemma~\ref{lem:Rep-product} for $H_1=H_2=H$ is an equivalence (e.g.~that $\mathrm{char}(k)=0$, or $k$ is algebraically closed, or $H$ is split reductive, see Remark~\ref{rmk:assumptions-Rep-product}). 
Consider the regular bimodule 
\[
\OO((H \times_k H) / \Delta H) \simeq \OO(H),
\]
viewed as a commutative algebra object in $\Rep(H \times_k H)$. (Here, $\Delta H$ denotes the diagonal copy of $H$ in $H \times_k H$, and in the right-hand side $H \times_k H$ acts on $H$ via the left and right regular actions.) 

\begin{Prop}
\label{prop:Rep-H2-modules-OH}
Assume that the functor~\eqref{eqn:Rep-product-assumption} is an equivalence.
The functor $\coind_{\Delta H}^{H \times_k H}$ factors through an equivalence of $\infty$-categories
\[
\Rep(H) \simto \Mod_{\OO(H)}(\Rep(H \times_k H)).
\]
\end{Prop}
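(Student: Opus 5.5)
We will use the adjunction $\res^{H\times_k H}_{\Delta H} \dashv \coind_{\Delta H}^{H \times_k H}$ from~\S\ref{sss:Res-Coind} together with the Barr--Beck--Lurie theorem, in the form used in~\S\ref{sss:rigidity}. The monad attached to this adjunction acts on $\Rep(H\times_k H)$ by $M \mapsto \coind_{\Delta H}^{H\times_k H}(\res^{H\times_k H}_{\Delta H}(M))$. The first step is a projection formula: there is a canonical identification
\[
\coind_{\Delta H}^{H\times_k H}(\res^{H\times_k H}_{\Delta H}(M)) \simeq \coind_{\Delta H}^{H\times_k H}(k) \otimes M.
\]
Since $\coind$ is $\Rep(H\times_k H)$-linear by Corollary~\ref{cor:rigidity-RepH}\eqref{it:adjoints-RepH-linear}, the functor $\coind$ is a lax-monoidal right adjoint of a monoidal functor and is linear over the target. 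Hence the monad is given by tensoring with the commutative algebra object $\coind_{\Delta H}^{H\times_k H}(k)$, and its module category is $\Mod_{\coind(k)}(\Rep(H\times_k H))$. It then remains to identify $\coind_{\Delta H}^{H\times_k H}(k)$ with $\OO(H)$ as an algebra. On bounded-below parts this is the classical computation: $\coind$ is computed by $\QCoh$-pushforward along the affine morphism $\BB \Delta H \to \BB(H\times_k H)$, whose fiber is $(H\times_k H)/\Delta H \simeq H$. Thus $\coind(k)$ lies in the heart and equals $\OO(H)$ with its left-right regular action, and its algebra structure is the usual multiplication.

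The main step is to verify the Barr--Beck--Lurie hypotheses for $\coind$. It is continuous by construction, so it commutes with all colimits, in particular with $\coind$-split geometric realizations. The harder point is conservativity. I would obtain it from generation: the functor is conservative as soon as the image of $\res^{H\times_k H}_{\Delta H}$ generates $\Rep(H)$ under colimits (\cite[Chap.~1, Lemma~5.4.3]{gr}). This holds because $\res$ is essentially surjective on compact objects: any finite-dimensional $H$-module $V$ is the restriction of $V \boxtimes k$. Here the assumption that~\eqref{eqn:Rep-product-assumption} is an equivalence is used, to identify $\Rep(H\times_k H)$ with $\Rep(H)\otimes_k\Rep(H)$. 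The same identification gives control on compact objects, so that the projection formula can be checked on the generators $V\boxtimes W$ and then extended by continuity. On such generators $\res(V\boxtimes W) = V\otimes W$, and the formula reduces to the classical tensor identity for coinduction, $\coind(V\otimes W)\simeq \coind(k)\otimes(V\boxtimes W)$.

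I expect the obstacle to be making the projection formula canonical and identifying the monad with $\OO(H)\otimes(-)$ as monads, not just as functors. Rigidity handles this: for $\Rep(H\times_k H)$ rigid, a continuous right adjoint of a $\Rep(H\times_k H)$-linear functor is linear. Consequently the monad $\coind\circ\res$ is a $\Rep(H\times_k H)$-linear monad on $\Rep(H\times_k H)$, and any such monad is $A\otimes(-)$ for the algebra $A=\coind\res(1)$. This yields the factorization through $\Mod_{\OO(H)}(\Rep(H\times_k H))$ and the claimed equivalence.
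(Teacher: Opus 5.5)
Your proposal is correct and is essentially the paper's argument. The paper uses~\eqref{eqn:Rep-product-assumption} to identify $\res^{H \times_k H}_{\Delta H}$ with the multiplication of $\Rep(H)$, invokes the Barr--Beck--Lurie/rigidity discussion of~\S\ref{sss:rigidity} (conservativity from generation, linearity of the right adjoint from rigidity), and concludes with $\coind_{\Delta H}^{H \times_k H}(k) \simeq \OO(H)$, exactly as you do. The only difference is that you take linearity from the rigidity of $\Rep(H \times_k H)$ itself, which makes the hypothesis~\eqref{eqn:Rep-product-assumption} unnecessary in your version: $V = \res(V \boxtimes k)$ holds for the inflated $H \times_k H$-module, and checking the projection formula on $V \boxtimes W$ is redundant once rigidity gives linearity.
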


\begin{proof}
The multiplication map $\Rep(H) \otimes_k \Rep(H) \to \Rep(H)$ identifies via the equivalence~\eqref{eqn:Rep-product-assumption} with $\res^{H \times_k H}_{\Delta H}$. Its right adjoint therefore identifies with $\coind_{\Delta H}^{H \times_k H}$, so that the claim
follows from the discussion in~\S\ref{sss:rigidity}
and the identification of the algebra object $\coind_{\Delta H}^{H \times_k H}(k)$ with $\OO(H)$, endowed with its usual algebra structure.
\end{proof}

\sss
Let us now spell out the consequences of Proposition~\ref{prop:Rep-H2-modules-OH} obtained by specializing the discussion of~\S\ref{sss:rigidity-tens-Hom} to our particular setting.

\begin{Cor}
\label{cor:tens-Hom-weak-rep}
Assume that the functor~\eqref{eqn:Rep-product-assumption} is an equivalence, and
let $\sC$ and $\sD$ be weak categorical representations of $H$.
\begin{enumerate}
\item
\label{it:tens-Rep-modules-OH}
We have a canonical equivalence 
\[
\sC  \otimes_{\Rep(H)} \sD \simeq \Mod_{\OO(H)}(\sC \otimes_k \sD).
\]
\item
\label{it:Funct-weak}
We have a canonical equivalence
\[
\Funct_{\Rep(H)}(\sC, \sD) \simeq \Mod_{\OO(H)}(\Funct_k(\sC, \sD)).
\]
In case $\sC$ is dualizable, there is a canonical equivalence 
\[
\Funct_{\Rep(H)}(\sC, \sD) \simeq \Mod_{\OO(H)}(\sC^\vee \otimes_k \sD).
\]
\end{enumerate}
\end{Cor}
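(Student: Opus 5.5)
The plan is to specialize the general discussion of \S\ref{sss:rigidity-tens-Hom} to $\sA = \Rep(H)$, which is a rigid $k$-linear monoidal $\infty$-category by \S\ref{sss:rigidity-Rep}. That discussion gives, for any right $\sA$-module $\sM$ and left $\sA$-module $\sN$,
\[
\sM \otimes_\sA \sN \simeq \LMod_{\mathfrak{a}}(\sM \otimes_k \sN), \qquad \mathfrak{a} = m_\sA^R(1_\sA) \in \sA^\rev \otimes_k \sA .
\]
For left $\sA$-modules $\sM$, $\sN$ it also gives
\[
\Funct_\sA(\sM,\sN) \simeq \RMod_{\mathfrak{a}}(\Funct_k(\sM,\sN)).
\]
Since $\Rep(H)$ is commutative, $\sA^\rev \simeq \sA$. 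Moreover, a weak representation $\sC$ can be regarded as a right module, so the first formula applies to $\sC \otimes_{\Rep(H)} \sD$ directly.

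Next I identify the algebra $\mathfrak{a}$. Under the assumed equivalence~\eqref{eqn:Rep-product-assumption}, $\Rep(H)\otimes_k\Rep(H) \simeq \Rep(H\times_k H)$, and the multiplication $m$ corresponds to $\res^{H\times_k H}_{\Delta H}$. Its right adjoint $m^R$ is therefore $\coind_{\Delta H}^{H \times_k H}$, so $\mathfrak{a} = \coind_{\Delta H}^{H\times_k H}(k) \simeq \OO(H)$ with its usual algebra structure. This is precisely the identification used in the proof of Proposition~\ref{prop:Rep-H2-modules-OH}. Since $\OO(H)$ is a commutative algebra object of $\Rep(H\times_k H)$, left and right modules agree (\S\ref{sss:reversed-algebra}), and I write $\Mod_{\OO(H)}$. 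Substituting this into the two formulas above yields \eqref{it:tens-Rep-modules-OH} and the first claim of \eqref{it:Funct-weak}.

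For the dualizable case, I use $\Funct_k(\sC,\sD)\simeq \sC^\vee\otimes_k\sD$. By \S\ref{sss:rigidity-duals}, $\sC^\vee$ carries the natural $\Rep(H)$-action; for the commutative rigid category $\Rep(H)$ the twist by $\varphi_\sA$ is harmless, since double duals of compact objects are canonically isomorphic to the objects themselves. The $(\Rep(H)\otimes_k\Rep(H))$-module structures on the two sides match, so applying $\Mod_{\OO(H)}$ gives the final equivalence. Alternatively, one can combine Corollary~\ref{cor:rigidity-RepH}\eqref{it:Funct-Rep-tensor} with \eqref{it:tens-Rep-modules-OH} applied to $\sC^\vee$ and $\sD$.

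The main obstacle I expect is bookkeeping. I need to verify that the module structure of $\Funct_k(\sC,\sD)$ over $\sA\otimes_k\sA^\rev$, transported through the equivalence $\Rep(H)^{\otimes_k 2} \simeq \Rep(H\times_k H)$, is compatible with the $\OO(H)$-action obtained from $\coind$. Everything else is a formal specialization of earlier results.
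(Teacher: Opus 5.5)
Your proposal is correct and is the paper's own argument: the corollary is obtained by specializing the identifications of \S\ref{sss:rigidity-tens-Hom} to $\sA=\Rep(H)$, using Proposition~\ref{prop:Rep-H2-modules-OH} to identify $\mathfrak{a}=m_\sA^R(1_\sA)$ with $\OO(H)$, and the dualizable case follows from $\Funct_k(\sC,\sD)\simeq\sC^\vee\otimes_k\sD$ (as in Corollary~\ref{cor:rigidity-RepH}). Your remarks on commutativity, which makes left and right $\OO(H)$-modules agree, and on the twist by $\varphi_\sA$ being harmless for $\Rep(H)$, are consistent with \S\ref{sss:rigidity-duals} and the beginning of Section~\ref{sec:weak-actions}.
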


%---------------------------------------------
\subsection{Equivariant modules}
\label{ss:equiv-modules}
%---------------------------------------------

\sss 
\label{sss:def-AmodH}
Some basic examples of objects of $\Rep(H)\mod$ are provided by $\infty$-categories of equivariant modules. Namely, 
suppose $A$ is an algebra object in $\Rep(H)$. By the general results recalled in~\S\ref{sss:modules-oocat}, the $\infty$-category
\[
A\mod_H := \LMod_A(\Rep(H))
\]
is presentable and stable. It admits a canonical action of $\Rep(H)$, and thus
is a weak categorical representation of $H$.

Of course basic examples of algebra objects in $\Rep(H)$ are given by classical $H$-equivariant $k$-algebras, i.e.~classical $k$-algebras 
equipped with a compatible structure of $H$-module.

\sss 

We first determine the dual of $A\mod_H$.

\begin{Lem}
\label{lem:Amod-dualizable}
The $\infty$-category
$A\mod_H$ is dualizable, and there is a canonical equivalence of $\Rep(H)$-modules
\[
(A\mod_H)^\vee \simeq A^{\rev}\mod_H,
\]
where the pairing of $M \in A^{\rev}\mod_H$ and $N \in A\mod_H$ is given by 
\[
\langle M, N \rangle_{A\mod_H} \simeq \Hom_{\Rep(H)}(k, M \otimes_A N).
\]
\end{Lem}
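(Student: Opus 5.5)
We show that $A\mod_H$ is dualizable over $\Rep(H)$, with the explicit pairing of the statement, by writing down evaluation and coevaluation functors and checking the zigzag identities; this is the usual argument that the module category of an algebra in a rigid symmetric monoidal $\infty$-category is dualizable. By the discussion in~\S\ref{sss:rigidity-duals}, dualizability as a $\Rep(H)$-module can equivalently be tested in $\DGCat$. Since $\Rep(H)$ is rigid and compactly generated, so is $A\mod_H$ (\S\ref{sss:Amod-comp-gen}), and hence it is dualizable; the real content is to identify the dual.

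\textbf{Duality data.} By~\S\ref{sss:rigidity-tens-Hom} and~\S\ref{sss:claim-relative-tensor-product}, I will identify
\[
A^\rev\mod_H \otimes_{\Rep(H)} A\mod_H \simeq \LMod_{A^\rev \otimes A}(\Rep(H)),
\]
that is, with $A$-bimodules in $\Rep(H)$. The argument is the same as for $\LMod_A(\sC)\otimes \LMod_B(\sD)\simeq \LMod_{A\otimes B}(\sC\otimes\sD)$: both sides are compactly generated by free objects $(A\otimes V)\boxtimes(A\otimes W)$ with $V,W$ compact, and Hom-spaces between these free objects agree. Under this identification:
\begin{enumerate}
\item the evaluation is the $\Rep(H)$-linear functor $(M,N)\mapsto M\otimes_A N$, i.e.\ $A\otimes_{A^\rev\otimes A}(-)$, from bimodules to $\Rep(H)$;
\item the coevaluation $\Rep(H)\to A\mod_H\otimes_{\Rep(H)} A^\rev\mod_H\simeq$ bimodules is $V\mapsto A\otimes V$, with $A$ regarded as the diagonal bimodule.
\end{enumerate}
The zigzag identity then reduces to $A\otimes_A N\simeq N$ and $M\otimes_A A\simeq M$, functorially and $\Rep(H)$-linearly. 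This follows from the bar resolution, as in~\S\ref{sss:claim-relative-tensor-product}.

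\textbf{Pairing over $\Vect_k$.} The $\Rep(H)$-linear dual $\sC^\vee$ of $\S$\ref{sss:rigidity-duals} coincides with the $\DGCat$-dual, with the pairing obtained by composing with the pairing of Lemma~\ref{lem:Rep-self-dual}, i.e.\ with $\on{inv}_H$. Composing the evaluation $M\otimes_A N$ with $\on{inv}_H=\Hom_{\Rep(H)}(k,-)$ therefore gives exactly the formula in the statement.

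\textbf{Main obstacle.} The main difficulty is making the identification of $\Rep(H)$-relative tensor products of module categories with bimodule categories rigorous, and checking that the evaluation and coevaluation are $\Rep(H)$-linear and satisfy the zigzag relations coherently, not merely on objects. To handle this, I would first do everything over $\Vect_k$ for $\LMod_A(\Rep(H))$, treating it as a module over the rigid algebra $\Rep(H)$ and using~\cite[Chap.~1, \S 8.5]{gr} for tensor products of module categories. I would then use Corollary~\ref{cor:rigidity-RepH}\eqref{it:adjoints-RepH-linear} to upgrade the linearity of the adjoint functors involved, and check the zigzag relations at the level of homotopy categories, where they are just the classical identities above.
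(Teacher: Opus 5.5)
Your proposal is correct and follows essentially the same route as the paper. The paper simply deduces the statement from \cite[Chap.~1, Corollary~8.6.3 and Proposition~9.4.4]{gr} using the rigidity of $\Rep(H)$, and these two results are exactly your two steps. The first is the $\Rep(H)$-linear duality between $\LMod_A(\Rep(H))$ and $\LMod_{A^{\rev}}(\Rep(H))$, which you build by hand: evaluation $(M,N)\mapsto M\otimes_A N$, coevaluation via the diagonal bimodule, and zigzags from $A\otimes_A(-)\simeq\id$. The second is the rigidity comparison recalled in~\S\ref{sss:rigidity-duals}; it identifies this relative dual with the dual in $\DGCat$, the pairing being obtained by composing with $\on{inv}_H=\Hom_{\Rep(H)}(k,-)$.

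The only slip is cosmetic: what you need from~\S\ref{sss:Amod-comp-gen} is that $A\mod_H$ is compactly generated, not ``rigid''.
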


\begin{proof} 
The statement is a special case of~\cite[Chap.~1, Corollary~8.6.3 and Proposition~9.4.4]{gr} since $\Rep(H)$ is rigid (see~\S\ref{sss:rigidity-Rep}).
\end{proof}

\sss 
We now describe tensor products with and the linear maps out of the $\infty$-category $A\mod_H$. 

\begin{Lem} 
\label{lem:Funct-Amod}
For any weak categorical representation $\sC$ of $H$,
there are canonical equivalences 
\[
A\mod_H \otimes_{\Rep(H)} \sC \simeq \LMod_{A}(\sC), \quad
\Funct_{\Rep(H)}(A\mod_H, \sC) \simeq \RMod_{A}(\sC).
\]
In particular, one has a monoidal equivalence
\[
\Funct_{\Rep(H)}(A\mod_H, A\mod_H) \simeq (A \otimes_k A^{\rev})\mod_H,
\]
i.e., the $\Rep(H)$-linear endomorphisms of $A\mod_H$ are given by $A$-bimodules in $\Rep(H)$. 
\end{Lem}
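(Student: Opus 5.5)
The approach is to reduce everything to the standard identification $\sM \otimes_{\sA} \LMod_A(\sA) \simeq \LMod_A(\sM)$ for an algebra object $A$ in a monoidal $\infty$-category $\sA$ and a left $\sA$-module $\sM$ in $\DGCat$ (see~\cite[Chap.~1, Corollary~8.5.7]{gr}, already used in~\S\ref{sss:rigidity-tens-Hom}). Here $\sA=\Rep(H)$ and $A\mod_H=\LMod_A(\Rep(H))$, viewed as a right $\Rep(H)$-module; since $\Rep(H)$ is symmetric monoidal, left and right modules agree (\S\ref{sss:reversed-algebra}).

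For the first equivalence, I will write $\LMod_A(\Rep(H))$ via Barr--Beck--Lurie as modules for the monad $A\otimes(-)$ on $\Rep(H)$. Equivalently, it is the geometric realization in $\DGCat$ of the bar construction $\Rep(H)\otimes_{\mathcal{E}} \Vect_k$ for the appropriate monad. Tensoring over $\Rep(H)$ with $\sC$ commutes with colimits in $\DGCat$ (\S\ref{sss:relative-tensor-product}). It takes $\Rep(H)$ to $\sC$ and takes the monad $A\otimes-$ to the monad $A\otimes-$ on $\sC$. A cleaner route is to cite~\cite[Chap.~1, Corollary~8.5.7]{gr} directly: for any algebra $A$ in $\sA$ it gives $\LMod_A(\sA)\otimes_\sA \sC \simeq \LMod_A(\sC)$. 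The one thing to verify is the hypothesis that $\LMod_A(\sA)$ behaves well. It is compactly generated by~\S\ref{sss:Amod-comp-gen}, and the induction/restriction adjunction is $\Rep(H)$-linear with a continuous right adjoint, which is where the rigidity of Corollary~\ref{cor:rigidity-RepH}\eqref{it:adjoints-RepH-linear} enters.

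For the functor category, I will use Lemma~\ref{lem:Amod-dualizable}, which gives $A\mod_H$ as dualizable with dual $A^\rev\mod_H$. By Corollary~\ref{cor:rigidity-RepH}\eqref{it:Funct-Rep-tensor},
\[
\Funct_{\Rep(H)}(A\mod_H,\sC)\simeq A^\rev\mod_H\otimes_{\Rep(H)}\sC\simeq \LMod_{A^\rev}(\sC)\simeq \RMod_A(\sC),
\]
using the first equivalence for $A^\rev$ and then~\S\ref{sss:reversed-algebra}. Specializing to $\sC=A\mod_H$ gives $\RMod_A(\LMod_A(\Rep(H)))$. This is $A$-bimodules in $\Rep(H)$, equivalently left $(A\otimes_k A^\rev)$-modules in $\Rep(H)$, i.e.\ $(A\otimes_kA^\rev)\mod_H$.

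The main obstacle is the monoidality claim in the last statement: composition of functors must match relative tensor product $M\otimes_A N$ of bimodules. I would handle it by tracking the equivalence on objects. A bimodule $P$ corresponds to the functor $N\mapsto P\otimes_A N$, which is $\Rep(H)$-linear and continuous. Continuity and linearity reduce the check to free modules $A\otimes V$ with $V\in\Rep(H)^\comp$. Composition then visibly corresponds to $\otimes_A$, and associativity data come from the general monoidal structure of~\cite[Chap.~1, \S 8.2.3]{gr} on endofunctors, compared with the bimodule tensor product via~\eqref{eqn:relative-tensor-product}. I expect this bookkeeping, rather than any real mathematics, to be the delicate part.
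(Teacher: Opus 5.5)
Your proposal is correct and is essentially the paper's argument. The paper cites \cite[Chap.~1, Corollary~8.5.7]{gr} for the first equivalence, \cite[Chap.~1, Corollary~8.6.4]{gr} for the second, and \cite[Chap.~1, Proposition~8.5.4]{gr} to identify $\RMod_A(A\mod_H)$ with $(A \otimes_k A^{\rev})\mod_H$. For the second equivalence you instead re-derive it, equivalently, from Lemma~\ref{lem:Amod-dualizable}, Corollary~\ref{cor:rigidity-RepH}\eqref{it:Funct-Rep-tensor}, and the first equivalence applied to $A^{\rev}$.

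Two small remarks. First, Corollary~8.5.7 needs neither rigidity nor compact generation, so that verification step can be dropped. Second, an object-level check on free modules does not by itself give a coherent monoidal equivalence. It is cleaner to note that $A$-bimodules, with the product $\otimes_A$, act on $A\mod_H$ commuting with $\Rep(H)$. By the universal property of the endomorphism algebra, this action yields a monoidal functor to $\Funct_{\Rep(H)}(A\mod_H, A\mod_H)$, and its underlying functor is the equivalence you already constructed.
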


\begin{proof} 
The first equivalence follows from~\cite[Chap.~1, Corollary~8.5.7]{gr}, and the second one
from~\cite[Chap.~1, Corollary~8.6.4]{gr}. We then deduce the description of $\Funct_{\Rep(H)}(A\mod_H, A\mod_H)$ using~\cite[Chap.~1, Proposition~8.5.4]{gr}.
\end{proof}

\sss 
\label{sss:equiv-modules-Ind}
We conclude this section with an alternative description of the $\infty$-category $A\mod_H$. Consider as above an algebra object $A$ in $\Rep(H)$,
and denote by $\sP_A$ the full idempotent complete stable subcategory of $A\mod_H$ generated by the objects of the form $A \otimes_k V$, for $V$ in $\Rep(H)^\comp$. The following statement follows from the discussion in~\S\ref{sss:Amod-comp-gen}.

\begin{Lem}
\label{lem:equiv-modules-Ind}
With notation as above, we have a canonical equivalence
\[
A\mod_H \simeq \Ind(\sP_A).
\]
\end{Lem}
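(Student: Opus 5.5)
The plan is to apply the general statement of \S\ref{sss:Amod-comp-gen}, taking the monoidal $\infty$-category to be $\sA = \Rep(H)$, the module to be $\sC = \Rep(H)$ acting on itself, and the algebra object to be $A$. Two hypotheses have to be checked.

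\textbf{Step 1: $\Rep(H)$ is compactly generated.} By \S\ref{sss:Repc-DbRepfg} we have $\Rep(H) = \Ind(\sC_0)$ with $\sC_0 = \Rep(H)^\comp \simeq \Db(\Rep^{\heartsuit,\fd}(H))$, which is a small, idempotent complete, $k$-linear stable $\infty$-category.

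\textbf{Step 2: the induction functor.} The left adjoint $A \otimes (-) \colon \Rep(H) \to \LMod_A(\Rep(H)) = A\mod_H$ sends $V \in \Rep(H)^\comp$ to $A \otimes_k V$. Here the monoidal product in $\Rep(H)$ is the tensor product over $k$ of representations, so the objects generating $\sP_A$ are exactly the images of $\sC_0$ under induction. Hence $\sP_A$ coincides with the subcategory $\sP_0$ of \S\ref{sss:Amod-comp-gen}, which gives $A\mod_H \simeq \Ind(\sP_A)$.

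\textbf{Step 3: recall the argument of \S\ref{sss:Amod-comp-gen}.} Since $\res$ is continuous (\S\ref{sss:modules-oocat}, using that the action of $\Rep(H)$ on itself is continuous in each variable), its left adjoint preserves compact objects. So each $A \otimes_k V$ is compact in $A\mod_H$. Since $\res$ is moreover conservative, and $\Rep(H)$ is generated by $\sC_0$, the objects $A \otimes_k V$ generate $A\mod_H$ (by \cite[Chap.~1, Lemmas~5.4.3 and~7.1.5]{gr}). It follows that $A\mod_H$ is compactly generated by $\sP_A$, and \cite[Chap.~1, Lemma~7.2.4]{gr} then gives the equivalence with $\Ind(\sP_A)$, because $\sP_A$ is idempotent complete by construction.

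No real obstacle is expected. The only point needing care is the identification in Step 2 of the free module functor with $V \mapsto A \otimes_k V$, that is, making sure the monoidal product on $\Rep(H)$ used to form $\LMod_A$ is the one induced from tensor product over $k$ (\S\ref{sss:def-RepH}). This identification holds by construction.
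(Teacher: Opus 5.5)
Your proposal is correct and follows the paper's own argument. The paper also obtains this lemma by applying the general discussion in \S\ref{sss:Amod-comp-gen} with $\sC = \Rep(H) = \Ind(\Rep(H)^\comp)$, and your Step 3 simply writes out the compact-generation argument given there.
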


\begin{Rem}
\begin{enumerate}
\item
Given an algebra object $A$ in $\QCoh(\BB H)^+$, the image of $A$ in $\Rep(H)^+$ (see~\eqref{eqn:equiv-Rep+-QCoh+}) defines an algebra object in $\Rep(H)$, which we will still denote by $A$. In this case, in the procedure above, the full subcategory $\sP_A$ is contained in the full subcategory $\LMod_A(\Rep(\BB H)^+)$, which identifies with $\LMod_A(\QCoh(\BB H)^+)$.
\item
In the setting of Lemma~\ref{lem:equiv-modules-Ind}, in view of Lemma~\ref{lem:Funct-Amod}, we also have $\Funct_{\Rep(H)}(A\mod_H, A\mod_H) \simeq \Ind(\sP_{A \otimes_k A^{\rev}})$. Under this identification, the monoidal structure on $\Funct_{\Rep(H)}(A\mod_H, A\mod_H)$ is induced by the monoidal structure on $\sP_{A \otimes_k A^{\rev}}$ given by tensor products of bimodules.
\end{enumerate}
\end{Rem}

%---------------------------------------------
\subsection{Representations and group homomorphisms}
\label{ss:reps-morphisms}
%---------------------------------------------

\sss 
\label{sss:Rep-weak-rep}
Another basic class of examples of weak categorical representations which is worth studying is the following. Consider a morphism of $k$-group schemes of finite type $K \rightarrow H$.
Then the associated monoidal functor
\[
\res^H_K : \Rep(H) \rightarrow \Rep(K)
\]
(see~\S\ref{sss:Res-Coind})
lets us view $\Rep(K)$ as a weak categorical representation of $H$. In particular, when $K=\Spec(k)$ is the trivial group scheme, we obtain the trivial weak categorical representation $\Vect_k$ of $H$.

\sss
\label{sss:linearity-res-Rep}
Given affine $k$-group schemes of finite type $K_1$, $K_2$ and morphisms $K_1 \to K_2 \to H$, the restriction functor $\res^{K_2}_{K_1} : \Rep(K_2) \to \Rep(K_1)$ is clearly $\Rep(H)$-linear. By rigidity (see Corollary~\ref{cor:rigidity-RepH}\eqref{it:adjoints-RepH-linear}), its right adjoint $\coind_{K_1}^{K_2}$ is also $\Rep(H)$-linear. (This property is a counterpart in this setting of the ``generalized tensor identity,'' see~\cite[Part I, Proposition~4.8]{jantzen}.)

\sss
The tensor products 
of such categorical representations can be described as follows. 

\begin{Lem}
\label{lem:weakhecke}
Consider a pair of morphisms of $k$-group schemes of finite type $J \rightarrow H \leftarrow K$, and assume that the functors
\[
\Rep(H) \otimes_k \Rep(H) \to \Rep(H \times_k H), \qquad \Rep(J) \otimes_k \Rep(K) \to \Rep(J \times_k K)
\]
of Lemma~\ref{lem:Rep-product} are equivalences. Then
there is a canonical equivalence
\[
\Rep(J) \otimes_{\Rep(H)} \Rep(K) \simeq \OO(H)\mod_{J \times_k K}.
\]
\end{Lem}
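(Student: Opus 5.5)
We plan to compute the tensor product using Corollary~\ref{cor:tens-Hom-weak-rep}\eqref{it:tens-Rep-modules-OH}, which applies because~\eqref{eqn:Rep-product-assumption} is assumed to be an equivalence. With $\sC = \Rep(J)$ (a right $\Rep(H)$-module; commutativity lets us ignore left versus right) and $\sD = \Rep(K)$, it gives
\[
\Rep(J) \otimes_{\Rep(H)} \Rep(K) \simeq \Mod_{\OO(H)}(\Rep(J) \otimes_k \Rep(K)).
\]
By the second assumed equivalence, $\Rep(J) \otimes_k \Rep(K) \simeq \Rep(J \times_k K)$. It then remains to identify the algebra object $\OO(H)$ transported into $\Rep(J \times_k K)$ with the classical $J \times_k K$-equivariant algebra $\OO(H)$. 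Here $J \times_k K$ acts through $J \times_k K \to H \times_k H$ by the left and right regular actions. Once this is done, the target is by definition $\LMod_{\OO(H)}(\Rep(J\times_k K)) = \OO(H)\mod_{J \times_k K}$ (see~\S\ref{sss:def-AmodH}).

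To carry out the identification, we unwind how $\OO(H)$ acts on $\sC \otimes_k \sD$ in~\S\ref{sss:rigidity-tens-Hom}. The algebra $\mathfrak{a} = m^R(1)$ lives in $\Rep(H) \otimes_k \Rep(H) \simeq \Rep(H\times_k H)$, and it acts on $\sC \otimes_k \sD$ via the $\Rep(H)\otimes_k\Rep(H)$-module structure. This module structure is given by the monoidal functor $\res^H_J \otimes \res^H_K$. Under the product equivalences, this functor corresponds to $\res^{H\times_k H}_{J \times_k K}$, by compatibility of exterior products with restriction. The Barr--Beck identification of $\sA \otimes_{\sA^\rev \otimes \sA} \sN$ with $\LMod_{\mathfrak a}(\sN)$ therefore becomes modules in $\Rep(J\times_k K)$ over the algebra $\res^{H \times_k H}_{J\times_k K}(\coind_{\Delta H}^{H\times_k H}(k))$. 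By Proposition~\ref{prop:Rep-H2-modules-OH}, the algebra $\coind_{\Delta H}^{H\times_k H}(k)$ is $\OO(H)$ with its bi-regular action and usual multiplication. Restriction is symmetric monoidal and t-exact, so its image is the classical algebra $\OO(H)$ with the restricted $J\times_k K$-action.

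We expect the main obstacle to be checking that these identifications are compatible at the level of algebra objects and module $\infty$-categories, rather than just objects. Concretely, we must check that the equivalence $\Rep(J)\otimes_k\Rep(K)\simeq\Rep(J\times_k K)$ is $\Rep(H)\otimes_k\Rep(H)\simeq\Rep(H\times_kH)$-linear, where the target carries the action through $\res^{H\times_kH}_{J\times_kK}$. This is a statement about symmetric monoidal functors commuting up to coherent homotopy. We would deduce it from the functoriality of $*$-pullback along the commutative square of classifying stacks $\BB(J\times_k K)\to \BB(H\times_k H)$ and $\BB J\times \BB K \to \BB H \times \BB H$, using the sheaf-theoretic construction of the exterior product in~\cite[Proposition~9.31]{zhu}. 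Linearity then transports $\mathfrak a$-modules to $\OO(H)$-modules. Since everything lands in the heart, the algebra structure on $\OO(H)$ is classical and is determined by its $1$-categorical multiplication, so no further coherence issues should arise there.
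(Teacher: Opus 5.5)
Your proposal is correct and follows the paper's proof. The paper's argument simply invokes Corollary~\ref{cor:tens-Hom-weak-rep}\eqref{it:tens-Rep-modules-OH} together with the assumed equivalence $\Rep(J)\otimes_k\Rep(K)\simeq\Rep(J\times_k K)$. You additionally make explicit two points the paper leaves under ``our definitions and assumptions'': that this equivalence is linear over $\Rep(H)\otimes_k\Rep(H)\simeq\Rep(H\times_k H)$, and that the transported algebra is $\OO(H)$ with its restricted $J\times_k K$-action.
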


\begin{proof}
This follows from Corollary~\ref{cor:tens-Hom-weak-rep}\eqref{it:tens-Rep-modules-OH} and our definitions and assumptions.
\end{proof}

\sss
\label{sss:tensor-Funct-Rep-morphisms}
Assume that the conditions in Lemma~\ref{lem:weakhecke} are satisfied, and 
consider the classical algebraic stack $J \bs H / K$. We have an affine morphism $J \bs H / K \to \BB(J \times_k K)$, which induces an equivalence
\[
\QCoh(J \bs H / K) \simeq \Mod_{\OO(H)}(\QCoh(\BB(J \times_k K))),
\]
see~\cite[Chap.~3, Proposition~3.3.3]{gr}. By Lemma~\ref{lem:t-structures}, we have a canonical t-structure on the right-hand side induced by the t-structure on $\QCoh(\BB(J \times_k K))$, and this equivalence is clearly t-exact. It therefore restricts to an equivalence
\[
\QCoh(J \bs H / K)^+ \simeq \Mod_{\OO(H)}(\QCoh(\BB(J \times_k K))^+),
\]
which by~\eqref{eqn:equiv-Rep+-QCoh+} can be interpreted as an equivalence
\begin{equation}
\label{eqn:equiv-QCoh-JHK-OH-modules}
\QCoh(J \bs H / K)^+ \simeq \Mod_{\OO(H)}(\Rep(J \times_k K)^+).
\end{equation}
The $\infty$-category $\sP_{\OO(H)}$ considered in~\S\ref{sss:equiv-modules-Ind} (where $\OO(H)$ is seen as an algebra object in $\Rep(J \times_k K)$) is clearly contained in the right-hand side.
By Lemma~\ref{lem:equiv-modules-Ind},
$\OO(H)\mod_{J \times_k K}$ is obtained from the stable $\infty$-category
\[
\QCoh(J \bs H / K)
\]
by Ind-completing the full idempotent complete stable subcategory generated by the image under the pullback functor $\QCoh(\BB(J \times_k K)) \to \QCoh(J \bs H / K)$ of $\Coh(\BB(J \times_k K)) = \Rep(J \times_k K)^\comp$. This full subcategory is contained in $\Coh(J \bs H / K)$, but this inclusion might be strict.

\begin{Rem}
\phantomsection
\label{rmk:smoothness-Rep}
\begin{enumerate}
\item
\label{it:smoothness-Rep-char0}
If $\mathrm{char}(k)=0$, the full idempotent complete stable subcategory of $\QCoh(J \bs H / K)$ generated by the image of $\Rep(J \times_k K)^\comp$ is $\Coh(J \bs H / K)$. In fact, any object of $\Coh(J \bs H / K)^\heartsuit$ is a quotient of the pullback of an object in $\Rep^{\heartsuit,\mathrm{fd}}(J \times_k K)$. Now since $J$ and $K$ have finite cohomological dimension, the $\infty$-category $\Coh(J \bs H / K)^\heartsuit$ has finite cohomological dimension: there exists an integer $N$ such that $\Ext^i(\mathcal{F},\mathcal{G})=0$ for any $\mathcal{F}$, $\mathcal{G}$ in $\Coh(J \bs H / K)^\heartsuit$ and $i>N$. Then given $\mathcal{F} \in \Coh(J \bs H / K)^\heartsuit$ there exists a complex $\mathcal{G}^\bullet$ which is $0$ except in degrees between $-N$ and $0$, whose nonzero terms are pullbacks of objects in $\Rep^{\heartsuit,\mathrm{fd}}(J \times_k K)$, whose cohomology is $0$ except in degrees $-N$ and $0$, and a morphism $\mathcal{G}^\bullet \to \mathcal{F}$ which induces an isomorphism $\mathcal{H}^0(\mathcal{G}^\bullet) \simto \mathcal{F}$. If $\mathcal{K}$ is the kernel of the differential $d^{-N} : \mathcal{G}^{-N} \to \mathcal{G}^{-N+1}$, we then have a fiber sequence
\[
\mathcal{K}[N] \to \mathcal{G}^\bullet \to \mathcal{F}.
\]
The connecting morphism $\mathcal{F} \to \mathcal{K}[N+1]$ in this fiber sequence vanishes by our choice of $N$, so that $\mathcal{F}$ is isomorphic to a direct summand of $\mathcal{G}^\bullet$, which finishes the proof of our claim.
\item
\label{it:smoothness-Rep-flag}
Another case where the full idempotent complete stable subcategory of $\QCoh(J \bs H / K)$ generated by the image of $\Rep(J \times_k K)^\comp$ is $\Coh(J \bs H / K)$ is when $H=G$ is a split reductive group scheme over $k$ and $K=P$ is a parabolic subgroup. In fact, in this case it is known that there exists a full exceptional collection in $\Coh(G/P)$ consisting of objects obtained by pullback from $\Coh(\BB P)$, see~\cite{svdk}. Then the claim follows by applying~\cite[Theorem~2.6]{elagin}.
\end{enumerate}
\end{Rem}

\sss
\label{sss:self-duality-RepK}
Let $H$ and $K$ be as in~\S\ref{sss:Rep-weak-rep}. It is not difficult to see that $\Rep(K)$ is dualizable and self-dual as a $\Rep(H)$-module. But one can also construct the corresponding duality datum ``by hand'' as follows. For this we assume that the canonical functors $\Rep(H) \otimes_k \Rep(H) \to \Rep(H \times_k H)$ and $\Rep(K) \otimes_k \Rep(K) \to \Rep(K \times_k K)$ are equivalences. (By~\cite[Lemma~8.20]{zhu}, this implies similar properties for the products $K \times_k K \times_k K$ and $H \times_k K$.)
Consider the composition
\begin{equation}
\label{eqn:duality-Rep-1}
\Rep(K) \otimes_k \Rep(K) \simeq \Rep(K \times_k K) \xrightarrow{\res^{K \times_k K}_{\Delta K}} \Rep(K) \xrightarrow{\coind_K^H} \Rep(H)
\end{equation}
(where $\Delta K$ is the diagonal copy of $K$ in $K \times_k K$).
On the other hand, the (pushforward of the) structure sheaf $\OO_K$ determines an object in $\QCoh(K \backslash H/K)^+$, and hence via~\eqref{eqn:equiv-QCoh-JHK-OH-modules} an object in $\OO(H)\mod_{K \times_k K}$. Using Lemma~\ref{lem:weakhecke}, this object determines a functor
\begin{equation}
\label{eqn:duality-Rep-2}
\Vect_k \to \Rep(K) \otimes_{\Rep(H)} \Rep(K) \simeq \OO(H)\mod_{K \times_k K}.
\end{equation}

\begin{Lem}
\label{lem:duality-datum-Rep}
Assume that the conditions in~\S\ref{sss:self-duality-RepK} are satisfied.
The maps~\eqref{eqn:duality-Rep-1} and~\eqref{eqn:duality-Rep-2} exhibit $\Rep(K)$ as its own dual as a $\Rep(H)$-module.
\end{Lem}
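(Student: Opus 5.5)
We have to check that the evaluation \eqref{eqn:duality-Rep-1}, which we view as a $\Rep(H)$-bilinear pairing $e : \Rep(K) \otimes_{\Rep(H)} \Rep(K) \to \Rep(H)$, and the coevaluation $u$ of \eqref{eqn:duality-Rep-2} satisfy the zigzag identities. Since $\Rep(H)$ is commutative and rigid, it is enough to verify one composite
\[
\Rep(K) \xrightarrow{u \otimes \id} \Rep(K) \otimes_{\Rep(H)} \Rep(K) \otimes_{\Rep(H)} \Rep(K) \xrightarrow{\id \otimes e} \Rep(K)
\]
and show it is isomorphic to the identity; the other composite is handled by symmetry (swap the two factors of $K \times_k K$, under which $\OO_K$ is invariant). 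First we make $e$ $\Rep(H)$-balanced. Both $\res^{K\times_k K}_{\Delta K}$ and $\coind_K^H$ are $\Rep(H)$-linear, by \S\ref{sss:linearity-res-Rep}. The first functor in \eqref{eqn:duality-Rep-1} is the tensor product on $\Rep(K)$, which is $\Rep(H)$-bilinear. Hence $e$ factors through $\Rep(K)\otimes_{\Rep(H)}\Rep(K)$.

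Next we translate everything into equivariant $\OO(H)$-modules using Lemma~\ref{lem:weakhecke} and its three-factor analogue (\cite[Lemma~8.20]{zhu} supplies the needed product equivalences). The triple tensor product becomes $(\OO(H)\otimes_k\OO(H))\mod_{K\times K\times K}$. Geometrically this is a renormalized version of quasi-coherent sheaves on $K\bs H\times^K H/K$. In this model:
\begin{itemize}
\item $u\otimes\id$ sends $V\in\Rep(K)$ to $\OO_K\boxtimes V$, i.e.\ the object $\OO(H)\otimes_{\OO(H)}\dots$ supported on $K \times H$;
\item $\id\otimes e$ is computed by restricting the two middle $K$-factors to the diagonal and then applying $\coind_K^H$. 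Under \eqref{eqn:equiv-QCoh-JHK-OH-modules}, this is pushforward along the multiplication $H\times^K H\to H$ followed by taking $K$-invariants in the middle.
\end{itemize}
Running $V$ through these two steps, we restrict to the middle diagonal and then take middle $K$-invariants of the module $\OO(K)\otimes\OO(H)\otimes V$. This gives $(\OO(K)\otimes \OO(H)\otimes V)^{K}$. Using the identification $\OO(K)^{K}$-style descent $(\OO(K)\otimes W)^K\simeq W$ (coinduction from the trivial group composed with restriction, i.e.\ $\coind_{1}^{K}\res^K_1$ followed by invariants), this collapses to $\OO(H)\otimes V$ regarded in $\OO(H)\mod_{H\times K}$. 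Via Lemma~\ref{lem:weakhecke} for $H\to H\leftarrow K$, that object corresponds to $V\in\Rep(K)\simeq\Rep(H)\otimes_{\Rep(H)}\Rep(K)$. This is the identity up to a canonical isomorphism.

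To avoid coherence issues, we should carry out this computation only for compact $V$, where all objects lie in the bounded-below parts. There \eqref{eqn:equiv-Rep+-QCoh+} and \eqref{eqn:equiv-QCoh-JHK-OH-modules} let us argue with honest quasi-coherent sheaves and base change on the stacks $K\bs H/K$. We then extend by continuity, since all functors involved are continuous and the composite is $\Rep(K)$-linear.

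The main obstacle is making the identification of $\id\otimes e$ with the geometric pushforward/base-change precise. This requires checking that the equivalence of Lemma~\ref{lem:weakhecke} is compatible with the relative tensor products and with $\coind$. It also requires checking that the base-change isomorphism $\Delta_K^*\circ(\text{pullback})$ against the affine map $K\bs H\times^K H/K\to K\bs H/K$ holds on the renormalized side. I would reduce both points to compact generators of the form $\OO(H)\otimes W$, with $W$ compact, where every functor is computed by honest comodule operations.
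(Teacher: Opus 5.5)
Your overall strategy is the paper's: pass to equivariant $\OO(H)$-modules via Lemma~\ref{lem:weakhecke} and conclude with the tensor identity. Reducing to one zigzag by the swap symmetry is also fine; the paper likewise treats only one composite.

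The problem is the central identification of $\id\otimes e$, which is wrong in the framework you chose. With both relative tensor products over $\Rep(H)$, the triple product models $K\bs H\times^K H/K$. Pushforward along the multiplication $K\bs H\times^K H/K\to K\bs H/K$ is the functor $\id\otimes\coind_K^H\otimes\id$. That functor coinduces the \emph{middle} factor and lands in $\Rep(K)\otimes_{\Rep(H)}\Rep(K)$. It is not $\id\otimes e$, because it ignores the diagonal restriction of factors $2,3$ built into $e$.

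Concretely, your output $\OO(H)\otimes_k V$ carries a $K\times_k K$-equivariance. The first copy acts by left translation; the third acts by right translation and on $V$. So it is just the image of $k\boxtimes V$ in $\OO(H)\mod_{K\times_k K}$. Declaring it an object of $\OO(H)\mod_{H\times_k K}$ silently upgrades a $K$-action to an $H$-action.

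In your framework the correct recipe is as follows. Your $e$ only sees the diagonal $\BB K\to\BB K\times_{\BB H}\BB K$. So $\id\otimes e$ is $*$-restriction along $K\bs H/K\to K\bs H\times^K H/K$, $[h]\mapsto[h,1]$, which is the derived fibre at $1\in H$ of the second $\OO(H)$-factor. This is followed by derived invariants of the diagonal copy of $K$ acting on factors $2$ and $3$. Applied to $\OO(K)\otimes_k\OO(H)\otimes_k V$, this gives $(\OO(K)\otimes_k V)^K\simeq V$.

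Your displayed formula $(\OO(K)\otimes\OO(H)\otimes V)^K\simeq\OO(H)\otimes V$ is in fact exactly the paper's computation, but in the paper's bookkeeping rather than yours. The paper uses the bimodule form of the duality: evaluation out of $\Rep(K)\otimes_k\Rep(K)$, and coevaluation $\Vect_k\to\Rep(K)\otimes_{\Rep(H)}\Rep(K)$. So the zigzag runs through $\Rep(K)\otimes_k\Rep(K)\otimes_{\Rep(H)}\Rep(K)\simeq\Mod_{\OO(H)}(\Rep(K\times_k K\times_k K))$, with $\OO(H)$ living on the last two factors.

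There the coevaluation is $M\mapsto M\otimes_k\OO(K)$. The evaluation on the first two factors is literally $\res^{K\times_k K\times_k K}_{\Delta K\times_k K}$ followed by $\coind^{H\times_k K}_{K\times_k K}$. No fibre at the identity is needed, because the first factor carries no $\OO(H)$. Writing $\coind_K^H(X)=(\OO(H)\otimes_k X)^K$ recovers your formula. The tensor identity
$\coind_K^H(M\otimes_k\OO(K))\simeq\coind_K^H\coind_{\{1\}}^K(M)\simeq\coind_{\{1\}}^H(M)\simeq M\otimes_k\OO(H)$
then gives precisely the object corresponding to $M$ under $\Rep(K)\simeq\Rep(H)\otimes_{\Rep(H)}\Rep(K)\simeq\OO(H)\mod_{H\times_k K}$.

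In that setup your ``main obstacle'' disappears. No base change on the renormalized side and no reduction to bounded-below parts is needed, since every functor involved is a restriction or coinduction, already $\Rep(H)$-linear by \S\ref{sss:linearity-res-Rep}.
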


\begin{proof}
We have to prove that two endomorphisms of $\Rep(K)$ are homotopic to the identity. We consider one of them; the other one can be analyzed similarly. This endomorphism is the composition
\[
\Rep(K) \to \Rep(K) \otimes_k \Rep(K) \otimes_{\Rep(H)} \Rep(K) \to \Rep(H) \otimes_{\Rep(H)} \Rep(K) \simeq \Rep(K)
\]
where the first map is induced by~\eqref{eqn:duality-Rep-2} and the second one by~\eqref{eqn:duality-Rep-1}. Here the second $\infty$-category identifies with $\Mod_{\OO(H)}(\Rep(K \times_k K \times_k K))$, and the third one with $\Mod_{\OO(H)}(\Rep(H \times_k K))$. Moreover, via these identifications the first functor is given by $M \mapsto M \otimes_k \OO(K)$, the second one is induced by the composition
\[
\Rep(K \times_k K \times_k K) \xrightarrow{\res^{K \times_k K \times_k K}_{\Delta K \times_k K}} \Rep( K \times_k K) \xrightarrow{\coind_{K \times_k K}^{H \times_k K}} \Rep(H \times_k K),
\]
and the identity of $\Rep(K)$ corresponds to the functor $M \mapsto \OO(H) \otimes_k M$ where the action of $\OO(H)$ is by multiplication on the first factor, the action of $H$ is via the action on $\OO(H)$, and the action of $K$ is diagonal. The desired identification is given by the canonical equivalences $\coind_K^H(M \otimes_k \OO(K)) = \coind_K^H \circ \coind_{\{1\}}^K(M) \simeq \coind_{\{1\}}^H(M) \simeq M \otimes_k \OO(H)$ (see the discussion of the tensor identity in~\S\ref{sss:linearity-res-Rep}).
\end{proof}

\begin{Rem}
The self-duality of $\Rep(K)$ implies that, in the setting of Lemma~\ref{lem:weakhecke}, we also have 
\[
\Funct_{\Rep(H)}(\Rep(J), \Rep(K)) \simeq \OO(H)\mod_{J \times_k K}.
\]
\end{Rem}

\sss
The following statement uses the notions discussed in~\S\ref{sss:smooth-proper-modules}.

\begin{Cor}
\label{cor:smoothness-properness-Rep}
Assume that 
the conditions in~\S\ref{sss:self-duality-RepK} are satisfied.
\begin{enumerate}
\item
\label{it:duality-Rep-smooth}
If the structure sheaf $\OO_K$ belongs to the full idempotent-complete stable subcategory of $\QCoh(K \backslash H/K)$ generated by the image under the pullback functor
\[
\QCoh(\BB(K \times_k K)) \to \QCoh(K \backslash H/K)
\]
of $\Rep(K \times_k K)^\comp$, then $\Rep(K)$ is smooth as a $\Rep(H)$-module.
\item
\label{it:duality-Rep-proper}
If $K$ is a subgroup scheme of $H$ and the quotient $H/K$ is projective, then $\Rep(K)$ is proper as a $\Rep(H)$-module.
\end{enumerate}
\end{Cor}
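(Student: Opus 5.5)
We use the explicit duality datum of Lemma~\ref{lem:duality-datum-Rep}. With $\sM = \Rep(K)$ viewed as a $(\Rep(H),\Vect_k)$-bimodule (so $\sA=\Rep(H)$, $\sB=\Vect_k$), the unit is the functor $u : \Vect_k \to \Rep(K)\otimes_{\Rep(H)}\Rep(K) \simeq \OO(H)\mod_{K\times_k K}$ of~\eqref{eqn:duality-Rep-2}, sending $k$ to the object corresponding to $\OO_K$. The counit is the functor $e : \Rep(K)\otimes_k\Rep(K) \to \Rep(H)$ of~\eqref{eqn:duality-Rep-1}. Smoothness asks for a continuous right adjoint of $u$, and properness for a continuous right adjoint of $e$. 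Since $\Rep(H)$ and $\Vect_k$ are rigid, continuity suffices to make these adjoints bimodule maps, as recalled in~\S\ref{sss:smooth-proper-modules}.

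For~\eqref{it:duality-Rep-smooth}, a functor out of $\Vect_k$ has a continuous right adjoint if and only if the image of $k$ is compact. By Lemma~\ref{lem:equiv-modules-Ind}, $\OO(H)\mod_{K\times_k K} \simeq \Ind(\sP_{\OO(H)})$, so its compact objects are exactly the objects of $\sP_{\OO(H)}$. By~\S\ref{sss:tensor-Funct-Rep-morphisms}, under~\eqref{eqn:equiv-QCoh-JHK-OH-modules} the subcategory $\sP_{\OO(H)}$ is the idempotent-complete stable subcategory of $\QCoh(K\bs H/K)^+$ generated by pullbacks of $\Rep(K\times_k K)^\comp$. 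The hypothesis says precisely that $\OO_K$ lies in this subcategory. Its image is therefore compact, so $u$ admits a continuous right adjoint. One point to check is that the image of $k$ under $u$ is indeed the object corresponding to $\OO_K$ in $\Ind(\sP_{\OO(H)})$, and not merely in the non-renormalized category. This holds because it lies in the bounded-below part, where the two categories agree.

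For~\eqref{it:duality-Rep-proper}, the counit $e$ is the composition $\coind_K^H \circ \res^{K\times_k K}_{\Delta K}$. The functor $\res^{K\times_k K}_{\Delta K}$ is $*$-pullback along $\BB K \to \BB(K\times_k K)$. This map is representable with fiber $(K\times_k K)/\Delta K \simeq K$, which is affine. So the pullback sends compact objects to compact objects (finite-dimensional representations restrict to finite-dimensional ones), and its right adjoint $\coind$ is therefore continuous. It then suffices to show that $\coind_K^H : \Rep(K)\to\Rep(H)$ preserves compact objects. In that case its right adjoint is continuous, and the composite right adjoint is continuous too.

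The map $\BB K \to \BB H$ has fiber $H/K$, which is projective, so $\coind_K^H$ is $\IndCoh$-pushforward along a proper representable morphism. On a finite-dimensional $K$-module $V$ it computes $R\Gamma(H/K, H\times^K V)$. This is a bounded complex, since $H/K$ is a projective scheme and coherent cohomology vanishes above its dimension. Its cohomology is finite-dimensional by properness. Hence it lies in $\Db(\Rep^{\heartsuit,\fd}(H)) = \Rep(H)^\comp$.

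The main obstacle is identifying $\coind_K^H$ on $\Rep(H)^+ \simeq \QCoh(\BB H)^+$ with this sheaf cohomology: we must check that the $\IndCoh$ pushforward agrees with the usual derived induction on coherent objects. This follows from the construction of $f^{\IndCoh}_*$ via $f_*$ on $\QCoh^+$, together with flat base change along $\Spec k \to \BB H$.
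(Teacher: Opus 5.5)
Your proposal is correct and follows the paper's proof. You use the duality datum of Lemma~\ref{lem:duality-datum-Rep}, reduce both properties to preservation of compact objects, obtain (1) directly from the hypothesis via~\S\ref{sss:tensor-Funct-Rep-morphisms}, and reduce (2) to $\coind_K^H$ preserving compact objects. The paper simply cites~\cite[Part~I, Proposition~5.12]{jantzen} for that last fact, whereas you re-derive it by flat base change to $H/K$ and the vanishing and finiteness of coherent cohomology on a projective scheme; that argument is essentially the content of the cited proposition.
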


\begin{proof}
We have constructed the duality datum for $\Rep(K)$ in Lemma~\ref{lem:duality-datum-Rep}; what remains to be discussed is when these functors admit linear right adjoints, i.e.~when they admit continuous right adjoints, i.e.~when they send compact objects to compact objects (see~\cite[Chap.~1, Lemma~7.1.5]{gr}). For~\eqref{it:duality-Rep-smooth} it suffices to consider the image of $k$, i.e.~$\OO_K$, and our assumption exactly says that this object is compact, see~\S\ref{sss:tensor-Funct-Rep-morphisms}. For~\eqref{it:duality-Rep-proper} we have to study when $\coind_K^H$ sends compact objects to compact objects, and the claim follows from~\cite[Part~I, Proposition~5.12]{jantzen}.
\end{proof}

\begin{Rem}
\label{rmk:smoothness-properness-Rep}
By Remark~\ref{rmk:smoothness-Rep}, the assumption in Corollary~\ref{cor:smoothness-properness-Rep}\eqref{it:duality-Rep-smooth} is automatically satisfied if $\mathrm{char}(k)=0$, or if $H$ is a split reductive group and $K$ is a parabolic subgroup. It is not clear to us when this assumption is satisfied beyond these cases. 
\end{Rem}

%------------------------------------------------------
\subsection{Equivariantization and deequivariantization}
\label{ss:equiv-deequiv}
%------------------------------------------------------

\sss 
We will now apply the discussion of~\S\ref{ss:reps-morphisms} in the framework of (de)equivariantization. We assume that the functor
$\Rep(H) \otimes_k \Rep(H) \to \Rep(H \times_k H)$
of Lemma~\ref{lem:Rep-product} is an equivalence.

Consider the trivial weak categorical representation $\Vect_k$ of $H$, see~\S\ref{sss:Rep-weak-rep}. In this case, by Lemma~\ref{lem:weakhecke} we have
\begin{equation}
\label{eqn:End-Vect-Rep}
\Funct_{\Rep(H)}(\Vect_k, \Vect_k) \simeq \Mod_{\OO(H)}(\Vect_k) \simeq \QCoh(H),
\end{equation}
and this equivalence is an equivalence of algebra objects for the structure on the left-hand side obtained from the general considerations of~\S\ref{sss:def-Amod-DGCat} and the structure on the right-hand side discussed in~\S\S\ref{sss:QCohH-algebra-object}--\ref{sss:QCohH-algebra-object-2}.

\begin{Rem}
In the setting above, the first equivalence in Lemma~\ref{lem:weakhecke} provides an equivalence
\begin{equation}
\label{eqn:Tens-Vect-Rep}
\Vect_k \otimes_{\Rep(H)} \Vect_k \simeq \QCoh(H).
\end{equation}
See~\cite[Corollary~2.1.5(i) and Corollary~2.3.2(i)]{tao} for these equivalences in case $\mathrm{char}(k)=0$.
\end{Rem}

\sss
In particular,~\eqref{eqn:End-Vect-Rep} means that $\Vect_k$ is equipped with commuting actions of $\Rep(H)$ and $\QCoh(H)$. (Concretely, the action of $\QCoh(H)$ is the one considered in~\S\ref{sss:action-QCoh}.) 
Using this structure we obtain a continuous adjunction of $\DGCat$-linear $\infty$-categories
\[
(-) \otimes_{\QCoh(H)} \Vect_k : \modr\QCoh(H) \rightleftarrows \Rep(H)\mod: \Funct_{\Rep(H)}(\Vect_k, -).
\]
(Here, continuity and $\DGCat$-linearity of the functor $\Funct_{\Rep(H)}(\Vect_k, -)$ follow from its identification with $\Vect_k \otimes_{\Rep(H)}  -$, which is a consequence of rigidity of $\Rep(H)$, see Corollary~\ref{cor:rigidity-RepH}\eqref{it:Funct-Rep-tensor}. Note that $\QCoh(H)$ is \emph{not} rigid since it is compactly generated but the binary product does not preserve compactness.)

\begin{Prop}
\label{prop:QCoh-Rep-ff}
Assume that the functor~\eqref{eqn:Rep-product-assumption} is an equivalence.
The functor
\[
(-) \otimes_{\QCoh(H)} \Vect_k : \modr\QCoh(H) \to \Rep(H)\mod
\]
is fully faithful; in other words it
exhibits $\modr\QCoh(H)$ as a full subcategory of the $\infty$-category of weak categorical representations of $H$.
\end{Prop}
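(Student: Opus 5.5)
To prove that $(-) \otimes_{\QCoh(H)} \Vect_k$ is fully faithful, we show that the unit of the adjunction
\[
\sM \to \Funct_{\Rep(H)}\bigl(\Vect_k, \sM \otimes_{\QCoh(H)} \Vect_k\bigr)
\]
is an equivalence for every right $\QCoh(H)$-module $\sM$. We first use rigidity of $\Rep(H)$, through Corollary~\ref{cor:rigidity-RepH}\eqref{it:Funct-Rep-tensor}, to rewrite the right adjoint as $\Vect_k \otimes_{\Rep(H)} (-)$; here we use that $\Vect_k$ is compactly generated, hence dualizable, and self-dual as a $\Rep(H)$-module. The composite $\mathrm{R}\circ \mathrm{L}$ then becomes
\[
\sM \mapsto \sM \otimes_{\QCoh(H)} \bigl(\Vect_k \otimes_{\Rep(H)} \Vect_k\bigr),
\]
where $\Vect_k \otimes_{\Rep(H)} \Vect_k$ is regarded as a $(\QCoh(H),\QCoh(H))$-bimodule. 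This step uses associativity of relative tensor products and the fact that the $\QCoh(H)$-action on $\Vect_k$ commutes with the $\Rep(H)$-action.

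The key input is the bimodule equivalence
\[
\Vect_k \otimes_{\Rep(H)} \Vect_k \simeq \QCoh(H),
\]
which is \eqref{eqn:Tens-Vect-Rep}, itself coming from Lemma~\ref{lem:weakhecke} and Corollary~\ref{cor:tens-Hom-weak-rep}. With the regular bimodule structure on the right-hand side, this gives
\[
\sM \otimes_{\QCoh(H)} \QCoh(H) \simeq \sM,
\]
and we must check that under these identifications the unit map is the identity. Equivalently, it suffices to check that the unit is an equivalence in the universal case $\sM=\QCoh(H)$, provided the unit map is compatible with the bimodule structures. By continuity in $\sM$ (both functors are continuous and $\DGCat$-linear) and the bar resolution of $\sM$ as a right $\QCoh(H)$-module, the general case then follows from the free case $\sM = \sN \otimes_k \QCoh(H)$, and $\DGCat$-linearity reduces this to $\sM=\QCoh(H)$.

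In the universal case, the unit is the map of algebra objects
\[
\QCoh(H) \to \Funct_{\Rep(H)}(\Vect_k,\Vect_k),
\]
since $\QCoh(H)\otimes_{\QCoh(H)}\Vect_k=\Vect_k$. This is exactly the action map, which is the equivalence \eqref{eqn:End-Vect-Rep}; note that the paper constructs the $\QCoh(H)$-action on $\Vect_k$ from that very equivalence. The main obstacle is the bookkeeping in the previous paragraph: we must verify that the unit transformation agrees with the map obtained from the bimodule identification, and that \eqref{eqn:End-Vect-Rep} and \eqref{eqn:Tens-Vect-Rep} are compatible with the left and right $\QCoh(H)$-actions. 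Our first attempt would be to track both through the description $\Mod_{\OO(H)}(\Vect_k)$ of Corollary~\ref{cor:tens-Hom-weak-rep}, where both sides become $\OO(H)$-modules and both actions are convolution.
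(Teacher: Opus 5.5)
Your proposal is correct, and your first two paragraphs are exactly the paper's proof. The paper uses the same four-step chain (Corollary~\ref{cor:rigidity-RepH}\eqref{it:Funct-Rep-tensor}, associativity of relative tensor products, \eqref{eqn:Tens-Vect-Rep}, unitality) and simply asserts it.

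Where you go beyond the paper is the reduction in your last two paragraphs. That reduction is not a supplement to the chain but a complete proof on its own, and it makes your ``main obstacle'' disappear:
\begin{enumerate}
\item Since $\Funct_{\Rep(H)}(\Vect_k,-)$ is continuous and $\DGCat$-linear by rigidity, the unit is a transformation between continuous $\DGCat$-linear endofunctors of $\modr\QCoh(H)$.
\item The bar resolution and $\DGCat$-linearity reduce you to the case $\sM=\QCoh(H)$.
\item In that case the unit is the right-module map determined by $\mathbf{1}\mapsto\id_{\Vect_k}$, i.e.\ the action map. This is \eqref{eqn:End-Vect-Rep} by the very definition of the $\QCoh(H)$-action on $\Vect_k$.
\end{enumerate}
This route never uses \eqref{eqn:Tens-Vect-Rep}. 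So no compatibility of \eqref{eqn:Tens-Vect-Rep} with the two $\QCoh(H)$-actions has to be checked, and nothing has to be matched against the chain.

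The paper's route is shorter on the page, but it implicitly assumes that \eqref{eqn:Tens-Vect-Rep} is an equivalence of $\QCoh(H)$-bimodules. Even granting that, identifying the resulting natural equivalence $\id\simeq R\circ L$ with the unit is unnecessary. For any adjunction, an arbitrary natural equivalence $\id\simeq R\circ L$ already forces the unit to be an equivalence. This follows from the classical argument on homotopy categories: transport the monad structure to the identity functor; the unit becomes an endotransformation of the identity with a one-sided inverse; and such endotransformations commute, so the inverse is two-sided.
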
  

\begin{proof}
Since we are considering a left adjoint,
to show full faithfulness we need to show that the unit natural transformation for the adjunction is an equivalence. This follows from the following identifications for $\sC$ in $\modr\QCoh(H)$:
\begin{align*}
\Funct_{\Rep(H)}(\Vect_k, \sC \otimes_{\QCoh(H)} \Vect_k) & \simeq \Vect_k \otimes_{\Rep(H)} (\sC \otimes_{\QCoh(H)} \Vect_k) \\
 & \simeq \sC \otimes_{\QCoh(H)} (\Vect_k \otimes_{\Rep(H)} \Vect_k) \\ 
 & \simeq \sC \otimes_{\QCoh(H)} \QCoh(H) \\ 
 &\simeq \sC. 
\end{align*}
Here the first equivalence uses Corollary~\ref{cor:rigidity-RepH}\eqref{it:Funct-Rep-tensor}, the second and fourth ones are straightforward, and the third one follows from~\eqref{eqn:Tens-Vect-Rep}.
\end{proof}

\begin{Rem}
We have stated Proposition~\ref{prop:QCoh-Rep-ff} with right $\QCoh(H)$-modules because this is the structure that naturally emerges from its proof. But the map $h \mapsto h^{-1}$ identifies $H$ with the opposite group, which provides an equivalence of algebra objects $\QCoh(H) \simeq \QCoh(H)^\rev$ and hence allows us to pass back and forth between left and right $\QCoh(H)$-modules, see~\S\ref{sss:reversed-algebra}.
\end{Rem}

\sss
In the case when $\mathrm{char}(k)=0$, it is a standard result (initially due to Gaitsgory and Lurie) that the fully faithful embedding of
Proposition~\ref{prop:QCoh-Rep-ff} is an equivalence of $\infty$-categories, see~\cite[Theorem~2.3.7]{beraldo} (see also~\cite{tao,dhillon} for other discussions). A closely related property is that for any $\QCoh(H)$-module $\sC$ we have an equivalence
\begin{equation}
\label{eqn:inv-coinv-weak}
\Vect_k \otimes_{\QCoh(H)} \sC \simto \Funct_{\QCoh(H)}(\Vect_k,\sC)
\end{equation}
(although $\QCoh(H)$ is not rigid, as noted above), see~\cite[Corollary~2.3.8]{beraldo}; in other words, invariants and coinvariants coincide for $\QCoh(H)$-modules.

For a general field $k$, the functor of Proposition~\ref{prop:QCoh-Rep-ff}
is typically not an equivalence, i.e.~the counit map of the adjunction is not an equivalence. Namely, applying this counit map to $\Rep(H)^\vee \simeq \Rep(H)$
we obtain a functor
\[
\Vect_k \otimes_{\QCoh(H)} \Vect_k \to \Rep(H).
\]
Applying $\Funct_{\Vect_k}({-},\Vect_k)$,
 we obtain a functor 
\[
\Rep(H) \rightarrow \Funct_{\QCoh(H)}(\Vect_k, \Vect_k).
\]
Now, by Lemma~\ref{lem:fixed-pts-QCoh-quotient}
the right-hand side identifies with
$\QCoh(\Spec(k)/H) \simeq \QCoh(\BB H)$ (see Remark~\ref{rmk:sheafification}),
and via this identification the above functor is the functor $\Psi_{\BB H}$ of~\S\ref{sss:Psi},
which in general is not an equivalence (see the discussion in~\S\ref{sss:IndCoh-finite-cohomological-dim}). Said differently, we have genuinely renormalized the naive category of weak categorical representations of $H$, and as a consequence the functors of equivariantization and de-equivariantization are no longer inverse equivalences. 

On a related note, we expect that the natural functor as in~\eqref{eqn:inv-coinv-weak} is \emph{not} an equivalence for a general field $k$ and a general $\QCoh(H)$-module $\sC$. We have however not been able to construct an explicit counterexample. (The difficulty here is that the $\infty$-category $\Vect_k \otimes_{\QCoh(H)} \sC$ is difficult to compute in nontrivial cases.)

%---------------------------------------------------
\subsection{Categorical traces}
\label{ss:categorical-trace-weak}
%---------------------------------------------------

\sss
\label{sss:cat-trace-Rep}
We again consider a group scheme $H$ of finite type over $k$, and assume that the functor
$\Rep(H) \otimes_k \Rep(H) \to \Rep(H \times_k H)$
of Lemma~\ref{lem:Rep-product} is an equivalence.

Consider the (commutative) algebra object $\Rep(H)$ in $\DGCat$. Given any $\Rep(H)$-bimodule $M$ one can consider its trace, or Hochschild homology,
\[
\Tr(\Rep(H), M) = \Rep(H) \otimes_{\Rep(H) \otimes_k \Rep(H)} M,
\]
see e.g.~\cite[\S 7.3.1]{zhu}. This $\infty$-category is equipped with a canonical functor $[-]_M : M \to \Tr(\Rep(H), M)$.

Due to our assumption, one can consider $M$ as a $\Rep(H \times_k H)$-module, and we have
\begin{equation}
\label{eqn:trace-tensor-product}
\Tr(\Rep(H), M) = \Rep(H) \otimes_{\Rep(H \times_k H)} M
\end{equation}
where the functor $\Rep(H \times_k H) \to \Rep(H)$ is restriction to the diagonal copy of $H$. As in the proof of Corollary~\ref{cor:tens-Hom-weak-rep}\eqref{it:tens-Rep-modules-OH}, we deduce an identification
\begin{equation}
\label{eqn:trace-modules-OH}
\Tr(\Rep(H), M) = \Mod_{\OO(H)}(M)
\end{equation}
such that the canonical functor $[-]_M$ corresponds to the induction functor $M \to \Mod_{\OO(H)}(M)$, see~\S\ref{sss:modules-oocat}.

\begin{Rem}
As explained in~\cite[Remark~7.65]{zhu}, $\Tr(\Rep(H), M)$ can be interpreted as a trace in a usual sense (a certain endomorphism of a unit object), applied to the Morita $(\infty,2)$-category associated with the symmetric monoidal $\infty$-category $\DGCat$.
\end{Rem}

\sss
In particular, following~\cite[Example~7.67]{zhu}, given an algebra endomorphism $\phi : \Rep(H) \to \Rep(H)$ we can consider the $\Rep(H)$-bimodule ${}^\phi \hspace{-1pt} \Rep(H)$ which is $\Rep(H)$ with the right action given by the natural action, and the left action given by the twist by $\phi$ of the natural action. We set
\[
\Tr(\Rep(H), \phi) := \Tr(\Rep(H), {}^\phi \hspace{-1pt} \Rep(H)), \quad [-]_\phi := [-]_{{}^\phi \hspace{-1pt} \Rep(H)} : \Rep(H) \to \Tr(\Rep(H), \phi).
\]

\begin{Rem}
\label{rmk:object-trace}
Consider a dualizable $\Rep(H)$-module $M$, and denote by ${}^\phi M$ the $\Rep(H)$-module obtained by twisting the $\Rep(H)$-action by $\phi$. Following~\cite[\S 7.3.2]{zhu}, given a morphism $\alpha : M \to {}^\phi M$ we obtain via functoriality of vertical traces a morphism $[M,\alpha]_\phi : \Vect_k \to \Tr(\Rep(H), \phi)$ (explicitly described in~\cite[Eqn.~(7.61)]{zhu}), i.e.~an object of $\Tr(\Rep(H), \phi)$.
\end{Rem}

\sss
\label{sss:trace-endomorphism}
Specializing even more, consider a group homomorphism $\varphi : H \to H$. Then we have the associated pullback functor $\phi = \varphi^*$ which is an algebra morphism, see~\S\ref{sss:Res-Coind}. In this case, we still use the notation
\[
\Tr(\Rep(H), \varphi) := \Tr(\Rep(H), \varphi^*), \quad [-]_\varphi := [-]_{\varphi^*}.
\]
In view of~\eqref{eqn:trace-tensor-product} we have
\[
\Tr(\Rep(H), \varphi) = \Rep(H) \otimes_{\Rep(H \times_k H)} \Rep(H)
\]
where on the right-hand side $\Rep(H)$ is seen as a $\Rep(H \times_k H)$-module via the monoidal functor $\Rep(H \times_k H) \to \Rep(H)$ given by restriction along the morphism $(\varphi, \id) : H \to H \times_k H$. Applying~\eqref{eqn:trace-modules-OH} we also have
\[
\Tr(\Rep(H), \varphi) = \Mod_{\OO(H)}(\Rep(H)) = \OO(H)\mod_H,
\]
where $\OO(H)$ is seen as an algebra object in $\Rep(H)$ for the natural multiplication and the action induced by the action of $H$ on itself given by $g \cdot h = \varphi(g)hg^{-1}$ for $g,h \in H$. Note that, by the 
same considerations as in~\S\ref{sss:tensor-Funct-Rep-morphisms}, 
$\OO(H)\mod_H$ identifies with the Ind-completion of the full stable idempotent complete subcategory of $\Coh(H/H)$ generated by the essential image of the functor $\Rep(H)^\comp = \Coh(\BB H) \to \Coh(H/H)$ given by pullback under the projection $H/H \to \BB H$, where $H$ acts on itself as above. Under this identification, the functor $[-]_\phi$ is the Ind-completion of this pullback functor.

\sss
\label{sss:trace-Rep-id}
Now we assume that $k$ is algebraically closed (which automatically implies that the assumption of~\S\ref{sss:cat-trace-Rep} is satisfied, see Remark~\ref{rmk:assumptions-Rep-product}), and that $H$ is smooth. We assume furthermore that the following conditions are satisfied:
\begin{enumerate}
\item
the identity component $H^\circ$ is a connected reductive algebraic group;
\item
the order of the group of components $H/H^\circ$ is invertible in $k$;
\item
the order of the torsion part of the fundamental group of $H^\circ$ is invertible in $k$.
\end{enumerate}
In this setting we consider the construction of~\S\ref{sss:trace-endomorphism} in the case when $\varphi=\id$.

\begin{Prop}
\label{prop:trace-Rep-id}
Under the assumptions above, we have
\[
\Tr(\Rep(H), \id) = \IndCoh(H / H),
\]
where $H$ acts on itself via the adjoint action, and the functor $[-]_{\id} : \Rep(H) \to \IndCoh(H / H)$ is induced by pullback along the projection morphism $H / H \to \BB H$.
\end{Prop}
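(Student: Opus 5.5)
The plan is to start from the description obtained in~\S\ref{sss:trace-endomorphism} with $\varphi=\id$. There, $\Tr(\Rep(H),\id) \simeq \OO(H)\mod_H$, where $H$ acts on itself by conjugation $g\cdot h = ghg^{-1}$. Moreover, $\OO(H)\mod_H$ is the Ind-completion of the full idempotent complete stable subcategory $\sP \subset \Coh(H/H)$ generated by the pullbacks $\pi^*V$ of objects $V \in \Coh(\BB H)=\Rep(H)^\comp$ along $\pi : H/H \to \BB H$. Under this identification $[-]_{\id}$ is the Ind-completion of $\pi^*$.

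Since $\IndCoh(H/H) = \Ind(\Coh(H/H))$ by definition, and $\Coh(H/H)$ is idempotent complete, the proposition reduces to a single generation statement:
\[
\sP = \Coh(H/H),
\]
that is, $\Coh(H/H)$ is generated, as an idempotent complete stable $\infty$-category, by the ``free'' objects $\OO_H \otimes_k V$ for $V \in \Rep^{\heartsuit,\fd}(H)$. Once this holds, the inclusion $\sP \subset \Coh(H/H)$ is an equivalence. Ind-completing it gives the equivalence $\Tr(\Rep(H),\id)\simeq \IndCoh(H/H)$, compatibly with $[-]_{\id}$ and pullback along $H/H \to \BB H$.

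Before that I would check one point: the full subcategory $\sP$ really sits inside $\Coh(H/H)$ as a full subcategory, so that no renormalization discrepancy arises. By~\eqref{eqn:equiv-QCoh-JHK-OH-modules}, $\OO(H)\mod_H^+ \simeq \QCoh(H/H)^+$, and both $\sP$ and $\Coh(H/H)$ lie in the bounded below parts. Hence mapping spaces between objects of $\sP$ computed in $\OO(H)\mod_H$ agree with those computed in $\QCoh(H/H)$. This was already implicit in~\S\ref{sss:tensor-Funct-Rep-morphisms}.

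The main obstacle is the generation statement, which is a genuine positive-characteristic result about the adjoint quotient. Here I would invoke the theorem of Fargues--Scholze~\cite{fs} mentioned in the introduction. Under exactly the hypotheses listed in~\S\ref{sss:trace-Rep-id} (identity component reductive, and the order of $H/H^\circ$ and the torsion of $\pi_1(H^\circ)$ invertible in $k$), it asserts that $\Coh(H/H)$ is generated under cones, shifts and retracts by the objects $\OO_H \otimes V$ with $V$ finite-dimensional. If one wants to see the mechanism rather than just cite it, the idea is as follows. Every coherent sheaf on $H/H$ is a quotient of a free one, so one can build resolutions by free objects. The hypotheses guarantee that $\OO(H)$ has a good filtration and that suitable Ext-vanishing holds for the relevant equivariant modules. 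This forces the truncation step in such resolutions to split off as a retract, exactly as in the characteristic-$0$ argument of Remark~\ref{rmk:smoothness-Rep}\eqref{it:smoothness-Rep-char0}, but with ``finite cohomological dimension'' replaced by the finer input from~\cite{fs}. I would therefore concentrate the write-up on matching our hypotheses and our $\Coh$ conventions with those of~\cite{fs}, checking in particular that their statement covers the possibly disconnected $H$ and classical (nonderived) $H/H$. I would then conclude by Ind-completion as above.
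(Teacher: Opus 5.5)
Your proposal is correct and is the paper's argument: via \S\ref{sss:trace-endomorphism} everything reduces to the statement that $\Coh(H/H)$ is generated, as an idempotent complete stable $\infty$-category, by the pullbacks of objects of $\Rep(H)^\comp$, and this is \cite[Proposition~VIII.5.11]{fs}. The convention check you anticipate is exactly the one the paper makes: \cite{fs} is phrased for perfect complexes, which coincide with $\Coh(H/H)$ because $H$ is smooth. Your heuristic ``mechanism'' paragraph is not needed and should not be relied on. In positive characteristic $\Coh(H/H)^\heartsuit$ typically has infinite cohomological dimension, so the splitting argument of Remark~\ref{rmk:smoothness-Rep}\eqref{it:smoothness-Rep-char0} does not carry over as stated.
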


\begin{proof}
This follows from the discussion in~\S\ref{sss:trace-endomorphism} together with the fact, proved in~\cite[Proposition~VIII.5.11]{fs}, that under our assumptions $\Coh(H / H)$ is generated, as an idempotent complete stable $\infty$-category, by the essential image of $\Rep(H)^\comp$. (The statement in~\cite{fs} is written in terms of perfect complexes, but $\Coh(H / H)$ coincides with the subcategory of perfect complexes in $\QCoh(H / H)$ since $H$ is smooth.)
\end{proof}

\begin{Rem}
Consider the construction of Remark~\ref{rmk:object-trace} with $M=\Rep(K)$ where $K$ is a subgroup scheme of $H$ and $\alpha=\id$. Under the assumptions of Proposition~\ref{prop:trace-Rep-id} and those of~\S\ref{sss:self-duality-RepK}, using the explicit construction of the duality datum in Lemma~\ref{lem:duality-datum-Rep} one can check that the object of $\IndCoh(H / H)$ produced in this way is the pushforward of the structure sheaf under the $H$-equivariant morphism $H \times^K K \to H$ (where $K$ acts on itself via conjugation, and $H$ acts by multiplication on the left factor) sending $[h:k]$ to $hkh^{-1}$. (In case $H$ is a connected reductive algebraic group and $K$ is a Borel subgroup, this object is the ``coherent Grothendieck--Springer sheaf.'')
\end{Rem}

\sss
\label{sss:trace-Rep-Frob}
Let us drop the assumptions of the preceding proposition. Instead we consider a finite field $k_\circ$ and a smooth affine group scheme $H_\circ$ of finite type over $k_\circ$. Let $k$ be an algebraic closure of $k_\circ$ and $H$ be the base change of $H_\circ$ to $k$. (As above, $H$ automatically satisfies the assumption of~\S\ref{sss:cat-trace-Rep}.) Then we have the geometric Frobenius endomorphism $\Frob$ of $H$, and the finite group scheme $H^\Frob$ over $k$ of $\Frob$-fixed points of $H$, which identifies with the constant group scheme associated with the finite group $H_\circ(k_\circ)$. (For this, see e.g.~\cite[Chap.~3]{dm}.) We will assume that $H$ is connected; by the Lang--Steinberg theorem (see~\cite[Theorem~3.10]{dm}), this implies that the map $h \mapsto \Frob(h) \cdot h^{-1}$ induces an isomorphism
\begin{equation}
\label{eqn:isom-Lang}
H / H^\Frob \simto H,
\end{equation}
where the natural action of $H$ on the left-hand side corresponds to the action on the right-hand side from~\S\ref{sss:trace-endomorphism}, for $\varphi=\Frob$.

Let us consider the $\infty$-categories $\Rep(H)$ and $\Rep(H^\Frob)$. Here $\Rep(H^\Frob)$ is the Ind-completion of the bounded derived $\infty$-category of the abelian category $\Rep^{\heartsuit,\fd}_k(H_\circ(k_\circ))$ of finite-dimensional representations of the finite group $H_\circ(k_\circ)$ over $k$, see~\S\ref{sss:Repc-DbRepfg}. We have a canonical (exact) restriction functor
\begin{equation}
\label{eqn:restriction-Rep-Frob}
\Rep^{\heartsuit,\fd}(H) \to \Rep^{\heartsuit,\fd}_k(H_\circ(k_\circ)).
\end{equation}
A second condition that we will consider below is that any simple object in $\Rep^{\heartsuit,\fd}_k(H_\circ(k_\circ))$ is the image of an object of $\Rep^{\heartsuit,\fd}(H)$, i.e.~any simple representation of $H_\circ(k_\circ)$ over $k$ extends to an algebraic representation of $H$. This condition is satisfied e.g.~if $H$ is a reductive group scheme with simply connected derived subgroup, see~\cite[Remark~3.6]{brunat-luebeck}. (In this case, the datum of $H_\circ$, or equivalently of $\Frob$, can be described in terms of root data, see~\cite[Theorem~3.17]{dm} or~\cite[\S 2.3]{brunat-luebeck}. The case when $H$ is semisimple---and simply connected---is classical, and due to Steinberg, see~\cite[\S 3.2]{brunat-luebeck}.)

\begin{Prop}
\label{prop:trace-Rep-Frob}
Assume that $H$ is connected. Then $\Tr(\Rep(H), \Frob)$ is the Ind-completion of the full stable subcategory of $\Rep(H^\Frob)^\comp$ generated by the essential image of~\eqref{eqn:restriction-Rep-Frob}, and the functor $[-]_{\Frob} : \Rep(H) \to \Tr(\Rep(H), \Frob^*)$ identifies with the restriction functor $\res^H_{H^\Frob} : \Rep(H) \to \Rep(H^\Frob)$.

In particular, if any simple $k$-representation of $H_\circ(k_\circ)$ is the restriction of a simple $H$-module, then we have
\[
\Tr(\Rep(H), \Frob) = \Rep(H^\Frob)
\]
and the functor $[-]_{\Frob}$ identifies with the restriction functor $\res^H_{H^\Frob}$.
\end{Prop}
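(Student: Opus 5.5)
The plan is to combine the description of the trace from~\S\ref{sss:trace-endomorphism} with the Lang isomorphism~\eqref{eqn:isom-Lang}. By~\S\ref{sss:trace-endomorphism}, $\Tr(\Rep(H),\Frob) = \OO(H)\mod_H$, where $H$ acts on itself by $g\cdot h = \Frob(g)hg^{-1}$. This $\infty$-category is the Ind-completion of the full idempotent complete stable subcategory of $\Coh(H/H)$ generated by pullbacks along $H/H \to \BB H$ of objects of $\Rep(H)^\comp$. Under this identification $[-]_\Frob$ becomes the Ind-extension of that pullback.

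The isomorphism~\eqref{eqn:isom-Lang} is $H$-equivariant. It therefore induces an isomorphism of quotient stacks
\[
H/H \simeq H\backslash (H/H^\Frob) \simeq \BB H^\Frob,
\]
and we check that under it the projection $H/H \to \BB H$ corresponds to the morphism $\BB H^\Frob \to \BB H$ induced by the inclusion $H^\Frob \subset H$. Both stacks are classical, and the equivalence of quotient stacks can be written at the level of $\QCoh^+$ via~\eqref{eqn:equiv-QCoh-JHK-OH-modules}. Concretely, $\OO(H)$-modules in $\Rep(H)$ for the twisted action correspond, via the $H$-equivariant isomorphism of algebras $\OO(H) \simeq \OO(H/H^\Frob)$, to $\OO(H/H^\Frob)$-modules in $\Rep(H)$. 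These are identified with $H^\Frob$-representations by taking fibers at the base point; this is the standard equivalence $\QCoh(H\backslash H/H^\Frob)\simeq \QCoh(\BB H^\Frob)$. Under this identification, the free module $\OO(H)\otimes_k V$ goes to $\res^H_{H^\Frob}(V)$. Hence pullback along $H/H\to\BB H$ becomes $\res^H_{H^\Frob}$, and $\Coh(H/H)$ identifies with $\Coh(\BB H^\Frob) = \Rep(H^\Frob)^\comp \simeq \Db(\Rep^{\heartsuit,\fd}_k(H_\circ(k_\circ)))$ (see~\S\ref{sss:Repc-DbRepfg}). Since $H^\Frob$ is finite, $\BB H^\Frob$ is an algebraic stack almost of finite presentation, so this applies.

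The subcategory $\sP_{\OO(H)}$ then corresponds to the idempotent complete stable subcategory of $\Rep(H^\Frob)^\comp$ generated by the essential image of~\eqref{eqn:restriction-Rep-Frob}, and Lemma~\ref{lem:equiv-modules-Ind} gives the first claim. I expect the main obstacle to be bookkeeping that the identifications are compatible: the equivariance of the Lang map, the sign and side conventions in the twisted action, and ensuring that $[-]_\Frob$ matches $\res^H_{H^\Frob}$ rather than a twist of it. I would settle this by checking the compatibility on the heart for free modules $\OO(H)\otimes V$, where it reduces to the explicit isomorphism $h\mapsto \Frob(h)h^{-1}$.

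For the second claim, suppose every simple $k$-representation of $H_\circ(k_\circ)$ extends to $H$. Then every simple object of $\Rep^{\heartsuit,\fd}_k(H_\circ(k_\circ))$ lies in the image of~\eqref{eqn:restriction-Rep-Frob}. Every finite-dimensional representation has a finite composition series, so it lies in the stable subcategory generated by its simple subquotients. Every object of $\Db$ of this finite-length abelian category is in turn built from its cohomology objects by finitely many extensions. So the generated subcategory is all of $\Rep(H^\Frob)^\comp$, idempotent completion being unnecessary, and Ind-completion gives $\Tr(\Rep(H),\Frob)=\Rep(H^\Frob)$.
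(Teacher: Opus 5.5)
Your proposal is correct and follows the paper's proof. You start from the description $\Tr(\Rep(H),\Frob)=\OO(H)\mod_H$ of \S\ref{sss:trace-endomorphism}, use the Lang isomorphism to identify $H/H$ with $\BB H^\Frob$ so that the projection to $\BB H$ becomes the canonical map $\BB H^\Frob\to\BB H$ and pullback becomes $\res^H_{H^\Frob}$, and then read off the generated subcategory of $\Coh(\BB H^\Frob)=\Db\Rep^{\heartsuit,\fd}_k(H_\circ(k_\circ))$. Your finite-length argument for the ``in particular'' statement is the routine step the paper leaves implicit.
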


\begin{proof}
Recall the discussion in~\S\ref{sss:trace-endomorphism}. In the present setting we need to consider the algebraic stack $H / H$ where $H$ acts on itself by $g \cdot h = \Frob(g) h g^{-1}$. By~\eqref{eqn:isom-Lang} this action identifies $H$ with $H/H^\Frob$. We therefore have $H / H = \BB H^\Frob$. Under this identification, the map $H / H \to \BB H$ identifies with the canonical map $\BB H^\Frob \to \BB H$, so what we need to consider is the full stable idempotent complete subcategory of $\Coh(\BB H^\Frob) = \Db \Rep^{\heartsuit,\fd}_k(H_\circ(k_\circ))$ generated by the image of the restriction functor~\eqref{eqn:restriction-Rep-Frob}, as claimed. 
\end{proof}

\begin{Rem}
Assume that $H$ is connected, and
consider the construction of Remark~\ref{rmk:object-trace} with $M=\Rep(K)$ where $K$ is a connected subgroup scheme of $H$ obtained by base change from a subgroup scheme $K_\circ$ of $H_\circ$, and with $\alpha$ given by pullback under the Frobenius morphism of $K$. 
Using the explicit construction of the duality datum in Lemma~\ref{lem:duality-datum-Rep} one can check that the object of $\OO(H/H^\Frob)\mod_H$
produced in this way is $\OO(H/K^\Frob)$. Under the second assumption in Proposition~\ref{prop:trace-Rep-Frob}, the corresponding representation of $H^\Frob$ is $\coind_{K^\Frob}^{H^\Frob}(k)$.
\end{Rem}

%%%%%%%%%%%%%%%%%%%%%%%%%%%%%%
\section{The monoidal \texorpdfstring{$\infty$}{infinity}-category of Harish-Chandra bimodules}
\label{sec:hc}
%%%%%%%%%%%%%%%%%%%%%%%%%%%%%%

We fix a $k$-group scheme of finite type $H$, and denote its Lie algebra by $\fh$. Our goal in this section is to define and study an algebra object $\hc_H$ in $\DGCat$ whose underlying $\infty$-category is an appropriate derived $\infty$-category of Harish-Chandra bimodules for $H$. This construction will follow the pattern of~\S\ref{sss:algebra-structure-Verdier-quotient}, so we start by considering abelian and additive versions.

%---------------------------------------------------
\subsection{The abelian category of Harish-Chandra bimodules}
%---------------------------------------------------

\sss 
\label{sss:hc-bimodules-def}
Recall that a \emph{Harish-Chandra bimodule} for $H$ is an $H$-equivariant $U(\mathfrak{h})$-bimodule such that the differential of the action of $H$ coincides with the restriction of the action of $U(\mathfrak{h}) \otimes_k U(\mathfrak{h})^{\rev}$ along the morphism $\Delta : U(\fh) \to U(\fh) \otimes_k U(\fh)^{\rev}$ sending $x \in \fh$ to $x \otimes 1 - 1 \otimes x$. (These objects are a classical object of study when $k=\mathbb{C}$. For recent occurrences over fields of positive characteristic, see~\cite{br-Hecke,br,losev}.)
We will denote by $\hc_{H}^\heartsuit$ the abelian category of Harish-Chandra bimodules. This category admits a natural monoidal structure, induced by tensor product $\otimes_{U(\fh)}$ of bimodules. (Note that the monoidal product is \emph{not} exact.)

\sss
\label{sss:hc-fg}
It is easily seen from local finiteness of $H$-modules that an object of $\hc_H^\heartsuit$ is finitely generated as a left $U(\fh)$-module if and only if it is finitely generated as a right $U(\fh)$-module, and if and only if it is finitely generated as a $U(\fh)$-bimodule. (Such Harish-Chandra bimodules will simply be called finitely generated.) We will denote by
$\hc_H^{\heartsuit, \mathrm{fg}}$ the full subcategory of $\hc_H^\heartsuit$ spanned by objects which satisfy this property. Then $\hc_H^{\heartsuit, \mathrm{fg}}$ is a monoidal Serre subcategory of $\hc_H^{\heartsuit}$.

\sss
\label{sss:diagonally-induced-hc-bimodules}
Recall from~\S\ref{sss:Rep-abelian} the abelian category $\Rep^{\heartsuit}(H)$ of 
$H$-modules. We have a natural exact monoidal functor 
\begin{equation}
\label{eqn:induction-Rep-hc}
\ind_{\hc}^H:
\Rep^{\heartsuit}(H) \rightarrow \hc_{H}^\heartsuit, \qquad V \mapsto \ind_{\hc}^H(V) = \bigl( U(\fh) \otimes_k U(\fh)^{\rev} \bigr) \otimes_{U(\fh)} V,
\end{equation}
sending a representation to its induction along $\Delta$, which is 
left adjoint to the functor $\hc_{H}^\heartsuit \to \Rep^{\heartsuit}(H)$ of forgetting the actions of $U(\fh)$. 
This functor restricts to a functor
\[
\Rep^{\heartsuit,\fd}(H) \rightarrow \hc_{H}^{\heartsuit, \mathrm{fg}}.
\]

For any $V \in \Rep^{\heartsuit}(H)$ we have an identification
\[
\ind_{\hc}^H(V) \simeq V \otimes_k U(\fh)
\]
where on the right-hand side $H$ acts diagonally, the left action of $U(\fh)$ is diagonal using left multiplication on the right factor, and the right action of $U(\fh)$ is given by right multiplication on the right factor. In other words, using Sweedler's notation we have
\[
a \cdot (v \otimes x) \cdot b = (a_{(1)} \cdot v) \otimes a_{(2)} x b 
\]
for $a,b,x \in U(\fh)$ and $v \in V$.

We also have an identification
\[
\ind_{\hc}^H(V) \simeq U(\fh) \otimes_k V
\]
where on the right-hand side $H$ acts diagonally, the left action of $U(\fh)$ is given by left multiplication on the left factor, and the right action of $U(\fh)$ is diagonal using right multiplication on the left factor; in other words the actions of $U(\fh)$ are now defined by
\[
a \cdot (x \otimes v) \cdot b = ax b_{(1)} \otimes (S(b_{(2)}) \cdot v)
\]
for $a,b,x \in U(\fh)$ and $v \in V$, where $S$ is the antipode.
For details, see e.g.~\cite[\S 3.4]{br-Hecke}.\footnote{There is a typo in the description of the isomorphism $V \otimes U(\fh) \simto U(\fh) \otimes V$ in~\cite{br-Hecke}, which should read $v \otimes x \mapsto x_{(1)} \otimes (S(x_{(2)}) \cdot v)$.}

\sss
Permutation of the two actions of $U(\mathfrak{h})$ (using the anti-automorphism of $U(\mathfrak{h})$ given by the antipode) yields a canonical involutive auto-equivalence $\inv_*$ of $\hc_H^\heartsuit$, which satisfies
\[
\inv_*(M \otimes_{U(\fh)} N) \simeq (\inv_* N) \otimes_{U(\fh)} (\inv_* M)
\]
canonically for $M,N \in \hc_H^\heartsuit$. It is clear that $\inv_*$ stabilizes $\hc_H^{\heartsuit, \mathrm{fg}}$, and that we have a canonical isomorphism
\[
\inv_* \circ \ind_{\hc}^H \simeq \ind_{\hc}^H.
\]

\sss
\label{sss:HC-equiv-modules-abelian}
The algebra $U(\fh)$, endowed with the adjoint $H$-action, is naturally an algebra object in the abelian category $\Rep^\heartsuit(H)$.
One can therefore consider the abelian category $\LMod_{U(\fh)}(\Rep^{\heartsuit}(H))$ of left $U(\fh)$-modules in $\Rep^\heartsuit(H)$, i.e.~$H$-equivariant $U(\fh)$-modules. It is a standard (and easy) fact that forgetting the right action of $U(\fh)$ induces an equivalence of abelian categories
\begin{equation}
\label{eqn:HC-equiv-modules-abelian}
\hc_H^\heartsuit \simeq \LMod_{U(\fh)}(\Rep^{\heartsuit}(H)),
\end{equation}
see e.g.~\cite[Equation~(3.5)]{br-Hecke}. Note that $\LMod_{U(\fh)}(\Rep^{\heartsuit}(H))$ is a Grothendieck abelian category. (Here a generator can be obtained from a generator $M$ of $\Rep^{\heartsuit}(H)$ by considering the $H$-equivariant $U(\fh)$-module $U(\fh) \otimes_k M$.) Hence $\hc_H^\heartsuit$ is also a Grothendieck abelian category.

 \sss 
We will denote by $\sH_H^\circ$ the full additive monoidal 
subcategory of $\hc_H^{\heartsuit, \mathrm{fg}}$ spanned by objects of the form $\ind_{\hc}^H(V)$ with $V \in \Rep^{\heartsuit,\mathrm{fd}}(H)$. 
We will also denote by $\sH_H$ the full additive monoidal 
subcategory of $\hc_H^{\heartsuit, \mathrm{fg}}$ spanned by objects which are finite extensions of objects in $\sH_H^\circ$. 

%---------------------------------------------------
\subsection{Duality}
%---------------------------------------------------

\sss
\label{sss:duals-RepH-HHcirc}
Recall that
in the symmetric monoidal category $\Rep^{\heartsuit,\mathrm{fd}}(H)$, each object $V$ is dualizable, with dual given by the contragredient representation $V^*$, see~\S\ref{sss:rigidity-Rep}.
Since monoidal functors send duals to duals (see~\cite[Exercise~2.10.6]{egno}), we deduce that in $\sH_H^\circ$ every object is left and right dualizable; more specifically, for $V \in \Rep^{\heartsuit,\fd}(H)$ the object $\ind_{\hc}^H(V)$ is both left and right dualizable in $\hc_H^\heartsuit$, with
\[
(\ind_{\hc}^H(V))^\vee \simeq \ind_{\hc}^H(V^*) \simeq {}^\vee (\ind_{\hc}^H(V)).
\]
Using the identifications discussed in~\S\ref{sss:diagonally-induced-hc-bimodules}, the evaluation morphisms
\[
V \otimes_k V^* \otimes_k U(\fh) \to U(\fh), \quad V^* \otimes_k V \otimes_k U(\fh) \to U(\fh)
\]
are induced by the evaluation morphism for $V$, and the coevaluation morphisms
\[
k \to V^* \otimes_k V \otimes_k U(\fh), \quad k \to V \otimes_k V^* \otimes_k U(\fh)
\]
are induced by the coevaluation morphism for $V$. In particular, $\sH_H^\circ$ is a rigid monoidal category in the sense of~\cite[Definition~2.10.11]{egno}.

\sss
We now study dualizability of more general objects of $\hc_H^{\heartsuit}$. Before this, recall that in the monoidal category of $U(\fh)$-bimodules, an object $M$ is left dualizable if and only if it is finitely generated projective as a right $U(\fh)$-module, see e.g.~\cite[Exercise~2.10.16]{egno}.\footnote{Note that this statement has a typo: the words left/right should be switched on one side of the claimed equivalence.} Explicitly, if $M$ satisfies this property then the left dual ${}^\vee M$ is the vector space $\Hom_r(M,U(\fh))$ of morphisms of right $U(\fh)$-modules from $M$ to $U(\fh)$, endowed with the bimodule structure determined by the formula
\[
(x \cdot f \cdot y)(m) = x \cdot f(y \cdot m)
\]
for $x,y \in U(\fh)$, $f \in \Hom_r(M,U(\fh))$ and $m \in M$. The evaluation morphism
\[
\Hom_r(M,U(\fh)) \otimes_{U(\fh)} M \to U(\fh)
\]
is defined by $f \otimes m \mapsto f(m)$. Under our assumption, the natural morphism
\[
M \otimes_{U(\fh)} \Hom_r(M,U(\fh)) \to \Hom_r(M,M)
\]
(where the right-hand side is the space of endomorphisms of right $U(\fh)$-modules of $M$) is an isomorphism, and the coevaluation morphism $U(\fh) \to M \otimes_{U(\fh)} \Hom_r(M,U(\fh))$ is defined, via this identification, by the assignment $x \mapsto (m \mapsto xm)$.

Similarly, a $U(\fh)$-bimodule $M$ is right dualizable if and only if it is finitely generated projective as a left $U(\fh)$-module, and in this case its right dual is the vector space $\Hom_l(M,U(\fh))$ of morphisms of left $U(\fh)$-modules from $M$ to $U(\fh)$, endowed with the natural $U(\fh)$-bimodule structure.

\begin{Lem}
An object $M \in \hc_H^{\heartsuit}$ is left, resp.~right, dualizable if and only if it is left, resp.~right, dualizable as a $U(\fh)$-bimodule. In this case its left, resp.~right, dual in $\hc_H^{\heartsuit}$ is its left, resp.~right, dual as a $U(\fh)$-bimodule, endowed with the natural $H$-action.
\end{Lem}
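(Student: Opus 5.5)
We treat left dualizability; the right case follows by the same argument, or by applying the anti-monoidal involution $\inv_*$, which exchanges left and right duals. The idea is to compare the monoidal category $\hc_H^\heartsuit$ with the monoidal category of $U(\fh)$-bimodules through the forgetful functor, which is monoidal and faithful. Since monoidal functors send duality data to duality data, if $M$ is left dualizable in $\hc_H^\heartsuit$ then it is left dualizable as a $U(\fh)$-bimodule. Its dual there is then the image of its dual in $\hc_H^\heartsuit$, and by uniqueness of duals this is $\Hom_r(M,U(\fh))$ with some $H$-structure.

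For the converse, assume $M$ is finitely generated projective as a right $U(\fh)$-module. Put $D := \Hom_r(M,U(\fh))$, with the bimodule structure recalled above. We make $H$ act on $D$ by the natural action. Since $M$ is finitely generated over $U(\fh)$ and both $M$ and $U(\fh)$ are locally finite $H$-modules, a right-linear map $f$ is determined by its values on a finite-dimensional $H$-stable generating subspace $W \subset M$. So $D$ embeds into $\Hom_k(W, U(\fh))$, and equivariantly into $W^* \otimes_k U(\fh)$, which is a locally finite $H$-module. Rather than take $H$-finite vectors, one should directly define the comodule structure: $f \mapsto \bigl(m \mapsto \text{coaction applied to } f(m_{(0)}) \text{ twisted by } S(m_{(1)})\bigr)$. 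The cleanest way to do this is to express $D$ as a kernel of a map between objects of the form $W^*\otimes U(\fh)$, using a finite presentation $W\otimes U(\fh) \to M$ (equivariant, via $\ind$-type objects). Since $D$ is then a kernel in $\Rep^\heartsuit(H)$, the action is algebraic.

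Next we check the Harish-Chandra compatibility of $D$: the differential of the $H$-action must coincide with $x \cdot f - f \cdot x$. This follows by differentiating the formula $(h\cdot f)(m) = h\cdot f(h^{-1}m)$ and using the corresponding property of $M$ and of $U(\fh)$ (adjoint action). We then verify that $D$ is equivariant as a bimodule and that the evaluation $f\otimes m \mapsto f(m)$ is $H$-equivariant, which is immediate from the definitions. The coevaluation $U(\fh) \to M\otimes_{U(\fh)} D \simeq \Hom_r(M,M)$, $x \mapsto (m\mapsto xm)$, is equivariant because $M$ is equivariant and the isomorphism $M\otimes_{U(\fh)} D\simeq \Hom_r(M,M)$ is $H$-equivariant. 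The zigzag identities hold because they hold after applying the faithful forgetful functor. This gives the duality in $\hc_H^\heartsuit$ with the claimed dual, and by uniqueness it also settles the identification in the first direction.

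The main obstacle is the second step: showing that the natural $H$-action on $\Hom_r(M,U(\fh))$ is genuinely algebraic, i.e.\ an $\OO(H)$-comodule structure. This is where finite generation and the presentation of $M$ by objects $\ind_{\hc}^H(V)$ with $V$ finite-dimensional are needed. A presentation of the form $\ind_{\hc}^H(V_1)\to\ind_{\hc}^H(V_0)\to M\to 0$ exists by local finiteness. It exhibits $D$ as the kernel of $\ind_{\hc}^H(V_0^*)\to \ind_{\hc}^H(V_1^*)$, using the duality of \S\ref{sss:duals-RepH-HHcirc}, and this kernel is computed in the Grothendieck abelian category $\hc_H^\heartsuit$.
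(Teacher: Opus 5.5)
Your proposal is correct and follows the paper's route. Monoidality of the forgetful functor gives the forward direction. For the converse you equip $\Hom_r(M,U(\fh))$ with its natural $H$-action and check that evaluation and coevaluation are equivariant.

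The only difference is how algebraicity of that action is justified. The paper simply cites \cite[Lemma~3.8(ii)]{br-Hecke}. Your presentation argument proves it directly by realizing $\Hom_r(M,U(\fh))$ as the kernel of ${}^\vee g \colon \ind_{\hc}^H(V_0^*) \to \ind_{\hc}^H(V_1^*)$ in $\hc_H^\heartsuit$, and it gives the Harish-Chandra condition for free. Note that getting $V_1$ finite-dimensional needs the kernel of $\ind_{\hc}^H(V_0) \to M$ to be finitely generated, which uses noetherianity of $U(\fh)$ and not only local finiteness.
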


\begin{proof}
To fix notation we consider left dualizability. Since the forgetful functor from $\hc_H^{\heartsuit}$ to $U(\fh)$-bimodules is monoidal, it sends left dualizable objects to left dualizable objects. Conversely, if $M$ is left dualizable as a $U(\fh)$-bimodule, then as explained above its left dual is $\Hom_r(M,U(\fh))$. Since $M$ is finitely generated as a right $U(\fh)$-module, this space admits a natural structure of $H$-module (see e.g.~\cite[Lemma~3.8(ii)]{br-Hecke}) which makes it a Harish-Chandra bimodule, and the evaluation and coevaluation morphisms described above are clearly $H$-equivariant. Hence $M$ is left dualizable in $\hc_H^{\heartsuit}$, which finishes the proof.
\end{proof}

\sss
Let $n=\dim(\fh)$ and $K_\fh = \wedge^n(\fh^*) \otimes_k U(\fh)$, viewed as a $U(\fh)$-bimodule as in~\S\ref{sss:diagonally-induced-hc-bimodules}. (Here $\wedge^n(\fh^*)$ is $1$-dimensional; the corresponding character of $\fh$ is the opposite of the ``modular character'' given by $x \mapsto \mathrm{tr}(\mathrm{ad}(x))$.) Note that this bimodule is invertible; its inverse will be denoted $K_\fh^{-1}$.
Recall that for any $U(\fh)$-bimodule $M$ and any $i \in \Z$ there exists a canonical isomorphism
\begin{equation}
\label{eqn:isom-HH-bimodules}
\Ext^i_{U(\fh) \otimes_k U(\fh)^\rev}(U(\fh), M) \simeq \mathrm{Tor}^{U(\fh) \otimes_k U(\fh)^\rev}_{n-i}(U(\fh), K_\fh \otimes_{U(\fh)} M).
\end{equation}
In fact this identification is the special case for enveloping algebras of van den Bergh duality for Hochschild homology and cohomology, see e.g.~\cite[\S 6.3]{bz}; explicitly it can be obtained by computing both sides using a Chevalley--Eilenberg complex, and using the canonical identification $(\wedge^i \fh)^* \simeq \wedge^{n-i} \fh \otimes_k \wedge^n(\fh^*)$. Now assume that $M$ is left dualizable, and that ${}^\vee M$ is finitely generated projective as a right $U(\fh)$-module. Then ${}^\vee M$ is left dualizable, and using the fact that this bimodule is automatically projective also as a left $U(\fh)$-module, one sees that the functor ${}^\vee M \otimes_{U(\fh)} (-)$ is left adjoint to $M \otimes_{U(\fh)} (-)$ on the \emph{derived} category of $U(\fh)$-bimodules, and similarly the functor $(-) \otimes_{U(\fh)} {}^{\vee \vee} M$ is right adjoint to $(-) \otimes_{U(\fh)} {}^\vee M$ on the derived category of $U(\fh)$-bimodules. Using these properties and applying the isomorphism~\eqref{eqn:isom-HH-bimodules} for $i=n$ and the bimodules $(M \otimes_{U(\fh)} K_\fh^{-1}) \otimes_k U(\fh)$ and $K_\fh^{-1} \otimes_k {}^{\vee\vee} M$ (with, in each case, left action on the left factor and right action on the right factor), we obtain a canonical isomorphism of $U(\fh)$-bimodules
\[
{}^{\vee\vee} M \simeq K_\fh \otimes_{U(\fh)} M \otimes_{U(\fh)} K_\fh^{-1}.
\]

In case $M$ is a (finitely generated) Harish-Chandra bimodule, then this identification is an isomorphism of Harish-Chandra bimodules, where now $K_\fh$ is seen as the Harish-Chandra bimodule $\ind_{\hc}^H(\wedge^n(\fh^*))$.

\sss
\label{sss:pivotality-HH}
The objects of $\sH_H$ are finitely generated and free both as left and as right $U(\fh)$-modules. The comments above therefore imply that these objects are left and right dualizable in $\hc_H^{\heartsuit, \mathrm{fg}}$. Moreover, the functors $(-)^\vee$ and ${}^\vee (-)$ both stabilize $\sH_H$, so that this monoidal category is rigid, and for any $M \in \sH_H$ we have a canonical isomorphism
\[
{}^{\vee\vee} M \simeq K_\fh \otimes_{U(\fh)} M \otimes_{U(\fh)} K_\fh^{-1}.
\]
In particular, $\sH_H$ is pivotal (in the sense of~\cite[Definition~4.7.8]{egno}) in case the $H$-module $\wedge^n(\fh^*)$ is trivial. (Note that any choice of isomorphism of $H$-modules $\wedge^n(\fh^*) \simeq k$ determines isomorphisms $K_\fh \simeq U(\fh)$ and $K_\fh^{-1} \simeq U(\fh)$, which provides a pivotal structure which is independent of the initial choice of isomorphism.)

%%---------------------------------------------------
%%---------------------------------------------------

%---------------------------------------------------
\subsection{Study of \texorpdfstring{$H$}{H}-equivariant \texorpdfstring{$U(\fh)$}{Uh}-modules}
%---------------------------------------------------

\sss
In this subsection we prove a technical statement about resolutions of $H$-equivariant $U(\fh)$-modules that will be required below.
We will say that an $H$-equivariant $U(\fh)$-module is \emph{$H$-equivariantly free} if it admits a finite filtration whose subquotients are of the form $U(\fh) \otimes_k V$ for $V \in \Rep^{\heartsuit,\fd}(H)$. (In particular, such a module is automatically finitely generated over $U(\fh)$.) 

\begin{Prop}
\label{prop:resolution-hc}
Let $M$ be an $H$-equivariant $U(\fh)$-module which is finitely generated as a $U(\fh)$-module, and let $n = \dim (\fh)$. There exists an exact sequence of $H$-equivariant $U(\fh)$-modules
\[
0 \to P^n \to P^{n-1} \to \cdots \to P^0 \to M \to 0
\]
where each $P^i$ is an $H$-equivariantly free $U(\fh)$-module.
\end{Prop}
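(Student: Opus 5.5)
The plan is to build a resolution by $H$-equivariantly free modules step by step, and to cut it off after $n$ steps by passing to the associated graded for a good filtration, where everything reduces to equivariant modules over $S(\fh)$, which has global dimension $n$.

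\textbf{Step 1 (equivariant surjections).} Since $M$ is finitely generated over $U(\fh)$ and $H$-modules are locally finite, choose finitely many generators. The $H$-submodule $V \subset M$ they generate is finite-dimensional. The induced map $U(\fh) \otimes_k V \to M$ is then a surjection of $H$-equivariant $U(\fh)$-modules, since the $H$-action on $U(\fh)$ is adjoint and the action map is equivariant. Its kernel is again $H$-equivariant and finitely generated, because $U(\fh)$ is noetherian. Iterating gives a resolution $\cdots \to P^1 \to P^0 \to M \to 0$ with $P^i$ free.

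\textbf{Step 2 (good filtrations).} Take the PBW filtration on $U(\fh)$. It is $H$-stable, and its associated graded is $S(\fh)$ with the adjoint action. Filter $U(\fh)\otimes V$ by $U_{\le j}(\fh)\otimes V$ and put the induced quotient filtration on $M$. This makes $\gr M$ a finitely generated graded $H$-equivariant $S(\fh)$-module. Refine the construction so that each map in the resolution is strict for the filtrations: at every stage, choose the finite-dimensional $H$-stable generating subspace $V$ \emph{homogeneously}, as a sum of $H$-stable subspaces $V_d$ in filtration degree $d$, mapping onto generators of $\gr$ of the kernel. Then $\gr$ of the resolution is a resolution of $\gr M$ by modules $S(\fh)\otimes V$ with grading shifts.

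\textbf{Step 3 (truncation at $n$).} Let $K = \ker(P^{n-1}\to P^{n-2})$, with its induced filtration. We have $\gr K = \ker(\gr P^{n-1}\to \gr P^{n-2})$, which is the $n$-th syzygy of $\gr M$ over $S(\fh)$. Since $S(\fh)$ is a polynomial ring in $n$ variables, Hilbert's syzygy theorem shows $\gr K$ is a graded projective, hence free, $S(\fh)$-module.

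This is the main obstacle: knowing $\gr K$ is free does not yet make $K$ equivariantly free, because we need an $H$-stable filtration of $K$ with subquotients $U(\fh)\otimes V$. Here is how I would get it. The graded module $\gr K$ is free, so $\gr K \simeq S(\fh)\otimes_k W$ with $W = \gr K/\fh\cdot\gr K$, an $H$-equivariant isomorphism at the level of generators. Next, $W$ is a graded $H$-module, but $\gr K \to W$ need not split $H$-equivariantly. In positive characteristic I cannot lift $W$ to $H$-stable generating subspaces directly.

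Instead I would filter $W$ by degree, $W = \bigoplus_d W_d$. Choose $H$-stable finite-dimensional $\tilde W_{\le d}\subset K_{\le d}$ whose images generate $(\gr K)_{\le d}$ over $S(\fh)$ in degrees $\le d$. Such subspaces exist by local finiteness, but may be larger than $W_d$.

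To avoid needing minimal lifts, I would argue with the filtration $F_d K = U(\fh)\cdot K_{\le d}$. Each $F_d K$ is an $H$-stable $U(\fh)$-submodule. Its graded pieces satisfy $\gr(F_dK/F_{d-1}K) \simeq S(\fh)\otimes W_d$ as equivariant modules, by freeness of $\gr K$ and the decomposition by generator degree. Then $F_dK/F_{d-1}K$ is generated by the image of $K_d$ modulo lower terms, which is $H$-equivariantly identified with $W_d$. Hence the natural map $U(\fh)\otimes W_d \to F_dK/F_{d-1}K$ is surjective, and it is an isomorphism because it is one after $\gr$.

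So $K$ is equivariantly free, and setting $P^n = K$ gives the desired exact sequence. The delicate checks are the strictness in Step 2 and the identification $F_dK/F_{d-1}K \simeq U(\fh)\otimes W_d$ in Step 3. Both rely on the fact that only $H$-equivariance of the filtrations and of $\gr$ is needed, never an equivariant splitting.
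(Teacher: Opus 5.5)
Your proposal is correct and follows essentially the paper's route: a filtered resolution with strict maps and exact associated graded (as in Lemma~\ref{lem:lift-resoln}), truncation at step $n$ via Hilbert's syzygy theorem applied to $\gr$, and then equivariant freeness of the $n$-th syzygy via the filtration by generator degree, which never needs an equivariant splitting.

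The only difference is in how that last step is organized. The paper builds your filtration $F_dK = U(\fh)\cdot K_{\le d}$ one step at a time, by induction on rank, peeling off $U(\fh)\otimes K_{\le d}$ for the lowest nonzero degree $d$ (Lemmas~\ref{lem:sh-free-criterion} and~\ref{lem:uh-free-graded}); there, injectivity on $\gr$ makes strictness automatic. Your one-shot version instead needs the identity $\gr(F_dK) = S(\fh)\cdot(\gr K)_{\le d}$, which you assert without proof. It does hold: lift a homogeneous (nonequivariant) basis of $\gr K$ to a filtered $U(\fh)$-basis of $K$.
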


\sss
In order to prove Proposition~\ref{prop:resolution-hc} we will introduce filtrations, which will allow us to take advantage of the nice behaviour of modules over polynomial rings. More specifically,
recall that $U(\fh)$ admits a $\Z$-filtration whose associated graded ring is the symmetric algebra $S(\fh)$, where the grading on the latter is such that elements of $\fh$ are in degree $1$.  If $M$ is a filtered ($H$-equivariant) $U(\fh)$-module, then the associated graded $\gr(M)$ is a graded ($H$-equivariant) $S(\fh)$-module. (When considering a filtered module $M$, we will always assume that the filtration satisfies $M_i = 0$ for $i \ll 0$ and $M = \bigcup_{i \in \Z} M_i$.)

We will say that a graded $H$-equivariant $S(\fh)$-module is \emph{$H$-equivariantly free} if it admits a finite filtration whose subquotients are of the form $S(\fh) \otimes_k V\langle i\rangle$ with $V \in \Rep^{\heartsuit,\fd}(H)$ and $i \in \Z$, where $\langle i\rangle$ denotes a shift-of-grading. (We will use the convention that $(M \langle i \rangle)_m=M_{m-i}$, and use the same notation for shift of filtrations.)
As usual we will say that
a (nonequivariant) graded $S(\fh)$-module is \emph{free} if it is a direct sum of modules of the form $S(\fh)\langle i\rangle$.

\sss
We start by explaining the relation between the conditions of being equivariantly free and free for $H$-equivariant graded $S(\fh)$-modules.

\begin{Lem}
\label{lem:sh-free-criterion}
Let $M$ be a 
graded $H$-equivariant $S(\fh)$-module which is finitely generated as an $S(\fh)$-module. Then $M$ is $H$-equivariantly free if and only if it is free as a nonequivariant graded $S(\fh)$-module.
\end{Lem}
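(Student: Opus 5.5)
The forward implication is immediate. If $M$ has a finite filtration by graded $H$-equivariant submodules whose subquotients are $S(\fh) \otimes_k V\langle i\rangle$, then each subquotient is free as a graded $S(\fh)$-module: choose a homogeneous basis of $V$, placed in degree $i$. Free graded modules are projective in the category of graded $S(\fh)$-modules, so every step of the filtration splits nonequivariantly. By induction on the length of the filtration, $M$ is a finite direct sum of shifted copies of $S(\fh)$.

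For the converse, assume $M$ is free as a graded $S(\fh)$-module and finitely generated. Let $S(\fh)_+$ be the augmentation ideal and set $\overline{M} := M / S(\fh)_+ M = k \otimes_{S(\fh)} M$. This is a finite-dimensional graded vector space. It carries an $H$-action because $S(\fh)_+$ is $H$-stable, so $\overline{M} = \bigoplus_j \overline{M}_j$ as $H$-modules.

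Since $M$ is free, any homogeneous lift of a basis of $\overline{M}$ is an $S(\fh)$-basis of $M$; this is the graded Nakayama lemma for modules bounded below. Equivalently, for a graded subspace $W \subset M$ mapping isomorphically to $\overline{M}$, the multiplication map $S(\fh) \otimes_k W \to M$ is an isomorphism. The difficulty is that we cannot in general choose $W$ to be $H$-stable, since $H$ need not be linearly reductive. I would therefore not look for an $H$-stable complement, and instead filter by degree.

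Let $d_1 < \dots < d_r$ be the degrees where $\overline{M}$ is nonzero. Define $F_p \subset M$ as the $S(\fh)$-submodule generated by $\bigoplus_{j \le d_p} M_j$. Each $F_p$ is an $H$-stable graded submodule, because $H$ preserves each graded piece $M_j$ and $S(\fh)$ is $H$-equivariant.

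Now choose a homogeneous lift $W = \bigoplus_j W_j$ as above. Because generators of $M$ in degrees $\le d_p$ can only involve basis elements of degree $\le d_p$, we get $F_p = S(\fh) \otimes_k \bigl(\bigoplus_{j \le d_p} W_j\bigr)$. Hence $F_p / F_{p-1} \simeq S(\fh) \otimes_k W_{d_p}$ nonequivariantly, and this quotient is generated in the single degree $d_p$.

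The key step is to make this identification equivariant. The quotient $F_p/F_{p-1}$ is free, generated in degree $d_p$, so its degree-$d_p$ component $(F_p/F_{p-1})_{d_p}$ is an $H$-stable subspace mapping isomorphically onto its reduction. It also identifies with $\overline{M}_{d_p}$ as an $H$-module; this holds because $S(\fh)_+$ contributes nothing in the lowest degree of $F_p/F_{p-1}$. So we can take $V := (F_p/F_{p-1})_{d_p}$, which is $H$-stable. The multiplication map $S(\fh) \otimes_k V\langle d_p\rangle \to F_p/F_{p-1}$ is then $H$-equivariant and is an isomorphism by the nonequivariant computation. This gives the required filtration with subquotients $S(\fh) \otimes_k \overline{M}_{d_p}\langle d_p\rangle$.

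The main point to check carefully is the identity $F_p = S(\fh) \otimes_k \bigl(\bigoplus_{j \le d_p} W_j\bigr)$. It is also what guarantees that $F_p/F_{p-1}$ is free and generated in one degree; this follows from the basis description by comparing degrees.
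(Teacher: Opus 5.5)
Your proof is correct and is essentially the paper's argument. The paper inducts on rank, mapping $S(\fh)\otimes_k M_d\langle d\rangle$ for the lowest degree $d$ injectively into $M$ and passing to the free, lower-rank cokernel; your degree filtration $F_p$ is exactly the filtration this induction produces, written down all at once, with freeness of each $F_p/F_{p-1}$ checked directly via a homogeneous basis.
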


\begin{proof}
Of course, if $M$ is $H$-equivariantly free it is free as a nonequivariant graded $S(\fh)$-module.
We will prove the reverse implication by induction on the rank of $M$. We therefore assume that $M$ is free as a nonequivariant graded $S(\fh)$-module, and write
$M = \bigoplus_{i \in \Z} M_i$ for the decomposition given by the $\Z$-grading.
Since $M$ is finitely generated, there is a smallest integer $d$ such that $M_d \ne 0$; moreover, $M_d$ is finite-dimensional, and stable under the action of $H$, and thus defines an object of $\Rep^{\heartsuit,\fd}(H)$. Consider the ($H$-equivariant) map
\[
S(\fh) \otimes_k M_d \langle d \rangle \to M
\]
given by the action of $S(\fh)$ on $M$.
Comparison with the nonequivariant setting shows that this map is injective, and that its cokernel is free as a nonequivariant graded module, of lower rank than $M$.  The lemma follows.
\end{proof}

\sss
We next study the relation between the conditions of being equivariantly free over $U(\fh)$ or $S(\fh)$.

\begin{Lem}
\label{lem:uh-free-graded}
Let $M$ be a filtered $H$-equivariant $U(\fh)$-module.  If $\gr(M)$ is $H$-equivariantly free as a graded $H$-equivariant $S(\fh)$-module, then $M$ is $H$-equivariantly free.
\end{Lem}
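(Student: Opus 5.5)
We argue by induction on the length of an $H$-equivariant free filtration of $\gr(M)$ (equivalently on its rank, which is finite since $\gr(M)$ is finitely generated). Implicitly we assume the filtration on $M$ is by $H$-stable subspaces compatible with the filtration on $U(\fh)$, so that $\gr(M)$ is an $H$-equivariant graded $S(\fh)$-module. Finite generation of $\gr(M)$ then gives finite generation of $M$, and a standard argument shows the filtration is exhaustive and bounded below.

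The first step is to isolate the bottom piece. Let $d$ be the smallest integer with $M_d \neq 0$; then $M_d = \gr(M)_d$ is a finite-dimensional $H$-stable subspace, i.e.~an object of $\Rep^{\heartsuit,\fd}(H)$. The action map
\[
\alpha : U(\fh) \otimes_k M_d \to M
\]
is $H$-equivariant, where $H$ acts diagonally and via the adjoint action on $U(\fh)$. It is compatible with the filtrations if we filter the source by $U(\fh)_{\le i-d} \otimes_k M_d$. The associated graded of this filtered module is $S(\fh) \otimes_k M_d\langle d\rangle$, and $\gr(\alpha)$ is the map $S(\fh) \otimes_k M_d\langle d\rangle \to \gr(M)$ appearing in the proof of Lemma~\ref{lem:sh-free-criterion}.

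Next we compare with the graded situation. By Lemma~\ref{lem:sh-free-criterion} and its proof, $\gr(M)$ is free as a nonequivariant graded $S(\fh)$-module. Moreover $\gr(\alpha)$ is injective with cokernel $Q$ that is graded free of smaller rank; applying Lemma~\ref{lem:sh-free-criterion} again, $Q$ is $H$-equivariantly free. Since $\gr(\alpha)$ is injective and the filtrations are exhaustive and bounded below, $\alpha$ is injective and strict. Setting $M' = \operatorname{coker}(\alpha)$ with the quotient filtration, we get $\gr(M') \simeq Q$; strictness is exactly what is needed for the associated graded to be exact here. By induction $M'$ is $H$-equivariantly free. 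Since $M$ is then an extension of $M'$ by $U(\fh) \otimes_k M_d$, concatenating filtrations shows $M$ is $H$-equivariantly free. The base case is rank zero, where $\gr(M) = 0$ and hence $M = 0$.

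The main obstacle is the strictness and injectivity of $\alpha$ deduced from $\gr(\alpha)$. This is the standard lemma that a filtered map between exhaustive, bounded-below filtered modules whose associated graded is injective is injective and strict. We prove it by lifting: take an element of minimal filtration degree in the kernel, or a non-strict preimage, and derive a contradiction via its symbol. Exhaustiveness and the lower bound of the filtrations ensure the argument terminates.
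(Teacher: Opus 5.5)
Your proposal is correct and follows the paper's proof essentially step by step: induction on the rank of $\gr(M)$, the action map $U(\fh) \otimes_k M_d\langle d\rangle \to M$, injectivity and strictness deduced from injectivity of its associated graded (the paper cites a standard result on filtered modules for this rather than reproving it), and Lemma~\ref{lem:sh-free-criterion} applied to the cokernel. One small correction: exhaustiveness and boundedness below of the filtration cannot be derived from finite generation of $\gr(M)$ (the constant filtration $M_i=M$ has $\gr(M)=0$); they are part of the paper's standing convention on filtered modules, and that convention is all your argument needs.
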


\begin{proof}
We proceed by induction on the rank of $\gr(M)$ as a (nonequivariant) free $S(\fh)$-module. Write $M = \bigcup_{i \in \Z} M_i$ for the given filtration on $M$, and note that $M$ is finitely generated over $U(\fh)$ since $\gr(M)$ is finitely generated over $S(\fh)$. Let $d$ be the smallest integer such that $M_d \ne 0$, and consider the filtered map
\begin{equation}
\label{eqn:uh-free1}
U(\fh) \otimes_k M_d \langle d \rangle \to M
\end{equation}
induced by the action of $U(\fh)$ on $M$.
Applying $\gr$ to this construction yields a map
\begin{equation}
\label{eqn:uh-free2}
S(\fh) \otimes_k \gr(M)_d \langle d \rangle \to \gr(M).
\end{equation}
By the argument in the proof of Lemma~\ref{lem:sh-free-criterion}, this map is injective.  By, say,~\cite[Corollary~7.6.14]{mr},~\eqref{eqn:uh-free1} is therefore injective and strict with respect to the filtrations.

Let $M'$ be the cokernel of~\eqref{eqn:uh-free1}, equipped with the filtration induced by that of $M$.  The quotient map $M \to M'$ is then strict with respect to the filtrations.  By~\cite[Proposition~7.6.13 and Corollary~7.6.14]{mr}, the induced map $\gr(M) \to \gr(M')$ is surjective, and identifies with the cokernel of~\eqref{eqn:uh-free2}. In particular, as in the proof of Lemma~\ref{lem:sh-free-criterion} again, $\gr (M')$ is free as a nonequivariant graded $S(\fh)$-module, of smaller rank than $\gr (M)$. By Lemma~\ref{lem:sh-free-criterion}, $\gr(M')$ is therefore $H$-equivariantly free as a graded $H$-equivariant $S(\fh)$-module. By induction, we deduce that $M'$ is $H$-equivariantly free, and then that so is $M$.
\end{proof}

\sss
The next lemma will be the key step for the proof of Proposition~\ref{prop:resolution-hc}.

\begin{Lem}
\label{lem:lift-resoln}
Let $M$ be a filtered $H$-equivariant $U(\fh)$-module, and assume that $\gr(M)$ is finitely generated over $S(\fh)$.  There exists a short exact sequence of filtered $H$-equivariant $U(\fh)$-modules
\[
Q \hookrightarrow P \twoheadrightarrow M
\]
such that:
\begin{enumerate}
\item
the induced sequence $\gr(Q) \to \gr(P) \to \gr(M)$ is a short exact sequence;
\item
$\gr(P)$ and $\gr(Q)$ are finitely generated over $S(\fh)$;
\item
$P$ is $H$-equivariantly free as an $H$-equivariant $U(\fh)$-module.
\end{enumerate}
\end{Lem}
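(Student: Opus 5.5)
The idea is to build $P$ from finitely many filtration pieces of $M$, chosen so that the associated graded of $P$ surjects onto $\gr(M)$. Then $Q$ is the kernel with its induced filtration, and strictness will give exactness on associated graded objects.

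First, since $\gr(M)$ is finitely generated over $S(\fh)$, we choose homogeneous generators of degrees $d_1,\dots,d_r$. For each $j$, the piece $M_{d_j}$ is stable under $H$, because the filtration is by $H$-submodules. However, $M_{d_j}$ need not be finite-dimensional. We therefore use local finiteness of $H$-modules: each chosen generator lifts to an element of $M_{d_j}$, and this element lies in a finite-dimensional $H$-stable subspace $V_j \subset M_{d_j}$. We then set
\[
P = \bigoplus_{j=1}^r U(\fh) \otimes_k V_j \langle d_j \rangle,
\]
with the tensor product filtration, where $U(\fh)$ carries its PBW filtration and $V_j$ sits in degree $d_j$. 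The map $P \to M$ is induced by the action. It is $H$-equivariant and $U(\fh)$-linear, because the action map $U(\fh)\otimes M \to M$ is $H$-equivariant for the adjoint action on $U(\fh)$. It is also filtered, since $U(\fh)_i M_{d} \subset M_{d+i}$. By construction $P$ is $H$-equivariantly free, and $\gr(P) = \bigoplus_j S(\fh)\otimes_k V_j\langle d_j\rangle$ is finitely generated.

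Next we check surjectivity. The map $\gr(P) \to \gr(M)$ is surjective, since its image contains the chosen generators and is an $S(\fh)$-submodule. By the standard criterion for filtrations bounded below and exhaustive (\cite[Proposition~7.6.13 and Corollary~7.6.14]{mr}, as used in the proof of Lemma~\ref{lem:uh-free-graded}), this implies that $P \to M$ is surjective and strict. We let $Q$ be the kernel with the induced filtration $Q_i = Q \cap P_i$. This $Q$ is an $H$-equivariant $U(\fh)$-submodule, and the inclusion $Q \hookrightarrow P$ is strict by definition. Strictness of both maps gives exactness of $0 \to \gr(Q) \to \gr(P) \to \gr(M) \to 0$; this is again the cited result from~\cite{mr}, or a direct check degree by degree. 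Finally, $\gr(Q)$ is a submodule of the finitely generated module $\gr(P)$ over the noetherian ring $S(\fh)$, so it is finitely generated.

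The main point needing care is the choice of the $V_j$. They must be finite-dimensional and $H$-stable, and they must also lie inside the correct filtration step. This works only because each $M_d$ is itself an $H$-submodule, so that local finiteness can be applied within $M_d$. The remaining content is the strictness bookkeeping, which is supplied by~\cite{mr}.
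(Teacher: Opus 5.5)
Your proof is correct and is essentially the paper's argument: you map $U(\fh)\otimes_k(\text{finite-dimensional $H$-stable part of } M)$ onto $M$ by the action, deduce surjectivity and strictness from surjectivity on $\gr$ via \cite[Corollary~7.6.14]{mr}, and take the kernel with the induced filtration. The only difference is that your local-finiteness step is unnecessary, and your remark that $M_{d_j}$ need not be finite-dimensional is false: filtrations are bounded below by the paper's convention and each $\gr(M)_i$ is finite-dimensional, so every $M_i$ is finite-dimensional, which is why the paper simply takes $P=U(\fh)\otimes_k M_N$ for $N$ large enough that $\bigoplus_{i\le N}\gr(M)_i$ generates $\gr(M)$.
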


\begin{proof}
Write the filtration on $M$ as $M = \bigcup_{i \in \Z} M_i$.  Since $\gr(M)$ is finitely generated, so is $M$; hence each $M_i$ is finite-dimensional, and there exists an integer $N$ such that $\gr (M_N) = \bigoplus_{i \le N} \gr(M)_i$ generates $\gr(M)$ as an $S(\fh)$-module.  Consider the filtered map
\[
\phi: P:= U(\fh) \otimes_k M_N \to M
\]
induced by the action of $U(\fh)$ on $M$.
Then the map
\[
\gr (\phi) : S(\fh) \otimes_k \gr (M_N) \to \gr (M)
\]
is surjective, so $\phi$ is surjective and strict by~\cite[Corollary~7.6.14]{mr}. It is clear that $P$ is $H$-equivariantly free as an $H$-equivariant $U(\fh)$-module. Denoting by $Q$ the kernel of this surjection, and equipping $Q$ with the filtration induced by that of $P$, the embedding $Q \hookrightarrow P$ is strict. Using~\cite[Proposition~7.6.13 and Corollary~7.6.14]{mr} we deduce that $\gr(Q) \to \gr(P) \to \gr(M)$ is a short exact sequence, which finishes the proof.
\end{proof}

\sss
\label{sss:proof-prop-resolution-hc}
Finally, we are in a position to give the proof of Proposition~\ref{prop:resolution-hc}.

\begin{proof}[Proof of Proposition~\ref{prop:resolution-hc}]
Since $M$ is finitely generated, we can (and shall) equip it with a filtration compatible with the $H$-action and such that $\gr (M)$ is a finitely generated $S(\fh)$-module. 
Using Lemma~\ref{lem:lift-resoln} repeatedly, we obtain 
a resolution as $H$-equivariant $U(\fh)$-module
\[
P^{n-1} \xrightarrow{d^{n-1}} \cdots \xrightarrow{d^1} P^0 \xrightarrow{d^0} M \to 0
\]
where each $P^i$ is filtered with finitely generated associated graded, each $d^i$ is filtered, the associated graded is again a resolution, and moreover each $P^i$ is $H$-equivariantly free as an $H$-equivariant $U(\fh)$-module.

For the last step, we set $P^n := \ker (d^{n-1})$, and equip it with the filtration induced by that of $P^{n-1}$. 
Then as above the sequence
\[
0 \to \gr (P^n) \to \gr (P^{n-1}) \to \cdots \to \gr (P^0) \to \gr(M) \to 0
\]
is an exact sequence of graded $H$-equivariant $S(\fh)$-modules. Since $S(\fh)$ is isomorphic to a polynomial ring in $n$ variables, the graded version of Hilbert's syzygy theorem implies that $\gr (P^n)$ is free as a nonequivariant graded $S(\fh)$-module. By Lemma~\ref{lem:sh-free-criterion}, $\gr (P^n)$ is therefore $H$-equivariantly free, and then $P^n$ is $H$-equivariantly free as an $H$-equivariant $U(\fh)$-module by Lemma~\ref{lem:uh-free-graded}, which finishes the proof.
\end{proof}

%---------------------------------------------------
\subsection{The monoidal presentable stable \texorpdfstring{$\infty$}{infinity}-category of Harish-Chandra bimodules}
%---------------------------------------------------

\sss 
Applying the construction of~\S\ref{sss:algebra-structure-Chb} to the additive monoidal category $\sH_H$ we obtain the
monoidal $\infty$-category $\Chb(\sH_H)$, in which every object is left and right dualizable, and which comes equipped with 
a monoidal functor
\[
\ind_{\hc}^H : \Chb(\Rep^{\heartsuit,\fd}(H)) \rightarrow \Chb(\sH_H),
\]
and an involutive equivalence
\[
\inv_*: \Chb(\sH_H) \simto \Chb(\sH_H)^{\rev}.
\]

\sss 
Let $\on{Acy} \subset \mathrm{Ho}(\Chb(\sH_H))$ be the full subcategory consisting of acyclic complexes. (Here, ``acyclic'' means with trivial cohomology when seen as a complex of objects in the abelian category $\hc_H^\heartsuit$.)
Using the fact that objects of $\sH_H$ are flat both as left and right $U(\fh)$-modules, one sees
that $\on{Acy}$ is a two-sided ideal in $\mathrm{Ho}(\Chb(\sH_H))$.
As a consequence, following the discussion in~\S\ref{sss:algebra-structure-Verdier-quotient}, if we define
\[
\hc_{H}^\comp
\]
as 
the Verdier quotient of $\Chb(\sH_H)$ by the full subcategory spanned by objects which belong to $\on{Acy}$,
we obtain a monoidal $\infty$-category in which every object is left and right dualizable.
Moreover, the composite monoidal functor
\[
\Chb(\Rep^{\heartsuit, \fd}(H)) \rightarrow \Chb(\sH_H) \rightarrow \hc_{H}^\comp
\]
factors through 
a monoidal functor 
\begin{equation}
\label{eqn:indc}
\Rep(H)^\comp \rightarrow \hc_{H}^\comp
\end{equation}
(where we use the comments in~\S\ref{sss:Repc-DbRepfg}).
We similarly obtain an involutive equivalence 
\[
\inv_*: \hc_{H}^\comp \simeq (\hc_{H}^{\comp})^{\rev}
\]
induced by $\inv_*$.

\sss
\label{sss:def-hc} 
With these preparations, we define the $\infty$-category of Harish-Chandra bimodules by setting
\[
\hc_H := \Ind(\hc_H^\comp).
\]
As we will justify in~\S\ref{sss:hcc-idempotent-complete} below, $\hc_H^\comp$ is idempotent-complete, so
$\hc_H^\comp$ identifies with the $\infty$-category of compact objects in $\hc_H$, see~\cite[Chap.~1, Lemma~7.2.4]{gr}, which justifies our choice of notation. By the considerations in~\S\ref{sss:algebra-structure-Verdier-quotient}, this $\infty$-category admits a canonical structure of algebra object in $\DGCat$.

By~\cite[Chap.~1, Lemma~9.1.5]{gr},
$\hc_{H}$ is a rigid monoidal $\infty$-category in the sense of~\cite[Chap.~1, Definition~9.1.2]{gr}; in fact it is a rigid $k$-linear monoidal $\infty$-category in the sense of~\cite[Example~7.88]{zhu}. This $\infty$-category comes equipped with an algebra morphism
\begin{equation}
\label{eqn:indhc}
\ind_{\hc}^H : \Rep(H) \rightarrow \hc_H
\end{equation}
(obtained by Ind-completing the monoidal functor~\eqref{eqn:indc})
and an involutive identification 
\begin{equation}
\label{eqn:inv-hc}
\inv_*: \hc_{H} \simeq (\hc_{H})^\rev.
\end{equation}

\sss
\label{sss:twist-pivotality}
The discussion in~\S\ref{sss:pivotality-HH} shows that the automorphism $\varphi_{\hc_H}$ of~\cite[Chap.~1, Lemma~9.2.8]{gr} is given by $\varphi_{\hc_H}(M) = K_\fh \otimes_{U(\fh)} M \otimes_{U(\fh)} K_{\fh}^{-1}$. In case the $H$-module $\wedge^n(\fh^*)$ is trivial, $\hc_H$ is pivotal, and this automorphism is the identity. In general it is nontrivial, but as it is given by conjugation by an invertible object, twisting a $\hc_H$-module by this automorphism will not change the module up to isomorphism. In particular, the twist appearing in~\cite[Chap.~1, Proposition~9.5.3]{gr} can be ignored for this rigid monoidal $\infty$-category.

%---------------------------------------------------
\subsection{Comparison with the derived \texorpdfstring{$\infty$-}{infinity-}category of the abelian category of Harish-Chandra bimodules and applications}
%---------------------------------------------------

\sss
As explained in~\S\ref{sss:HC-equiv-modules-abelian}, $\hc_H^\heartsuit$ is a Grothendieck abelian category, so 
we can consider the derived $\infty$-category $D(\hc_H^\heartsuit)$, which is a presentable stable $\infty$-category, see~\S\ref{sss:derived-cat}. We also consider the bounded derived $\infty$-category $\Db(\hc_H^{\heartsuit, \mathrm{fg}})$ of $\hc_H^{\heartsuit, \mathrm{fg}}$, see~\S\ref{sss:derived-cat}. As in Remark~\ref{rmk:DAcoh-DbAnoeth} we have a canonical fully faithful functor $\Db(\hc_H^{\heartsuit, \mathrm{fg}}) \to D(\hc_H^{\heartsuit})$.

From the construction we also have a canonical functor
\begin{equation}
\label{eqn:functor-hcc-Dhc}
\hc_H^\comp \to \Db(\hc_H^{\heartsuit,\mathrm{fg}}).
\end{equation}

\begin{Lem}
\label{lem:HCc-DbHC}
The functor~\eqref{eqn:functor-hcc-Dhc} is an equivalence of $\infty$-categories.
\end{Lem}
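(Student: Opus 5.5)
The functor \eqref{eqn:functor-hcc-Dhc} is induced by the exact functor $\Chb(\sH_H) \to \Chb(\hc_H^{\heartsuit,\mathrm{fg}}) \to \Db(\hc_H^{\heartsuit,\mathrm{fg}})$. This composite kills $\on{Acy}$ and therefore factors through the Verdier quotient. The plan is to prove that the induced functor is fully faithful and essentially surjective. Everything can be checked at the level of homotopy categories, so this reduces to a classical statement about triangulated categories. The model is the standard result that the homotopy category of bounded complexes in a suitable ``resolving'' subcategory, localized at quasi-isomorphisms, computes the bounded derived category. Here the Verdier quotient by acyclic complexes is exactly that localization.

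\textbf{Essential surjectivity.} By Proposition~\ref{prop:resolution-hc}, via the equivalence \eqref{eqn:HC-equiv-modules-abelian}, every $M \in \hc_H^{\heartsuit,\mathrm{fg}}$ admits a finite resolution $0 \to P^n \to \cdots \to P^0 \to M \to 0$. Each $P^i$ is $H$-equivariantly free, i.e.\ a finite extension of objects $U(\fh)\otimes_k V \simeq \ind_{\hc}^H(V)$, and hence lies in $\sH_H$. The complex $P^\bullet$ therefore maps to an object of $\hc_H^\comp$ whose image is isomorphic to $M$. Since $\Db(\hc_H^{\heartsuit,\mathrm{fg}})$ is generated as a triangulated category by the heart (every bounded complex is an iterated extension of shifted cohomology objects), and the essential image of an exact fully faithful functor is closed under extensions, essential surjectivity follows once full faithfulness is known.

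\textbf{Full faithfulness.} Here I would use the standard criterion for a subcategory $\sB \subset \sA$ (see e.g.\ Keller's or Kashiwara--Schapira's version): if every object of $\sA$ admits a finite resolution by objects of $\sB$, with length bounded uniformly (here by $n=\dim\fh$), and $\sB$ is closed under kernels of epimorphisms, then $\mathrm{Ho}(\Chb(\sB))/\mathrm{Acy} \to D^b(\sA)$ is an equivalence. The closure property is the delicate point. I would avoid it by proving the ``one-sided'' condition directly. Given a quasi-isomorphism $X \to P$ in $\Chb(\hc_H^{\heartsuit,\mathrm{fg}})$ with $P \in \Chb(\sH_H)$, the goal is a quasi-isomorphism $Q \to X$ with $Q \in \Chb(\sH_H)$. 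I would construct $Q$ by the usual Cartan--Eilenberg-type argument, using surjections from objects $U(\fh)\otimes_k V$ supplied by Lemma~\ref{lem:lift-resoln}, applied degreewise and compatibly with the differentials. The resulting complex is a priori only bounded above. To truncate it, use the fact that a $(n)$-th syzygy is again in $\sH_H$, by the syzygy argument in the proof of Proposition~\ref{prop:resolution-hc} (Lemmas~\ref{lem:sh-free-criterion} and~\ref{lem:uh-free-graded}). This syzygy step requires choosing good filtrations compatibly, which is where the filtered lemmas enter. With this condition in hand, the multiplicative-system (calculus of fractions) description of Hom-sets in both localizations shows the map on Hom-groups is bijective.

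The main obstacle is exactly this truncation/syzygy step: showing that the kernel appearing after $n$ steps lies in $\sH_H$ and not merely in $\hc_H^{\heartsuit,\mathrm{fg}}$. My approach is to reduce to the associated graded. There, Hilbert's syzygy theorem together with Lemma~\ref{lem:sh-free-criterion} gives equivariant freeness, and Lemma~\ref{lem:uh-free-graded} lifts this back to $U(\fh)$. This requires that the maps in the constructed resolution be strict filtered maps, which is precisely what Lemma~\ref{lem:lift-resoln} provides.
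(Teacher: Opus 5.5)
Your proposal is correct and follows essentially the paper's route: Proposition~\ref{prop:resolution-hc} is used to show that every bounded complex in $\hc_H^{\heartsuit,\mathrm{fg}}$ receives a quasi-isomorphism from a bounded complex with terms in $\sH_H$, and a general localization lemma then gives the equivalence (the paper cites Stacks Project Tag 0GSL, while you spell out the calculus of fractions). The truncation step you flag as the main obstacle does not need the filtered argument to be re-run: take any bounded-above resolution $Q \to X$ by objects of $\sH_H$ and any $m$ at most the lowest degree in which $X$ is nonzero; then $Q \to X$ factors through the quasi-isomorphism $Q \to \tau_{\geq m} Q$, and splicing in degrees $\leq m$ the finite resolution of $Q^m/\mathrm{im}(d_Q^{m-1})$ given by Proposition~\ref{prop:resolution-hc} yields the bounded replacement directly, for arbitrary bounded $X$, so essential surjectivity comes for free as well.
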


\begin{proof}
Using the equivalence~\eqref{eqn:HC-equiv-modules-abelian}, one can interpret Proposition~\ref{prop:resolution-hc} as saying that any object of $\hc_H^{\heartsuit,\mathrm{fg}}$ admits a bounded resolution by objects of $\sH_H$. It follows that for any bounded complex $X$ of objects of $\hc_H^{\heartsuit,\mathrm{fg}}$ there exists a bounded complex $Y$ of objects of $\sH_H$ and a quasi-isomorphism $Y \to X$. Then the claim follows from~\cite[\href{https://stacks.math.columbia.edu/tag/0GSL}{Tag 0GSL}]{stacks-project}.
\end{proof}

\sss
\label{sss:hcc-idempotent-complete}
Note that
since $\mathrm{Ho}(\Db(\hc_H^{\heartsuit,\mathrm{fg}}))$ is idempotent-complete by~\cite{lc}, and since idempotent-completeness of stable $\infty$-categories can be checked at the level of homotopy categories (see~\cite[Proposition~2.2.3]{jasso}), Lemma~\ref{lem:HCc-DbHC} implies in particular that $\hc_H^\comp$ is idempotent-complete.

\sss
\label{sss:hc-reductive-gp}
Let us
assume that $H$ is a split reductive group scheme over $k$. Recall the considerations of~\S\ref{sss:Repc-tilting}, and assume furthermore that one of the following conditions holds:
\begin{itemize}
\item
$\mathrm{char}(k)$ is good for $H$, $H$ has simply connected derived subgroup, and $\fh$ admits a nondegenerate $H$-invariant bilinear form;
\item
$\mathrm{char}(k)$ is very good for $H$. 
\end{itemize}
Then the proof of~\cite[Lemma~2.9]{br} shows that $U(\fh)$ admits a good filtration as an $H$-module, in the sense of~\cite[Part II, \S 4.16]{jantzen}. (More specifically, the proof applies in the first case. In the second case, one can choose a finite central isogeny $H' \to H$ such that $H'$ has a simply connected derived subgroup and the induced morphism $\fh' \to \fh$ is an isomorphism; then $H'$ satisfies the first set of assumptions, and one deduces the claim for $H$.) Using this property and arguing as in~\cite[\S 2.5]{br} one sees that,
if we denote by $\sT_H$ the full subcategory of $\sH_H^\circ$ 
spanned by objects of the form $\ind_{\hc}^H(V)$ where $V$ is a finite-dimensional tilting $H$-module,
the natural functor
\[
\Chb(\sT_H) \rightarrow \Db(\hc_H^{\heartsuit,\mathrm{fg}})
\]
is fully faithful. By the results of~\S\ref{sss:Repc-tilting}, the essential image of this functor 
contains all objects of the form $\ind_{\hc}^H(V)$ with $V \in \Rep^{\heartsuit, \fd}(H)$, so it is essentially surjective by Proposition~\ref{prop:resolution-hc}. In this case we therefore obtain equivalences
\[
\Chb(\sT_H) \simto \hc_H^\comp, \quad
\Ind(\Chb(\sT_H)) \simto \hc_H.
\]
Since a tensor product of tilting modules is tilting (see~\cite[\S E.7]{jantzen}),
the subcategory $\sT_H \subset \sH_H$ is monoidal, so that, by the considerations in~\S\ref{sss:algebra-structure-Chb}, $\Ind(\Chb(\sT_H))$ has a canonical structure of algebra object in $\DGCat$. It is easily seen that the equivalence above is an equivalence of algebra objects.

\sss
\label{sss:hcc-Dbhcfg}
We now come back to the setting where $H$ is a general affine group scheme of finite type over $k$.
The $\infty$-category $\Db(\hc_H^{\heartsuit,\mathrm{fg}})$ admits a canonical t-structure. Transferring this t-structure through the equivalence of Lemma~\ref{lem:HCc-DbHC}, we deduce a t-structure on $\hc_H^\comp$. It is easily seen that the binary product on $\hc_H^\comp$ given by the monoidal structure is right t-exact with respect to this t-structure, in the sense that it sends $(\hc_H^\comp)^{\leq 0} \times (\hc_H^\comp)^{\leq 0}$ into $(\hc_H^\comp)^{\leq 0}$. By the natural procedure (see e.g.~\cite[Chap.~4, Lemma~1.2.4]{gr}), from this t-structure we deduce a t-structure on $\hc_H$ which is compatible with filtered colimits, and which restricts to the given t-structure on $\hc_H^\comp$. Here again, the binary product on $\hc_H$ given by the monoidal structure is right t-exact with respect to this t-structure.

\sss
\label{sss:hc-LMod-D}
Consider now the functor
\[
D(\hc_H^\heartsuit) \to D(\Rep^\heartsuit(H))
\]
induced by the exact functor $\hc_H^\heartsuit \to \Rep^\heartsuit(H)$ given by forgetting the actions of $U(\fh)$.
This functor is continuous, conservative, and it admits a left adjoint (induced by~\eqref{eqn:induction-Rep-hc}).
In view of the equivalence~\eqref{eqn:HC-equiv-modules-abelian}, the corresponding monad on $D(\Rep^\heartsuit(H))$ is given by $U(\fh) \otimes_k (-)$. Hence the Barr--Beck--Lurie theorem (in the form of~\cite[Chap.~1, Proposition~3.7.7]{gr}) implies that we have an equivalence of $\infty$-categories
\[
D(\hc_H^\heartsuit) \simto \LMod_{U(\fh)}(D(\Rep^\heartsuit(H))).
\]
It is clear that this equivalence identifies the natural t-structure on the left-hand side with the t-structure on the right-hand side provided by Lemma~\ref{lem:t-structures} (applied to the natural t-structure on $D(\Rep^\heartsuit(H))$). Hence this equivalence restricts to an equivalence
\begin{equation}
\label{eqn:Dhc-modules-DRep}
D(\hc_H^\heartsuit)^+ \simto \LMod_{U(\fh)}(D(\Rep^\heartsuit(H))^+).
\end{equation}

\sss
\label{sss:HC-ren}
Recall the description of the renormalization of the derived $\infty$-category of a Grothendieck abelian category given in Proposition~\ref{prop:renormalization-DA} and~\eqref{eqn:renormalization-DA-IndDbA}. In view of Lemma~\ref{lem:HCc-DbHC}, $\hc_H$ is obtained from $D(\hc_H^{\heartsuit})$ using this procedure. By Remark~\ref{rmk:renormalization-DA}\eqref{it:renormalization-bounded-below-parts}, it follows that we have a canonical t-exact equivalence
\begin{equation}
\label{eqn:hc+-Dhc}
(\hc_H)^+ \simeq D(\hc_H^{\heartsuit})^+.
\end{equation}
In particular, the heart of the t-structure on $\hc_H$ identifies with $\hc_H^\heartsuit$, which justifies the notation. This analysis also provides an identification of $\hc_H$ with the $\infty$-category of chain complexes of injective objects in $\hc_H^\heartsuit$.

Note that the renormalization procedure only depends on the bounded below part of the t-structure (and its induced t-structure). In view of~\eqref{eqn:Dhc-modules-DRep} and the discussion in~\S\S\ref{sss:QCoh-BH}--\ref{sss:def-RepH}, one can therefore also obtain $\hc_H$ by renormalization from the $\infty$-categories $\LMod_{U(\fh)}(\QCoh(\BB H))$ or $\LMod_{U(\fh)}(\Rep(H))$ and their t-structures given by Lemma~\ref{lem:t-structures}.

\sss
In the following statement we use the notation introduced in~\S\ref{sss:def-AmodH}. 

\begin{Prop} 
\label{prop:hchind} 
The functor $\ind^H_\hc$ of~\eqref{eqn:indhc} admits a conservative lax monoidal right adjoint 
\[
\Rep(H) \leftarrow \hc_H: (\ind^H_\hc)^R.
\]
Moreover, the resulting monad on $\Rep(H)$ yields a canonical identification
\[
\hc_H \simeq U(\fh)\mod_H.
\]
\end{Prop}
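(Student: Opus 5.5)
Since both $\Rep(H)$ and $\hc_H$ are compactly generated and $\ind^H_\hc$ is the Ind-extension of the functor~\eqref{eqn:indc} on compact objects, it preserves compact objects. Its right adjoint $(\ind^H_\hc)^R$ therefore exists and is continuous (by~\cite[Chap.~1, Lemma~7.1.5]{gr}). It is lax monoidal as the right adjoint of a monoidal functor.

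For conservativity I will use~\cite[Chap.~1, Lemma~5.4.3]{gr}. It suffices to check that the essential image of $\ind^H_\hc$ generates $\hc_H$. By Lemma~\ref{lem:HCc-DbHC} and Proposition~\ref{prop:resolution-hc}, every compact object of $\hc_H$ is built from the objects $\ind^H_\hc(V)$, $V\in\Rep^{\heartsuit,\fd}(H)$, by finitely many cones, shifts and retracts. Indeed, $\hc_H^\comp$ is by construction a quotient of $\Chb(\sH_H)$, and objects of $\sH_H$ are finite extensions of objects of $\sH_H^\circ$. So compact objects generate under colimits starting from the image of $\ind^H_\hc$, and the right adjoint is conservative.

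Then I will apply Barr--Beck--Lurie~\cite[Chap.~1, Proposition~3.7.7]{gr}. This gives $\hc_H\simeq\LMod_T(\Rep(H))$ for the continuous monad $T=(\ind^H_\hc)^R\circ\ind^H_\hc$. The main step, and the expected obstacle, is identifying $T$ with $U(\fh)\otimes_k(-)$ as monads, i.e.\ with the algebra object $U(\fh)$ of $\Rep(H)$ acting by left multiplication.

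First I will use rigidity. $\ind^H_\hc$ is a $\Rep(H)$-linear functor of left $\Rep(H)$-modules, where $\Rep(H)$ acts on $\hc_H$ through $\ind^H_\hc$ on the left. By Corollary~\ref{cor:rigidity-RepH}\eqref{it:adjoints-RepH-linear} (or~\cite[Chap.~1, Lemma~9.3.6]{gr}), its continuous right adjoint is also $\Rep(H)$-linear. Hence $T$ is a $\Rep(H)$-linear monad on $\Rep(H)$ (acting on the right, say), and is thus given by $A\otimes(-)$ for the algebra object $A=T(k)=(\ind^H_\hc)^R(U(\fh))$.

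It remains to compute $A$. Here I will use that $U(\fh)=\ind_\hc^H(k)$ lies in the heart, and that on bounded below parts the right adjoint agrees with the forgetful functor. Under~\eqref{eqn:hc+-Dhc} and~\eqref{eqn:equiv-Rep+-QCoh+}, compatible with~\S\ref{sss:hc-LMod-D}, the functor $(\ind^H_\hc)^R$ restricted to $\hc_H^+$ is the derived forgetful functor $D(\hc_H^\heartsuit)^+\to D(\Rep^\heartsuit(H))^+$. I will check this by computing $\Map(V,-)$ for compact $V$ on both sides via the adjunction~\eqref{eqn:induction-Rep-hc}, which is exact. So $A$ is $U(\fh)$ with the adjoint action, and its algebra structure is the multiplication, since the monad structure on the heart is the one in~\eqref{eqn:HC-equiv-modules-abelian}. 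Because an algebra in $\Rep(H)$ concentrated in the heart is determined by its classical structure (the heart is an ordinary category, and $A$ lies in $\Rep(H)^{\heartsuit}$), this identifies the monad. We conclude $\hc_H\simeq\LMod_{U(\fh)}(\Rep(H))=U(\fh)\mod_H$.
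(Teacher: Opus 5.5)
Your proposal is correct and follows the paper's proof essentially step by step. The steps are: continuity of the right adjoint because $\ind^H_\hc$ preserves compact objects; conservativity because the image of $\ind^H_\hc$ generates $\hc_H$; Barr--Beck--Lurie; rigidity of $\Rep(H)$ to reduce the monad to the algebra object $(\ind^H_\hc)^R(U(\fh))$; and t-structure considerations to identify that algebra with $U(\fh)$. For the rigidity step one should let $\Rep(H)$ act on $\hc_H$ on the right, as the paper does, in order to land in left modules, which you hint at. The only difference is that you pin down the algebra structure in the heart, whereas the paper transports the monad through the chain of equivalences $(\hc_H)^+ \simeq \LMod_{U(\fh)}(\Rep(H))^+$.
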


\begin{proof} 
The functor $\ind^H_{\hc}$ admits a (not necessarily continuous) right adjoint $(\ind^H_{\hc})^R$ by the adjoint functor theorem. This adjoint is in fact continuous because $\ind^H_{\hc}$ preserves compact objects, see~\cite[Chap.~1, Lemma~7.1.5]{gr}. The essential image of $\ind^H_{\hc}$ generates $\hc_H$ by~\cite[Chap.~1, Proposition~5.4.5]{gr}, and hence $(\ind^H_{\hc})^R$ is conservative by~\cite[Chap.~1, Lemma~5.4.3]{gr}. This functor is also lax monoidal as the right adjoint of a monoidal functor, see~\cite[Chap.~1, Lemma~3.5.3]{gr}.

By the Barr--Beck--Lurie theorem~\cite[Chap.~1, Proposition~3.7.7]{gr}, the $\infty$-category $\hc_H$ identifies with modules for the monad $(\ind^H_{\hc})^R \circ \ind^H_{\hc}$ on $\Rep(H)$. If we regard $\hc_H$ as a right $\Rep(H)$-module under multiplication, i.e.~with underlying binary product
\[
\hc_H \otimes_k \Rep(H) \xrightarrow{\id \otimes_k \ind^H_{\hc}} \hc_H \otimes_k \hc_H \xrightarrow{\on{mult}} \hc_H,
\]
then $\ind^H_{\hc}$ is tautologically $\Rep(H)$-linear. By Corollary~\ref{cor:rigidity-RepH}\eqref{it:adjoints-RepH-linear},
$(\ind^H_{\hc})^R$ is again $\Rep(H)$-linear, and in particular so is the monad $(\ind^H_\hc)^R \circ \ind^H_\hc$.  That is, the $\infty$-category of modules for the monad $(\ind^H_\hc)^R \circ \ind^H_\hc$ is identified with the $\infty$-category of modules for the algebra object $(\ind^H_\hc)^R \circ \ind^H_\hc(k)$. It therefore remains to identify the latter with $U(\fh)$. 

The functor $\ind^H_\hc$ is t-exact, so its right adjoint $(\ind^H_\hc)^R$ is left t-exact. As a consequence, $(\ind^H_\hc)^R \circ \ind^H_\hc(k)$ belongs to $\Rep(H)^{\geqslant 0}$, and a fortiori to $\Rep(H)^+$. Since the monoidal product on $\Rep(H)$ preserves $\Rep(H)^+$, to conclude one can therefore work with the restrictions of all functors to the ``$+$'' parts. Now we have equivalences
\begin{multline*}
(\hc_H)^+ \overset{\eqref{eqn:hc+-Dhc}}{\simeq} D(\hc_H^{\heartsuit})^+ \overset{\eqref{eqn:Dhc-modules-DRep}}{\simeq} \LMod_{U(\fh)}(D(\Rep^\heartsuit(H))^+) \\
\simeq \LMod_{U(\fh)}(\Rep(H)^+) \simeq \LMod_{U(\fh)}(\Rep(H))^+
\end{multline*}
(where the third equivalence uses the discussion in~\S\S\ref{sss:QCoh-BH}--\ref{sss:def-RepH}, so that the claim is clear.
\end{proof}

\begin{Rem}
The functor $(\ind^H_\hc)^R$ in Proposition~\ref{prop:hchind} can be explicitly constructed by an application of the universal property of the Ind-completion to the composition
\[
\hc_H^\comp \subset (\hc_H)^+ \simeq D(\hc_H^\heartsuit)^+ \to D(\Rep^\heartsuit(H))^+ \simeq \Rep(H)^+ \subset \Rep(H)
\]
where the middle arrow is as in~\S\ref{sss:hc-LMod-D}. In view of this description, this functor will sometimes be denoted $\res^H_{\hc}$.
\end{Rem}

%-------------------------------------------
\subsection{Harish-Chandra bimodules for products of groups}
%-------------------------------------------

We finish this section with a study of the behaviour of the construction of Harish-Chandra bimodules with respect to products of groups, whose proof will use the considerations above.

\begin{Lem}
\label{lem:HC-product}
Let $H_1$, $H_2$ be affine $k$-group schemes of finite type,
and assume that the functor
\[
\Rep(H_1) \otimes_k \Rep(H_2) \to \Rep(H_1 \times_k H_2)
\]
of Lemma~\ref{lem:Rep-product} is an equivalence. Then
we have a canonical equivalence of 
algebra objects in $\DGCat$
\[
\hc_{H_1} \otimes_k \hc_{H_2} \simeq \hc_{H_1 \times_k H_2}.
\]
\end{Lem}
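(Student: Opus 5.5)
The plan is to construct the functor at the level of compact objects, where everything is explicit, and then check full faithfulness and generation. The Lie algebra of $H := H_1 \times_k H_2$ is $\fh = \fh_1 \oplus \fh_2$, so $U(\fh) \simeq U(\fh_1) \otimes_k U(\fh_2)$. External tensor product gives a bilinear monoidal functor of additive categories
\[
\sH_{H_1} \times \sH_{H_2} \to \sH_{H}, \quad (M_1, M_2) \mapsto M_1 \otimes_k M_2 .
\]
It sends $\ind_{\hc}^{H_1}(V_1)$ and $\ind_{\hc}^{H_2}(V_2)$ to $\ind_{\hc}^{H}(V_1 \boxtimes V_2)$, and it preserves finite extensions. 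It also sends pairs with one acyclic entry to acyclic complexes, since everything is over a field. Passing to bounded complexes and Verdier quotients as in~\S\ref{sss:algebra-structure-Chb}--\S\ref{sss:algebra-structure-Verdier-quotient} produces a monoidal functor $\hc_{H_1}^\comp \otimes \hc_{H_2}^\comp \to \hc_H^\comp$, bilinear and exact in each variable. Ind-completing it gives a morphism of algebra objects
\[
\hc_{H_1} \otimes_k \hc_{H_2} \to \hc_{H},
\]
because the Lurie tensor product of compactly generated categories is compactly generated by the external products of compacts. This morphism is compatible with $\ind_{\hc}$ and with the equivalence $\Rep(H_1) \otimes_k \Rep(H_2) \simeq \Rep(H)$ that we are assuming.

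The key step is full faithfulness on compact generators. I would reduce it to the corresponding statement for $\Rep$ by using Proposition~\ref{prop:hchind}, which identifies $\hc_{H_i}$ with $U(\fh_i)\mod_{H_i}$ and $\hc_H$ with $U(\fh)\mod_H$. Concretely, I would compute morphisms from $\ind(V_1 \boxtimes V_2)$ to $\ind(W_1 \boxtimes W_2)$ by adjunction. On the target side this is $\Hom_{\Rep(H)}\bigl(V_1 \boxtimes V_2,\, (U(\fh_1) \otimes W_1) \boxtimes (U(\fh_2) \otimes W_2)\bigr)$. On the source side it is the tensor product of the analogous $\Hom$ complexes for $H_1$ and $H_2$.

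Comparing these two is the main obstacle. The objects $U(\fh_i) \otimes W_i$ are not compact in $\Rep(H_i)$, so the Künneth-type full faithfulness of Lemma~\ref{lem:Rep-product} does not apply directly. I would first try to use that the functor of Lemma~\ref{lem:Rep-product} is a continuous equivalence. Under it, $X_1 \boxtimes X_2$ for arbitrary $X_1, X_2$ corresponds to the exterior product in $\Rep(H)$, and $\Hom$ out of the compact object $V_1 \boxtimes V_2$ in $\Rep(H_1) \otimes_k \Rep(H_2)$ is computed by the tensor product of mapping complexes. This holds because $V_i$ is compact, so each $\Hom(V_i, -)$ commutes with the filtered colimits writing $U(\fh_i) \otimes W_i$ as a colimit of finite-dimensional pieces.

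Alternatively, the same argument can be phrased by identifying $U(\fh)\mod_H$ with $\LMod_{U(\fh_1) \boxtimes U(\fh_2)}(\Rep(H_1) \otimes_k \Rep(H_2))$. This is in turn $U(\fh_1)\mod_{H_1} \otimes_k U(\fh_2)\mod_{H_2}$, using the general fact (an instance of~\cite[Chap.~1, Corollary~8.5.7]{gr}) that modules over an exterior product of algebras in a tensor product of categories form the tensor product of the module categories. This version would give the equivalence of underlying $\infty$-categories directly.

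Essential surjectivity then follows, since $\hc_H$ is generated by objects $\ind_{\hc}^H(V)$ with $V$ compact in $\Rep(H)$, by~\cite[Chap.~1, Proposition~5.4.5]{gr}. Each such $V$ lies in the image by the assumed equivalence for $\Rep$. Finally, the monoidal compatibility was built in at the level of $\sH$, so the equivalence is one of algebra objects.
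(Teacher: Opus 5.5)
Your proposal is correct. The construction of the functor matches the paper's: external tensor product on $\sH_{H_1}\times\sH_{H_2}$, descent to the Verdier quotients, Ind-completion, then passage to $\otimes_k$. Your ``alternative'' paragraph is exactly the paper's proof of the equivalence. It identifies each $\hc_K$ with $U(\fk)\mod_K$ via Proposition~\ref{prop:hchind}, for $K=H_1,H_2,H_1\times_k H_2$, and then uses the compatibility of module categories with tensor products (the paper cites \cite[Chap.~1, Proposition~8.5.4]{gr}).

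Your primary route differs in execution. You check full faithfulness directly on the compact generators $\ind_{\hc}^{H_1}(V_1)\boxtimes\ind_{\hc}^{H_2}(V_2)$, using adjunction and a K\"unneth computation in $\Rep(H_1)\otimes_k\Rep(H_2)$, and you get essential surjectivity from generation.

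This route gives a slightly sharper statement. The full-faithfulness step only uses that $\Rep(H_1)\otimes_k\Rep(H_2)\to\Rep(H_1\times_k H_2)$ is fully faithful, which holds unconditionally by Lemma~\ref{lem:Rep-product}. So the functor on Harish-Chandra bimodules is always fully faithful, and the hypothesis is needed only for essential surjectivity, in parallel with Lemma~\ref{lem:Rep-product}.

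The cost is a compatibility you leave implicit. You need the map on mapping complexes induced by your functor to be the one matching your two computations. Concretely, the Beck--Chevalley map relating $(\ind_{\hc}^{H_1})^R\otimes_k(\ind_{\hc}^{H_2})^R$ and $(\ind_{\hc}^{H_1\times_k H_2})^R$ must be an isomorphism on the objects $\ind_{\hc}^{H_1}(W_1)\boxtimes\ind_{\hc}^{H_2}(W_2)$. This is harmless: both sides lie in the heart, where the map is the evident isomorphism with $(U(\fh_1)\otimes_k W_1)\otimes_k(U(\fh_2)\otimes_k W_2)$. It still deserves a sentence in your write-up. The paper's route avoids this object-by-object comparison by matching the two sides as categories of modules over the same algebra $U(\fh_1)\otimes_k U(\fh_2)$.
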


\begin{proof}
Tensor product over $k$ induces a canonical functor
\[
\hc_{H_1}^\comp \times \hc_{H_2}^\comp \to \hc_{H_1 \times_k H_2}^\comp.
\]
Taking Ind-completions and observing that the resulting functor preserves colimits in each variable, we deduce a functor
\[
\hc_{H_1} \otimes \hc_{H_2} \to \hc_{H_1 \times_k H_2}
\]
and then, using the considerations of~\S\ref{sss:claim-relative-tensor-product}, a functor
\[
\hc_{H_1} \otimes_k \hc_{H_2} \to \hc_{H_1 \times_k H_2},
\]
which is clearly a morphism of algebra objects. To show that this functor is an equivalence we use the equivalences
\[
\hc_K \simeq U(\fk)\mod_K
\]
for $K$ being $H_1$, $H_2$ and $H_1 \times_k H_2$ (see Proposition~\ref{prop:hchind}), and~\cite[Chap.~1, Proposition~8.5.4]{gr}.
\end{proof}

%%%%%%%%%%%%%%%%%%%%%%%%%%%%%%
\section{Strong actions}
\label{sec:strong-actions}
%%%%%%%%%%%%%%%%%%%%%%%%%%%%%%

%---------------------------------------------------
\subsection{Definition and first examples}
%---------------------------------------------------

\sss 
Having made sense of the algebra object $\hc_H$, we may define left, resp.~right, strong categorical representations of $H$ as left, resp.~right, $\hc_H$-modules in $\DGCat$. We therefore have $\infty$-categories
\[
\hc_{H}\mod, \quad \modr\hc_H
\]
of left and right strong categorical representations of $H$. 

Note that we have an equivalence of $\infty$-categories
\[
\hc_{H}\mod \simto \modr\hc_H
\]
sending a left $\hc_H$-module to the same object of $\DGCat$, endowed with the right module structure obtained by twisting the given left action of $\hc_H$ by $\inv_*$ (see~\eqref{eqn:inv-hc}). We will therefore often state our results below only for left strong categorical representations, which we will simply call strong categorical representations.

\sss
\label{sss:dualizability-strong-reps}
One can consider the (left and right) dualizability of left and right strong categorical representations of $H$. In particular, by the discussion in~\S\ref{sss:rigidity-duals}, a left strong categorical representation $\sC$ is right dualizable if and only if it is dualizable in $\DGCat$, or in $\PrLSt$. Moreover, in this case, taking also into account the discussion in~\S\ref{sss:twist-pivotality}, the right dual of $\sC$ is the right categorical representation given by $\sC^\vee$ (the dual in $\DGCat$, or in $\PrLSt$), endowed with the right action of $\hc_H$ where an object $M$ acts via the natural right action of $K_\fh^{-1} \otimes_{U(\fh)} M \otimes_{U(\fh)} K_\fh$. (This module is isomorphic to $\sC^\vee$ with the natural right action.)

\sss
\label{sss:restriction-induction-weak-strong}
Recall the algebra morphism $\ind_{\hc}^H$, see~\eqref{eqn:indhc}. Using this morphism we can consider the ``restriction'' functor
\[
\hc_H\mod \to \Rep(H)\mod
\]
and its left adjoint
\[
\hc_H \otimes_{\Rep(H)} (-) : \Rep(H)\mod \to \hc_H\mod,
\]
which allow us to pass back and forth between weak and strong categorical representations of $H$.

\sss 
\label{sss:Amod-strong-reps}
The following class of examples is a counterpart for strong categorical representations of the weak categorical representations studied in~\S\ref{sss:def-AmodH}.
If $A$ is an algebra object in $\hc_H$, the $\infty$-category of left $A$-modules 
\[
\LMod_A(\hc_H)
\]
is naturally a right strong categorical representation of $H$, equipped with equivariant adjoint functors 
\[
\hc_H \rightleftarrows \LMod_A(\hc_H).
\]
Similar statements apply for right modules (which naturally form a left strong categorical representation).

Note that the lax monoidal ``forgetful'' functor $\hc_H \to \Rep(H)$ (see Proposition~\ref{prop:hchind}) sends algebra objects to algebra objects; an algebra object $A$ as above therefore determines an algebra object in $\Rep(H)$, denoted similarly.
Consider the adjunctions 
\[
\Rep(H) \rightleftarrows \hc_H \rightleftarrows \LMod_A(\hc_H).
\]
As both right adjoints in this diagram are conservative, we deduce by the Barr--Beck--Lurie theorem~\cite[Chap.~1, Proposition~3.7.7]{gr} an equivalence of $\infty$-categories
\[
\LMod_A(\hc_H) \simeq \LMod_{A}(\Rep(H)).
\]

\sss
\label{sss:Dmod-strong-action}
Let us emphasize a particular case of the discussion in~\S\ref{sss:Amod-strong-reps}. 
Suppose $X$ is a smooth affine algebraic variety over $k$ equipped with an action of $H$, and let $\sD(X)$ be its algebra of global differential operators. Then the action of $H$ on $X$ induces an action on $\sD(X)$ and an algebra morphism $U(\fh) \to \sD(X)$, so that $\sD(X)$ is naturally an algebra object in $\hc_H$, and we have an equivalence
\[
\LMod_{\sD(X)}(\hc_H) \simeq \LMod_{\sD(X)}(\Rep(H)).
\]
Moreover, 
by Lemma~\ref{lem:equiv-modules-Ind},
this $\infty$-category can be obtained by starting with the ``naive'' derived $\infty$-category $\LMod_{\sD(X)}(\QCoh(\BB H))$ of weakly $H$-equivariant D-modules on $X$, equipped with its localization functor
\[
\on{Loc}: \hc_H^\comp \rightarrow \LMod_{\sD(X)}(\QCoh(\BB H)),
\]
and Ind-completing the full stable subcategory generated by the essential image of $\on{Loc}$. 

%------------------------------------------------------------------------------
\subsection{Harish-Chandra modules}
\label{ss:hK-mod}
%------------------------------------------------------------------------------

\sss
We now discuss another source of interesting strong categorical representations. Consider an affine $k$-group scheme of finite type $H$, and a (closed) subgroup scheme $K \subset H$. Denote their respective Lie algebras by $\fh$ and $\fk$. Then we have the abelian category $(\fh,K)\mod^\heartsuit$ of $(\fh,K)$-modules, or strongly $K$-equivariant $U(\fh)$-modules, i.e.~$K$-equivariant $U(\fh)$-modules such that the differential of the action of $K$ coincides with the restriction of the action of $U(\fh)$ along the embedding $U(\fk) \hookrightarrow U(\fh)$. We have a canonical functor
\begin{equation}
\label{eqn:ind-(h,K)-mod}
\ind_{K}^{(\fh,K)} : \Rep^\heartsuit(K) \to (\fh,K)\mod^\heartsuit, \qquad \ind_{K}^{(\fh,K)}(V) = U(\fh) \otimes_{U(\fk)} V,
\end{equation}
which is left adjoint to the natural forgetful functor $(\fh,K)\mod^\heartsuit \to \Rep(K)^\heartsuit$. The image under this functor of a generator of $\Rep^\heartsuit(K)$ provides a generator of $(\fh,K)\mod^\heartsuit$, and this category is therefore a Grothendieck abelian category.

\sss
\label{sss:action-hcbim-hcmod}
Of course the Harish-Chandra bimodules for $H$ introduced in~\S\ref{sss:hc-bimodules-def} are a special case of this construction, for the antidiagonal embedding of $H$ in $H \times_k H^{\rev}$. (In this case we have $\ind_{\Delta H}^{(\fh \oplus \fh^\rev, \Delta H)} = \ind^H_\hc$.)

\sss
For general $H$, $K$, we have a natural left action of the monoidal abelian category $\hc_H^\heartsuit$ on the abelian category $(\fh,K)\mod^\heartsuit$, induced by the action of $U(\fh)$-bimodules on $U(\fh)$-modules. Note that for $V_1 \in \Rep^{\heartsuit}(H)$ and $V_2 \in \Rep^\heartsuit(K)$ we have a canonical isomorphism
\begin{equation}
\label{eqn:tensor-induced-hc}
\ind_{\hc}^H(V_1) \otimes_{U(\fh)} \ind_{K}^{(\fh,K)}(V_2) \simeq \ind_{K}^{(\fh,K)}(\res^H_K(V_1) \otimes_k V_2).
\end{equation}
Explicitly, identifying as in~\S\ref{sss:diagonally-induced-hc-bimodules} $\ind_{\hc}^H(V_1)$ with $V_1 \otimes_k U(\fh)$, the left-hand side identifies with $V_1 \otimes_k (U(\fh) \otimes_{U(\fk)} V_2)$ with the diagonal action of $U(\fh)$ (on the first two factors), which identifies with $U(\fh) \otimes_{U(\fk)} (\res^H_K(V_1) \otimes_k V_2)$ via the map $v \otimes (x \otimes w) \mapsto x_{(1)} \otimes S(x_{(2)}) \cdot v \otimes w$ (for $v \in V_1$, $x \in U(\fh)$ and $w \in V_2$) in Sweedler's notation.

\sss
We will denote by $(\fh,K)\mod^{\heartsuit,\mathrm{fg}}$ the full abelian subcategory of $(\fh,K)\mod^\heartsuit$ spanned by objects which are finitely generated as $U(\fh)$-modules. It is clear that this subcategory is stable under the action of $\hc_H^{\heartsuit, \mathrm{fg}}$, and that the functor~\eqref{eqn:ind-(h,K)-mod} restricts to a functor
\[
\Rep^{\heartsuit,\fd}(K) \to (\fh,K)\mod^{\heartsuit,\mathrm{fg}}.
\]

\sss
We set
\[
(\fh,K)\mod^\comp := \Db((\fh,K)\mod^{\heartsuit,\mathrm{fg}}).
\]
Then $(\fh,K)\mod^\comp$ is a small stable $\infty$-category, which is idempotent complete by the same considerations as in~\S\ref{sss:hcc-idempotent-complete}, and which is equipped with a canonical t-structure with heart $(\fh,K)\mod^{\heartsuit,\mathrm{fg}}$, and a t-exact functor
\[
\ind_K^{(\fh,K)} : \Rep(K)^\comp \to (\fh,K)\mod^\comp
\]
induced by the exact functor~\eqref{eqn:ind-(h,K)-mod}.

\sss
\label{sss:def-(h,K)mod}
We next set
\[
(\fh,K)\mod := \Ind((\fh,K)\mod^\comp).
\]
This $\infty$-category is naturally an object in $\DGCat$. The subcategory of compact objects in $(\fh,K)\mod$ identifies with $(\fh,K)\mod^\comp$, justifying the notation. We also have a canonical t-structure compatible with filtered colimits on $(\fh,K)\mod$, which restricts to the t-structure considered above on $(\fh,K)\mod^\comp$, and a t-exact functor
\[
\ind_K^{(\fh,K)} : \Rep(K) \to (\fh,K)\mod.
\]

Using the universal property of the Ind-completion we obtain a canonical t-exact functor
\begin{equation}
\label{eqn:functor-hKmod-D}
(\fh,K)\mod \to D((\fh,K)\mod^{\heartsuit}).
\end{equation}

\sss
\label{sss:hKmod-renormalization}
As in~\S\ref{sss:HC-ren}, using Proposition~\ref{prop:renormalization-DA} and~\eqref{eqn:renormalization-DA-IndDbA} one sees that the $\infty$-category $(\fh,K)\mod$ can be obtained from the derived $\infty$-category $D((\fh,K)\mod^{\heartsuit})$ by renormalization. As a consequence, this $\infty$-category identifies with the $\infty$-category of chain complexes of injective objects in $(\fh,K)\mod^{\heartsuit}$, and moreover the functor~\eqref{eqn:functor-hKmod-D} restricts to a t-exact equivalence
\[
\bigl( (\fh,K)\mod \bigr)^+ \simto D((\fh,K)\mod^{\heartsuit})^+.
\]
In particular, the heart of the t-structure on $(\fh,K)\mod$ identifies with $(\fh,K)\mod^\heartsuit$.

\sss
The action of the monoidal category $\sH_H$ on $(\fh,K)\mod^{\heartsuit, \mathrm{fg}}$ (see~\S\ref{sss:action-hcbim-hcmod}) induces an action of $\Chb(\sH_H)$ on $\Chb((\fh,K)\mod^{\heartsuit, \mathrm{fg}})$. Since any object in $\sH_H$ is flat as a right $U(\fh)$-module, tensoring with any complex in $\Chb(\sH_H)$ sends acyclic complexes in $\Chb((\fh,K)\mod^{\heartsuit, \mathrm{fg}})$ to acyclic complexes. It is also easily seen that the tensor product of an acyclic complex in $\Chb(\sH_H)$ with any complex in $\Chb((\fh,K)\mod^{\heartsuit, \mathrm{fg}})$ is acyclic. In view of the considerations in~\S\ref{sss:algebra-structure-Verdier-quotient}, we therefore obtain on $(\fh,K)\mod$ a structure of $\hc_H$-module, i.e.~of strong categorical representation of $H$. The action of $\hc_H$ on $(\fh,K)\mod$ is right t-exact in the sense that the binary operation $\hc_H \times (\fh,K)\mod \to (\fh,K)\mod$ sends $(\hc_H)^{\leq 0} \times ((\fh,K)\mod)^{\leq 0}$ into $((\fh,K)\mod)^{\leq 0}$.

\sss
One can also describe the $\infty$-category $(\fh,K)\mod^\comp$ in a way which is more parallel to the construction of $\hc_H^\comp$, as we now explain.
We will denote by $\sH_{H,K}$ the full subcategory of $(\fh,K)\mod^{\heartsuit,\mathrm{fg}}$ spanned by objects which are finite extensions of objects of the form $\ind_{K}^{(\fh,K)}(V)$ with $V \in \Rep^{\heartsuit,\fd}(K)$. It follows from the formula~\eqref{eqn:tensor-induced-hc} that $\sH_{H,K}$ is a module for the
monoidal additive category $\sH_H$. 

\sss
\label{sss:duality-H-HK}
Consider the opposite group $H^\rev$, and its subgroup $K^\rev$.
We have an involutive equivalence of additive categories
\begin{equation}
\label{eqn:duality-h-K}
\DD_{H,K} : \sH_{H,K} \simto (\sH_{H^\rev,K^\rev})^{\op}
\end{equation}
constructed as follows. Consider the abelian category $\fh\mod_K^\heartsuit$ of $K$-equivariant $U(\fh)$-modules, and the full subcategory $\fh\mod_K^{\heartsuit,\mathrm{fg}}$ of $K$-equivariant $U(\fh)$-modules which are finitely generated as $U(\fh)$-modules. Then $(\fh,K)\mod^\heartsuit$ is a full subcategory of $\fh\mod_K^\heartsuit$ (not stable under extensions), and $(\fh,K)\mod^{\heartsuit, \mathrm{fg}}$ is a full subcategory of $\fh\mod_K^{\heartsuit,\mathrm{fg}}$.

For any 
$M \in \fh\mod_K^{\heartsuit,\mathrm{fg}}$, 
the space $\Hom_{U(\fh)}(M,U(\fh))$ admits a canonical action of $K$ (induced by the actions on $M$ and $U(\fh)$), which we can twist via $k \mapsto k^{-1}$ to obtain an action of $K^\rev$, and a canonical right action of $U(\fh)$ (induced by right multiplication on $U(\fh)$). Taken together, these actions define a structure of $K^\rev$-equivariant $U(\fh^\rev)$-module. Deriving this functor we obtain, for any $n \in \Z_{\geq 0}$, a functor
\[
\Ext^n_{U(\fh)}(-, U(\fh)) : \fh\mod_K^{\heartsuit,\mathrm{fg}} \to (\fh^\rev\mod_{K^\rev}^\heartsuit)^\op.
\]
Using a Chevalley--Eilenberg complex one checks that for $V \in \Rep^{\heartsuit,\fd}(K)$ we have
\begin{equation}
\label{eqn:Ext-Ind-hc}
\Ext^n_{U(\fh)}(\ind_{K}^{(\fh,K)}(V), U(\fh)) =
\begin{cases}
(V^* \otimes_k \det(\fk)^*) \otimes_{U(\fk)} U(\fh) & \text{if $n=\dim(\fk)$;}\\
0 & \text{otherwise}
\end{cases}
\end{equation}
(where $\det(\fk)$ is the top exterior power of $\fk$, and $V^* \otimes_k \det(\fk)^*$ is viewed as a $K^\rev$-module in the natural way).
This formula implies that the functor $\Ext^{\dim(\fk)}_{U(\fh)}(-, U(\fh))$ restricts to a functor from $\sH_{H,K}$ to $(\sH_{H^\rev,K^\rev})^\op$, which we choose as the functor~\eqref{eqn:duality-h-K}.

It is not difficult to check that for $M \in \sH_H$ and $N \in \sH_{H,K}$ we have a canonical isomorphism
\begin{equation}
\label{eqn:duality-HK-action}
\DD_{H,K}(M \otimes_{U(\fh)} N) \simeq \DD_{H,K}(N) \otimes_{U(\fh)} M^\vee.
\end{equation}

\begin{Rem}
There is a canonical equivalence $\fh\mod_K^\heartsuit \simeq \fh^\rev\mod_{K^\rev}^\heartsuit$ induced by inversion on $H$, which induces an equivalence $\sH_{H,K} \simeq \sH_{H^\rev, K^\rev}$, so that $\DD_{H,K}$ can be seen as a contravariant autoequivalence of $\sH_{H,K}$. The description using $H^\rev$ seems more canonical however.
\end{Rem}

\sss
The following statement is a counterpart for Harish-Chandra modules of Proposition~\ref{prop:resolution-hc}.

\begin{Prop}
\label{prop:resolution-hc-mod}
Let $M \in (\fh,K)\mod^{\heartsuit,\mathrm{fg}}$, and let $n = \dim (\fh/\fk)$. There exists an exact sequence
\[
0 \to P^n \to P^{n-1} \to \cdots \to P^0 \to M \to 0
\]
in $(\fh,K)\mod^{\heartsuit,\mathrm{fg}}$
where each $P^i$ belongs to $\sH_{H,K}$.
\end{Prop}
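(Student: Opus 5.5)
We will adapt the proof of Proposition~\ref{prop:resolution-hc}, replacing $U(\fh)$ by the $(U(\fh),U(\fk))$-bimodule $U(\fh)\otimes_{U(\fk)}(-)$ and the polynomial ring $S(\fh)$ by the symmetric algebra $S(\fh/\fk)$. The natural filtration of $U(\fh)$ by PBW degree induces, on any $(\fh,K)$-module $M$ equipped with a $K$-stable filtration compatible with the $U(\fh)$-action, an associated graded $\gr(M)$. This is a graded $K$-equivariant $S(\fh)$-module. Since $\fk$ acts on $M$ through the differential of the $K$-action, we can choose the filtration so that $\fk \subset F_1 U(\fh)$ acts on $\gr(M)$ with degree $0$ and hence by zero in degree $1$. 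Concretely, take $M_i := F_i U(\fh)\cdot M_0$, where $M_0$ is a finite-dimensional $K$-stable generating subspace; this exists by local finiteness. Then $\gr(M)$ is a graded $K$-equivariant module over $S(\fh)/(\fk)\cdot S(\fh) = S(\fh/\fk)$, which is a polynomial ring in $n=\dim(\fh/\fk)$ variables. We call a graded $K$-equivariant $S(\fh/\fk)$-module equivariantly free if it has a finite filtration with subquotients $S(\fh/\fk)\otimes_k V\langle i\rangle$, where $V\in\Rep^{\heartsuit,\fd}(K)$. The key observation is that $\gr(\ind_K^{(\fh,K)}(V)) \simeq S(\fh/\fk)\otimes_k V$ for the filtration generated by $V$ in degree $0$, by the PBW theorem for $U(\fh)$ as a free right $U(\fk)$-module.

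The first step proves analogues of Lemmas~\ref{lem:sh-free-criterion} and~\ref{lem:uh-free-graded}. Lemma~\ref{lem:sh-free-criterion} carries over verbatim: a finitely generated graded $K$-equivariant $S(\fh/\fk)$-module is equivariantly free if and only if it is free. The proof uses the lowest nonzero graded piece $M_d$ and the map $S(\fh/\fk)\otimes M_d\langle d\rangle\to M$, with induction on rank. For the analogue of Lemma~\ref{lem:uh-free-graded}, we again take the lowest filtered piece $M_d$, which is a finite-dimensional $K$-module. We then consider the filtered map $\ind_K^{(\fh,K)}(M_d)\langle d\rangle \to M$, which is well defined because $\fk$ acts on $M_d$ compatibly with $K$. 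Its associated graded is $S(\fh/\fk)\otimes M_d\langle d\rangle\to\gr(M)$, which is injective. By the strictness results of~\cite[Corollary~7.6.14]{mr}, the map is injective and strict, and we conclude by induction. This shows that $M$ admits a finite filtration by objects $\ind_K^{(\fh,K)}(V)$, i.e.\ $M\in\sH_{H,K}$.

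The second step is the analogue of Lemma~\ref{lem:lift-resoln}. Choose $N$ such that $M_N$ generates $\gr(M)$. Take $P:=\ind_K^{(\fh,K)}(M_N)=U(\fh)\otimes_{U(\fk)}M_N$, filtered by PBW degree with $M_N$ in degree $0$; the surjection $\gr(P)\to\gr(M)$ gives surjectivity and strictness of $P\to M$. The kernel $Q$ is an $(\fh,K)$-submodule, and with the induced filtration we obtain a short exact sequence of associated gradeds. One point needs care here: the grading convention for $M_N$ must be set so that the map is filtered. It may be simplest to filter $P$ as the sum of $U(\fh)_{\le j}\otimes M_j$, or to use the direct sum over $j\le N$ of $\ind(\gr$-pieces$)$ as in the original argument. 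Finally, we iterate $n$ times to get $P^{n-1}\to\cdots\to P^0\to M$ with exact associated graded. We set $P^n=\ker(d^{n-1})$; then $\gr(P^n)$ is an $n$-th syzygy over the polynomial ring $S(\fh/\fk)$, hence free by Hilbert's syzygy theorem, hence equivariantly free, hence $P^n\in\sH_{H,K}$ by the first step.

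The main obstacle is the identification of $\gr$ with modules over $S(\fh/\fk)$, i.e.\ ensuring that $\fk$ acts trivially on associated gradeds. This requires the filtrations to be generated by $K$-stable (hence $\fk$-stable) subspaces. In particular, $\fk$ must act on $M_j$ without raising degree, so that the symbol of any $x\in\fk$ in $\gr_1 U(\fh)$ acts by zero. This holds because $x\cdot m\in M_j$ for $m\in M_j$, as the $\fk$-action is the differential of the $K$-action. Once this is in place, the remaining steps are formal repetitions of~\S\ref{sss:proof-prop-resolution-hc}.
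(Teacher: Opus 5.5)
Your proposal follows the paper's route: filter by $K$-stable pieces so that $\gr$ is a module over $S(\fh/\fk)$, prove analogues of Lemmas~\ref{lem:sh-free-criterion}, \ref{lem:uh-free-graded} and~\ref{lem:lift-resoln}, and conclude with Hilbert's syzygy theorem. It is correct provided that in the lifting step you adopt your first fix, i.e.\ you give $P=\ind_K^{(\fh,K)}(M_N)$ the filtration $P_i=\sum_{j\le N}F_{i-j}U(\fh)\cdot(1\otimes M_j)$, so that $\gr(P)\simeq S(\fh/\fk)\otimes_k\bigoplus_{j\le N}\gr_j(M)$ (this is what the paper's argument implicitly does with $U(\fh)\otimes_k M_N$). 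Your second suggestion, a direct sum of inductions of the pieces $\gr_j(M)$, fails in general because $M_j\to\gr_j(M)$ need not have a $K$-equivariant splitting, so those pieces do not map to $M$; using $\bigoplus_{j\le N}\ind_K^{(\fh,K)}(M_j)\langle j\rangle$ instead would work.
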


\begin{proof}
This statement can be obtained by adapting the proof of Proposition~\ref{prop:resolution-hc}. Namely, we consider filtered $(\fh,K)$-modules (where we require the natural compatibility of the filtration with the action of $U(\fh)$, and that $K$ preserves the filtration). The action of $S(\fh)$ on the associated graded of such a module factors through an action of $S(\fh/\fk)$. We replace the condition of being $H$-equivariantly free for an $H$-equivariant $U(\fh)$-module, resp.~graded $S(\fh)$-module, by the condition of having a filtration by modules of the form $\mathrm{ind}_{K}^{(\fh,K)}(V)$ with $V \in \Rep^{\heartsuit,\fd}(K)$, resp.~of the form $S(\fh/\fk) \otimes_k V \langle n \rangle$ with $V \in \Rep^{\heartsuit,\fd}(K)$ and $n \in \Z$. Then we have natural analogues in this context of Lemmas~\ref{lem:sh-free-criterion},~\ref{lem:uh-free-graded} and~\ref{lem:lift-resoln}, which allow us to copy the proof of Proposition~\ref{prop:resolution-hc}.
\end{proof}

\sss
\label{sss:(h,K)modc-HHK}
Using Proposition~\ref{prop:resolution-hc-mod}, one sees as in the proof of Lemma~\ref{lem:HCc-DbHC} that there exists a canonical equivalence from the Verdier quotient of $\Chb(\sH_{H,K})$ by the full subcategory spanned by acyclic complexes to $(\fh,K)\mod^\comp$. Since $\sH_{H,K}$ is a module for $\sH_H$, one can also observe the action of $\hc_H$ on $(\fh,K)\mod$ using this perspective.

\sss
There exists an involutive equivalence of $\infty$-categories
\begin{equation}
\label{eqn:duality-h-K-comp}
\DD_{(\fh,K)} : (\fh,K)\mod^\comp \simto ((\fh^\rev,K^\rev)\mod^\comp)^\op
\end{equation}
constructed as follows. First, the functor $\DD_{H,K}$ constructed in~\S\ref{sss:duality-H-HK}
induces an equivalence $\Chb(\sH_{H,K}) \simto \Chb(\sH_{H^\rev,K^\rev})^\op$. We observe that this functor sends acyclic complexes to acyclic complexes, so it factors through the desired equivalence~\eqref{eqn:duality-h-K-comp}. In fact this follows from the observation that given an exact sequence $0 \to M_1 \to M_2 \to M_3 \to 0$, if $\Ext^n_{U(\fh)}(M_i, U(\fh))=0$ for $i \in \{1,2\}$ and $n < \dim(\fk)$, resp.~for $i \in \{2,3\}$ and $n > \dim(\fk)$, then the same vanishing holds for $i=3$, resp.~for $i=1$. Hence given an acyclic complex of objects in $\sH_{H,K}$, each kernel $M$ of each differential satisfies $\Ext^n_{U(\fh)}(M, U(\fh))=0$ unless $n = \dim(\fk)$, which implies that the complex obtained by applying $\DD_{H,K}$ to each term is acyclic.

From the formula~\eqref{eqn:duality-HK-action} we obtain that for $M \in \hc_H^\comp$ and $N \in (\fh,K)\mod^\comp$ we have
\[
\DD_{(\fh,K)}(M \otimes_{U(\fh)} N) \simeq \DD_{(\fh,K)}(N) \otimes_{U(\fh)} M^\vee.
\]

\sss
\label{sss:h-mod}
As a particular case of the discussion above one can take $K=\{1\}$ the trivial subgroup; in this case we will write $\fh\mod$ for $(\fh,\{1\})\mod$. Note that since $U(\fh)$ has finite global dimension, the $\infty$-category $\fh\mod$ is simply the $\infty$-category of (complexes of) $U(\fh)$-modules, which identifies with $\LMod_{U(\fh)}(\Vect_k)$. 
In terms of the ``induction'' functor from weak to strong categorical representations, see~\S\ref{sss:restriction-induction-weak-strong}, we have
\begin{equation}
\label{eqn:hmod-induction-Vect}
\hc_H \otimes_{\Rep(H)} \Vect_k = \fh\mod.
\end{equation}
In fact, this identification follows from Proposition~\ref{prop:hchind} and~\cite[Chap.~1, Corollary~8.5.7]{gr}.

\sss
Another interesting special case is when $K=H$. In this case we have $(\fh,H)\mod=\Rep(H)$.

\sss
\label{sss:dualizability-gKmod}
By definition $(\fh,K)\mod$ is the Ind-completion of a small stable $\infty$-category. Hence it is dualizable in $\PrLSt$ and in $\DGCat$, with dual
\begin{equation}
\label{eqn:dual-(h,K)-mod}
((\fh,K)\mod)^\vee = \Ind(((\fh,K)\mod^{\comp})^{\op}),
\end{equation}
see~\cite[Chap.~1, Proposition~7.3.2 and Proposition~9.5.3]{gr}. Using the equivalence~\eqref{eqn:duality-h-K-comp} one can also describe the dual of $(\fh,K)\mod$ as $(\fh^\rev,K^\rev)\mod$.

The left action of $\hc_H$ on $(\fh,K)\mod$ induces a right action on $((\fh,K)\mod)^\vee$, see~\cite[Chap.~1, \S 9.4.1]{gr}; under the identification~\eqref{eqn:dual-(h,K)-mod} this action is induced by the right action of $(\hc_H)^\comp$ on $((\fh,K)\mod^{\comp})^{\op}$ given by $N \cdot M = ({}^\vee M) \otimes_{U(\fh)} N$ for $M \in (\hc_H)^\comp$ and $N \in (\fh,K)\mod^{\comp}$. Using the equivalence~\eqref{eqn:duality-h-K-comp}, the right action of $\hc_H$ on $(\fh^\rev,K^\rev)\mod$ identified with $((\fh,K)\mod)^\vee$ is therefore given by the obvious action where $\fh^\rev$-modules are identified with right $U(\fh)$-modules.

Finally, one can study the dualizability of $(\fh,K)\mod$ as a left strong categorical representation. By the comments in~\S\ref{sss:dualizability-strong-reps}, $(\fh,K)\mod$ is right dualizable as a left strong categorical representation, and its right dual is the right strong categorical representation given by $(\fh^\rev,K^\rev)\mod$ with the natural right action of $\hc_H$.

%-------------------------------------------------------
\subsection{Functorialities for \texorpdfstring{$\infty$}{infinity}-categories of Harish-Chandra modules}
\label{ss:hKmod-functorialities}
%-------------------------------------------------------

\sss
We will now study functorialities for Harish-Chandra modules. First, consider an affine group scheme $H_2$ of finite type over $k$, and subgroups $K \subset H_1 \subset H_2$. Denote by $\fk$, $\fh_1$, $\fh_2$ the Lie algebras of $K$, $H_1$ and $H_2$ respectively. Then we have an exact restriction functor
\[
\res^{(\fh_2,K)}_{(\fh_1,K)} : (\fh_2,K)\mod^\heartsuit \to (\fh_1,K)\mod^\heartsuit
\]
and its exact left adjoint
\[
\ind^{(\fh_2,K)}_{(\fh_1,K)} : (\fh_1,K)\mod^\heartsuit \to (\fh_2,K)\mod^\heartsuit
\]
given by $\ind^{(\fh_2,K)}_{(\fh_1,K)}(M) = U(\fh_2) \otimes_{U(\fh_1)} M$.

The functor $\ind^{(\fh_2,K)}_{(\fh_1,K)}$ restricts to an exact functor $(\fh_1,K)\mod^{\heartsuit,\mathrm{fg}} \to (\fh_2,K)\mod^{\heartsuit,\mathrm{fg}}$ which induces a functor $(\fh_1,K)\mod^\comp \to (\fh_2,K)\mod^\comp$, and then a functor
\[
\ind^{(\fh_2,K)}_{(\fh_1,K)} : (\fh_1,K)\mod \to (\fh_2,K)\mod.
\]
Since this functor sends compact objects to compact objects, its right adjoint $\res^{(\fh_2,K)}_{(\fh_1,K)}$ is continuous. In fact this right adjoint can be explicitly constructed by restricting the composition
\[
D((\fh_2,K)\mod^\heartsuit)^+ \to D((\fh_1,K)\mod^\heartsuit)^+ \simeq ((\fh_1,K)\mod)^+ \subset (\fh_1,K)\mod
\]
(where the first arrow is induced by the functor $\res^{(\fh_2,K)}_{(\fh_1,K)}$ considered above, and the equivalence is explained in~\S\ref{sss:hKmod-renormalization}) to $\Db((\fh_1,K)\mod^{\heartsuit,\mathrm{fg}})$, and then using the universal property of the Ind-completion.

In particular, when $K=H_1$ we have $\ind_{(\fk,K)}^{(\fh_2,K)} = \ind_{K}^{(\fh_2,K)}$ where the right-hand side is as in~\S\ref{sss:def-(h,K)mod}. Accordingly, in this case we will write $\res_{K}^{(\fh_2,K)}$ for $\res_{(\fk,K)}^{(\fh_2,K)}$.

\sss
\label{sss:coind-hcmod}
We now consider an affine group scheme $H$ of finite type over $k$, and two subgroups $K_1 \subset K_2 \subset H$. Denote the corresponding Lie algebras by $\fk_1$, $\fk_2$ and $\fh$. We then have an obvious exact restriction functor
\[
\res^{(\fh,K_2)}_{(\fh,K_1)} : (\fh,K_2)\mod^\heartsuit \to (\fh,K_1)\mod^\heartsuit
\]
which restricts to a functor $(\fh,K_2)\mod^{\heartsuit,\mathrm{fg}} \to (\fh,K_1)\mod^{\heartsuit,\mathrm{fg}}$. Taking the induced functor on bounded derived categories and then Ind-completing, we deduce a continuous functor
\[
\res^{(\fh,K_2)}_{(\fh,K_1)} : (\fh,K_2)\mod \to (\fh,K_1)\mod.
\]
By construction this functor sends compact objects to compact objects, so its right adjoint
\[
\coind^{(\fh,K_2)}_{(\fh,K_1)} : (\fh,K_1)\mod \to (\fh,K_2)\mod
\]
is continuous. It is clear that the functor $\res^{(\fh,K_2)}_{(\fh,K_1)}$ is $\hc_H$-linear; by rigidity, it follows that $\coind^{(\fh,K_2)}_{(\fh,K_1)}$ is also $\hc_H$-linear. Since $\res^{(\fh,K_2)}_{(\fh,K_1)}$ is t-exact, $\coind^{(\fh,K_2)}_{(\fh,K_1)}$ is left t-exact.

\sss
\label{sss:transitivity-coind-hcmod}
If we are given subgroups $K_1 \subset K_2 \subset K_3 \subset H$ then we have
\[
\res^{(\fh,K_3)}_{(\fh,K_1)} = \res^{(\fh,K_2)}_{(\fh,K_1)} \circ \res^{(\fh,K_3)}_{(\fh,K_2)},
\]
and hence
\[
\coind^{(\fh,K_3)}_{(\fh,K_1)} =  \coind^{(\fh,K_3)}_{(\fh,K_2)} \circ \coind^{(\fh,K_2)}_{(\fh,K_1)}.
\]

\sss
\label{sss:coind-res-diagram}
If $K_1$, $K_2$, $H$ are as in~\S\ref{sss:coind-hcmod}, we have a commutative diagram
\[
\begin{tikzcd}[column sep=2cm]
(\fh,K_1)\mod \ar[r, "\coind^{(\fh,K_2)}_{(\fh,K_1)}"] \ar[d, "\res^{(\fh,K_1)}_{(\fk_2,K_1)}"'] & (\fh,K_2)\mod \ar[d, "\res^{(\fh,K_2)}_{K_2}"] \\
(\fk_2, K_1)\mod \ar[r, "\coind_{(\fk_2,K_1)}^{(\fk_2,K_2)}"] & \Rep(K_2)
\end{tikzcd}
\]
obtained by passage to right adjoints from the commutative diagram
\[
\begin{tikzcd}[column sep=2cm]
(\fh,K_1)\mod & (\fh,K_2)\mod \ar[l, "\res^{(\fh,K_2)}_{(\fh,K_1)}"'] \\
(\fk_2, K_1)\mod \ar[u, "\ind^{(\fh,K_1)}_{(\fk_2,K_1)}"] & \Rep(K_2). \ar[u, "\ind^{(\fh,K_2)}_{K_2}"'] \ar[l, "\res_{(\fk_2,K_1)}^{(\fk_2,K_2)}"']
\end{tikzcd}
\]

\sss
\label{sss:coind-hcmod-same-Liealg-1}
Let us consider the construction of~\S\ref{sss:coind-hcmod} in the special case when $\fk_1=\fk_2=:\fk$ (but $K_1$ might be different from $K_2$). In this case we have a commutative diagram
\[
\begin{tikzcd}[column sep=2cm]
(\fh,K_2)\mod \ar[r, "\res^{(\fh,K_2)}_{(\fh,K_1)}"] & (\fh,K_1)\mod \\
\Rep(K_2) \ar[r, "\res^{K_2}_{K_1}"] \ar[u, "\ind_{K_2}^{(\fh,K_2)}"] & \Rep(K_1). \ar[u, "\ind_{K_1}^{(\fh,K_1)}"'] 
\end{tikzcd}
\]
Passing to right adjoints we deduce a commutative diagram
\begin{equation}
\label{eqn:diag-coind-res-hcmod}
\begin{tikzcd}[column sep=2cm]
(\fh,K_2)\mod \ar[d, "\res_{K_2}^{(\fh,K_2)}"'] & (\fh,K_1)\mod \ar[l, "\coind^{(\fh,K_2)}_{(\fh,K_1)}"'] \ar[d, "\res_{K_1}^{(\fh,K_1)}"] \\
\Rep(K_2) & \Rep(K_1). \ar[l, "\coind^{K_2}_{K_1}"'] 
\end{tikzcd}
\end{equation}
In this diagram the horizontal arrows are left t-exact (see~\S\ref{sss:Res-Coind} and~\S\ref{sss:coind-hcmod}), and the bounded below part of each of these $\infty$-categories identifies with the bounded below derived $\infty$-category of its heart, see~\S\S\ref{sss:QCoh-BH}--\ref{sss:def-RepH} and~\S\ref{sss:hKmod-renormalization}. In particular, the restriction of $\res_{K_2}^{(\fh,K_2)}$ to $((\fh,K_2)\mod)^+$ is conservative. Since $\coind^{K_2}_{K_1}$ has bounded cohomological dimension (see~\cite[Part~I, Proposition~5.12(b)]{jantzen}), we deduce that $\coind^{(\fh,K_2)}_{(\fh,K_1)}$ preserves the bounded part of the t-structure.

\sss
\label{sss:coind-hcmod-same-Liealg-2}
Continue with the setting of~\S\ref{sss:coind-hcmod-same-Liealg-1}, and consider the diagram
\[
\begin{tikzcd}[column sep=2cm]
((\fh,K_2)\mod)^+ & ((\fh,K_1)\mod)^+ \ar[l, "\coind^{(\fh,K_2)}_{(\fh,K_1)}"'] \\
(\Rep(K_2))^+ \ar[u, "\ind_{K_2}^{(\fh,K_2)}"] & (\Rep(K_1))^+. \ar[l, "\coind^{K_2}_{K_1}"'] \ar[u, "\ind_{K_1}^{(\fh,K_1)}"']
\end{tikzcd}
\]
We have a ``Beck--Chevalley map''
\[
\ind_{K_2}^{(\fh,K_2)} \circ \coind^{K_2}_{K_1} \to \coind^{(\fh,K_2)}_{(\fh,K_1)} \circ \ind_{K_1}^{(\fh,K_1)}.
\]
Using the commutativity of~\eqref{eqn:diag-coind-res-hcmod} and the tensor identity for coinduction of representations (see~\cite[Part~I, Proposition~3.6]{jantzen}) one sees that this morphism becomes an equivalence after application of the conservative functor $\res_{K_2}^{(\fh,K_2)}$. Hence the map itself is an equivalence.

Assume now, in addition, that the quotient $K_2/K_1$ is a projective scheme. Then the functor $\coind^{K_2}_{K_1}$ sends compact objects to compact objects, see~\cite[Part~I, Proposition~5.12(c)]{jantzen}. Since $((\fh,K_1)\mod)^\comp$ is generated, as a small stable $\infty$-category, by the image of $(\Rep(K_1))^\comp$ under $\ind_{K_1}^{(\fh,K_1)}$, see~\S\ref{sss:(h,K)modc-HHK}, we deduce that $\coind^{(\fh,K_2)}_{(\fh,K_1)}$ sends compact objects to compact objects; in other words, its right adjoint is continuous.

\sss
We now drop the assumption that $\fk_1=\fk_2$, and come back to the general setting of~\S\ref{sss:coind-hcmod}.

\begin{Lem}
\label{lem:coind-compact-objects}
Assume that $k$ is a perfect field of characteristic $p>0$, that $K_1$ and $K_2$ are smooth, and that the quotient $K_2/K_1$ is projective. Then the functor
\[
\coind^{(\fh,K_2)}_{(\fh,K_1)} : (\fh,K_1)\mod \to (\fh,K_2)\mod
\]
sends compact objects to compact objects; in other words, its right adjoint is continuous.
\end{Lem}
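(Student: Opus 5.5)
The plan is to reduce to the case of equal Lie algebras, which is already treated in~\S\ref{sss:coind-hcmod-same-Liealg-2}, by inserting an intermediate subgroup scheme. Let $N := (K_2)_1$ be the first Frobenius kernel of $K_2$. It is defined because $k$ is perfect of characteristic $p$. It is a normal infinitesimal subgroup scheme of $K_2$, and since $K_2$ is smooth its Lie algebra is $\fk_2$. Set $K' := K_1 \cdot N \subset K_2$. This is a subgroup scheme because $K_1$ normalizes $N$, and its Lie algebra is $\fk_2$. By~\S\ref{sss:transitivity-coind-hcmod} we have
\[
\coind^{(\fh,K_2)}_{(\fh,K_1)} = \coind^{(\fh,K_2)}_{(\fh,K')} \circ \coind^{(\fh,K')}_{(\fh,K_1)},
\]
so it suffices to show that each factor preserves compact objects.

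For the first factor, $K'$ and $K_2$ have the same Lie algebra. The quotient $K_2/K'$ receives a surjection from the projective scheme $K_2/K_1$, so it is proper; being quasi-projective, it is projective. The second paragraph of~\S\ref{sss:coind-hcmod-same-Liealg-2} therefore applies directly. That argument did not use smoothness; it only needs Jantzen's result that $\coind^{K_2}_{K'}$ preserves finite-dimensional (and hence compact) objects when the quotient is projective.

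The second factor is the main step. Here $K'/K_1 \simeq N/(N\cap K_1)$ is a finite (infinitesimal) scheme. I would first work at the abelian level. The right adjoint of the restriction $(\fh,K')\mod^\heartsuit \to (\fh,K_1)\mod^\heartsuit$ should be computed by coinduction of the underlying $K_1$-module to $K'$. One checks that the $\fh$-action extends, compatibly with the $K'$-structure, by the same recipe as for coinduction to $K'$ from $K_1$. Coinduction along a finite quotient is exact, since $K' \to K'/K_1$ is finite flat. It also multiplies dimensions of finite-dimensional representations by the rank of $\OO(K'/K_1)$. Hence it preserves $K$-finite-type data, and a filtration argument as in Lemma~\ref{lem:lift-resoln} shows that it sends $(\fh,K_1)\mod^{\heartsuit,\mathrm{fg}}$ to $(\fh,K')\mod^{\heartsuit,\mathrm{fg}}$.

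The point I expect to be most delicate is identifying the $\infty$-categorical $\coind^{(\fh,K')}_{(\fh,K_1)}$ on compact objects with this exact heart functor. I would use the equivalences $((\fh,K)\mod)^+ \simeq D((\fh,K)\mod^\heartsuit)^+$ from~\S\ref{sss:hKmod-renormalization}. Because $\coind$ is left t-exact, its restriction to the bounded below parts is the right derived functor of the heart-level coinduction. Since that heart-level functor is exact, its derived functor is computed termwise. Consequently $\coind^{(\fh,K')}_{(\fh,K_1)}$ maps $\Db((\fh,K_1)\mod^{\heartsuit,\mathrm{fg}})$ into $\Db((\fh,K')\mod^{\heartsuit,\mathrm{fg}})$, which is the subcategory of compact objects.

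Should the direct verification of the $\fh$-compatibility be awkward, the fallback is to check compactness after applying the conservative functor $\res^{(\fh,K')}_{K'}$ on bounded-below parts, via the square of~\S\ref{sss:coind-res-diagram}. There one only needs to show that $\coind^{(\fk_2,K')}_{(\fk_2,K_1)}$ preserves finite generation, which can be shown by the same finite-quotient argument.
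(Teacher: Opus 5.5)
Your reduction is the paper's: $K'$ is the paper's $K_{12}$, and the factor $\coind^{(\fh,K_2)}_{(\fh,K')}$ is handled exactly as there. The gap is in the second factor, because the heart-level right adjoint you use is the wrong functor.

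Since $\mathrm{Lie}(K')=\fk_2$ and representations of $N=(K_2)_1$ are the same as restricted $\fk_2$-modules, the $N$-part of the $K'$-structure on an $(\fh,K')$-module is forced by the action of $\fk_2\subset\fh$. Let $Z:=\OO((\fh/\fk_1)^{*(1)})$; the Frobenius center acts on $(\fh,K_1)$-modules through $Z$. Let $\mathfrak{I}\subset Z$ be the ideal defining $(\fh/\fk_2)^{*(1)}$, generated by the elements $x^p-x^{[p]}$ with $x\in\fk_2$. Then $(\fh,K')\mod^\heartsuit$ is a \emph{full} subcategory of $(\fh,K_1)\mod^\heartsuit$: it consists of the modules on which $\mathfrak{I}$ acts by zero.

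Consequently the heart-level right adjoint of restriction is $M\mapsto\{m\in M \mid \mathfrak{I}m=0\}$. It is not coinduction of the underlying $K_1$-module. Indeed, coinduction would send the trivial module $k$, which already lies in the subcategory, to $\OO(K'/K_1)$ rather than to $k$. This contradicts full faithfulness as soon as $\fk_1\neq\fk_2$.

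The correct functor is only left exact. Take $H=K_2=SL_2$ and $K_1=B$ the upper triangular Borel subgroup, so that all hypotheses hold. Then $\mathfrak{I}=(f^p)$ and the functor is $M\mapsto\ker(f^p)$. For $M:=U(\fh)\otimes_{U(\mathfrak{b})}k_\lambda$, on which $f^p$ acts injectively, the derived functor gives $(M/f^pM)[-1]$. So the claims ``exact'', ``computed termwise'' and the dimension count all fail.

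Your fallback does not repair this. On hearts, $\Rep(K')\to(\fk_2,K_1)\mod$ is fully faithful for the same reason, so $\coind^{(\fk_2,K')}_{(\fk_2,K_1)}$ has the same non-exact form. Moreover, forgetting to $\Rep(K')$ does not detect compactness: it sends $\ind_{K'}^{(\fh,K')}(V)$ to an infinite-dimensional representation whenever $\fh\neq\fk_2$.

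What you are missing is the paper's identification of this right adjoint with an $i^!$. Use the Frobenius center $\OO(\fh^{*(1)})$, over which $U(\fh)$ is finite free. It gives forgetful functors $(\fh,K_1)\mod\to\IndCoh((\fh/\fk_1)^{*(1)}/K_1)$ and $(\fh,K')\mod\to\IndCoh((\fh/\fk_2)^{*(1)}/K_1)$. Their left adjoints $U(\fh)\otimes_{U(\fk_1)\cdot\OO(\fh^{*(1)})}(-)$ intertwine $\res^{(\fh,K')}_{(\fh,K_1)}$ with $i^{\IndCoh}_*$, where $i:(\fh/\fk_2)^{*(1)}\hookrightarrow(\fh/\fk_1)^{*(1)}$ is the linear closed embedding.

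Pass to right adjoints and run the Beck--Chevalley argument of~\S\ref{sss:coind-hcmod-same-Liealg-2}. The claim then reduces to the fact that $i^!_{\IndCoh}$ preserves compact objects. Concretely, on underlying $Z$-modules the right adjoint becomes $R\Hom_Z(Z/\mathfrak{I},-)$, computed by a Koszul complex of length $\dim(\fk_2/\fk_1)$. The input you need is therefore not exactness. It is bounded cohomological amplitude together with preservation of finite generation, the latter using that $U(\fh)$ is finite over its Frobenius center.
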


\begin{proof}
Denote by $K_{12}$ the subgroup of $K_2$ generated by $K_1$ and the Frobenius kernel of $K_2$. Then we have inclusions $K_1 \subset K_{12} \subset K_2$, and hence
$\coind^{(\fh,K_2)}_{(\fh,K_1)} = \coind^{(\fh,K_2)}_{(\fh,K_{12})} \circ \coind^{(\fh,K_{12})}_{(\fh,K_1)}$, see~\S\ref{sss:transitivity-coind-hcmod}.
Here $K_2$ and $K_{12}$ have the same Lie algebra, and the quotient $K_2/K_{12} \simeq (K_2/K_1)^{(1)}$ is projective. Hence, by~\S\ref{sss:coind-hcmod-same-Liealg-2}, $\coind^{(\fh,K_2)}_{(\fh,K_{12})}$ sends compact objects to compact objects, which reduces the proof of the lemma to proving that $\coind^{(\fh,K_{12})}_{(\fh,K_1)}$ sends compact objects to compact objects.

Recall the Frobenius center of $U(\fh)$, which identifies canonically with $\OO(\fh^{*(1)})$, and recall that $U(\fh)$ is finite and free over this central subalgebra. For any $M \in (\fh,K_1)\mod^\heartsuit$, the action of $\OO(\fh^{*(1)})$ on $M$ factors through an action of $\OO((\fh/\fk_1)^{*(1)})$; we deduce a canonical t-exact functor
\[
(\fh,K_1)\mod \to \IndCoh((\fh/\fk_1)^{*(1)} / K_1)
\]
(where $K_1$ acts on $(\fh/\fk_1)^{*(1)}$ via the Frobenius morphism).
This functor has a left adjoint
\[
\IndCoh((\fh/\fk_1)^{*(1)} / K_1) \to (\fh,K_1)\mod
\]
induced by the exact functor sending a $K_1$-equivariant coherent sheaf on $(\fh/\fk_1)^{*(1)}$, considered as a $K_1$-equivariant $\OO((\fh/\fk_1)^{*(1)})$-module $M$, to $U(\fh) \otimes_{U(\fk_1) \cdot \OO(\fh^{*(1)})} M$. (Here $U(\fk_1) \cdot \OO(\fh^{*(1)})$ is the subalgebra of $U(\fh)$ generated by $U(\fk_1)$ and $\OO(\fh^{*(1)})$; note that $U(\fh)$ is free over this subalgebra by the Poincar\'e--Birkhoff--Witt theorem, which guarantees exactness of the functor. The $U(\fk_1)$-action on $M$ is obtained by differentiating the $K_1$-action.)

Similarly we have a natural exact functor
\[
(\fh,K_{12})\mod \to \IndCoh((\fh/\fk_2)^{*(1)} / K_1),
\]
which has a left adjoint
\[
\IndCoh((\fh/\fk_2)^{*(1)} / K_1) \to (\fh,K_{12})\mod
\]
induced by the restriction of the functor considered above to $K_1$-equivariant coherent sheaves on $(\fh/\fk_2)^{*(1)}$. (Here we use the identification between representations of the Frobenius kernel of $K_2$ and modules over the restricted enveloping algebra of $\fk_2$, see e.g.~\cite[Part~I, \S 9.6]{jantzen}, to extend the action of $K_1$ to an action of $K_{12}$.)

We have a commutative diagram
\[
\begin{tikzcd}[column sep=2cm]
(\fh,K_{12})\mod \ar[r, "\res^{(\fh,K_{12})}_{(\fh,K_1)}"] & (\fh,K_1)\mod \\
\IndCoh((\fh/\fk_2)^{*(1)} / K_1) \ar[r, "i_*^{\IndCoh}"] \ar[u] & \IndCoh((\fh/\fk_1)^{*(1)} / K_1) \ar[u]
\end{tikzcd}
\]
where the vertical arrows are the functors considered above, and $i$ is induced by the closed embedding $(\fh/\fk_2)^{*(1)} \hookrightarrow (\fh/\fk_1)^{*(1)}$. Passing to right adjoints, we deduce a commutative diagram
\[
\begin{tikzcd}[column sep=2cm]
(\fh,K_{12})\mod \ar[d] & (\fh,K_1)\mod \ar[l, "\coind^{(\fh,K_{12})}_{(\fh,K_1)}"'] \ar[d] \\
\IndCoh((\fh/\fk_2)^{*(1)} / K_1) & \IndCoh((\fh/\fk_1)^{*(1)} / K_1) \ar[l, "i_{\IndCoh}^!"']
\end{tikzcd}
\]
where again the vertical arrows are as above. Here the functor $i_{\IndCoh}^!$ sends compact objects to compact objects, see~\cite[Proposition~9.40(1)]{zhu}. Using the same arguments as in~\S\ref{sss:coind-hcmod-same-Liealg-2} we deduce that $\coind^{(\fh,K_{12})}_{(\fh,K_1)}$ sends compact objects to compact objects, which finishes the proof.
\end{proof}

\sss
We now change setting, and consider two affine $k$-group schemes of finite type $H_1$ and $H_2$, with respective Lie algebras $\fh_1$ and $\fh_2$, and subgroups $K_1 \subset H_1$, $K_2 \subset H_2$.

\begin{Lem}
\label{lem:product-hcmod}
There exists a canonical fully faithful functor
\[
\bigl( (\fh_1,K_1)\mod \bigr) \otimes_k \bigl( (\fh_2,K_2)\mod \bigr) \to \bigl( (\fh_1 \oplus \fh_2,K_1 \times_k K_2)\mod \bigr).
\]
If the functor
$\Rep(K_1) \otimes_k \Rep(K_2) \to \Rep(K_1 \times_k K_2)$
of Lemma~\ref{lem:Rep-product} is an equivalence, then this functor is an equivalence. 
If the functor $\Rep(H_1) \otimes_k \Rep(H_2) \to \Rep(H_1 \times_k H_2)$
of Lemma~\ref{lem:Rep-product} is also an equivalence, this equivalence is compatible with the actions of $\hc_{H_1 \times_k H_2}$ on both sides, where the action on the left-hand side is obtained using Lemma~\ref{lem:HC-product}.
\end{Lem}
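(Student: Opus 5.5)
The plan is to mimic the proof of Lemma~\ref{lem:Rep-product}, working with compact generators. I would first construct the functor on compact objects. Exterior tensor product over $k$ gives an exact functor $(\fh_1,K_1)\mod^{\heartsuit,\mathrm{fg}} \times (\fh_2,K_2)\mod^{\heartsuit,\mathrm{fg}} \to (\fh_1 \oplus \fh_2, K_1 \times_k K_2)\mod^{\heartsuit,\mathrm{fg}}$, exact in each variable. By~\S\ref{sss:(h,K)modc-HHK} it is convenient to use the models $\Chb(\sH_{H_i,K_i})$ modulo acyclics, because exterior products of objects of $\sH_{H_1,K_1}$ and $\sH_{H_2,K_2}$ lie in $\sH_{H_1 \times H_2, K_1 \times K_2}$. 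Indeed, $\ind_{K_1}^{(\fh_1,K_1)}(V_1) \boxtimes \ind_{K_2}^{(\fh_2,K_2)}(V_2) \simeq \ind_{K_1\times K_2}^{(\fh_1\oplus\fh_2,K_1\times K_2)}(V_1 \boxtimes V_2)$. This gives a functor bi-exact in each variable $(\fh_1,K_1)\mod^\comp \times (\fh_2,K_2)\mod^\comp \to (\fh_1\oplus\fh_2,K_1\times_k K_2)\mod^\comp$. Ind-extending and using the universal property of $\otimes$ together with~\S\ref{sss:claim-relative-tensor-product}, as in the proof of Lemma~\ref{lem:HC-product}, yields the desired functor. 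Since it preserves compact objects and the source is compactly generated by the products $c_1 \boxtimes c_2$, full faithfulness reduces to checking that the map on mapping complexes is an equivalence for compact generators.

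For this check I would reduce to induced objects. The objects $\ind_{K_i}^{(\fh_i,K_i)}(V_i)$ with $V_i \in \Rep^{\heartsuit,\fd}(K_i)$ generate each $(\fh_i,K_i)\mod^\comp$ (Proposition~\ref{prop:resolution-hc-mod}), so it suffices to treat these. By adjunction,
\[
\Hom(\ind(V_1)\boxtimes\ind(V_2), \ind(W_1)\boxtimes\ind(W_2)) \simeq \Hom_{\Rep(K_1\times K_2)}\bigl(V_1\boxtimes V_2, \res(\ind W_1)\boxtimes\res(\ind W_2)\bigr).
\]
The restriction $\res(\ind W_i) = U(\fh_i)\otimes_{U(\fk_i)} W_i$ is a filtered colimit (via the PBW filtration) of finite-dimensional $K_i$-modules. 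Mapping out of the compact object $V_1 \boxtimes V_2$ commutes with this colimit, so the comparison reduces to the full faithfulness already established in Lemma~\ref{lem:Rep-product}, namely~\cite[Proposition~9.31(1)]{zhu}. The same identification holds on the source side. The main subtlety I expect is making this colimit argument rigorous in the renormalized setting: the adjunction $\ind \dashv \res$ has to be used with the continuous right adjoint $\res$ landing in $\Rep(K_i)$, and one must check that $\Rep(K_1)\otimes_k\Rep(K_2)\to\Rep(K_1\times K_2)$ is compatible with these adjunctions. The cleanest way to see both points is the Barr--Beck description $(\fh,K)\mod \simeq \LMod_{U(\fh)\otimes_{U(\fk)}-}$. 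This is a monad on $\Rep(K)$ which is $\Rep(K)$-linear only up to the Harish-Chandra condition, so I would instead compare monads directly: the monad on $\Rep(K_1)\otimes_k\Rep(K_2)$ is the tensor product of the two monads, and under the fully faithful functor to $\Rep(K_1\times K_2)$ it matches the monad for $(\fh_1\oplus\fh_2, K_1\times K_2)$.

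Given full faithfulness, essential surjectivity under the hypothesis on $\Rep(K_1)\otimes_k\Rep(K_2)$ follows from generation. The target's compact objects are generated by $\ind_{K_1\times K_2}(V)$ for $V$ finite-dimensional, and every such $V$ lies in the thick subcategory generated by the $V_1\boxtimes V_2$ because the functor on $\Rep$ is an equivalence. Applying $\ind$ puts these in the image. Finally, for compatibility with the $\hc_{H_1\times H_2}$-actions, I would check on compact objects, using $\sH_{H_1}\boxtimes\sH_{H_2}\subset\sH_{H_1\times H_2}$, that $(M_1\boxtimes M_2)\otimes_{U(\fh_1\oplus\fh_2)}(N_1\boxtimes N_2)\simeq (M_1\otimes N_1)\boxtimes(M_2\otimes N_2)$ coherently at the level of the monoidal additive categories. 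This coherence then propagates through~\S\ref{sss:algebra-structure-Verdier-quotient} and Lemma~\ref{lem:HC-product}.
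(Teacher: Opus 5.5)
Your proposal is correct and follows essentially the same route as the paper. The paper uses the commutative square formed by $\Rep(K_1) \otimes_k \Rep(K_2) \to \Rep(K_1 \times_k K_2)$, the Harish-Chandra module categories and the induction/restriction adjunctions; it deduces full faithfulness from compact generation by induced objects together with Lemma~\ref{lem:Rep-product}, and essential surjectivity from the induced compact generators of the target lying in the image. Your explicit Hom computation, and your point that the restriction functors must intertwine the two exterior-product functors, just spell out what the paper packs into the phrase ``commutative diagram.''
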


\begin{proof}
The functor is induced by the assignment sending a pair $(M,N)$ to the tensor product $M \otimes_k N$, with the obvious actions of $\fh_1 \oplus \fh_2$ and $K_1 \times_k K_2$.
We have a commutative diagram
\[
\begin{tikzcd}
\bigl( (\fh_1,K_1)\mod \bigr) \otimes_k \bigl( (\fh_2,K_2)\mod \bigr) \ar[r] \ar[d, shift left=0.5ex] & \bigl( (\fh_1 \oplus \fh_2,K_1 \times_k K_2)\mod \bigr) \ar[d, shift left=0.5ex] \\
\Rep(K_1) \otimes_k \Rep(K_2) \ar[r] \ar[u, shift left=0.5ex] & \Rep(K_1 \times_k K_2) \ar[u, shift left=0.5ex]
\end{tikzcd}
\]
where the vertical arrows are induced by corresponding induction and restriction functors, and in each case the upward arrow is left adjoint to the downward arrow. Since $\bigl( (\fh_1,K_1)\mod \bigr) \otimes_k \bigl( (\fh_2,K_2)\mod \bigr)$ is compactly generated by objects in the image of $\Rep(K_1) \otimes_k \Rep(K_2)$ (see~\cite[Chap.~1, \S 10.5.7]{gr}), and since the lower horizontal arrow is fully faithful by Lemma~\ref{lem:Rep-product}, we deduce the full faithfulness of the upper horizontal arrow.

In case the lower arrow is an equivalence, this diagram shows that the natural compact generators of $\bigl( (\fh_1 \oplus \fh_2,K_1 \times_k K_2)\mod \bigr)$ belong to the image of our functor, which implies that the functor is also essentially surjective.
\end{proof}

\sss
We will now use the considerations above to study properness (in the sense of~\S\ref{sss:smooth-proper-modules}) of $(\fh,K)\mod$ as a (right dualizable) strong representation, see~\S\ref{sss:dualizability-gKmod}. We must therefore study the counit morphism
\begin{equation}
\label{eqn:counit-hcmod-1}
\left( (\fh,K)\mod \right) \otimes_k \left( (\fh^\rev,K^\rev)\mod \right) \to \hc_H.
\end{equation}
We will assume that the canonical functors $\Rep(K) \otimes_k \Rep(K) \to \Rep(K \times_k K)$ and $\Rep(H) \otimes_k \Rep(H) \to \Rep(H \times_k H)$ are equivalences, and compare the counit above with the composition
\begin{multline}
\label{eqn:counit-hcmod-2}
\left( (\fh,K)\mod \right) \otimes_k \left( (\fh^\rev,K^\rev)\mod \right) \xrightarrow[\sim]{\text{Lem.~\ref{lem:product-hcmod}}} (\fh \oplus \fh^\rev,K \times_k K^\rev)\mod \\
\xrightarrow{\res^{(\fh \oplus \fh^\rev,K \times K^\rev)}_{(\fh \oplus \fh^\rev,\Delta K)}} (\fh \oplus \fh^\rev, \Delta K)\mod \xrightarrow{\coind_{(\fh \oplus \fh^\rev,\Delta K)}^{(\fh \oplus \fh^\rev,\Delta H)}} (\fh \oplus \fh^\rev, \Delta H)\mod \xrightarrow[\sim]{\text{\S\ref{sss:action-hcbim-hcmod}}} \hc_H.
\end{multline}
(Here, $\Delta K$ and $\Delta H$ are the antidiagonal copies of $K$ and $H$ in $H \times_k H^\rev$.)
In the following statement we use the notation $\boxtimes_k$ as in~\cite[Chap.~1, \S 10.4.1]{gr}.
We also consider $\det(\fh)$ as a right $U(\fh)$-module by twisting the natural left action by the antipode.

\begin{Lem}
\label{lem:counit-hcmod}
Let $V_1 \in \Rep(K)$ and $V_2 \in \Rep(K^\rev)$. Then the image of $\ind_K^{(\fh,K)}(V_1) \boxtimes_k \ind_{K^\rev}^{(\fh^\rev,K^\rev)}(V_2)$ under~\eqref{eqn:counit-hcmod-1} identifies with the image of $\ind_K^{(\fh,K)}(V_1) \boxtimes_k \left( \ind_{K^\rev}^{(\fh^\rev,K^\rev)}(V_2) \otimes_k \det(\fh) \right)[\dim(\fh/\fk)]$ under~\eqref{eqn:counit-hcmod-2}.
\end{Lem}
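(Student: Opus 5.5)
Both sides are continuous functors out of $\Rep(K)\otimes_k\Rep(K^\rev)$ once we precompose with the induction functors. So the plan is to compute each side explicitly on the generators $\ind_K^{(\fh,K)}(V_1)\boxtimes_k\ind_{K^\rev}^{(\fh^\rev,K^\rev)}(V_2)$ and compare the results. It suffices to treat $V_1$, $V_2$ finite-dimensional, and we may even reduce to the heart, since everything commutes with filtered colimits and shifts.

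\textbf{The counit~\eqref{eqn:counit-hcmod-1}.} By the general theory of duals for modules over a rigid monoidal $\infty$-category (\cite[Chap.~1, \S 9]{gr}), the counit sends $N\boxtimes\DD_{(\fh,K)}(N')$ to the object of $\hc_H$ corepresenting $M\mapsto\Hom_{(\fh,K)\mod}(N', M\otimes_{U(\fh)}N)$, up to the standard twist. Concretely, via~\eqref{eqn:dual-(h,K)-mod} and~\eqref{eqn:duality-h-K-comp} I would write
\[
\ind_{K^\rev}^{(\fh^\rev,K^\rev)}(V_2)=\DD_{(\fh,K)}(N'),\qquad N'=\ind_K^{(\fh,K)}\bigl(V_2^*\otimes_k\det(\fk)^*\bigr)[\dim\fk],
\]
which is possible by~\eqref{eqn:Ext-Ind-hc}. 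The counit value $X\in\hc_H$ is then characterized by
\[
\Hom_{\hc_H}(X,M)\simeq\Hom_{(\fh,K)\mod}\bigl(\ind_K^{(\fh,K)}(V_2^*\otimes\det(\fk)^*)[\dim\fk],\ M\otimes_{U(\fh)}\ind_K^{(\fh,K)}(V_1)\bigr)
\]
for compact $M$, with the twist from \S\ref{sss:twist-pivotality}. Using~\eqref{eqn:tensor-induced-hc} together with the adjunction $\ind_K\dashv\res_K$, the right-hand side becomes $\Hom_{\Rep(K)}$ of $V_2^*\otimes\det(\fk)^*[\dim\fk]$ into $\res^{(\fh,K)}_K\bigl(M\otimes_{U(\fh)}\ind_K^{(\fh,K)}(V_1)\bigr)$.

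\textbf{The composite~\eqref{eqn:counit-hcmod-2}.} Using the right adjoints $\coind$ described in \S\ref{sss:coind-hcmod} and \S\ref{sss:coind-res-diagram}, the composite corepresents $\Hom_{(\fh\oplus\fh^\rev,\Delta K)}\bigl(\res_{\Delta K}(M),\,\ind(V_1)\boxtimes\ind(V_2)\otimes\det(\fh)\bigr)[\dim\fh/\fk]$. This is not directly a left adjoint formula, so I would instead identify both sides after applying the conservative functor $(\ind^H_\hc)^R$ of Proposition~\ref{prop:hchind}, computing in the bounded-below parts where everything is classical. On the~\eqref{eqn:counit-hcmod-2} side, the Beck--Chevalley identification of \S\ref{sss:coind-hcmod-same-Liealg-2}, extended by the diagram~\eqref{eqn:diag-coind-res-hcmod}, reduces the computation to coinduction from $\Delta K$ to $\Delta H$ of $\bigl(U(\fh)\otimes_{U(\fk)}V_1\bigr)\otimes\bigl(V_2\otimes_{U(\fk)}U(\fh)\bigr)\otimes\det(\fh)$. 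This amounts to relative Lie algebra cohomology $\mathrm{H}^\bullet(\fh,\fk;-)$, and in the Lie algebra direction it equals Lie algebra coinvariants shifted by $\dim(\fh/\fk)$ and twisted by $\det(\fh/\fk)$.

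\textbf{Matching and main obstacle.} The key input is the relative Poincaré duality, which is exactly the computation~\eqref{eqn:Ext-Ind-hc} done with a relative Chevalley--Eilenberg complex:
\[
\mathrm{RHom}_{U(\fh)}\bigl(U(\fh)\otimes_{U(\fk)}W,\,-\bigr)\simeq\bigl(W^*\otimes\det(\fk)^*\otimes_{U(\fk)}U(\fh)\bigr)\otimes^L_{U(\fh)}(-)\,[-\dim\fk].
\]
Combining this with $\det(\fh)\simeq\det(\fk)\otimes\det(\fh/\fk)$ matches the determinant characters and the shift $\dim(\fh/\fk)$. Both sides then become the bimodule $U(\fh)\otimes_{U(\fk)}(V_1\otimes V_2)\otimes_{U(\fk)}U(\fh)$ (suitably $\Delta K$-coinduced), carrying the same twist. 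I expect the main obstacle to be bookkeeping rather than conceptual. Three points need care: making the identification natural, so that it is an isomorphism in $\hc_H$ and not just after forgetting; keeping track of the antipode convention on $\det(\fh)$ as a right module; and keeping track of the pivotal twist $K_\fh$. For the twist, I would use \S\ref{sss:twist-pivotality} and its conjugation invariance to absorb the $\det(\fh)$ discrepancy.
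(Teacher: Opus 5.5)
There is a genuine gap at the step where the two sides are compared.

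\textbf{Your characterization of the counit is wrong by a duality.} The identification $\hc_H\simeq(\hc_H)^\vee$ furnished by rigidity sends $Q$ to the functional $P\mapsto\Map_{\hc_H}(U(\fh),P\otimes_{U(\fh)}Q)$. Hence the image $X$ of $\ind_K^{(\fh,K)}(V_1)\boxtimes_k\ind_{K^\rev}^{(\fh^\rev,K^\rev)}(V_2)$ under \eqref{eqn:counit-hcmod-1} is determined by an equivalence, natural in $P$,
\[
\Map_{\hc_H}\bigl(U(\fh),P\otimes_{U(\fh)}X\bigr)\simeq\Map_{(\fh,K)\mod}\bigl(N,P\otimes_{U(\fh)}\ind_K^{(\fh,K)}(V_1)\bigr),\qquad N=\ind_K^{(\fh,K)}\bigl(V_2^*\otimes_k\det(\fk)^*\bigr).
\]
There is no shift on $N$, since $\DD_{(\fh,K)}$ is $\Ext^{\dim(\fk)}_{U(\fh)}(-,U(\fh))$ applied termwise.

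When $X$ is compact, the object corepresenting the right-hand side is $X^\vee$, not $X$. The twist of \S\ref{sss:twist-pivotality} is a double-dual phenomenon and cannot absorb a single dual. When $X$ is not compact, the functional is not corepresentable at all. For compact $V_1,V_2$, whether $X$ is compact is exactly the properness question of \S\ref{sss:hK-mod-proper}, which is settled only under extra hypotheses.

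\textbf{The conservativity fallback proves nothing as stated.} Conservativity of $(\ind^H_\hc)^R$ only says that a \emph{morphism} with invertible image is invertible. You never construct a morphism in $\hc_H$ between the two objects, nor do you match the $U(\fh)$-module structures of Proposition~\ref{prop:hchind}. The underlying $H$-module does not determine the Harish-Chandra bimodule. For $H=\mathbb{G}_m$, take the one-dimensional bimodules with trivial $H$-action on which $\fh$ acts on both sides by the same scalar $\lambda$. They are pairwise non-isomorphic, yet all map to the trivial representation. So what you call bookkeeping is the heart of the matter.

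\textbf{The missing idea.} Work with the covariant characterization above throughout, naturally in $P\in\hc_H$; this is exactly what makes the right adjoints in \eqref{eqn:counit-hcmod-2} tractable.

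On the counit side, your adjunction computation then goes through and gives $\Map_{\Rep(K)}(k,(P\otimes_{U(\fk)} V_1)\otimes_k V_2\otimes_k \det(\fk))$.

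For $Y$ the image under \eqref{eqn:counit-hcmod-2}, the steps are:
\begin{enumerate}
\item Move $P$ inside $\coind^{(\fh\oplus\fh^\rev,\Delta H)}_{(\fh\oplus\fh^\rev,\Delta K)}$, which is $\hc_H$-linear by rigidity.
\item Write $U(\fh)=\ind_{\Delta H}^{(\fh\oplus\fh^\rev,\Delta H)}(k)$ and use adjunction.
\item Commute $\res$ past $\coind$ using \S\ref{sss:coind-res-diagram}.
\item Use adjunction and the tensor identity to reach $\Map_{(\fh,K)\mod}(k,\ind_K^{(\fh,K)}(\cdots))$.
\item Conclude with Lemma~\ref{lem:identification-Maps-det} and $\det(\fh)\simeq\det(\fk)\otimes_k\det(\fh/\fk)$.
\end{enumerate}
The duality input actually needed is that lemma. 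It concerns maps from the \emph{trivial} $(\fh,K)$-module into an induced module, and is proved with the relative Chevalley--Eilenberg resolution $U(\fh)\otimes_{U(\fk)}\wedge^{-\bullet}(\fh/\fk)$ and a PBW/Koszul argument. The formula you quote, for $\mathrm{RHom}_{U(\fh)}$ \emph{out of} an induced module, is a different statement. The resulting equivalence of functionals, natural in $P$, gives $X\simeq Y$ through the equivalence $\hc_H\simeq(\hc_H)^\vee$.
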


\sss
Before we give the proof of this lemma we establish a preparatory result.

\begin{Lem}
\label{lem:identification-Maps-det}
For any $M \in \Rep(K)$, there exists a canonical identification in $\Vect_k$
\[
\Map_{\Rep(K)}(\det(\fh/\fk)[\dim(\fh/\fk)], M) \simeq \Map_{(\fh,K)\mod}(k, \ind_K^{(\fh,K)}(M)).
\]
\end{Lem}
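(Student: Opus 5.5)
The plan is to reduce to bounded below $M$, compute both sides by a relative Chevalley--Eilenberg complex, and identify the resulting complex with $M \otimes_k \det(\fh/\fk)^*[-\dim(\fh/\fk)]$ by a PBW filtration argument. Throughout, set $d = \dim(\fh/\fk)$.

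\emph{Reduction to bounded below $M$.} Both sides of the claimed identification are continuous in $M$.
\begin{itemize}
\item On the left, $\det(\fh/\fk)[d]$ is a compact object of $\Rep(K)$.
\item On the right, the trivial module $k$ lies in $(\fh,K)\mod^{\heartsuit,\mathrm{fg}}$, so it is compact in $(\fh,K)\mod$. Moreover $\ind_K^{(\fh,K)}$ is continuous.
\end{itemize}
So it suffices to construct a natural identification for $M \in \Rep(K)^\comp$, or even for $M \in \Rep(K)^+$, and then extend by filtered colimits. For bounded below $M$, both mapping spaces can be computed in bounded below derived categories of the hearts, by~\eqref{eqn:equiv-Rep+-QCoh+} and~\S\ref{sss:hKmod-renormalization}. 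In those categories one may use classical homological algebra.

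\emph{Resolving the trivial module.} Resolve $k$ in $(\fh,K)\mod^{\heartsuit,\mathrm{fg}}$ by the relative Chevalley--Eilenberg (Koszul) complex
\[
0 \to \ind_K^{(\fh,K)}(\wedge^d(\fh/\fk)) \to \cdots \to \ind_K^{(\fh,K)}(\wedge^1(\fh/\fk)) \to \ind_K^{(\fh,K)}(k) \to k \to 0 .
\]
It is $K$-equivariant, and its exactness is checked after passing to associated graded for the PBW filtration, where it becomes the Koszul resolution of $k$ over $S(\fh/\fk)$. By the adjunction $(\ind_K^{(\fh,K)},\res_K^{(\fh,K)})$ applied termwise, $\Map_{(\fh,K)\mod}(k,N)$ is then the $K$-invariants $\Map_{\Rep(K)}(k,C^\bullet(N))$. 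Here $C^\bullet(N)$ is the relative cochain complex with terms $\Hom_k(\wedge^i(\fh/\fk),N)$, viewed as a complex in $\Rep(K)^+$.

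\emph{Computing the cochain complex.} Apply this with $N = \ind_K^{(\fh,K)}(M) = U(\fh)\otimes_{U(\fk)} M$. The key claim is a $K$-equivariant quasi-isomorphism, natural in $M$,
\[
C^\bullet\bigl(U(\fh)\otimes_{U(\fk)} M\bigr) \simeq M \otimes_k \det(\fh/\fk)^*[-d].
\]
This is a relative version of~\eqref{eqn:Ext-Ind-hc}: it computes $\Ext^\bullet_{(\fh,\fk)}(k, U(\fh)\otimes_{U(\fk)} M)$. To prove it, filter by the PBW filtration on $U(\fh)\otimes_{U(\fk)}M \cong S(\fh/\fk)\otimes M$ (as a filtered $K$-module). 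The associated graded complex is the Koszul cochain complex of $S(\fh/\fk)$ with coefficients in the free module $S(\fh/\fk)\otimes M$. Its cohomology is concentrated in degree $d$ and equals $M\otimes \det(\fh/\fk)^*$. A spectral sequence argument then shows the same holds before passing to associated graded, and the projection onto top cochains modulo the augmentation provides the canonical $K$-equivariant map realizing the quasi-isomorphism.

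\emph{Conclusion.} Taking $K$-invariants gives
\[
\Map_{(\fh,K)\mod}\bigl(k,\ind_K^{(\fh,K)}(M)\bigr) \simeq \Map_{\Rep(K)}\bigl(k, M\otimes\det(\fh/\fk)^*[-d]\bigr) \simeq \Map_{\Rep(K)}\bigl(\det(\fh/\fk)[d],M\bigr),
\]
where the last step uses that $\det(\fh/\fk)$ is invertible in $\Rep(K)$.

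The main obstacle is the previous step: making the identification of the cochain complex canonical and functorial in $M$ at the $\infty$-categorical level, rather than only on cohomology. I would handle this by writing down the explicit top-degree map of complexes of $K$-modules before checking it is a quasi-isomorphism by the filtration argument. A secondary point is that the $K$-action (not only the $\fk$-action) must be respected on the nose, which holds because every construction above is $K$-equivariant.
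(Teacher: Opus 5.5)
Your overall strategy is the one the paper uses: resolve $k$ by $U(\fh)\otimes_{U(\fk)}\wedge^\bullet(\fh/\fk)$, reduce via the PBW filtration to a Koszul complex, and finish with invertibility of $\det(\fh/\fk)$. However, the middle step fails as written.

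\textbf{The gap.} There is no complex of $K$-modules $C^\bullet(N)$ with terms $\Hom_k(\wedge^i(\fh/\fk),N)$. The relative Chevalley--Eilenberg differential, already $n\mapsto(\bar x\mapsto x\cdot n)$ in degree $0$, depends on the lift $x\in\fh$ of $\bar x\in\fh/\fk$ unless the cochain is $\fk$-invariant.

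The well-defined relative complex $\Hom_{\fk}(\wedge^\bullet(\fh/\fk),N)$ does not rescue this: applying $\Map_{\Rep(K)}(k,-)$ to it does not compute $\Map_{(\fh,K)\mod}(k,N)$ in positive characteristic. Take $\fh=\fk$, $K=\mathbb{G}_a$ and $M=\OO(K)$, so that $N=\ind_K^{(\fh,K)}(M)=\OO(K)$. Then $\OO(K)^{\fk}=k[x^p]=\OO(K/\alpha_p)$, so this recipe returns the cohomology of $\alpha_p$ with trivial coefficients. The correct answer is $k$.

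What the termwise adjunction actually gives you is only a finite diagram in $\Vect_k$ with terms $\Map_{\Rep(K)}(\wedge^i(\fh/\fk),N)$. Its transition maps pass through $\ind_K^{(\fh,K)}$ and are not induced by maps of $K$-modules. So the following three things in your argument refer to an object that does not exist:
\begin{itemize}
\item the ``$K$-equivariant quasi-isomorphism'' $C^\bullet(U(\fh)\otimes_{U(\fk)}M)\simeq M\otimes\det(\fh/\fk)^*[-d]$;
\item the ``projection onto top cochains'' that was supposed to define your canonical map;
\item the spectral-sequence comparison ``before passing to associated graded''.
\end{itemize}
Only after passing to the PBW associated graded do you get a genuine complex of $K$-modules: the Koszul complex, whose differential is $S(\fh/\fk)$-linear and $K$-equivariant. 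So the obstacle you flagged is not resolved by your proposed fix.

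\textbf{The missing idea.} As in the paper, construct the map before computing anything. Projecting the resolution onto its lowest-degree term (a stupid truncation) gives a morphism $k\to\ind_K^{(\fh,K)}(\det(\fh/\fk))[d]$ in $(\fh,K)\mod$. Apply the functor $\ind_K^{(\fh,K)}$ on mapping spaces and then precompose with this morphism. This yields a map $\Map_{\Rep(K)}(\det(\fh/\fk)[d],M)\to\Map_{(\fh,K)\mod}(k,\ind_K^{(\fh,K)}(M))$ that is natural in $M$ at the $\infty$-categorical level by construction.

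It then suffices to check that this map is an isomorphism for $M\in\Rep^{\heartsuit,\fd}(K)$; your continuity argument then covers all $M$. For such $M$:
\begin{itemize}
\item write the target as the finite diagram above;
\item rewrite its terms as $\Map_{\Rep(K)}(\det(\fh/\fk),\wedge^{d-n}(\fh/\fk)\otimes_k U(\fh)\otimes_{U(\fk)}M)$;
\item filter by PBW.
\end{itemize}
Your Koszul computation then applies to the associated graded, which is where it belongs.
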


\begin{proof}
Recall that we have a resolution
\[
U(\fh) \otimes_{U(\fk)} \wedge^{-\bullet}(\fh/\fk)
\]
of the trivial $(\fh,K)$-module, where the description of the differential is similar to that in the Chevalley--Eilenberg complex. (This resolution is discussed e.g.~in~\cite[Chap.~VII, \S 8]{knapp} over the complex numbers and for specific choices of $H$ and $K$; the description given in~\cite[Eqns.~(7.53c) and (7.55)]{knapp} works in our present generality.) Using the stupid truncation for this complex, we obtain a canonical morphism
\begin{equation}
\label{eqn:morph-resolution-trivial-hKmod}
k \to \ind_K^{(\fh,K)}(\det(\fh/\fk))[\dim(\fh/\fk)]
\end{equation}
in $(\fh,K)\mod$.

Using this morphism one can construct a functorial morphism
\[
\Map_{\Rep(K)}(\det(\fh/\fk)[\dim(\fh/\fk)], M) \to \Map_{(\fh,K)\mod}(k, \ind_K^{(\fh,K)}(M))
\]
as the composition of the morphism induced by $\ind_K^{(\fh,K)} : \Rep(K) \to (\fh,K)\mod$ with the morphism
\[
\Map_{(\fh,K)\mod}(\ind_K^{(\fh,K)}(\det(\fh/\fk))[\dim(\fh/\fk)], \ind_K^{(\fh,K)}(M)) \to \Map_{(\fh,K)\mod}(k, \ind_K^{(\fh,K)}(M))
\]
induced by~\eqref{eqn:morph-resolution-trivial-hKmod}. To conclude the proof it remains to prove that this morphism is an isomorphism. 

For this one can assume that $M \in \Rep^{\heartsuit,\fd}(K)$, and use the resolution above to compute the complex $\Map_{(\fh,K)\mod}(k, \ind_K^{(\fh,K)}(M))$; we obtain that this complex is the colimit of a (finite) system of complexes whose $n$-th term is
\begin{multline*}
\Map_{(\fh,K)\mod}(U(\fh) \otimes_{U(\fk)} \wedge^{n}(\fh/\fk), \ind_K^{(\fh,K)}(M)) \simeq
\Map_{\Rep(K)}(\wedge^{n}(\fh/\fk), \ind_K^{(\fh,K)}(M)) \\
\simeq \Map_{\Rep(K)}(k, (\wedge^{n}(\fh/\fk))^* \otimes_k (U(\fh) \otimes_{U(\fk)} M)).
\end{multline*}
Next we use the canonical isomorphism $(\wedge^{n}(\fh/\fk))^* \simeq \wedge^{\dim(\fh/\fk)-n}(\fh/\fk) \otimes_k \det(\fh/\fk)^*$ to identify this system with a system whose $n$-th term is
\[
\Map_{\Rep(K)}(\det(\fh/\fk), \wedge^{\dim(\fh/\fk)-n}(\fh/\fk) \otimes_k (U(\fh) \otimes_{U(\fk)} M)).
\]
Finally we consider the filtration on this system induced by the Poincar\'e--Birkhoff--Witt filtration on $U(\fh)$; in the associated graded we recognize the Koszul complex of the vector space $\fh/\fk$, which is a resolution of the trivial module and allows us to check that our map is indeed an isomorphism.
\end{proof}

\sss
We can now explain the proof of Lemma~\ref{lem:counit-hcmod}.

\begin{proof}[Proof of Lemma~\ref{lem:counit-hcmod}]
By definition, under the identification~\eqref{eqn:dual-(h,K)-mod},
the counit
\[
\left( (\fh,K)\mod \right) \otimes_k \left( (\fh,K)\mod \right)^\vee \to \hc_H
\]
is defined using the Ind-extension of the following operation. Given $M \in (\fh,K)\mod^\comp$ and $N \in (\fh,K)\mod^{\comp,\op}$ one considers the object of $(\hc_H)^\vee = \Funct_k(\hc_H,\Vect_k)$ defined by $P \mapsto \Map_{(\fh,K)\mod}(N, P \otimes_{U(\fh)} M)$. Then, one uses the fact that we have an identification
\[
\hc_H \simto (\hc_H)^\vee
\]
sending $Q$ to the functor $P \mapsto \Map_{\hc_H}(U(\fh), P \otimes_{U(\fh)} Q)$ to see this object as an object of $\hc_H$. 

In our case we need to analyze this procedure in case
\[
M = \ind_K^{(\fh,K)}(V_1) \quad \text{and} \quad N = (\DD_{(\fh,K)})^{-1}(\ind_{K^\rev}^{(\fh^\rev,K^\rev)}(V_2)) \simeq \ind_K^{(\fh,K)}((V_2)^* \otimes_k \det(\fk)^*).
\]
(See~\eqref{eqn:Ext-Ind-hc} for the isomorphism in the right-hand side.)
So we need to understand the object
\[
\Map_{(\fh,K)\mod} \left( \ind_K^{(\fh,K)}((V_2)^* \otimes_k \det(\fk)^*), P \otimes_{U(\fh)} (U(\fh) \otimes_{U(\fk)} V_1) \right).
\]
By adjunction, this complex identifies with
\begin{equation}
\label{eqn:computation-Maps-counit}
\Map_{\Rep(K)} \left( (V_2)^* \otimes_k \det(\fk)^*, P \otimes_{U(\fk)} V_1 \right) \simeq \Map_{\Rep(K)} \left( k, (P \otimes_{U(\fk)} V_1) \otimes_k V_2 \otimes_k \det(\fk) \right)
\end{equation}
where the action on $(P \otimes_{U(\fk)} V_1) \otimes_k V_2 \otimes_k \det(\fk)$ is diagonal.

On the other hand, we want to consider
\[
\Map_{\hc_H} \left( U(\fh), P \otimes_{U(\fh)} \coind_{(\fh \oplus \fh^\rev,\Delta K)}^{(\fh \oplus \fh^\rev,\Delta H)}((U(\fh) \otimes_{U(\fk)} V_1) \otimes_k (V_2 \otimes_{U(\fk)} U(\fh) \otimes_k \det(\fh)))[\dim(\fh/\fk)] \right).
\]
Since the functor $\res_{(\fh \oplus \fh^\rev,\Delta K)}^{(\fh \oplus \fh^\rev,\Delta H)}$ is $\hc_H$-linear for the natural left actions, so is its right adjoint, which allows us to identify the complex above with 
\[
\Map_{\hc_H} \left( U(\fh), \coind_{(\fh \oplus \fh^\rev,\Delta K)}^{(\fh \oplus \fh^\rev,\Delta H)}((P \otimes_{U(\fk)} V_1) \otimes_k (V_2 \otimes_{U(\fk)} U(\fh) \otimes_k \det(\fh))  ) [\dim(\fh/\fk)] \right).
\]
Now we use the fact that $U(\fh) = \ind_{\Delta H}^{(\fh \oplus \fh^\rev, \Delta H)}(k)$ and adjunction to identify this complex with
\[
\Map_{\Rep(H)} \left( k, \res_{\Delta H}^{(\fh \oplus \fh^\rev, \Delta H)} \circ \coind_{(\fh \oplus \fh^\rev,\Delta K)}^{(\fh \oplus \fh^\rev,\Delta H)}((P \otimes_{U(\fk)} V_1) \otimes_k (V_2 \otimes_{U(\fk)} U(\fh) \otimes_k \det(\fh))) [\dim(\fh/\fk)] \right).
\]
Next, we use the commutative diagram from~\S\ref{sss:coind-res-diagram} to identify the complex with
\[
\Map_{\Rep(H)} \left( k, \coind_{(\fh,\Delta K)}^{H}((P \otimes_{U(\fk)} V_1) \otimes_k (U(\fh) \otimes_{U(\fk)} V_2) \otimes_k \det(\fh))[\dim(\fh/\fk)] \right)
\]
where now $V_2$ is seen as a left $U(\fk)$-module using the antipode and the actions on the tensor product $(P \otimes_{U(\fk)} V_1) \otimes_k (U(\fh) \otimes_{U(\fk)} V_2) \otimes_k \det(\fh)$ are diagonal.
By adjunction this complex identifies with
\[
\Map_{(\fh,K)\mod} \left( k, (P \otimes_{U(\fk)} V_1) \otimes_k (U(\fh) \otimes_{U(\fk)} V_2) \otimes_k \det(\fh)[\dim(\fh/\fk)] \right).
\]
Using the tensor identity we identify this complex with
\[
\Map_{(\fh,K)\mod} \left( k, U(\fh) \otimes_{U(\fk)}((P \otimes_{U(\fk)} V_1) \otimes_k V_2 \otimes_k \det(\fh))[\dim(\fh/\fk)] \right).
\]
Finally, by Lemma~\ref{lem:identification-Maps-det} this complex identifies with
\[
\Map_{\Rep(K)} \left( \det(\fh/\fk)[\dim(\fh/\fk)], (P \otimes_{U(\fk)} V_1) \otimes_k V_2\otimes_k \det(\fh) [\dim(\fh/\fk)] \right),
\]
which indeed identifies with the right-hand side of~\eqref{eqn:computation-Maps-counit}.
\end{proof}

\sss
\label{sss:hK-mod-proper}
By construction, $(\fh,K)\mod$ and $(\fh^\rev,K^\rev)\mod$ are compactly generated. By~\cite[Chap.~1, \S 10.5.7]{gr}, it follows that $( (\fh,K)\mod) \otimes_k ( (\fh^\rev,K^\rev)\mod )$ is compactly generated, with compact objects the full idempotent complete stable subcategory generated by the objects of the form $\ind_K^{(\fh,K)}(V_1) \boxtimes_k \ind_{K^\rev}^{(\fh^\rev,K^\rev)}(V_2)$ with $V_1,V_2 \in \Rep(K)^{\comp}$. Using Lemma~\ref{lem:coind-compact-objects} it follows that if $k$ is a perfect field of characteristic $p>0$, $K$ and $H$ are smooth, and $H/K$ is projective, the functor~\eqref{eqn:counit-hcmod-1} sends compact objects to compact objects, so that $(\fh,K)\mod$ is proper as a left $\hc_H$-module. (An example of this setting is obtained by taking $H$ to be a split reductive group scheme over $k$ and $K$ a parabolic subgroup. We do not expect such modules to be smooth.)

%-------------------------------------------------------
\subsection{Tensor products and functors}
\label{sss:tens-funct-strong}
%-------------------------------------------------------

\sss
\label{sss:assumption-tensor-Funct}
In this subsection we study relative tensor products and functor categories for strong representations. We fix an affine group scheme $H$ of finite type over $k$, such that the functor
$\Rep(H) \otimes_k \Rep(H) \to \Rep(H \times_k H)$ of
Lemma~\ref{lem:Rep-product} is an equivalence. By Lemma~\ref{lem:HC-product}, this implies that we also have $\hc_{H \times_k H} \simeq \hc_H \otimes_k \hc_H$.

\sss 
The results below will be deduced from an analogue in the present setting of Proposition~\ref{prop:Rep-H2-modules-OH}.
Consider the monad 
\[
m^H_\hc : \hc_H \otimes_k \hc_H \rightleftarrows \hc_H: (m^H_\hc)^R,
\]
where $m_\hc^H$ denotes the multiplication map. Here, $(m^H_\hc)^R$ is the right adjoint of $m^H_\hc$, which is $(\hc_H \otimes_k (\hc_H)^{\rev})$-linear since $\hc_H$ is rigid, see~\S\ref{sss:def-hc}.

On the other hand we consider the algebra $\sD(H)$ of differential operators on $H$. Since $H$ has two actions of $H$ (induced by multiplication on the right and on the left), $\sD(H)$ is endowed with two actions of $H$ and two algebra morphisms from $U(\fh)$, hence (using left and right multiplication on itself) four actions of $U(\fh)$, and these structures can be combined to give two structures of Harish-Chandra bimodules (see~\S\ref{sss:Dmod-strong-action}). More specifically, we let
$H^\rev \times_k H$ act on $H$ by $(g,h) \cdot k = g^{-1}kh^{-1}$ for $g,h,k \in H$, and therefore view $\sD(H)$ as an algebra object in $\hc_{H^\rev \times_k H}$. Our assumption implies that $\hc_{H^\rev \times_k H} \simeq \hc_{H^\rev} \otimes_k \hc_{H}$, and we have a tautological identification $\hc_{H^\rev} = (\hc_{H})^\rev$. (Here we consider the left action of $U(\fh^\rev)$ as a right action of $U(\fh)$, etc.) We can therefore consider $\sD(H)$ as an algebra object in $(\hc_H)^\rev \otimes_k \hc_H$.

\begin{Prop}
\label{prop:monadhc}
Under the assumption of~\S\ref{sss:assumption-tensor-Funct}, the monad 
\[
m_\hc^H : \hc_H \otimes_k \hc_H \rightleftarrows \hc_H: (m^H_\hc)^R
\]
yields an identification 
\[
\hc_H \simeq \LMod_{\sD(H)}((\hc_H)^\rev \otimes_k \hc_H).
\]
\end{Prop}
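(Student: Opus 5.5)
The plan is to follow the pattern of Proposition~\ref{prop:Rep-H2-modules-OH}, using the general Barr--Beck argument of~\S\ref{sss:rigidity}. Since $\hc_H$ is rigid, $(m^H_\hc)^R$ is continuous, $(\hc_H \otimes_k \hc_H^\rev)$-linear and conservative. By~\eqref{eqn:rigidity-A-Lmod} we get
\[
\hc_H \simeq \LMod_{\mathfrak{a}}(\hc_H^\rev \otimes_k \hc_H), \qquad \mathfrak{a} = (m^H_\hc)^R(U(\fh)).
\]
Here $\hc_H \otimes_k \hc_H$ is viewed as a right module over $\hc_H^\rev \otimes_k \hc_H$, and $\mathfrak a$ carries its natural algebra structure. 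The whole content of the proposition is therefore the identification of the algebra object $\mathfrak{a}$ with $\sD(H)$ in $(\hc_H)^\rev \otimes_k \hc_H \simeq \hc_{H^\rev \times_k H}$.

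To identify $\mathfrak a$, I would first compute its image in $\Rep$, using the commutative square of monoidal functors
\[
\ind^H_\hc \circ m_{\Rep} \simeq m^H_\hc \circ (\ind^H_\hc \otimes \ind^H_\hc).
\]
Passing to right adjoints gives $(\ind^{H}_\hc\otimes\ind^H_\hc)^R \circ (m^H_\hc)^R \simeq m_{\Rep}^R \circ (\ind^H_\hc)^R$. Applying this to $U(\fh) = \ind^H_\hc(k)$, the underlying object of $\mathfrak a$ in $\Rep(H\times_k H)$ is $\coind^{H\times H}_{\Delta H}(U(\fh))$, with $U(\fh)$ carrying the adjoint action. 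By the tensor identity this is $\OO(H)\otimes_k U(\fh)$ with the appropriate actions. That is precisely the PBW description $\sD(H) \simeq \OO(H)\otimes_k U(\fh)$ (left-invariant vector fields) as an $H\times H$-module, which gives the identification at the level of underlying objects in $\Rep(H \times_k H)$.

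Next I would upgrade this to an identification in $\hc_{H^\rev\times_k H}$, compatible with algebra structures. All objects involved lie in the $+$ parts: $(m^H_\hc)^R$ is left t-exact, since its left adjoint is right t-exact, and $U(\fh)$ lies in the heart. By~\eqref{eqn:hc+-Dhc} together with Lemma~\ref{lem:HC-product}, the $+$ part of $\hc_{H^\rev \times H}$ is the bounded below derived category of the abelian category of Harish-Chandra bimodules for $H^\rev\times H$. So it suffices to compute $\mathfrak a$ there. The plan is to show that $\mathfrak a$ lies in the heart, which follows from the $\Rep$-level computation because the forgetful functor is t-exact and conservative. One then checks in the abelian setting that the four $U(\fh)$-actions on $\OO(H)\otimes U(\fh)$ coming from the adjunction agree with those on $\sD(H)$.

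Concretely, the adjunction for the abelian restriction functor along $m$ (i.e.~$M \mapsto M\otimes_{U(\fh)\otimes U(\fh)^\rev}$-type restriction to the diagonal) has right adjoint $\mathrm{Hom}$ from $U(\fh)\otimes U(\fh)$ with the $H$-finite part. The value on $U(\fh)$ can be identified with $H$-finite $U(\fh)$-bimodule maps, i.e.~with distributions on $H$ convolved appropriately, which is $\sD(H)$. The algebra structure on $\mathfrak a$ is induced by the lax monoidal structure of $(m^H_\hc)^R$, and it corresponds to composition of differential operators. This can be checked on the heart via the explicit description of the unit and counit.

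The main obstacle I expect is this last step: matching the monad's multiplication, and the precise conventions for the $H^\rev\times H$ action, with the algebra structure of $\sD(H)$ coherently. The approach I would take is to realize the monad concretely as $M \mapsto \sD(H)\otimes_{U(\fh)\otimes U(\fh)^\rev}$-type induction on hearts. An alternative is to use the equivalence $\hc_H\simeq U(\fh)\mod_H$ of Proposition~\ref{prop:hchind} together with the identification of $\Rep$ in Proposition~\ref{prop:Rep-H2-modules-OH}, so that $\OO(H)$-modules with compatible $U(\fh)$-structure become $\OO(H)\rtimes U(\fh) = \sD(H)$-modules. This reduces the coherence question to an algebra-level check in an ordinary category.
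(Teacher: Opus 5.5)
Up to the reduction to the heart, your argument follows the paper's proof. You use Barr--Beck as in \S\ref{sss:rigidity}, the square of right adjoints giving $\coind_{\Delta H}^{H\times_k H}(U(\fh))$ as the image of $\mathfrak a$ in $\Rep(H\times_k H)$, and a t-structure argument placing $\mathfrak a$ in the heart. The gap is in the step that carries the content of the proposition: identifying the algebra structure on $\mathfrak a$ with that of $\sD(H)$, as an algebra in $\hc_{H\times_k H}^\heartsuit$, which also encodes the four $U(\fh)$-actions.

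Your primary mechanism rests on a false premise. Since $\hc_H$ is not braided, $m^H_\hc$ is not a monoidal functor, so $(m^H_\hc)^R$ carries no lax monoidal structure. The multiplication on $\mathfrak a$ is only the one coming from the $((\hc_H)^\rev\otimes_k\hc_H)$-linearity of the monad $(m^H_\hc)^R\circ m^H_\hc$. The intuition from $\Rep(H)$, where $m^H_{\Rep}$ is monoidal because $\Rep(H)$ is symmetric, does not transfer.

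Your fallback does not address $\mathfrak a$, and taken literally it produces the wrong algebra. Composing $\hc_H\simeq U(\fh)\mod_H$ with Proposition~\ref{prop:Rep-H2-modules-OH} exhibits $\hc_H$ as modules over $\coind_{\Delta H}^{H\times_k H}(U(\fh))$ with the multiplication induced by the lax structure of $\coind$. That algebra is $\OO(H)\otimes_k U(\fh)$ with $\OO(H)$ central, not $\sD(H)$. This does not contradict the proposition. The forgetful functor $\hc_H\to\Rep(H\times_k H)$ is linear for two different module structures on $\hc_H$:
\begin{itemize}
\item the one-sided structure $M\otimes_{U(\fh)}\ind^H_\hc(V_1\otimes_k V_2)$, coming from $\ind^H_\hc\circ m^H_{\Rep}$;
\item the two-sided structure $\ind^H_\hc(V_1)\otimes_{U(\fh)}M\otimes_{U(\fh)}\ind^H_\hc(V_2)$, which is the one relevant for $m^H_\hc$.
\end{itemize}
The same monad is then encoded by different algebras. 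So an equivalence $\hc_H\simeq\LMod_B(\Rep(H\times_k H))$ neither determines $B$ nor identifies $\mathfrak a$. Getting to $\OO(H)\rtimes U(\fh)$ requires an untwisting by the derivative of the group action, which you do not mention. The ``distributions'' description is also unavailable here: in positive characteristic, crystalline $\sD(H)$ does not act faithfully on $\OO(H)$, and $U(\fh)\to\mathrm{Dist}(H)$ is not injective.

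The missing idea is the paper's rigidity argument.
\begin{enumerate}
\item Compose $\hc_H\simeq\LMod_{\mathfrak a}((\hc_H)^\rev\otimes_k\hc_H)\simeq\LMod_{\mathfrak a}(\Rep(H)\otimes_k\Rep(H))$ (as in \S\ref{sss:Amod-strong-reps}) with the top arrow $(\ind^H_\hc)^R$ of the square of right adjoints. Via Proposition~\ref{prop:Rep-H2-modules-OH}, this lands in $\Mod_{\OO(H)}(\Rep(H)\otimes_k\Rep(H))$.
\item Apply this to the free module $\mathfrak a$. The resulting $\OO(H)$-action commutes with right multiplication by $\mathfrak a$, hence is given by an algebra map $\OO(H)\to\mathfrak a$.
\item Take the unit $U(\fh^\rev\oplus\fh)\to\mathfrak a$, and compare via the canonical isomorphism $\sD(H)\simto\coind_{\Delta H}^{H\times_k H}(U(\fh))$ adjoint to $\sD(H)\to\sD(H)\otimes_{\OO(H)}k_e\simeq U(\fh)$. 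This is better than your ad hoc tensor-identity identification, which need not be compatible with these two maps.
\item Reduce to the heart and show there is a unique algebra structure on $\sD(H)$ in $\hc_{H\times_k H}^\heartsuit$ compatible with both maps. Choosing one side, multiplication $U(\fh)\otimes_k\OO(H)\to\sD(H)$ is a linear isomorphism, and the commutators $[x,f]$ are forced by the Harish-Chandra condition.
\end{enumerate}
Without this uniqueness statement, or an honest computation of the monad's multiplication, your proposal identifies $\mathfrak a$ only as an object of $\Rep(H\times_k H)$, which falls short of the statement.
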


\begin{proof} 
By the discussion in~\S\ref{sss:rigidity}, it suffices to construct
an isomorphism of algebra objects $(m^H_\hc)^R( U(\fh)) \simeq \sD(H)$.
We have a commutative diagram
\[
\begin{tikzcd}[column sep=3cm]
\Rep(H) \otimes_k \Rep(H) \ar[r, "\on{ind}^H_{\hc} \otimes_k \on{ind}^H_{\hc}"] \ar[d, "m^H_{\Rep}"'] & \hc_H \otimes_k \hc_H \ar[d, "m^H_{\hc}"] \\ 
 \Rep(H) \ar[r, "\on{ind}^H_{\hc}"] & \hc_H
\end{tikzcd}
\]
expressing the monoidality of the functor $\on{ind}^H_{\hc}$, where we denote by $m^H_{\Rep}$ the binary product for $\Rep(H)$.
Passing to right adjoints we deduce a commutative diagram 
\begin{equation}
\label{eqn:cddiaghc}
\begin{tikzcd}[column sep=3cm]
\hc_H \ar[r, "(\on{ind}^H_\hc)^R"] \ar[d, "(m^H_{\hc})^R"'] & \Rep(H) \ar[d, "(m^H_{\Rep})^R"] \\ 
\hc_H \otimes_k \hc_H \ar[r, "(\on{ind}^H_\hc)^R \otimes_k (\on{ind}^H_\hc)^R"] & \Rep(H) \otimes_k \Rep(H). 
\end{tikzcd}
\end{equation}
Here in the right-hand column, as in the proof of Proposition~\ref{prop:Rep-H2-modules-OH}, $m^H_{\Rep}$ identifies with restriction along the diagonal embedding $\Delta : H \hookrightarrow H \times_k H$, and $(m^H_{\Rep})^R$ identifies with its right adjoint $\coind_{\Delta H}^{ H \times_k H}$, see~\S\ref{sss:Res-Coind}.
Using this diagram, we obtain an identification
\[
((\on{ind}^H_\hc)^R \otimes_k (\on{ind}^H_\hc)^R) \circ (m^H_{\hc})^R(U(\fh)) \simeq \coind_{\Delta H}^{ H \times_k H} (U(\fh)).
\]

Now we have a canonical isomorphism
\[
\sD(H) \simto \coind_{\Delta H}^{H \times_k H} (U(\fh))
\]
in $\Rep(H) \otimes_k \Rep(H) \simeq \Rep(H \times_k H)$.
Indeed, this morphism is obtained by adjunction from the natural maps 
\[
\sD(H) \rightarrow \sD(H) \otimes_{\OO(H)} k_e \xleftarrow{\sim} 
U(\fh),
\]
where $k_e$ denotes the skyscraper $\OO(H)$-module at the identity $e \in H$, and the map
\[
U(\fh)
\rightarrow \sD(H) \otimes_{\OO(H)} k_e
\]
is induced by 
left multiplication of $H$ on itself.

It remains to identify the algebra structure on $\sD(H)$ with its usual one. However, as in~\S\ref{sss:Amod-strong-reps} we have an equivalence
\[
\hc_H \simeq \LMod_{\sD(H)}((\hc_H)^\rev \otimes_k \hc_H) \simeq \LMod_{{\sD(H)}}(\Rep(H) \otimes_k \Rep(H)),
\]
so that the upper arrow in~\eqref{eqn:cddiaghc} defines a functor
\[
\LMod_{{\sD(H)}}(\Rep(H) \otimes_k \Rep(H)) \to \Mod_{\OO(H)}(\Rep(H) \otimes_k \Rep(H)).
\]
The image of $\sD(H)$ under this map provides an $\OO(H)$-module structure on $\sD(H)$ which commutes with right multiplication, hence is induced by
a map of algebra objects
\[
\OO(H) \rightarrow \sD(H)
\]
in $\Rep(H) \otimes_k \Rep(H)$.
We also have the unit map $U(\fh^\rev \oplus \fh) \rightarrow \sD(H)$, and the usual algebra structure on $\sD(H)$ is the unique algebra structure on it in $\hc_H \otimes_k \hc_H$ compatible with the above two maps. Indeed, this reduces to the analogous assertion in $\hc_{H \times_k H}^\heartsuit$ for t-structure reasons. There it follows e.g. by choosing a side, say left translations, using that the multiplication map $U(\fh) \otimes_k \OO(H) \rightarrow \sD(H)$ is an isomorphism of vector spaces, and that the commutators of elements of $U(\fh)$ and $\OO(H)$ are dictated by the structure as a Harish-Chandra bimodule. 
\end{proof}

\sss
The following statements follow, in view of the discussion in~\S\ref{sss:rigidity-tens-Hom}.

\begin{Prop} 
\label{prop:tensor-Funct-strong-actions}
Under the assumption of~\S\ref{sss:assumption-tensor-Funct},
if $\sM$, resp.~$\sN$, is a right, resp.~left, strong categorical representation of $H$, we have an identification
\[
\sM \otimes_{\hc_H} \sN \simeq \LMod_{\sD(H)}(\sM \otimes_k \sN).
\]
If $\sM$ and $\sN$ are left strong categorical representations of $H$, we have an identification
\[
\Funct_{\hc_H}(\sM,\sN) \simeq \RMod_{\sD(H)}( \Funct_k(\sM,\sN)).
\]
In case $\sM$ is dualizable, we have a canonical identification
\[
\Funct_{\hc_H}(\sM, \sN) \simeq \RMod_{\sD(H)}( \sM^\vee \otimes_k \sN).
\]
\end{Prop}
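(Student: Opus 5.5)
The plan is to specialize the general machinery of~\S\ref{sss:rigidity-tens-Hom} to $\sA = \hc_H$, feeding in Proposition~\ref{prop:monadhc}. This mirrors the way Corollary~\ref{cor:tens-Hom-weak-rep} was deduced from Proposition~\ref{prop:Rep-H2-modules-OH}. Recall from~\S\ref{sss:def-hc} that $\hc_H$ is a rigid $k$-linear monoidal $\infty$-category. The discussion of~\S\ref{sss:rigidity} therefore applies, and gives $\hc_H \simeq \LMod_{\mathfrak{a}}(\hc_H^\rev \otimes_k \hc_H)$ with $\mathfrak{a} = m_\hc^{H,R}(U(\fh))$. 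Proposition~\ref{prop:monadhc} identifies the algebra object $\mathfrak{a}$ with $\sD(H)$, viewed in $(\hc_H)^\rev \otimes_k \hc_H$ as in the statement there. The one point to check here is that the ordering of factors ($\hc_H \otimes_k \hc_H$ versus $\hc_H^\rev \otimes_k \hc_H$) matches the conventions of~\S\ref{sss:rigidity}. This is exactly how $\sD(H)$ was placed in Proposition~\ref{prop:monadhc}, via the action $(g,h)\cdot k = g^{-1}kh^{-1}$ of $H^\rev \times_k H$.

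For the tensor product, I would use~\eqref{eqn:relative-tensor-product} to write
\[
\sM \otimes_{\hc_H} \sN \simeq \hc_H \otimes_{\hc_H^\rev \otimes_k \hc_H} (\sM \otimes_k \sN).
\]
Substituting $\hc_H \simeq \LMod_{\sD(H)}(\hc_H^\rev \otimes_k \hc_H)$ and applying~\cite[Chap.~1, Corollary~8.5.7]{gr} then gives $\LMod_{\sD(H)}(\sM \otimes_k \sN)$. Here $\sD(H)$ acts on $\sM \otimes_k \sN$ through the $(\hc_H^\rev \otimes_k \hc_H)$-module structure induced by the right action on $\sM$ and the left action on $\sN$. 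Under this identification the insertion functor corresponds to induction.

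For functors, I would start from~\S\ref{sss:Funct-2}, which gives
\[
\Funct_{\hc_H}(\sM,\sN) \simeq \Funct_{\hc_H \otimes_k \hc_H^\rev}(\hc_H, \Funct_k(\sM,\sN)).
\]
Combining this with the presentation of $\hc_H$ as $\sD(H)$-modules and with~\cite[Chap.~1, Corollary~8.6.4]{gr}, as in the second half of~\S\ref{sss:rigidity-tens-Hom}, yields $\RMod_{\sD(H)}(\Funct_k(\sM,\sN))$. When $\sM$ is dualizable in $\DGCat$ we have $\Funct_k(\sM,\sN) \simeq \sM^\vee \otimes_k \sN$ compatibly with the actions, which gives the last formula.

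I expect the main obstacle to be bookkeeping rather than substance. One has to track left versus right actions and the $\rev$ twists so that $\sD(H)$ acts on the correct side. One also has to check, in the dualizable case, whether the pivotality twist of~\S\ref{sss:twist-pivotality} on $\sM^\vee$ intervenes. By that discussion it does not, up to equivalence, since it is conjugation by the invertible object $K_\fh$.
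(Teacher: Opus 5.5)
Your proposal is correct and is the paper's argument: the paper specializes the general discussion of~\S\ref{sss:rigidity-tens-Hom} to $\sA = \hc_H$, identifies the algebra $\mathfrak{a}$ with $\sD(H)$ via Proposition~\ref{prop:monadhc}, and uses $\Funct_k(\sM,\sN) \simeq \sM^\vee \otimes_k \sN$ for the dualizable case. Your concern about the twist of~\S\ref{sss:twist-pivotality} does not actually arise, because $\sM^\vee$ here only carries the action transported from $\Funct_k(\sM,\sN)$ and is never identified with a $\hc_H$-linear dual.
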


%------------------------------------------------------
\subsection{Equivariantization and deequivariantization}
\label{ss:equiv-deequiv-strong}
%------------------------------------------------------

\sss
We continue with our affine group scheme $H$ of finite type over $k$, and assume that the functor
$\Rep(H) \otimes_k \Rep(H) \to \Rep(H \times_k H)$ of
Lemma~\ref{lem:Rep-product} is an equivalence.
Consider the $\hc_H$-module $\fh\mod$, see~\S\ref{sss:h-mod}, and denote by $\Dmod(H)$ the $\infty$-category of (left) crystalline D-modules on the smooth affine scheme $H$, i.e.~of (left) modules for the algebra $\sD(H)$.

The following statement is a counterpart in our setting of a standard result when $\mathrm{char}(k)=0$, see e.g.~\cite[Proposition~2.5.3]{tao}.

\begin{Lem}
\label{lem:Dmod-Funct-hc}
There is a canonical equivalence
\[
\Dmod(H) \simeq \Funct_{\hc_H}(\fh\mod, \fh\mod).
\]
\end{Lem}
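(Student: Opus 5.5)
The plan is to use Proposition~\ref{prop:tensor-Funct-strong-actions} with $\sM=\sN=\fh\mod$. Since $\fh\mod$ is compactly generated, it is dualizable. By \S\ref{sss:dualizability-gKmod} (with $K=\{1\}$), its dual is $\fh^\rev\mod$, i.e.\ right $U(\fh)$-modules. We therefore obtain
\[
\Funct_{\hc_H}(\fh\mod,\fh\mod) \simeq \RMod_{\sD(H)}\bigl(\fh^\rev\mod \otimes_k \fh\mod\bigr).
\]
Lemma~\ref{lem:product-hcmod} (with trivial subgroups, where the hypothesis on $\Rep$ is trivially satisfied) identifies $\fh^\rev\mod\otimes_k\fh\mod$ with $(\fh^\rev\oplus\fh)\mod$. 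The latter is simply $\LMod_{U(\fh^\rev)\otimes_k U(\fh)}(\Vect_k)$, i.e.\ $U(\fh)$-bimodules, as in \S\ref{sss:h-mod}, since $U(\fh\oplus\fh)$ has finite global dimension. The action of $(\hc_H)^\rev\otimes_k\hc_H$ on this category is by the two-sided tensor product of Harish-Chandra bimodules. This uses Lemma~\ref{lem:product-hcmod} again, together with the compatibility with $\hc_{H^\rev\times_k H}$ and Lemma~\ref{lem:HC-product}.

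The main step is to compute modules, in this bimodule category, over the algebra object $\sD(H)\in(\hc_H)^\rev\otimes_k\hc_H$, where the category is viewed as a module for $\hc_{H^\rev\times H}$. I would apply the general principle of \S\ref{sss:Amod-strong-reps}: for an algebra $A$ in $\hc_{H'}$ acting on $\fh'\mod=\hc_{H'}\otimes_{\Rep(H')}\Vect_k$ (see~\eqref{eqn:hmod-induction-Vect}), one has
\[
\LMod_A(\fh'\mod)\simeq \LMod_A(\hc_{H'}\otimes_{\Rep(H')}\Vect_k)\simeq \LMod_A(\hc_{H'})\otimes_{\Rep(H')}\Vect_k \simeq \LMod_A(\Rep(H'))\otimes_{\Rep(H')}\Vect_k.
\]
The second step here uses that taking modules over an algebra commutes with relative tensor product, as in~\cite[Chap.~1, Corollary~8.5.7]{gr}. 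By Lemma~\ref{lem:Funct-Amod} the last category is $\LMod_A(\Vect_k)$, i.e.\ modules over the underlying algebra of $A$ in $\Vect_k$, obtained by forgetting the $H'$-action. With $H'=H^\rev\times_k H$ and $A=\sD(H)$, this gives $\LMod_{\sD(H)}(\Vect_k)=\Dmod(H)$.

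For this identification I need to check that the algebra underlying $\sD(H)$ in $\Vect_k$ is the usual ring of differential operators. I also need to check that the ``$\RMod$'' of Proposition~\ref{prop:tensor-Funct-strong-actions} matches the left modules above after the $\rev$ swap. The latter amounts to the bookkeeping with the antipode and with the identification $\hc_{H^\rev}=(\hc_H)^\rev$ made before Proposition~\ref{prop:monadhc}. I expect this bookkeeping of sides, together with justifying that the composite equivalence is canonical (in particular that the action on the bimodule category is the one compatible with $\sD(H)$ viewed in $\hc_{H^\rev\times H}$), to be the main obstacle. As a sanity check, the image of the identity functor should correspond to $\sD(H)$ itself.

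Alternatively, and perhaps more cleanly, I could bypass $\sD(H)$-module bookkeeping. The plan would be to write $\fh\mod=\hc_H\otimes_{\Rep(H)}\Vect_k$ and compute
\[
\Funct_{\hc_H}(\fh\mod,\fh\mod)\simeq\Funct_{\Rep(H)}(\Vect_k,\fh\mod)\simeq \Mod_{\OO(H)}(\fh\mod)
\]
by Corollary~\ref{cor:tens-Hom-weak-rep}. The final step would then identify $\OO(H)$-modules in $U(\fh)$-modules, with the induced structure, as $\sD(H)=\OO(H)\rtimes U(\fh)$-modules. This again reduces, for t-structure reasons, to an abelian statement as in the end of the proof of Proposition~\ref{prop:monadhc}. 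I would try this second route first.
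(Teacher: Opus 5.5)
Your first route is essentially the paper's proof. The paper applies Proposition~\ref{prop:tensor-Funct-strong-actions}, identifies $\Funct_k(\fh\mod,\fh\mod)$ (equivalently $\fh^\rev\mod\otimes_k\fh\mod$) with $U(\fh)$-bimodules, and uses the Barr--Beck argument of \S\ref{sss:Amod-strong-reps} to descend to $\RMod_{\sD(H)}(\Vect_k)$.

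The side issue you defer to ``bookkeeping'' is real, though. This route outputs \emph{right} modules over the ring $\sD(H)$ (equivalently, left modules over $\sD(H)^\rev$). The paper converts these to $\Dmod(H)$ using the anti-automorphism of $\sD(H)$ that is the identity on $\OO(H)$ and $-\id$ on left-invariant vector fields. It is well defined because it respects the relations $xf-fx=x(f)$ and $xy-yx=[x,y]$ for $x,y\in\fh$, $f\in\OO(H)$. You should name this ingredient explicitly.

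Your proposed sanity check is also wrong. The identity functor is the monoidal unit, so it corresponds to the delta module $\sD(H)\otimes_{\OO(H)}k_e$, whose underlying vector space is $U(\fh)$, not to $\sD(H)$. The object $\sD(H)$ is instead what $(m^H_\hc)^R$ assigns to the unit of $\hc_H$ in Proposition~\ref{prop:monadhc}, which is a different statement.

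Your second route is genuinely different from the paper's, and it works.
\begin{itemize}
\item Writing $\fh\mod\simeq\hc_H\otimes_{\Rep(H)}\Vect_k$ as in~\eqref{eqn:hmod-induction-Vect}, the extension/restriction-of-scalars adjunction gives $\Funct_{\hc_H}(\fh\mod,\fh\mod)\simeq\Funct_{\Rep(H)}(\Vect_k,\fh\mod)$.
\item Corollary~\ref{cor:tens-Hom-weak-rep}\eqref{it:Funct-weak} then gives $\Mod_{\OO(H)}(\fh\mod)$.
\item One copy of $\Rep(H)$ acts through $\ind^H_\hc$ and the other only through the fiber functor. Hence $\OO(H)$ acts by $M\mapsto\OO(H)\otimes_k M$, with diagonal $U(\fh)$-action through one of the regular actions.
\item The composite forgetful functor to $\Vect_k$ is conservative and continuous. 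Its monad is tensoring with the discrete algebra $\OO(H)\otimes_k U(\fh)$ with smash-product multiplication, i.e.\ $\sD(H)$. Discreteness is what reduces the algebra structure to the classical computation.
\end{itemize}

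Compared with the paper, this route has three advantages. It bypasses Proposition~\ref{prop:monadhc} entirely, in particular the identification of $(m^H_\hc)^R(U(\fh))$ with $\sD(H)$ as an algebra object, which there requires a separate uniqueness argument. It lands directly in left $\sD(H)$-modules, with no anti-automorphism needed. It also makes the later map $\QCoh(H)\to\Dmod(H)$, $\sD(H)\otimes_{\OO(H)}(-)$, visibly postcomposition with $\Vect_k\to\fh\mod$.

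The paper's route, in exchange, is simply the case $\sM=\sN=\fh\mod$ of a general formula. That formula rests on Proposition~\ref{prop:monadhc}, which the paper needs anyway, e.g.\ for the traces in \S\ref{sss:traces-strong}.
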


\begin{proof}
By Proposition~\ref{prop:tensor-Funct-strong-actions} we have
\[
\Funct_{\hc_H}(\fh\mod, \fh\mod) \simeq \RMod_{\sD(H)}(\Funct_k(\fh\mod, \fh\mod)).
\]
Now, by the results in~\cite[Chap.~1, \S 8.5--8.6]{gr} we have $\Funct_k(\fh\mod, \fh\mod) \simeq (\fh \oplus \fh^\rev)\mod$. Here $\sD(H)$ is seen as an algebra object in $U(\fh)$-bimodules, via left and right multiplication of $H$ on itself. By the same arguments as in~\S\ref{sss:Amod-strong-reps} we have an identification
\[
\RMod_{\sD(H)}((\fh \oplus \fh^\rev)\mod) \simeq \RMod_{\sD(H)}(\Vect_k).
\]
Finally, we have a canonical identification $\sD(H) \simeq \sD(H)^\rev$ (given by the identity on $\OO(H)$ and $-\id$ on $\fh$ seen as left invariant vector fields), which allows us to identify the right-hand side with $\Dmod(H)$.
\end{proof}

\sss
By the discussion in~\S\ref{sss:Funct}, the equivalence of Lemma~\ref{lem:Dmod-Funct-hc} endows $\Dmod(H)$ with the structure of an algebra object in $\DGCat$, which allows us to consider the $\infty$-category $\Dmod(H)\mod$. We expect that the associated binary operation
\[
\Dmod(H \times_k H) \simeq \Dmod(H) \otimes_k \Dmod(H) \to \Dmod(H)
\]
is given by the usual bimodule defining pushforward for $D$-modules associated with the multiplication morphism $H \times_k H \to H$, but we will not consider this question, in particular for lack of an adequate reference for a sufficiently robust theory of $D$-modules on schemes in positive characteristic.

The identification~\eqref{eqn:hmod-induction-Vect} also provides an algebra morphism
\[
\QCoh(H) \overset{\eqref{eqn:End-Vect-Rep}}{=} \Funct_{\Rep(H)}(\Vect_k, \Vect_k) \to \Funct_{\hc_H}(\fh\mod, \fh\mod) = \Dmod(H).
\]
Concretely, this functor is given by the tensor product operation $\sD(H) \otimes_{\OO(H)} (-)$.

\sss
By the definitions, $\fh\mod$ has commuting actions of $\hc_H$ and $\Dmod(H)$. 
Consider the associated adjunction
\[
(-) \otimes_{\Dmod(H)} \fh\mod : \modr( \Dmod(H) ) \rightleftarrows \hc_H\mod : \Funct_{\hc_H}(\fh\mod, -).
\]
By rigidity of $\hc_H$ we have an identification $\Funct_{\hc_H}(\fh\mod, -) \simeq (\fh\mod)^\vee \otimes_{\hc_H} (-)$, so that this functor is continuous and $\DGCat$-linear.

The same arguments as for Proposition~\ref{prop:QCoh-Rep-ff} allow us to prove the following counterpart for strong representations.

\begin{Prop}
\label{prop:Dmod-HC-ff}
The functor
\[
(-) \otimes_{\Dmod(H)} \fh\mod : \modr(\Dmod(H)) \to \hc_H\mod
\]
is fully faithful; in other words it
exhibits $\modr(\Dmod(H))$ as a full subcategory of $\hc_H\mod$.
\end{Prop}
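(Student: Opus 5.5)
We follow the pattern of the proof of Proposition~\ref{prop:QCoh-Rep-ff}. Since $(-) \otimes_{\Dmod(H)} \fh\mod$ is a left adjoint, full faithfulness amounts to showing that the unit of the adjunction is an equivalence. Concretely, for $\sC$ in $\modr(\Dmod(H))$ we must show that the canonical map
\[
\sC \to \Funct_{\hc_H}(\fh\mod, \sC \otimes_{\Dmod(H)} \fh\mod)
\]
is an equivalence.

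By rigidity of $\hc_H$ (see~\S\ref{sss:def-hc}), together with the right dualizability of $\fh\mod$ described in~\S\ref{sss:dualizability-gKmod}, we have
\[
\Funct_{\hc_H}(\fh\mod, -) \simeq (\fh\mod)^\vee \otimes_{\hc_H} (-).
\]
Here $(\fh\mod)^\vee$ is the right strong representation $\fh^\rev\mod$; by~\S\ref{sss:twist-pivotality} the twist by $K_\fh$ does not change it up to equivalence. Associativity of relative tensor products then gives
\[
(\fh\mod)^\vee \otimes_{\hc_H} (\sC \otimes_{\Dmod(H)} \fh\mod) \simeq \sC \otimes_{\Dmod(H)} \bigl( (\fh\mod)^\vee \otimes_{\hc_H} \fh\mod \bigr).
\]
This step uses that the actions of $\Dmod(H)$ and $\hc_H$ on $\fh\mod$ commute, which holds by construction. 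The remaining input is the strong analogue of~\eqref{eqn:Tens-Vect-Rep}, namely an identification of $(\Dmod(H),\Dmod(H))$-bimodules
\[
(\fh\mod)^\vee \otimes_{\hc_H} \fh\mod \simeq \Dmod(H).
\]
Granting it, we conclude via $\sC \otimes_{\Dmod(H)} \Dmod(H) \simeq \sC$.

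To obtain this key identification, apply rigidity once more. Since $\fh\mod$ is dualizable, the left-hand side is $\Funct_{\hc_H}(\fh\mod, \fh\mod)$, and this is $\Dmod(H)$ by Lemma~\ref{lem:Dmod-Funct-hc}. One must check that the bimodule structures match, which is formal from how the $\Dmod(H)$-action on $\fh\mod$ was defined through that very equivalence. Alternatively, the identification can be made concrete using Proposition~\ref{prop:tensor-Funct-strong-actions}:
\[
\fh^\rev\mod \otimes_{\hc_H} \fh\mod \simeq \LMod_{\sD(H)}\bigl((\fh^\rev \oplus \fh)\mod\bigr).
\]
The argument of~\S\ref{sss:Amod-strong-reps} identifies this with $\LMod_{\sD(H)}(\Vect_k) = \Dmod(H)$, in parallel with the proof of Lemma~\ref{lem:Dmod-Funct-hc}.

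The main obstacle is bookkeeping rather than substance. One must verify that the unit map really corresponds, under this chain of equivalences, to the composite of the canonical identifications. This means checking that the identification $(\fh\mod)^\vee \otimes_{\hc_H} \fh\mod \simeq \Dmod(H)$ is compatible with both the left and the right $\Dmod(H)$-actions, and that it sends the relevant unit to $\sD(H)$. I would handle this as in the weak case: compatibility can be tested on the compact generator $U(\fh)$ of $\fh\mod$, where everything reduces to the identification $(m^H_\hc)^R(U(\fh)) \simeq \sD(H)$ from the proof of Proposition~\ref{prop:monadhc}.
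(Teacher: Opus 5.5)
Your proposal is correct and follows the paper's approach: the paper proves this proposition by saying that the argument for Proposition~\ref{prop:QCoh-Rep-ff} carries over, with \eqref{eqn:Tens-Vect-Rep} replaced by the identification $(\fh\mod)^\vee \otimes_{\hc_H} \fh\mod \simeq \Funct_{\hc_H}(\fh\mod,\fh\mod) \simeq \Dmod(H)$ (rigidity of $\hc_H$ plus Lemma~\ref{lem:Dmod-Funct-hc}), exactly as you do. The compatibility check in your last paragraph is in fact formal, because the algebra structure on $\Dmod(H)$ and its action on $\fh\mod$ are defined through that same equivalence, so no check on generators is needed.
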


\begin{Rem}
Recall the functors of Proposition~\ref{prop:QCoh-Rep-ff} and~\S\ref{sss:restriction-induction-weak-strong}.
In view of the identification~\eqref{eqn:hmod-induction-Vect}, we have a commutative diagram
\[
\begin{tikzcd}[column sep=3cm]
\modr(\Dmod(H)) \ar[r, "(-) \otimes_{\Dmod(H)} \fh\mod"] & \hc_H\mod \\
\modr\QCoh(H) \ar[u, "(-) \otimes_{\QCoh(H)} \Dmod(H)"] \ar[r, "(-) \otimes_{\QCoh(H)} \Vect_k"] & \Rep(H)\mod, \ar[u, "\hc_H \otimes_{\Rep(H)} (-)"']
\end{tikzcd}
\]
and hence also a similar commutative diagram for the right adjoints of these functors.
\end{Rem}

%--------------------------------------------------
\subsection{Categorical traces}
\label{sss:traces-strong}
%--------------------------------------------------

\sss
We consider once again an affine group scheme $H$ of finite type over $k$, and assume that the functor
$\Rep(H) \otimes_k \Rep(H) \to \Rep(H \times_k H)$ of
Lemma~\ref{lem:Rep-product} is an equivalence.
In this subsection we establish counterparts for strong categorical representations of the results of~\S\ref{ss:categorical-trace-weak}. These counterparts will be slightly different, because it is \emph{not} the case that a group homomorphism induces a corresponding algebra map between the associated $\infty$-categories of Harish-Chandra bimodules.

\sss
Given $k$-group schemes of finite type $H_1$, $H_2$ and a morphism $\varphi : H_1 \to H_2$, as noted above there is no naturally induced algebra morphism $\hc_{H_2} \to \hc_{H_1}$. However there exists a natural $(\hc_{H_1}, \hc_{H_2})$-bimodule, namely the $\infty$-category $(\fh_1 \oplus \h_2^\rev, H_1^\varphi)\mod$ of Harish-Chandra modules associated with $H_1 \times H_2^\rev$ and its subgroup $H_1^\varphi$ given by the image of the map $H_1 \to H_1 \times H_2^\rev$ defined by $h \mapsto (h,\varphi(h)^{-1})$.

In particular, if $H_1=H_2=H$ we obtain in this way an $\hc_H$-bimodule, and following~\cite[\S 7.3.1]{zhu} we can consider its Hochschild homology
\[
\Tr(\hc_H,\varphi) = \hc_H \otimes_{\hc_H \otimes_k (\hc_H)^\rev} \left( (\fh \oplus \fh^\rev, H^\varphi)\mod \right).
\]
Using Proposition~\ref{prop:monadhc}, one sees that
\[
\Tr(\hc_H,\varphi) \simeq \LMod_{\sD(H)}((\fh \oplus \fh^\rev, H^\varphi)\mod).
\]

\sss
Consider the composition of conservative functors
\[
\LMod_{\sD(H)}((\fh \oplus \fh^\rev, H^\varphi)\mod) \to (\fh \oplus \fh^\rev, H^\varphi)\mod \xrightarrow{\res_H^{(\fh \oplus \fh^\rev, H^\varphi)}} \Rep(H).
\]
Applying the Barr--Beck--Lurie theorem~\cite[Chap.~1, Proposition~3.7.7]{gr} we deduce an equivalence
\[
\Tr(\hc_H,\varphi) \simeq \LMod_{\overline{\sD(H)}^\varphi}(\Rep(H))
\]
where $\overline{\sD(H)}^\varphi$ is the monad $M \mapsto \sD(H) \otimes_{U(\fh)} M$ on $\Rep(H)$, and we still use the notation $\LMod$ for modules over a monad. (Here the $U(\fh)$-action on $M$ is obtained by differentiation from the $H$-action, and the morphism $U(\fh) \to \sD(H)$ is induced by the action of $H$ on itself given by $h \cdot k = \varphi(h)kh^{-1}$.)

The natural morphism $\OO(H) \to \sD(H)$ induces a morphism of monads from the monad associated with the algebra object $\OO(H)$ (where $H$ acts on itself as above) to $\overline{\sD(H)}^\varphi$; we deduce a canonical map
\[
\LMod_{\overline{\sD(H)}^\varphi}(\Rep(H)) \to \LMod_{\OO(H)}(\Rep(H)).
\]
As explained in~\S\ref{sss:trace-endomorphism}, the codomain of this map identifies with $\Tr(\Rep(H),\varphi)$; we have therefore constructed a canonical conservative functor
\begin{equation}
\label{eqn:morphism-traces}
\Tr(\hc_H,\varphi) \to \Tr(\Rep(H),\varphi).
\end{equation}

\sss
\label{sss:trace-hc-id}
Consider the setting and assumptions of~\S\ref{sss:trace-Rep-id}, and the constructions above with $\varphi=\id$. In this case, one can think of $\Tr(\hc_H,\id)$ as a (renormalized) $\infty$-category of $D$-modules on the adjoint quotient $H/H$. In fact, by a counterpart of Lemma~\ref{lem:t-structures} for monads, this $\infty$-category admits a t-structure, whose heart consists of $H$-modules $M$ equipped with an action map $\sD(H) \otimes_{U(\fh)} M \to M$ (where here we consider usual tensor products of modules over an algebra), i.e.~strongly $H$-equivariant $D$-modules on $H$. In view of Proposition~\ref{prop:trace-Rep-id}, the map~\eqref{eqn:morphism-traces} provides a ``forgetful'' functor
\[
\Tr(\hc_H,\id) \to \IndCoh(H/H).
\]

\sss
Consider now the setting of~\S\ref{sss:trace-Rep-Frob}, so $\varphi$ is the Frobenius morphism $\Frob$. In this case the morphism $U(\fh) \to \sD(H)$ coincides with the morphism induced by the right regular action of $H$ on itself (because the differential of $\Frob$ is trivial), so it induces an isomorphism $\OO(H) \otimes_k U(\fh) \simto \sD(H)$. As a consequence the monad $\overline{\sD(H)}^\Frob$ identifies with the monad associated with the algebra $\OO(H)$, so that~\eqref{eqn:morphism-traces} is an equivalence.
The answer we get for the trace is therefore that given by Proposition~\ref{prop:trace-Rep-Frob}.

%%%%%%%%%%%%%%%%%%%%%%%%%%%%%%%%%%%%
%%%%%%%%%%%%%%%%%%%%%%%%%%%%%%%%%%%%

\end{document}